\numberwithin{equation}{section}
\newtheorem{thm}{Thm}[section]
\newtheorem{theorem}[thm]{Theorem}
\newtheorem{proposition}[thm]{Proposition}
\newtheorem{corollary}[thm]{Corollary}
\newtheorem{lemma}[thm]{Lemma}
\theoremstyle{definition}
\newtheorem{definition}[thm]{Definition}
\newtheorem{remark}[thm]{Remark}
\newtheorem{example}[thm]{Example}
\newtheorem{construction}[thm]{Construction}
\theoremstyle{remark}
\newcommand{\A}{\mathbb{A}}
\newcommand{\G}{\mathbb{G}}
\newcommand{\N}{\mathbb{N}}
\renewcommand{\P}{\mathbb{P}}
\newcommand{\T}{\mathbb{T}}
\newcommand{\Z}{\mathbb{Z}}
\newcommand{\cC}{\mathcal{C}}
\newcommand{\cF}{\mathcal{F}}
\newcommand{\cG}{\mathcal{G}}
\newcommand{\cI}{\mathcal{I}}
\newcommand{\cM}{\mathcal{M}}
\newcommand{\cO}{\mathcal{O}}
\newcommand{\cP}{\mathcal{P}}
\newcommand{\cQ}{\mathcal{Q}}
\newcommand{\cR}{\mathcal{R}}
\newcommand{\cS}{\mathcal{S}}
\newcommand{\cT}{\mathcal{T}}
\newcommand{\cU}{\mathcal{U}}
\newcommand{\cV}{\mathcal{V}}
\newcommand{\cW}{\mathcal{W}}
\newcommand{\cX}{\mathcal{X}}
\newcommand{\cY}{\mathcal{Y}}
\newcommand{\cZ}{\mathcal{Z}}
\newcommand{\et}{{\acute{e}t}}
\DeclareMathOperator{\Hom}{Hom}
\DeclareMathOperator{\Spec}{Spec}
\DeclareMathOperator{\coker}{coker}
\newcommand{\id}{{\rm id}}
\newcommand{\lSch}{\mathbf{lSch}}
\newcommand{\lSpc}{\mathbf{lSpc}}
\newcommand{\lFan}{\mathbf{lFan}}
\newcommand{\lSmSpc}{\mathbf{lSmSpc}}
\newcommand{\lSmFan}{\mathbf{lSmFan}}
\newcommand{\rank}{\mathrm{rank}}
\newcommand{\sSm}{\mathbf{sSm}}
\newcommand{\ol}{\overline}
\newcommand{\pt}{\mathrm{pt}}
\newcommand{\Blow}{\mathrm{Bl}}
\newcommand{\lSm}{\mathbf{lSm}}
\newcommand{\Coeq}{\mathrm{Coeq}}
\newcommand{\Eq}{\mathrm{Eq}}
\newcommand{\gp}{\mathrm{gp}}
\newcommand{\Deform}{\mathrm{D}}
\newcommand{\Normal}{\mathrm{N}}
\newcommand{\colimit}{\mathop{\mathrm{colim}}}
\newcommand{\boxx}{\square}
\newcommand{\Shv}{\mathbf{Shv}}
\newcommand{\ul}{\underline}
\begin{document}
\title{Inverting log blow-ups in log geometry}
\author{Doosung Park}
\address{Department of Mathematics and Informatics, University of Wuppertal, Germany}
\email{dpark@uni-wuppertal.de}
\subjclass[2020]{14A21}
\keywords{log schemes, log blow-ups, divided log spaces}
\date{\today}
\begin{abstract}
In the category of log schemes, it is unclear how to define the blow-ups for non-strict closed immersions.
In this article, we introduce the notion of divided log spaces.
We obtain the category of divided log spaces by locally inverting log blow-ups in the category of log schemes.
We show that blow-ups exist for closed immersions of log smooth divided log spaces.
This is an ingredient of the motivic six-functor formalism for log schemes.
\end{abstract}
\maketitle
%\tableofcontents
\section{Introduction}

For a regular embedding $Z\to X$ of schemes, the deformation to the normal cone construction, denoted $\Deform_Z X$, plays a central role in intersection theory.
For example, if $X$ is a smooth scheme over a field $k$, Fulton-MacPherson \cite{zbMATH01027930} used this construction for the diagonal morphism $X\to X\times_k X$ to define the intersection product.

The notion of log schemes, introduced by Fontaine-Illusie and further developed by Kato \cite{zbMATH00125808}, can be thought of as the notion of ``schemes with boundaries''.
The extra structure of boundaries is helpful for the compactification and degeneration problems in algebraic geometry.
For example, log geometry has been applied to the construction of the fine moduli space of polarized logarithmic Hodge structures with extra structures \cite{zbMATH05485422} and the proof of the $C_{st}$-conjecture in $p$-adic hodge theory \cite{MR1705837}.

To develop intersection theory or motivic homotopy theory of log schemes, the deformation to the normal cone construction for log schemes is desirable.
However, there is a technical difficulty:
If $X$ is a log smooth fs log scheme over a field $k$ whose log structure is nontrivial, then the diagonal morphism $X\to X\times_k X$ is a closed immersion that is not strict.
It is unclear how to define the blow-ups for non-strict closed immersions in the category of fs log schemes because a non-strict closed immersion is not defined by a sheaf of ideals.
Hence the construction of $\Deform_X(X\times_k X)$ in this category is unclear.

The purpose of this article is to introduce the notion of \emph{divided log spaces}.
The category of divided log spaces $\lSpc$ is obtained by locally inverting log blow-ups in the category of fs log schemes $\lSch$.
More precisely, we consider a full subcategory $\lFan$ of $\lSch$ in Definition \ref{lFan.1} such that globally inverting log blow-ups in $\lFan$ is reasonable.
Then we define a divided space in Definition
\ref{equiv.5} as a presheaf on $\lFan$ satisfying certain conditions whose formulation is similar to that of algebraic spaces.
Any morphism of divided log spaces behaves like an exact morphism of fs log schemes.
In particular, a non-strict closed immersion of fs log schemes becomes a strict closed immersion of divided log spaces.

For applications, we construct the open complements (resp.\ blow-ups) for non-strict closed immersions $Z\to X$ of fs log schemes (resp.\ log smooth fs log schemes) in the category of divided log spaces.
This allows us to construct $\Deform_Z X$, which is a crucial ingredient in  \cite{logGysin} and \cite{logsix}.

\subsection*{Organization of the article.}

In Section \ref{dZar}, we explain the three types of covers in log geometry: dividing covers, Zariski covers, and dividing Zariski covers.
The associated topologies are called the dividing, Zariski, and dividing Zariski topologies.
We study several properties of dividing Zariski sheaves.

In Section \ref{rep}, we consider properties  of representable morphisms of sheaves.
For example, we define representable strict closed immersions, representable log smooth morphisms, and representable Zariski covers of sheaves.
We also provide various basic lemmas about representable morphisms, which are used in later sections.

The definition of divided log spaces appears in Section \ref{divspace}.
A sheaf $\cX$ is called a divided log space if the diagonal morphism is a representable strict closed immersion, and if there exists a representable Zariski cover $\cY\to \cX$ such that $\cY$ is representable.
Properties of representable morphisms of sheaves can be restricted to divided log spaces.

The purpose of Section \ref{equiv} is to glue divided log spaces.
Our method for this is to introduce the notion of Zariski equivalence relations, which is an analog of \'etale equivalence relations in the theory of algebraic spaces.

In Section \ref{property}, we explain properties of morphisms of divided log spaces, which do not need to be representable.
For example, we define closed immersions, log smooth morphisms, and Zariski covers of divided log spaces.
We show that closed immersions of divided log spaces are representable strict closed immersions.

In Section \ref{topology}, we introduce several topologies on the category of divided log spaces.
We also compare sheaves on the category of divided log spaces and sheaves on the category of fs log schemes.

In Sections \ref{complement} and \ref{blow-up}, we define the open complements and blow-ups of closed immersions of divided log spaces using universal properties.
The open complements always exist, but we only show the existence of the blow-ups in the case of closed immersions of log smooth divided log spaces.
We combine these two notions for the deformation to the normal cone construction.
We also show that the open complements are closed under pullbacks and the blow-ups are closed under pullbacks along log smooth morphisms.

In Appendices, we collect several results in log geometry.

\subsection*{Related work}
Kato introduced \emph{algebraic valuative log spaces} in \cite{Katoval}, and this also does a similar job of locally inverting log blow-ups.
One technical advantage of our divided log spaces is that the definition resembles that of algebraic spaces.
Hence we can imitate many proofs in the literature of algebraic spaces to develop our theory.

Kato gave in \cite[Proposition 1.4.2]{Katoval} a description of the Hom group in the category of algebraic valuative log spaces, whose proof was left to the reader.
Assuming this, we expect that there is a fully faithful functor from the category of divided log spaces into the category of algebraic valuative log spaces since we have a similar result in Proposition \ref{div.5}.
We left an investigation about the comparison to the interested reader.

\subsection*{Notation and conventions}

Throughout this article, we fix a noetherian fs log scheme $B$ of finite Krull dimension.
Our standard reference for the notation and terminology in log geometry is Ogus's book \cite{Ogu}.
We also follow the convention in \cite[Definition 1.9]{MR1457738} for several properties of fs log schemes and morphisms of fs log schemes except open immersions.
A morphism of fs log schemes $f\colon Y\to X$ is an \emph{open immersion} if it is strict and its underlying morphism of schemes $\ul{f}\colon \ul{Y}\to \ul{X}$ is an open immersion.

Every fs log scheme in this article is equipped with a Zariski log structure except in Remark \ref{blow.8}, Proposition \ref{blow.11}, and Lemma \ref{blow.9}.

The coproduct $M\oplus_P N$ of saturated monoids is taken in the category of saturated monoids, and the fiber product $X\times_S Y$ of saturated log schemes is taken in the category of saturated log schemes.

\subsection*{Acknowledgements} The author wishes to thank the anonymous referees for their very careful reading of the manuscript and many valuable comments.
This research was conducted in the framework of the DFG-funded research training group GRK 2240: \emph{Algebro-Geometric Methods in Algebra, Arithmetic and Topology}.

\section{Dividing Zariski topology}
\label{dZar}
We want a topology that is finer than the Zariski topology and does the job of locally inverting log blow-ups in the category of sheaves.
The dividing Zariski topology defined below is suited for this.

\begin{definition}
\label{mono.1}
Let $f\colon Y\to X$ be a quasi-compact morphism of fs log schemes.
\begin{enumerate}
\item[\textup{(1)}] We say that $f$ is a \emph{dividing cover} if $f$ is a universally surjective proper log \'etale monomorphism.
\item[\textup{(2)}] We say that $f$ is a \emph{Zariski cover} if $f$ is surjective and of the form $\amalg_{i\in I}Y_i\to X$ with finite $I$ such that each $Y_i\to X$ is an open immersion.
\item[\textup{(3)}] We say that $f$ is a \emph{dividing Zariski cover} if $f$ is universally surjective and of the form $\amalg_{i\in I}Y_i\to X$ with finite $I$ such that each $Y_i\to X$ is a log \'etale monomorphism.
\end{enumerate}
\end{definition}

Recall that a morphism $f\colon Y\to X$ in a category with fiber products is a monomorphism if and only if the diagonal morphism $Y\to Y\times_X Y$ is an isomorphism.

\begin{definition}
\label{mono.2}
Let $\{Y_i\to X\}_{i\in I}$ be a family of quasi-compact morphisms of fs log schemes with finite $I$.
\begin{enumerate}
\item[\textup{(1)}] The family is called a \emph{Zariski covering family} if $\amalg_{i\in I}Y_i\to X$ is a Zariski cover.
\item[\textup{(2)}] The family is called a \emph{dividing Zariski covering family} if $\amalg_{i\in I}Y_i\to X$ is a dividing Zariski cover.
\end{enumerate}
\end{definition}

Every dividing cover is a dividing Zariski cover.
Every pullback of a dividing (resp.\ dividing Zariski) cover is again a dividing (resp.\ dividing Zariski) cover.
Every composition of dividing (resp.\ dividing Zariski) covers is again a dividing (resp.\ dividing Zariski) cover.

\begin{definition}
The topology on the category of quasi-compact fs log schemes generated by dividing (resp.\ dividing Zariski) covers is called the \emph{dividing} (resp.\ \emph{dividing Zariski}) \emph{topology}.
Let $div$ (resp.\ $dZar$) be the shorthand for the dividing (resp.\ dividing Zariski) topology.
\end{definition}

\begin{definition}
We refer to \cite[Definition II.1.9.2]{Ogu} for the definition of the category of fans.
For a fan $\Sigma$, let $\T_{\Sigma}$ be the fs log scheme whose underlying scheme is the toric scheme over $\Spec(\Z)$ associated with $\Sigma$ and whose log structure is the compactifying log structure \cite[Definition III.1.6.1]{Ogu} associated with the open immersion from the torus $\G_m^n$, where $n$ is the rank of $\Sigma$.

Every morphism of fans $\theta\colon \Delta\to \Sigma$ induces a morphism of fs log schemes $\T_{\theta}\colon \T_\Delta\to \T_\Sigma$.
We say that $\theta$ is a \emph{subdivision} if the associated homomorphism of lattices and the associated map of supports $\vert\Delta\vert\to \vert\Sigma\vert$ are isomorphisms.
In this case, the underlying morphism of schemes $\ul{\T_\theta}$ is a proper birational morphism.
\end{definition}

\begin{definition}
\label{lFan.1}
For an fs log scheme $X$, a \emph{fan chart of $X$} is a fan $\Sigma$ together with a strict morphism $X\to \T_\Sigma$.

Let $\lSch/B$ be the category of noetherian fs log schemes of finite Krull dimensions over $B$.
Let $\lFan/B$ be the full subcategory of the category $\lSch/B$ consisting of disjoint unions $\amalg_{i\in I} X_i$ with finite $I$ such that each $X_i$ admits a fan chart.
The dividing topology and dividing Zariski topology can be restricted to $\lFan/B$.

Any fs log scheme admits a fan chart Zariski locally by \cite[Theorem III.1.2.7(1)]{Ogu}.
Hence any $X\in \lSch/B$ admits a Zariski cover $Y\to X$ with $Y\in \lFan/B$. 
\end{definition}

\begin{definition}
Let $f\colon Y\to X$ be a morphism of fs log schemes.
A \emph{fan chart of $f$} is a triple $(\Sigma,\Delta,\theta\colon \Delta\to \Sigma)$ such that the induced diagram
\[
\begin{tikzcd}
Y\ar[d]\ar[r,"f"]&
X\ar[d]
\\
\T_\Delta\ar[r,"\T_\theta"]&
\T_\Sigma
\end{tikzcd}
\]
commutes, the vertical morphisms are strict, and $\theta$ is a morphism of fans.
If $X$ has a fan chart, then $f$ has a fan chart Zariski locally on $Y$ by \cite[Theorem III.1.2.7(1)]{Ogu}.
\end{definition}

\begin{example}
Every log blow-up \cite[Definition III.2.6.2]{Ogu} is a dividing cover, see \cite[Example A.11.1]{logDM}.

Let $\theta \colon \Sigma'\to \Sigma$ be a subdivision of fans.
We have isomorphisms
\[
\T_{\Sigma'}\simeq \T_{\Sigma'\times_{\Sigma} \Sigma'}\simeq \T_{\Sigma'}\times_{\T_\Sigma}\T_{\Sigma'},
\]
so the induced morphism $\T_\theta\colon \T_{\Sigma'}\to \T_\Sigma$ is a monomorphism.
Zariski locally, $\T_\theta$ is of the form $\A_\eta\colon \A_Q\to \A_P$ for some injective homomorphism $\eta\colon P\to Q$ of fs monoids such that $\eta\colon P^\gp\to Q^\gp$ is an isomorphism.
Hence \cite[Corollary IV.3.1.10]{Ogu} shows that $\T_\theta$ is log \'etale.
Since $\theta$ is a subdivision, $\T_\theta$ is proper.
As a consequence of \cite[Lemma A.11.4]{logDM}, $\T_\theta$ is universally surjective.
Hence we have shown that $\T_\theta$ is a dividing cover.

If $X\to \T_\Sigma$ is any morphism of quasi-compact log schemes, then the projection $X\times_{\T_\Sigma}\T_{\Sigma'}\to X$ is a dividing cover too.
\end{example}

\begin{proposition}
\label{equiv.11}
We have the following list of synonyms of morphisms of fs log schemes:
\begin{enumerate}
\item[\textup{(1)}] exact proper monomorphism $=$ strict closed immersion,
\item[\textup{(2)}] exact log \'etale monomorphism $=$ open immersion,
\item[\textup{(3)}] exact dividing cover $=$ isomorphism,
\item[\textup{(4)}] exact dividing Zariski cover $=$ Zariski cover.
\end{enumerate}
\end{proposition}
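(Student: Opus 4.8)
The plan is to read each of the four identities as two inclusions and to observe that the implication from the right-hand class to the left-hand class is routine in every case: a strict closed immersion, an open immersion, an isomorphism, and a Zariski cover are all strict, hence exact, and each visibly carries the remaining properties (properness and being a monomorphism; log \'etaleness and being a monomorphism; and so on). Here I would use that for a strict morphism the fs fiber product has underlying scheme the ordinary fiber product, so that a strict morphism is a monomorphism of fs log schemes as soon as its underlying morphism is a monomorphism of schemes. The real content therefore lies in the forward implications, all of which I would deduce from a single lemma: every exact monomorphism $f\colon Y\to X$ of fs log schemes is strict.

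I would establish this lemma first, and I expect its proof to be the main obstacle. Strictness can be checked on stalks of characteristic sheaves, so I would fix $y\in Y$ with image $x$ and write $\bar\theta\colon \bar P\to \bar Q$ for the induced homomorphism of the sharp fs monoids $\bar P=\overline M_{X,x}$ and $\bar Q=\overline M_{Y,y}$, the goal being that $\bar\theta$ is an isomorphism. Exactness of $f$ says exactly that $\bar\theta$ is exact, i.e.\ that $\bar P\cong \bar P^{\gp}\times_{\bar Q^{\gp}}\bar Q$. Being a monomorphism means the diagonal $Y\to Y\times_X Y$ is an isomorphism; since the characteristic sheaf of an fs fiber product is the saturated pushout, the codiagonal $\bar Q\oplus_{\bar P}\bar Q\to \bar Q$ is an isomorphism, and comparing groupifications this forces $\bar\theta^{\gp}$ to be surjective. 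On the other hand, any element of $\ker(\bar\theta^{\gp})$ lies in $\bar P$ by exactness, since it maps to $0\in \bar Q$, and it is a unit there, so sharpness of $\bar P$ forces $\ker(\bar\theta^{\gp})=0$. Thus $\bar\theta^{\gp}$ is an isomorphism, and exactness then upgrades this to an isomorphism $\bar\theta$. The essential point, and precisely what fails for a nontrivial subdivision as in the example above, is that exactness pushes the kernel into the sharp monoid, where it must vanish.

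Granting the lemma, I would obtain (1) and (2) from classical scheme theory. For (1), an exact proper monomorphism is strict by the lemma, so its underlying morphism is a proper monomorphism of schemes, hence a closed immersion, and being strict this is a strict closed immersion. For (2), an exact log \'etale monomorphism is strict, and a strict log \'etale morphism is \'etale on underlying schemes, so its underlying morphism is an \'etale monomorphism and therefore an open immersion.

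Finally I would reduce (3) and (4) to (2). An exact dividing cover is in particular an exact log \'etale monomorphism, hence an open immersion by (2); since it is also surjective, it is an isomorphism. For (4), exactness of a coproduct morphism is equivalent to exactness of each summand, so an exact dividing Zariski cover is a finite, universally surjective family of exact log \'etale monomorphisms, each of which is an open immersion by (2); universal surjectivity yields surjectivity, so the family is a Zariski cover. Conversely, a Zariski cover is a finite surjective family of open immersions, which are exact log \'etale monomorphisms, and such a family is stable under base change and remains surjective, hence is a dividing Zariski cover.
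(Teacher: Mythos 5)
Your overall architecture coincides with the paper's: everything is reduced to the single lemma that an exact monomorphism of fs log schemes is strict (this is Proposition \ref{equiv.32} in the appendix), after which (1) and (2) are the classical facts from EGA that a proper (resp.\ flat \'etale) monomorphism of schemes is a closed (resp.\ open) immersion, and (3), (4) follow formally. Your reduction of (3) to (2) alone (a surjective open immersion is an isomorphism) instead of to (1) and (2) is a harmless variant, and your treatment of the converse inclusions and of (4) matches the paper. The problem is your proof of the key lemma.

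The gap is the step ``the codiagonal $\ol{Q}\oplus_{\ol{P}}\ol{Q}\to\ol{Q}$ is an isomorphism, and comparing groupifications this forces $\ol{\theta}^{\gp}$ to be surjective.'' First, the characteristic monoid of the fs fiber product at $\Delta(y)$ is not the saturated pushout of the characteristic monoids but the \emph{sharpening} of (a localization of) it --- the saturated pushout of sharp monoids need not be sharp --- and one must separately check that $\Delta(y)$ sits in the stratum where the full sharpened pushout is realized. More seriously, even granting $\overline{\ol{Q}\oplus_{\ol{P}}\ol{Q}}\simeq\ol{Q}$, surjectivity of $\ol{\theta}^{\gp}$ does not follow. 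For the Kummer homomorphism $\theta\colon\N\to\N$, $\theta(1)=2$, the saturated pushout of $\N\leftarrow\N\rightarrow\N$ along $\theta$ is $\N\oplus\Z/2$, whose sharpening is again $\N$, while $\theta^{\gp}$ is not surjective: the cokernel of $\theta^{\gp}$ hides entirely in the unit group $(Q\oplus_P Q)^{*}$, which is invisible to characteristic monoids. This is exactly why the paper's Lemma \ref{equiv.10} cannot stop at $\overline{Q\oplus_P Q}\simeq Q$: from that isomorphism it extracts only a rank equality, deduces that $\theta$ is injective and then Kummer, and finally rules out the nontrivial Kummer case by an argument on \emph{underlying schemes} --- a nonzero torsion element $(q,-q)$ would make $(Q\oplus_P Q)^{*}$ nontrivial, so the underlying scheme of $\pt_Q\times_{\pt_P}\pt_Q$ would contain $\Spec(k[(Q\oplus_P Q)^{*}])\neq\Spec(k)$, contradicting the diagonal being an isomorphism. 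Some input beyond characteristic monoids is unavoidable here. Your injectivity argument and the final upgrade from $\ol{\theta}^{\gp}$ being an isomorphism to $\ol{\theta}$ being one via exactness are both fine.
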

\begin{proof}
(1) Consequence of Proposition \ref{equiv.32} and \cite[Corollaire IV.18.12.6]{EGA}.

(2) Consequence of Proposition \ref{equiv.32} and \cite[Th\'eor\`eme IV.17.9.1]{EGA}.

(3) Consequence of (1) and (2).

(4) Consequence of (2).
\end{proof}

The case of log \'etale monomorphisms in the result below is \cite[Proposition A.11.5]{logDM}.

\begin{proposition}
\label{equiv.23}
Let $f\colon Y\to X$ be a quasi-compact morphism of fs log schemes.
Assume that $X$ admits a fan chart $\Sigma$.
If $f$ is a monomorphism (resp.\ proper monomorphism, resp.\ log \'etale monomorphism, resp.\ dividing cover, resp.\ dividing Zariski cover), then there exists a subdivision $\Sigma'$ of $\Sigma$ such that the pullback
\[
f'\colon Y\times_{\T_\Sigma}\T_{\Sigma'}\to X\times_{\T_\Sigma}\T_{\Sigma'}
\]
is a strict monomorphism (resp.\ strict closed immersion, resp.\ open immersion, resp.\ isomorphism, resp.\ Zariski cover).
\end{proposition}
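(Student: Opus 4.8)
The plan is to reduce all five cases to the single assertion that, after a suitable subdivision, the pullback $f'$ becomes \emph{exact}; the stated strict conclusions then follow formally. Each of the five hypotheses is stable under base change---monomorphisms, proper morphisms, and log \'etale morphisms are preserved by pullback, and we have already recorded that dividing and dividing Zariski covers are preserved by pullback---so for every subdivision $\Sigma'$ the pullback $f'$ still has the property that $f$ has. It therefore suffices to choose $\Sigma'$ so that $f'$ is in addition exact, since then an exact monomorphism is strict, hence a strict monomorphism, while by Proposition \ref{equiv.11} an exact proper monomorphism is a strict closed immersion, an exact log \'etale monomorphism is an open immersion, an exact dividing cover is an isomorphism, and an exact dividing Zariski cover is a Zariski cover. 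The fact that an exact monomorphism is strict is the monomorphism counterpart of Proposition \ref{equiv.11}, and I would verify it by the same computation with characteristic monoids.

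Next I would reduce exactness of $f'$ to a statement about a single morphism to a toric scheme. Write $g\colon Y\to \T_\Sigma$ for the composite of $f$ with the strict fan chart $X\to \T_\Sigma$. Because $X\to \T_\Sigma$ is strict, the characteristic monoid of $X$ is pulled back from $\T_\Sigma$, so at every point the map of characteristic monoids induced by $f$ agrees with the one induced by $g$; thus $f$ is exact if and only if $g$ is, and the same holds after base change for $f'$ and $g'\colon Y\times_{\T_\Sigma}\T_{\Sigma'}\to \T_{\Sigma'}$. Hence the whole proposition reduces to producing a subdivision $\Sigma'$ of $\Sigma$ for which $g'$ is exact. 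Exactness is a pointwise condition on characteristic monoids, and since $X$ is noetherian and $f$ is quasi-compact, $Y$ is noetherian, so only finitely many combinatorial types occur. Over each cone $\sigma$ of $\Sigma$ the relevant data is the finite family of homomorphisms $\overline{M}_\sigma\to \overline{M}_{Y,y}$ for the points $y$ lying over $\sigma$; subdividing $\sigma$ according to the finitely many submonoids of $\overline{M}_\sigma^{\gp}$ obtained as preimages of the $\overline{M}_{Y,y}$ determines a refinement of $\sigma$, and one expects the common refinement $\Sigma'$ of these cone-by-cone subdivisions to exactify $g$ everywhere at once.

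The main obstacle is precisely this combinatorial \emph{exactification}: producing, by subdividing only the target fan $\Sigma$, a refinement along which every local homomorphism of characteristic monoids becomes exact, and verifying that passing to the common refinement does not destroy the exactness achieved cone by cone. The case in which $f$ is a log \'etale monomorphism is the prototype and is already known, namely \cite[Proposition A.11.5]{logDM}; there the refinement turns the chart morphism $\T_\theta$ into an open immersion of toric schemes. For the bare monomorphism and the proper monomorphism cases the same subdivision strategy applies---this is where the genuinely new combinatorial work lies---after which the resulting exact monomorphism is strict as indicated above.

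Finally, the two cover cases follow formally from the monomorphism machinery. A dividing cover is itself a monomorphism, so the exactification makes $f'$ an exact dividing cover, i.e.\ an isomorphism by Proposition \ref{equiv.11}(3); equivalently, $f'$ becomes an open immersion that is still proper and universally surjective, hence an isomorphism. A dividing Zariski cover $\amalg_{i\in I}Y_i\to X$ is not a monomorphism, but each component $Y_i\to X$ is a log \'etale monomorphism; applying the log \'etale case to each $Y_i\to X$ and passing to a common refinement of the finitely many resulting subdivisions makes every $Y_i'\to X'$ an open immersion, and the universal surjectivity of $f'$ then exhibits $\amalg_{i\in I}Y_i'\to X'$ as a Zariski cover.
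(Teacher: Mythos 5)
Your proposal is correct and follows essentially the same route as the paper: the paper's proof is a one-line combination of the exactification result \cite[Proposition 4.2.3]{logA1} (a variant of \cite[Theorem III.2.6.7]{Ogu}), which supplies exactly the subdivision you identify as the ``main obstacle,'' together with Proposition \ref{equiv.32} (exact monomorphisms are strict --- the fact you say you would verify by a characteristic-monoid computation) and the synonym list of Proposition \ref{equiv.11}. The only difference is that you sketch the combinatorial exactification from scratch where the paper simply cites it.
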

\begin{proof}
Combine \cite[Proposition 4.2.3]{logA1} (a variant of \cite[Theorem III.2.6.7]{Ogu}) and Propositions \ref{equiv.32} and \ref{equiv.11}.
\end{proof}

This immediately implies the following.

\begin{corollary}
\label{equiv.34}
Let $f\colon Y\to X$ be a dividing cover of quasi-compact fs log schemes.
If $X$ admits a fan chart $\Sigma$, then $f$ admits a refinement of the form $X\times_{\T_\Sigma}\T_{\Sigma'}\to X$ for some subdivision $\Sigma'$ of $\Sigma$.
\end{corollary}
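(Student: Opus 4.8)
The plan is to deduce this directly from Proposition \ref{equiv.23} by unwinding what it means for one cover to refine another. Recall that a cover $g\colon Z\to X$ is a \emph{refinement} of $f\colon Y\to X$ when $g$ factors through $f$, i.e.\ when there exists a morphism $Z\to Y$ over $X$. Thus it suffices to produce a subdivision $\Sigma'$ of $\Sigma$ together with a morphism $X\times_{\T_\Sigma}\T_{\Sigma'}\to Y$ over $X$; the source of this map is itself a dividing cover of $X$ by the Example above, so it indeed qualifies as a cover in the relevant topology.

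First I would feed the dividing cover $f$ into Proposition \ref{equiv.23}. Because $X$ carries the fan chart $\Sigma$, the proposition produces a subdivision $\Sigma'$ of $\Sigma$ for which the base change
\[
f'\colon Y\times_{\T_\Sigma}\T_{\Sigma'}\to X\times_{\T_\Sigma}\T_{\Sigma'}
\]
is an \emph{isomorphism}. This is the only substantive input, and it is exactly the content that makes the corollary ``immediate''.

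Next I would assemble the factorization from the Cartesian square defining $f'$. Writing $q\colon Y\times_{\T_\Sigma}\T_{\Sigma'}\to Y$ and $p\colon X\times_{\T_\Sigma}\T_{\Sigma'}\to X$ for the two projections, commutativity of the base-change square gives $f\circ q=p\circ f'$. Setting
\[
h:=q\circ (f')^{-1}\colon X\times_{\T_\Sigma}\T_{\Sigma'}\to Y
\]
and precomposing the previous identity with $(f')^{-1}$ yields $f\circ h=p$. Hence $h$ factors the projection $p$ through $f$, which is precisely the asserted refinement. The only point demanding attention is the verification that $h$ lies over $X$, and this is nothing more than the commutativity of that square; there is no genuine obstacle once Proposition \ref{equiv.23} is in hand.
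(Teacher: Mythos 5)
Your proposal is correct and is exactly the argument the paper intends: the paper states that Corollary \ref{equiv.34} follows immediately from Proposition \ref{equiv.23}, and your unwinding (invert the pullback $f'$ and compose with the projection to $Y$ to obtain the factorization over $X$) is the standard way to make that implication explicit. No gaps; the verification that the resulting morphism lies over $X$ is indeed just the commutativity of the base-change square.
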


\begin{remark}
\label{equiv.35}
Let $\theta\colon \Sigma'\to \Sigma$ be a subdivision of fans.
Combine toric resolution of singularities \cite[Theorem 11.1.9]{CLStoric} and De Concini-Procesi's Theorem \cite[pp.\ 39--40]{TOda} to see that there exists a refinement $\Sigma''\to \Sigma$ of $\theta$ that is a composition of star subdivisions.
The induced morphism $\T_{\Sigma''}\to \T_{\Sigma}$ is a log blow-up.
Hence Proposition \ref{equiv.23} is valid if we further require that $\T_{\Sigma'}\to \T_{\Sigma}$ is a log blow-up.

Using this, one can check that the dividing Zariski topology on the category of quasi-compact fs log schemes is equivalent to the \emph{Zariski valuative topology} \cite[Definition 2.23]{zbMATH05342987}, which is the smallest Grothendieck topology containing morphisms of the form $f\colon \amalg_{i\in I} Y_i\to X$ with finite $I$ such that $f$ is universally surjective and each $Y_i\to X$ is a composition of open immersions and log blow-ups.

Due to \cite[Remark 2.24]{zbMATH05342987}, the dividing Zariski topology has enough points.
\end{remark}

The next result justifies our choice of the terminology ``dividing Zariski.''

\begin{proposition}
\label{div.8}
Let $\cF$ be a presheaf on $\lFan/B$.
Then $\cF$ is a dividing Zariski sheaf if and only if $\cF$ is both a dividing sheaf and a Zariski sheaf.
\end{proposition}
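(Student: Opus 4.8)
The plan is to prove the two implications separately. The forward implication is immediate, so the content lies in the converse. For the \emph{only if} direction, I would note that every dividing cover and every Zariski cover is itself a dividing Zariski cover: a dividing cover is by definition a universally surjective proper log \'etale monomorphism, hence a dividing Zariski cover with a one-element index set; and a Zariski cover is surjective with open-immersion components, each of which is a log \'etale monomorphism by Proposition \ref{equiv.11}(2), while its surjectivity is stable under base change, so it is universally surjective. Thus the dividing Zariski sheaf condition specializes to the dividing and to the Zariski sheaf conditions.

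For the \emph{if} direction, the conceptual heart of the argument is the observation that a dividing sheaf automatically inverts dividing covers, which I would isolate as a lemma. Since a dividing cover $g\colon X'\to X$ is a monomorphism, the diagonal identifies $X'\times_X X'$ with $X'$ and makes the two projections coincide; the dividing sheaf equalizer for the singleton cover $\{X'\to X\}$ then degenerates and forces $g^*\colon \cF(X)\to \cF(X')$ to be a bijection.

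With this in hand, let $\{Y_i\to X\}$ be a dividing Zariski covering family. Using that $\cF$ is a Zariski sheaf — so that it carries the finite disjoint unions defining objects of $\lFan/B$ into products and covering families into single covers — I would first reduce to the case where $X$ admits a fan chart $\Sigma$ and the family is a single dividing Zariski cover $Y\to X$. By Proposition \ref{equiv.23} there is a subdivision $\Sigma'$ of $\Sigma$ such that, writing $X'=X\times_{\T_\Sigma}\T_{\Sigma'}$ and $Y'=Y\times_{\T_\Sigma}\T_{\Sigma'}=Y\times_X X'$, the morphism $Y'\to X'$ is a Zariski cover. Since $\T_{\Sigma'}\to \T_\Sigma$ is a dividing cover, so are its base changes $X'\to X$, $Y'\to Y$, and, using the identification $Y'\times_{X'}Y'\cong (Y\times_X Y)\times_X X'$, also $Y'\times_{X'}Y'\to Y\times_X Y$. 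I would then compare the two \v{C}ech fork diagrams
\[
\begin{tikzcd}
\cF(X)\ar[d]\ar[r] & \cF(Y)\ar[d]\ar[r,shift left]\ar[r,shift right] & \cF(Y\times_X Y)\ar[d]\\
\cF(X')\ar[r] & \cF(Y')\ar[r,shift left]\ar[r,shift right] & \cF(Y'\times_{X'}Y').
\end{tikzcd}
\]
The bottom row is an equalizer because $\cF$ is a Zariski sheaf and $Y'\to X'$ is a Zariski cover, while all three vertical maps are bijections by the lemma applied to the three dividing covers above. Hence the top row is an equalizer, which is exactly the dividing Zariski sheaf condition for $Y\to X$.

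The main obstacle I anticipate is not a single hard step but the bookkeeping that makes this comparison legitimate: checking that all intervening objects ($X'$, $Y'$, $Y\times_X Y$, $Y'\times_{X'}Y'$) lie in $\lFan/B$ — which holds because $X'$ carries the fan chart $\Sigma'$ and $Y'\to X'$ is strict — verifying the fiber-product identification $Y'\times_{X'}Y'\cong (Y\times_X Y)\times_X X'$ so that the right-hand vertical map genuinely arises from a dividing cover, and confirming that both squares of the ladder commute so that it is a morphism of forks. Once these routine compatibilities are in place, the transfer of the equalizer property from the bottom row to the top row is formal.
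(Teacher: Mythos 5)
Your proof is correct and follows essentially the same route as the paper: both reduce to the case of a single cover of an $X$ with a fan chart, invoke Proposition \ref{equiv.23} to find a dividing cover $X'\to X$ over which $Y'\to X'$ becomes a Zariski cover, apply the Zariski sheaf condition there, and transfer the equalizer back along the dividing covers. You merely make explicit (via the lemma that a dividing sheaf inverts dividing covers, and the ladder of forks) what the paper compresses into the sentence ``Use the assumption that $\cF$ is a dividing sheaf.''
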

\begin{proof}
The only if direction is trivial.
For the if direction, assume that $\cF$ is both a dividing sheaf and a Zariski sheaf.
Let $Y\to X$ be a dividing Zariski cover in $\lFan/B$.
By Proposition \ref{equiv.23}, there exists a dividing cover $X'\to X$ such that the projection $Y':=Y\times_X X'\to X'$ is a Zariski cover.
Since $\cF$ is a Zariski sheaf, we obtain
\[
\cF(X')
\xrightarrow{\simeq}
\Eq(\cF(Y')\rightrightarrows \cF(Y'\times_{X'}Y')).
\]
Use the assumption that $\cF$ is a dividing sheaf to obtain
\[
\cF(X)
\xrightarrow{\simeq}
\Eq(\cF(Y)\rightrightarrows \cF(Y\times_{X}Y)).
\]
This shows that $\cF$ is a dividing Zariski sheaf.
\end{proof}

The dividing sheafification and dividing Zariski sheafification admit explicit descriptions as follows. 

\begin{proposition}
\label{div.10}
Let $\cF$ be a presheaf on $\lFan/B$.
For every $X\in \lFan/B$ admitting a fan chart $\Sigma$, there exists an isomorphism
\begin{equation}
a_{div}\cF(X)
\simeq
\colimit_{\Sigma'\to \Sigma}\cF(X\times_{\T_\Sigma}\T_{\Sigma'}),
\end{equation}
where the colimit runs over the category of the subdivisions of $\Sigma$.
\end{proposition}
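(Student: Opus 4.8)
The plan is to realize the dividing sheafification as a filtered colimit over dividing covers and then to use Corollary \ref{equiv.34} to reindex this colimit by subdivisions. The decisive structural input is that every dividing cover is a monomorphism (Definition \ref{mono.1}(1)): for a dividing cover $W\to X$ the diagonal $W\to W\times_X W$ is an isomorphism, so its \v{C}ech nerve is constant and the sheaf condition along $W\to X$ collapses to the single requirement that $\cF(X)\to\cF(W)$ be an isomorphism.

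First I would define, for every $X\in\lFan/B$,
\[
G(X):=\colimit_{W\to X}\cF(W),
\]
the colimit over all dividing covers $W\to X$, with transition maps given by refinement. Because dividing covers are stable under pullback and composition, any two of them are refined by the common refinement $W_1\times_X W_2\to X$; and because they are monomorphisms, there is at most one $X$-morphism between any two dividing covers. Hence this colimit is filtered, and the construction is functorial in $X$ by pulling back covers, so $G$ is a presheaf that does not depend on any choice of fan chart. If $X$ admits a fan chart $\Sigma$, then by Corollary \ref{equiv.34} every dividing cover of $X$ is refined by a subdivision cover $X\times_{\T_\Sigma}\T_{\Sigma'}\to X$, so these subdivision covers are cofinal; since the subdivisions of $\Sigma$ themselves form a filtered category (any two have a common refinement, and subdivision morphisms over $\Sigma$ are unique), we may reindex to obtain
\[
G(X)\simeq\colimit_{\Sigma'\to\Sigma}\cF(X\times_{\T_\Sigma}\T_{\Sigma'}),
\]
which is the right-hand side of the claimed formula.

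Next I would show that $G$ is a dividing sheaf and then identify it with $a_{div}\cF$. Since the topology is generated by dividing covers and these are monomorphisms, it suffices to check that $G(X)\to G(W)$ is an isomorphism for every dividing cover $W\to X$. Here the monomorphism property is used twice: for a dividing cover $V\to W$ one has $V\times_X W\simeq V\times_W(W\times_X W)\simeq V$, while conversely every dividing cover $Y\to X$ is refined by $Y\times_X W\to X$, which arises from the dividing cover $Y\times_X W\to W$ by composition with $W\to X$. These two observations exhibit the pullback and the composition functors between dividing covers of $X$ and of $W$ as mutually cofinal, whence $G(X)\to G(W)$ is an isomorphism. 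Finally, the canonical map $\iota\colon\cF\to G$ coming from the identity cover exhibits $G$ as the dividing sheafification: for any dividing sheaf $\mathcal H$ and any $\phi\colon\cF\to\mathcal H$, the sheaf condition gives $\mathcal H(W)\xleftarrow{\simeq}\mathcal H(X)$ for every dividing cover, so the maps $\cF(W)\to\mathcal H(W)\xrightarrow{\simeq}\mathcal H(X)$ assemble into a unique $\overline\phi\colon G\to\mathcal H$ with $\overline\phi\circ\iota=\phi$. By the universal property of sheafification, $G\simeq a_{div}\cF$, and the displayed isomorphism proves the proposition.

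The step I expect to be the main obstacle is verifying that $G$ is a sheaf, more precisely pinning down the mutual cofinality of pullback and composition of dividing covers and checking that the two composites of $G(X)\rightleftarrows G(W)$ are the identity; all of this rests on the identity $V\times_X W\simeq V$ for monomorphisms $W\to X$, together with careful bookkeeping of the refinement maps inside the filtered colimit, which is routine once the monomorphism reductions are in place. I would also remark that an object of $\lFan/B$ is a finite disjoint union of schemes admitting fan charts, and that both sides of the formula convert these coproducts into products; this reduces the general case to the case of a single fan chart treated above.
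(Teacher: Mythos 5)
Your proof is correct and follows essentially the same route as the paper: the paper applies the SGA4 functor $L_{div}$ once, uses Corollary \ref{equiv.34} together with the fact that dividing covers are monomorphisms to collapse the \v{C}ech equalizer and reindex by subdivisions, and then checks via Proposition \ref{equiv.23} that the result is already a dividing sheaf. The only difference is cosmetic: you build the filtered colimit over dividing covers by hand and verify the universal property of sheafification directly, where the paper cites the $L$-construction.
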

\begin{proof}
Let $L_{div}\cF$ be the separated presheaf associated with $\cF$ for the dividing topology, which is defined using \cite[Section II.3.0.5]{SGA4}.
For a subdivision $\Sigma'$ of $\Sigma$, we set $X_{\Sigma'}:=X\times_{\T_\Sigma}\T_{\Sigma'}$.
By Corollary \ref{equiv.34}, we have isomorphisms
\begin{equation}
\label{div.10.1}
L_{div}\cF(X)
\simeq
\colimit_{\Sigma'\to \Sigma}\Eq(\cF(X_{\Sigma'})\rightrightarrows \cF(X_{\Sigma'}\times_X X_{\Sigma'}))
\simeq
\colimit_{\Sigma'\to \Sigma}\cF(X_{\Sigma'}).
\end{equation}

Suppose that $Y\to X$ is a dividing cover in $\lFan/B$.
Use Proposition \ref{equiv.23} and the above description \eqref{div.10.1} to obtain an isomorphism 
\[
L_{div}\cF(X)
\simeq
L_{div}\cF(Y).
\]
This means that $L_{div}\cF$ is already a dividing sheaf, i.e., $L_{div}\cF\simeq a_{div}\cF$.
\end{proof}

For a fan $\Sigma$, the category of subdivisions of $\Sigma$ is a filtered category.
Hence the colimit in \eqref{div.10.1} is filtered.

\begin{proposition}
\label{div.3}
Let $\cF$ be a Zariski sheaf on $\lFan/B$.
For every $X\in \lFan/B$ admitting a fan chart $\Sigma$, there exists an isomorphism
\begin{equation}
\label{div.3.1}
a_{dZar}\cF(X)
\simeq
\colimit_{\Sigma'\to \Sigma}\cF(X\times_{\T_\Sigma}\T_{\Sigma'}),
\end{equation}
where the colimit runs over the category of the subdivisions of $\Sigma$.
\end{proposition}
\begin{proof}
Suppose that $X'\to X$ is a Zariski cover.
For any subdivision $\Sigma'$ of $\Sigma$, we set $X_{\Sigma'}:=X\times_{\T_{\Sigma}}\T_{\Sigma'}$ and $X'_{\Sigma'}:=X'\times_X X_{\Sigma'}$.
We obtain an isomorphism
\[
\cF(X_{\Sigma'})
\xrightarrow{\simeq}
\Eq(\cF(X_{\Sigma'}')\rightrightarrows \cF(X_{\Sigma'}'\times_{X_{\Sigma'}}X_{\Sigma'}')).
\]
Since any filtered colimits commute with finite limits, we obtain
\[
\colimit_{\Sigma'\to \Sigma}\cF(X_{\Sigma'})
\xrightarrow{\simeq}
\Eq(\colimit_{\Sigma'\to \Sigma}\cF(X_{\Sigma'}')\rightrightarrows \colimit_{\Sigma'\to \Sigma}\cF(X_{\Sigma'}'\times_{X_{\Sigma'}}X_{\Sigma'}')).
\]
This shows that the right-hand side of \eqref{div.3.1} is a Zariski sheaf.
Together with Proposition \ref{div.8}, we finish the proof.
\end{proof}

The following result enables us to work with $\lFan/B$ instead of $\lSch/B$ in many situations.

\begin{proposition}
\label{div.9}
Let $t$ be the topology on $\lSch/B$ finer than the Zariski topology.
Then the inclusion functor $\lFan/B \to \lSch/B$ induces an equivalence
\[
\Shv_t(\lFan/B)
\simeq
\Shv_t(\lSch/B).
\]
\end{proposition}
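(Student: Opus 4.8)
The plan is to identify $\lFan/B$, equipped with the restriction of $t$, as a dense subsite of $\lSch/B$ and then invoke the comparison lemma for sites \cite[III, Th\'eor\`eme 4.1]{SGA4}. The inclusion functor $i\colon \lFan/B\to \lSch/B$ is fully faithful because $\lFan/B$ is a full subcategory, and it is continuous once we give $\lFan/B$ the topology induced from $t$. Granting density, the comparison lemma yields that the restriction functor $i^*\colon \Shv_t(\lSch/B)\to \Shv_t(\lFan/B)$ is an equivalence, with quasi-inverse obtained by sheafifying the left Kan extension along $i$. Thus the entire content is to verify the two density conditions: (i) every object of $\lSch/B$ admits a $t$-covering by objects of $\lFan/B$, and (ii) every $t$-covering of an object of $\lFan/B$ can be refined by a $t$-covering whose sources lie in $\lFan/B$.

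For condition (i), let $X\in \lSch/B$. By the last assertion of Definition \ref{lFan.1}, $X$ admits a Zariski cover $Y\to X$ with $Y\in \lFan/B$. Since $t$ is finer than the Zariski topology by hypothesis, this Zariski cover is in particular a $t$-covering, which gives (i).

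For condition (ii), let $Y\in \lFan/B$ and let $\{U_i\to Y\}_{i}$ be a $t$-covering in $\lSch/B$. Applying Definition \ref{lFan.1} to each $U_i$, I would choose a Zariski cover $V_{ij}\to U_i$ with $V_{ij}\in \lFan/B$; as above, these are $t$-coverings. The composite family $\{V_{ij}\to Y\}_{i,j}$ is then a $t$-covering, being a composition of $t$-coverings; it refines $\{U_i\to Y\}$, and all its sources lie in $\lFan/B$. This establishes (ii) and completes the verification of density.

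The argument is essentially formal, and the only point that requires care is the compatibility of topologies: one must check that the topology induced on $\lFan/B$ by $t$ agrees with the topology used to define $\Shv_t(\lFan/B)$, and that the hypothesis that $t$ refines the Zariski topology is precisely what converts the fan-chart Zariski covers of Definition \ref{lFan.1} into $t$-coverings in both density conditions. I expect no genuine obstacle beyond this bookkeeping, since $\lFan/B$ is closed under finite disjoint unions and every object is Zariski-locally a fan-chart scheme, so refinements by objects of $\lFan/B$ are always available.
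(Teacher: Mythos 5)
Your proposal is correct and follows essentially the same route as the paper: the paper also reduces the statement to the comparison lemma \cite[Th\'eor\`eme III.4.1]{SGA4} (specifically the implication (i)$\Rightarrow$(ii)), with the only substantive input being that every $X\in \lSch/B$ admits a Zariski cover, hence a $t$-cover, by an object of $\lFan/B$. Your additional verification of the refinement condition for coverings of objects of $\lFan/B$ is just a more explicit unpacking of the hypotheses of that theorem.
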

\begin{proof}
For every $X\in \lSch/B$, there exists a Zariski cover $Y\to X$ such that $Y\in \lFan/B$.
The implication (i)$\Rightarrow$(ii) in \cite[Th\'eor\`eme 4.1]{SGA4} finishes the proof.
\end{proof}

\begin{definition}
\label{div.4}
For $X\in \lSch/B$, let $h_X$ be the dividing Zariski sheafification of the presheaf on $\lFan/B$ represented by $X$.
\end{definition}

If $f\colon Y\to X$ is a dividing cover in $\lFan/B$, then the induced morphism $h_f\colon h_Y\to h_X$ is an isomorphism.
Hence the Yoneda functor
\begin{equation}
\label{div.0.1}
h\colon \lSch/B \to \Shv_{dZar}(\lSch/B)
\simeq
\Shv_{dZar}(\lFan/B)
\end{equation}
is \emph{not} conservative if $B$ is nonempty.
In particular, $h$ is not fully faithful, and the dividing Zariski topology is not subcanonical.

\begin{proposition}
\label{div.5}
Suppose $X\in \lSch/B$ and $Y\in \lFan/B$.
If $Y$ admits a fan chart $\Sigma$, then there exists a canonical isomorphism
\[
h_X(Y)
\simeq
\colimit_{\Sigma'\to \Sigma}\Hom_{\lSch/B}(Y\times_{\T_\Sigma}\T_{\Sigma'},X),
\]
where the colimit runs over the category of subdivisions of $\Sigma$.
\end{proposition}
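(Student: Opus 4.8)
The plan is to realize $h_X$ as the dividing Zariski sheafification of the presheaf represented by $X$, and then to read off the colimit formula from the explicit description of $a_{dZar}$ in Proposition \ref{div.3}. First I would write $\cF$ for the presheaf on $\lFan/B$ defined by $\cF(Y')=\Hom_{\lSch/B}(Y',X)$. By Definition \ref{div.4} we have $h_X=a_{dZar}\cF$, so the task reduces to computing $a_{dZar}\cF(Y)$.

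The key preliminary step is to verify that $\cF$ is already a Zariski sheaf on $\lFan/B$. Since the Zariski topology on $\lSch/B$ is subcanonical — morphisms of fs log schemes glue along open covers — the presheaf $\Hom_{\lSch/B}(-,X)$ is a Zariski sheaf on $\lSch/B$. For a Zariski covering family $\{Y_i\to Y'\}_{i\in I}$ in $\lFan/B$, each fiber product $Y_i\times_{Y'}Y_j$ is an open subscheme of $Y'$, hence again lies in $\lFan/B$ (a fan chart of $Y'$ restricts to one on any open subscheme, strictness being preserved under restriction to opens) and agrees with the fiber product formed in $\lSch/B$. Therefore the descent condition for $\cF$ over $\lFan/B$ coincides with the one over $\lSch/B$, and $\cF$ is a Zariski sheaf.

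With this in hand, Proposition \ref{div.3} applies directly to $\cF$ and to $Y$ equipped with its fan chart $\Sigma$. For each subdivision $\Sigma'$ of $\Sigma$, the projection $Y\times_{\T_\Sigma}\T_{\Sigma'}\to \T_{\Sigma'}$ is strict, so $Y\times_{\T_\Sigma}\T_{\Sigma'}$ carries the fan chart $\Sigma'$ and lies in $\lFan/B$; in particular $\cF$ can be evaluated on it. The cited proposition then yields
\[
h_X(Y)=a_{dZar}\cF(Y)\simeq \colimit_{\Sigma'\to \Sigma}\cF(Y\times_{\T_\Sigma}\T_{\Sigma'})=\colimit_{\Sigma'\to \Sigma}\Hom_{\lSch/B}(Y\times_{\T_\Sigma}\T_{\Sigma'},X),
\]
which is the asserted isomorphism, and tracing through the construction shows it is canonical.

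I expect the only genuine point requiring care to be the verification that $\cF$ is a Zariski sheaf, and within that the observation that Zariski covers and their associated fiber products stay inside $\lFan/B$, so that passing from $\lSch/B$ to the subcategory $\lFan/B$ leaves the sheaf condition undisturbed. Everything else is a direct application of Proposition \ref{div.3}, whose filtered colimit over subdivisions already encodes the dividing direction of the sheafification.
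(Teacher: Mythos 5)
Your proof is correct and takes essentially the same route as the paper: the paper's one-line argument applies Proposition \ref{div.3} to the Zariski sheaf $\Hom_{\lSch/B}(-,X)$ restricted to $\lFan/B$, whose dividing Zariski sheafification is $h_X$ by Definition \ref{div.4}. You have merely spelled out the verification (left implicit in the paper) that this restricted presheaf is a Zariski sheaf and that the relevant fiber products remain in $\lFan/B$.
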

\begin{proof}
Since the presheaf represented by $X$ is a Zariski sheaf, we can use Proposition \ref{div.3} for $h_X$.
\end{proof}

\begin{proposition}
\label{div.13}
The Yoneda functor $h\colon \lSch/B \to \Shv_{dZar}(\lFan/B)$ preserves finite limits.
\end{proposition}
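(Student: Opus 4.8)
The plan is to verify that $h$ preserves finite limits by evaluating on a generating family and invoking the explicit formula of Proposition \ref{div.5}. The forgetful functor from dividing Zariski sheaves to presheaves is a right adjoint, so finite limits of sheaves are computed objectwise. Thus for a finite diagram $(X_i)_i$ in $\lSch/B$ with limit $X=\lim_i X_i$ taken in $\lSch/B$, and for $Y\in\lFan/B$, one has $(\lim_i h_{X_i})(Y)\simeq \lim_i h_{X_i}(Y)$. To compare this with $h_X(Y)$ it suffices to treat those $Y$ admitting a fan chart $\Sigma$: every object of $\lFan/B$ is a finite disjoint union of such objects, and both $h_X$ and $\lim_i h_{X_i}$ are Zariski sheaves, hence send finite disjoint unions to products.

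So I would fix $Y$ with fan chart $\Sigma$ and write $Y_{\Sigma'}:=Y\times_{\T_\Sigma}\T_{\Sigma'}$ for a subdivision $\Sigma'$ of $\Sigma$; note that $Y_{\Sigma'}\in\lFan/B$ via the strict morphism $Y_{\Sigma'}\to\T_{\Sigma'}$. Proposition \ref{div.5} gives $h_{X_i}(Y)\simeq \colimit_{\Sigma'\to\Sigma}\Hom_{\lSch/B}(Y_{\Sigma'},X_i)$. Since the category of subdivisions of $\Sigma$ is filtered and finite limits commute with filtered colimits in $\mathrm{Set}$, the computation runs
\[
\lim_i h_{X_i}(Y)
\simeq
\lim_i\colimit_{\Sigma'\to\Sigma}\Hom_{\lSch/B}(Y_{\Sigma'},X_i)
\simeq
\colimit_{\Sigma'\to\Sigma}\lim_i\Hom_{\lSch/B}(Y_{\Sigma'},X_i).
\]
The functor $\Hom_{\lSch/B}(Y_{\Sigma'},-)$ preserves limits, so $\lim_i\Hom_{\lSch/B}(Y_{\Sigma'},X_i)\simeq \Hom_{\lSch/B}(Y_{\Sigma'},X)$, and a final application of Proposition \ref{div.5} identifies $\colimit_{\Sigma'\to\Sigma}\Hom_{\lSch/B}(Y_{\Sigma'},X)$ with $h_X(Y)$. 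Chaining these natural isomorphisms yields $\lim_i h_{X_i}(Y)\simeq h_X(Y)$, compatibly in $Y$, which is the claim. The empty-diagram case recovers that $h_B$ is the terminal sheaf, since $\Hom_{\lSch/B}(Y_{\Sigma'},B)$ is a singleton.

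The step requiring the most care is the interchange of the finite limit with the filtered colimit over subdivisions; this rests on the filteredness recorded after Proposition \ref{div.10} together with the commutation of finite limits with filtered colimits. The remaining ingredients are formal: that limits of sheaves are computed pointwise, that $Y_{\Sigma'}$ again lies in $\lFan/B$ because strictness is stable under base change, and that the finite limit $X$ computed in $\lSch/B$ corepresents $\lim_i\Hom_{\lSch/B}(Y_{\Sigma'},X_i)$ by the universal property of the limit. I expect no serious obstacle beyond this bookkeeping, the one genuine input being Proposition \ref{div.5} combined with the filteredness of the diagram of subdivisions.
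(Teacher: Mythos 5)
Your proposal is correct and is precisely the argument the paper intends: its proof reads in full ``Follows from Proposition \ref{div.5} since filtered colimits commute with finite limits,'' and your write-up is a careful elaboration of exactly that, with the same two inputs (the colimit formula of Proposition \ref{div.5} over the filtered category of subdivisions, and the commutation of finite limits with filtered colimits). The additional bookkeeping you supply --- objectwise computation of limits of sheaves, reduction to $Y$ with a fan chart, and corepresentability of the limit --- is all sound.
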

\begin{proof}
Follows from Proposition \ref{div.5} since filtered colimits commute with finite limits.
\end{proof}

\section{Representable morphisms of sheaves}\label{rep}

The purpose of this section is to deal with properties of morphisms of sheaves, which is needed later to define divided log spaces.

\begin{definition}
We say that $\cF\in \Shv_{dZar}(\lFan/B)$ is \emph{represented by $X\in \lFan/B$} if $\cF\simeq h_X$.
We say that a morphism $\cG\to \cF$ in $\Shv_{dZar}(\lFan/B)$ is \emph{represented by a morphism $f\colon Y\to X$ in $\lFan/B$} if there exists a commutative square with vertical isomorphisms
\[
\begin{tikzcd}
h_Y\ar[d,"\simeq"']\ar[r,"h_f"]&
h_X\ar[d,"\simeq"]
\\
\cG\ar[r]&
\cF.
\end{tikzcd}
\]
\end{definition}

\begin{definition}
\label{equiv.6}
Let $u\colon \cG\to \cF$ be a morphism in $\Shv_{dZar}(\lFan/B)$.
We say that $u$ is \emph{$\lFan/B$-representable} (or simply \emph{representable}) if for every morphism $h_X\to \cF$ with $X\in \lFan/B$,
there exists $Y\in \lFan/B$ such that $h_X\times_{\cF}\cG\simeq h_Y$.

Suppose that $\cP$ is a class of morphisms in $\lFan/B$.
We say that $u$ is a \emph{representable $\cP$-morphism} if for every morphism $h_X\to \cF$ with $X\in \lFan/B$, there exists a commutative square
\begin{equation}
\label{equiv.6.1}
\begin{tikzcd}
h_{V}\ar[d,"\simeq"']\ar[r,"h_{g}"]&
h_{U}\ar[d,"\simeq"]
\\
h_X\times_{\cF}\cG\ar[r,"p"]&
h_X
\end{tikzcd}
\end{equation}
with vertical isomorphisms such that $p$ is the projection and $g$ is a $\cP$-morphism,
i.e., $g$ is a morphism in $\cP$.
\end{definition}

Observe that every representable $\cP$-morphism is representable.
If $\cP$ is closed under pullbacks, then the class of representable $\cP$-morphisms in $\Shv_{dZar}(\lFan/B)$ is closed under pullbacks too.

\begin{proposition}
\label{equiv.30}
Suppose that $\cP$ is the class of isomorphisms in $\lFan/B$.
If $f\colon \cG\to \cF$ is a representable $\cP$-morphism in $\Shv_{dZar}(\lFan/B)$, then $f$ is an isomorphism.
\end{proposition}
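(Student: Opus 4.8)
The plan is to prove that $f$ is an isomorphism of sheaves by checking that it induces a bijection on sections over every object of $\lFan/B$. Recall that since $h_X$ is by definition the dividing Zariski sheafification of the presheaf represented by $X$, the adjunction between sheafification and the inclusion of sheaves into presheaves, combined with the ordinary Yoneda lemma, gives a natural bijection $\cF(X)\simeq \Hom_{\Shv_{dZar}(\lFan/B)}(h_X,\cF)$, and likewise for $\cG$; under this identification the map $f_X\colon \cG(X)\to \cF(X)$ is post-composition with $f$. A morphism of sheaves is an isomorphism precisely when it is a bijection on all such sections, so it suffices to fix $X\in\lFan/B$ and show that $f_X$ is bijective.

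First I would unwind the hypothesis. Taking $\cP$ to be the class of isomorphisms, the defining square \eqref{equiv.6.1} for a representable $\cP$-morphism exhibits the projection $p\colon h_X\times_{\cF}\cG\to h_X$ as the composite of two vertical isomorphisms with $h_g$, where $g$ is an isomorphism in $\lFan/B$. Since the functor $h$ of \eqref{div.0.1} carries isomorphisms to isomorphisms, $h_g$ and hence $p$ is an isomorphism. Thus the hypothesis says exactly that for every morphism $h_X\to\cF$ the base-changed projection $p$ is an isomorphism.

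Next I would fix a section $s\in\cF(X)$, regard it as a morphism $s\colon h_X\to\cF$, and form the pullback $P:=h_X\times_{\cF}\cG$ with projection $p\colon P\to h_X$. By the universal property of the pullback, a section of $p$ (a morphism $h_X\to P$ over $h_X$) is the same datum as a morphism $t\colon h_X\to\cG$ with $f\circ t=s$; that is, the set of sections of $p$ is naturally in bijection with the fiber $f_X^{-1}(s)\subseteq\cG(X)$. Since $p$ is an isomorphism, it admits a unique section, so $f_X^{-1}(s)$ is a singleton. As $s$ was arbitrary, $f_X$ is bijective, and as $X$ was arbitrary, $f$ is an isomorphism.

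I expect the only delicate point to be the bookkeeping of the first paragraph: one must be sure the identification $\cF(X)\simeq\Hom(h_X,\cF)$ is legitimate even though the dividing Zariski topology is not subcanonical. This causes no trouble, because the identification uses only the sheafification adjunction and the Yoneda lemma for presheaves, neither of which needs subcanonicity; non-subcanonicity would only obstruct computing $\Hom(h_Y,h_X)$ in terms of morphisms of fs log schemes, which plays no role here. A more structural alternative would write $\cF$ as a colimit of representable sheaves and invoke universality of colimits in the topos $\Shv_{dZar}(\lFan/B)$ to reduce $f$ to its base changes along the maps $h_X\to\cF$, but the pointwise argument above is more self-contained and avoids appealing to the Giraud axioms.
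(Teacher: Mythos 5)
Your proof is correct and follows essentially the same route as the paper's: the paper also observes that for every $h_X\to\cF$ the base-changed projection $h_X\times_{\cF}\cG\to h_X$ is an isomorphism (drawn there as a cartesian square with $\id$ on top) and concludes $\cG(X)\simeq\cF(X)$. You have merely filled in the details the paper leaves implicit, including the correct justification of $\cF(X)\simeq\Hom(h_X,\cF)$ via the sheafification adjunction despite the topology not being subcanonical.
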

\begin{proof}
For every morphism $h_X\to \cF$ with $X\in \lFan/B$, we have a cartesian square
\[
\begin{tikzcd}
h_X\ar[r,"\id"]\ar[d]&
h_X\ar[d]
\\
\cG\ar[r,"f"]&
\cF.
\end{tikzcd}
\]
This means $\cG(X)\simeq \cF(X)$.
\end{proof}

The next five lemmas deal with the structure of representable morphisms.

\begin{lemma}
\label{equiv.24}
Let $f\colon h_Y\to h_X$ be a morphism in $\Shv_{dZar}(\lFan/B)$, where $X,Y\in \lFan/B$.
Then there exists a dividing cover $v\colon V\to Y$ and a morphism $g\colon V\to X$ in $\lFan/B$ such that $fh_v=h_g$.
Furthermore, $f$ is representable.
\end{lemma}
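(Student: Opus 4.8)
The plan is to establish the explicit form of $f$ first, and then deduce representability from it by a fiber product computation.

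\emph{The morphism $g$.} Since $h_Y$ is the dividing Zariski sheafification of the presheaf represented by $Y$, the Yoneda lemma identifies $f$ with an element of $h_X(Y)$. Decomposing $Y$ into its fan-chart components, I may assume $Y$ carries a single fan chart $\Sigma$, so Proposition~\ref{div.5} gives $h_X(Y)\simeq\colimit_{\Sigma'\to\Sigma}\Hom_{\lSch/B}(Y\times_{\T_\Sigma}\T_{\Sigma'},X)$. Thus $f$ is represented, at some finite stage of this filtered colimit, by a morphism $g\colon V\to X$ with $V:=Y\times_{\T_\Sigma}\T_{\Sigma'}$. The projection $v\colon V\to Y$ is the base change of the subdivision $\T_{\Sigma'}\to\T_\Sigma$, hence a dividing cover, and $V\to\T_{\Sigma'}$ is strict, so $V\in\lFan/B$. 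Unwinding the colimit identification shows that the restriction of $f$ along $v$ is precisely $h_g$, that is $fh_v=h_g$; this is the required first statement, and is a matter of bookkeeping.

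\emph{Reduction of representability.} To verify the condition of Definition~\ref{equiv.6}, let $h_Z\to h_X$ be arbitrary with $Z\in\lFan/B$; since $\lFan/B$ is closed under finite disjoint unions and fiber products distribute over them, I may assume $Z$ has a single fan chart. Applying the first statement to $h_Z\to h_X$ produces a dividing cover $Z'\to Z$ and a morphism $e\colon Z'\to X$ realizing the map as $h_e$. As $h$ inverts dividing covers (the discussion following Definition~\ref{div.4}) and preserves finite limits (Proposition~\ref{div.13}), we obtain
\[
h_Z\times_{h_X}h_Y\simeq h_{Z'}\times_{h_X}h_V\simeq h_{Z'\times_X V},
\]
where $Z'\times_X V$ is the fiber product taken in $\lSch/B$. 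It therefore suffices to represent $h_{Z'\times_X V}$ by an object of $\lFan/B$.

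\emph{Straightening via the diagonal.} Here I would use that the induced morphism $j\colon Z'\times_X V\to N:=Z'\times_{\Spec\Z}V$ is the base change of the diagonal $X\to X\times_{\Spec\Z}X$, hence a quasi-compact monomorphism (diagonals are monomorphisms, and the source is noetherian). The product of the fan charts of $Z'$ and $V$ equips $N$ with the fan chart $\Sigma_N:=\Sigma_{Z'}\times\Sigma_V$, so Proposition~\ref{equiv.23} furnishes a subdivision $\Sigma_N'$ of $\Sigma_N$ for which $W:=(Z'\times_X V)\times_{\T_{\Sigma_N}}\T_{\Sigma_N'}$ maps by a strict monomorphism to $N\times_{\T_{\Sigma_N}}\T_{\Sigma_N'}$. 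Being strict over an object that carries a fan chart, $W$ inherits the fan chart $\Sigma_N'$, and $W$ lies over $B$, so $W\in\lFan/B$. Finally $W\to Z'\times_X V$ is the base change of the subdivision $\T_{\Sigma_N'}\to\T_{\Sigma_N}$, whence
\[
h_W\simeq h_{Z'\times_X V}\times_{h_{\T_{\Sigma_N}}}h_{\T_{\Sigma_N'}}\simeq h_{Z'\times_X V}
\]
by Proposition~\ref{div.13} and the fact that $h$ inverts the dividing cover $\T_{\Sigma_N'}\to\T_{\Sigma_N}$ of objects of $\lFan/B$. This identifies $h_Z\times_{h_X}h_Y$ with $h_W$ and proves representability.

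I expect the delicate point to be the straightening step: one must check that the base-changed diagonal $j$ really is a quasi-compact monomorphism into an object carrying a global fan chart, so that Proposition~\ref{equiv.23} applies and converts the \emph{non-strict} embedding created by the diagonal --- exactly the phenomenon flagged in the introduction --- into a strict monomorphism after a single subdivision. Once $j$ is in place, everything else is formal manipulation with the sheaf $h$ and its two governing properties, namely inverting dividing covers and preserving finite limits.
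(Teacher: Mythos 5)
Your proof is correct and, for its core, follows the same route as the paper: the first claim is read off from the colimit description of $h_X(Y)$ in Proposition~\ref{div.5}, and representability is reduced via that claim to identifying $h_{Z'}\times_{h_X}h_V$ with $h_{Z'\times_X V}$ using Proposition~\ref{div.13}. The one genuine difference is your final ``straightening via the diagonal'' step. The paper's proof stops at $h_{V}\times_{h_X}h_{X'}\simeq h_{V\times_X X'}$ and treats this as establishing representability, leaving implicit why the fs fiber product $V\times_X X'$ --- which is a priori only an object of $\lSch/B$, since the non-strict diagonal of $X$ can destroy the existence of a global fan chart --- yields a sheaf represented by an object of $\lFan/B$ as Definition~\ref{equiv.6} requires. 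Your argument supplies exactly the missing justification: the base-changed diagonal $Z'\times_X V\to Z'\times_{\Spec\Z}V$ is a quasi-compact monomorphism into something carrying the product fan chart, so Proposition~\ref{equiv.23} produces a subdivision after which the pullback is strict, hence lies in $\lFan/B$, and the pullback is a dividing cover, hence induces an isomorphism on $h$. This is the same device the paper itself uses later (in the proof of Proposition~\ref{equiv.9}) to handle the diagonal, so your version is if anything more complete than the printed proof; all the individual verifications (strictness of the product fan chart, quasi-compactness of the monomorphism, $W$ lying over $B$) check out.
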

\begin{proof}
The first claim is a consequence of Proposition \ref{div.5}.
To show that $f$ is representable, using the first claim, we only need to show that $h_{V}\times_{h_X} h_{X'}$ is representable for every morphism $X'\to X$ in $\lFan/B$.
This holds since $h_{V}\times_{h_X}h_{X'}\simeq h_{V\times_X X'}$ by Proposition \ref{div.13}.
\end{proof}

\begin{lemma}
\label{equiv.26}
Suppose that $\cP$ is a class of morphisms in $\lFan/B$ closed under pullbacks.
Let $f\colon h_Y\to h_X$ be a representable $\cP$-morphism in $\Shv_{dZar}(\lFan/B)$, where $X,Y\in \lFan/B$.
Then there exists a commutative square
\begin{equation}
\begin{tikzcd}
h_{V}\ar[d,"\simeq"']\ar[r,"h_{g}"]&
h_{U}\ar[d,"h_{u}"',"\simeq"]
\\
h_{Y}\ar[r,"f"]&
h_X
\end{tikzcd}
\end{equation}
with vertical isomorphisms such that $g$ is a $\cP$-morphism and $u$ is a dividing cover in $\lFan/B$.
\end{lemma}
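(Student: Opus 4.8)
The plan is to first reduce the abstract statement to a concrete square by testing representability against the identity, and then to replace the abstract sheaf isomorphisms appearing there by maps that come from honest dividing covers. First I would apply Definition \ref{equiv.6} to the morphism $\id\colon h_X\to h_X$. Since $h_X\times_{h_X}h_Y\simeq h_Y$, with the projection identified with $f$, this produces isomorphisms $\alpha\colon h_{V_0}\xrightarrow{\simeq}h_Y$ and $\beta\colon h_{U_0}\xrightarrow{\simeq}h_X$ together with a $\cP$-morphism $g_0\colon V_0\to U_0$ satisfying $f\alpha=\beta h_{g_0}$. At this stage the right vertical is only an abstract isomorphism of sheaves $\beta$, not yet of the form $h_u$.

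The crucial step is to realize $\beta$ through a dividing cover. Rather than applying Lemma \ref{equiv.24} to $\beta$, I would apply it to the inverse $\beta^{-1}\colon h_X\to h_{U_0}$; this yields a dividing cover $u\colon U\to X$ and a morphism $t\colon U\to U_0$ with $\beta^{-1}h_u=h_t$, hence $h_u=\beta h_t$. The payoff of this choice is that $u$ is produced as a genuine dividing cover, so the right vertical $h_u$ is automatically of the required form; I thereby avoid having to argue the converse, namely that a morphism with $h_u$ invertible must itself be a dividing cover. Moreover $h_t=\beta^{-1}h_u$ is invertible, being a composite of isomorphisms.

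Finally I would form the fiber product $V:=V_0\times_{U_0}U$ along $g_0$ and $t$, with projections $\pi\colon V\to V_0$ and $g\colon V\to U$. Since $\cP$ is closed under pullbacks and $g$ is the base change of $g_0$ along $t$, the morphism $g$ is a $\cP$-morphism, giving the top edge. Using that $h$ preserves finite limits (Proposition \ref{div.13}), $h_V\simeq h_{V_0}\times_{h_{U_0}}h_U$; as $h_t$ is invertible, so is its base change $h_\pi$, whence $\alpha h_\pi\colon h_V\to h_Y$ is the desired left isomorphism. Commutativity of the resulting square follows formally from $tg=g_0\pi$ together with the relations $f\alpha=\beta h_{g_0}$ and $h_u=\beta h_t$, which chain to $h_u h_g=\beta h_{g_0}h_\pi=f\alpha h_\pi$.

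I expect the only delicate points to be bookkeeping rather than conceptual: verifying that the fiber product $V$ again lies in $\lFan/B$, and checking that the various identifications from Definition \ref{equiv.6} and Lemma \ref{equiv.24} are mutually compatible. The one genuinely structural difficulty, namely exhibiting a dividing cover for the right vertical, is sidestepped entirely by the device of applying Lemma \ref{equiv.24} to $\beta^{-1}$ instead of $\beta$, so that the dividing cover is produced on the side of $X$ exactly where it is needed.
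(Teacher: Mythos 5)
Your proposal is correct and follows essentially the same route as the paper: the paper likewise extracts the initial square from Definition \ref{equiv.6} applied at $\id\colon h_X\to h_X$, then applies Lemma \ref{equiv.24} to the inverse isomorphism $h_X\xrightarrow{\simeq} h_{X'}$ so that the dividing cover lands over $X$, and finally takes the fiber product $V:=Y'\times_{X'}U$. The "device" you highlight of applying Lemma \ref{equiv.24} to $\beta^{-1}$ rather than $\beta$ is exactly the paper's move, so there is nothing to add beyond the routine verifications you already flag.
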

\begin{proof}
From \eqref{equiv.6.1}, we have a commutative square
\[
\begin{tikzcd}
h_{Y'}\ar[r,"h_{f'}"]\ar[d,"\simeq"']&
h_{X'}\ar[d,"\simeq"]
\\
h_Y\ar[r,"f"]&
h_X
\end{tikzcd}
\]
with vertical isomorphisms such that $f'$ is a $\cP$-morphism in $\lFan/B$.
Apply Lemma \ref{equiv.24} to $h_X\xrightarrow{\simeq} h_{X'}$ to obtain a commutative triangle
\[
\begin{tikzcd}
h_{X'}\ar[d,"\simeq"']\ar[r,leftarrow,"h_{u'}"]&
h_U\ar[ld,"h_u","\simeq"']
\\
h_X
\end{tikzcd}
\]
such that $u$ is a dividing cover and $u'$ is a morphism in $\lFan/B$.
Take $V:=Y'\times_{X'} U$ to conclude.
\end{proof}

\begin{lemma}
\label{equiv.25}
Let $f\colon Y\to X$ be a morphism in $\lFan/B$.
If $h_f$ is an isomorphism, then there exist dividing covers $u\colon V\to X$ and $v\colon V\to Y$ in $\lFan/B$ such that $fv=u$.
\end{lemma}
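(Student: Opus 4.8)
The plan is to avoid proving that $f$ itself is a monomorphism — which can fail, since any nontrivial $\T_\theta$ associated to a subdivision is a non-isomorphism with $h_{\T_\theta}$ an isomorphism — and instead to manufacture a monomorphism over $Y$ to which Proposition \ref{equiv.23} applies. First I would apply Lemma \ref{equiv.24} to $h_f^{-1}$ to obtain a dividing cover $w\colon W\to X$ and a morphism $k\colon W\to Y$ with $h_k=h_f^{-1}h_w$, which is an isomorphism; refining $W$ by a further dividing cover (using Proposition \ref{div.5} to upgrade the equality of sections $h_{fk}=h_w$ to an honest equality of morphisms), I may assume $fk=w$. I would also apply Lemma \ref{equiv.24} to $h_f$ to obtain a dividing cover $v\colon V\to Y$. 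Now form $Q:=V\times_Y W$, with structure morphism $\sigma\colon Q\to Y$ and projections $q_1\colon Q\to V$, $q_2\colon Q\to W$. Since $q_2$ is the pullback of the dividing cover $v$, it is a dividing cover, so $f\sigma=(fk)q_2=wq_2$ is a composite of dividing covers, hence a dividing cover. In particular $f\sigma$ is a monomorphism, and therefore so is $\sigma$. Moreover $h_\sigma=h_v\circ h_{q_1}$, and since $h$ preserves fiber products (Proposition \ref{div.13}) the map $h_{q_1}$ is the base change of the isomorphism $h_k$; thus $h_\sigma$ is an isomorphism.

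Next, because $\sigma$ is a monomorphism and $Y$ admits a fan chart $\Delta$ (arguing one component at a time), Proposition \ref{equiv.23} produces a subdivision $\Delta'$ of $\Delta$ such that, writing $Y':=Y\times_{\T_\Delta}\T_{\Delta'}$ and $Q':=Q\times_{\T_\Delta}\T_{\Delta'}$, the pullback $\sigma_{\Delta'}\colon Q'\to Y'$ is a strict monomorphism; its associated sheaf map $h_{\sigma_{\Delta'}}$ is again an isomorphism by base change. The preimage of $\id_{Y'}$ under $h_{\sigma_{\Delta'}}$ is an element of $h_{Q'}(Y')$, which by Proposition \ref{div.5} is the filtered colimit of $\Hom(Y'\times_{\T_{\Delta'}}\T_{\Delta''},Q')$ over subdivisions $\Delta''$ of $\Delta'$. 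Hence, after passing to a suitable $\Delta''$ and setting $Y'':=Y'\times_{\T_{\Delta'}}\T_{\Delta''}$, I obtain a morphism $\phi\colon Y''\to Q'$ such that $\sigma_{\Delta'}\circ\phi$ equals the projection $\pi\colon Y''\to Y'$, which is a dividing cover.

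The key technical input is the following cancellation statement: if $g$ is a monomorphism and $gs$ is a dividing cover, then $s$ is a dividing cover. I would prove this by checking the four defining properties of a dividing cover for $s$ separately — the monomorphism property since $gs$ is a monomorphism, properness by the usual cancellation for a separated outer map ($g$ is a monomorphism, hence separated), log \'etaleness by the cancellation for log smooth and log unramified morphisms (a monomorphism is log unramified, so the cotangent-complex sequence forces $s$ to be log \'etale), and universal surjectivity from the universal injectivity of $g$ together with the universal surjectivity of $gs$. Applying this with $g=\sigma_{\Delta'}$ and $s=\phi$ shows that $\phi$ is a dividing cover.

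Finally I would assemble the covers. Let $V_0:=Y''$ and let $\rho\colon V_0\to Q$ be $\phi$ followed by the dividing cover $Q'\to Q$, so $\rho$ is a dividing cover. Then $v':=\sigma\rho\colon V_0\to Y$ equals the composite of the dividing covers $Y''\xrightarrow{\pi}Y'\to Y$, hence is a dividing cover, and $u':=(f\sigma)\rho\colon V_0\to X$ is a composite of the dividing covers $f\sigma$ and $\rho$, hence a dividing cover; by construction $fv'=u'$, giving the desired covers. I expect the main obstacle to be the cancellation statement above, together with the clean observation that $f\sigma$ being a dividing cover forces the structure morphism $\sigma$ to be a monomorphism — this is precisely what makes Proposition \ref{equiv.23} applicable even though $f$ need not be a monomorphism itself.
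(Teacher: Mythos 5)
Your proof is correct, but its second half takes a genuinely different route from the paper's. The opening move is the same in both: apply the colimit description of $h_X$ (Proposition \ref{div.5}, via Lemma \ref{equiv.24}) to $h_f^{-1}$ to get a dividing cover $w\colon W\to X$ and a morphism $k\colon W\to Y$ with $fk=w$ after one more refinement, so that $k$ is a monomorphism with $h_k$ an isomorphism. (Your auxiliary cover $V\to Y$ and the fibre product $Q=V\times_Y W$ are superfluous: $Q=W$, $\sigma=k$ already has every property you use.) From there the paper applies Proposition \ref{div.5} a second time, to $h_k^{-1}\in h_{W}(Y)$, obtaining $q\colon Y'\to W$ with $kq$ a dividing cover, and finishes with a graph/projection argument on $W\times_Y Y'$; it never needs to certify that any single auxiliary map is itself a dividing cover. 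You instead strictify the monomorphism with Proposition \ref{equiv.23}, extract a section $\phi$ of the strict monomorphism up to a dividing cover, and then invoke a cancellation lemma to see that $\phi$ is a dividing cover. What your route buys is transparency (the final assembly of $u'$ and $v'$ is immediate); what it costs is the cancellation lemma, which is the only delicate point. Two things to tighten there: (i) state and prove that lemma for \emph{strict} monomorphisms, which is exactly what Proposition \ref{equiv.23} hands you --- for a non-strict monomorphism of fs log schemes such as $\T_\theta$ for a nontrivial subdivision, the underlying morphism of schemes is neither injective on points nor separated for free, so ``universal injectivity of $g$'' should be replaced by the categorical observation that the graph $T\to Q'\times_{Y'}T$ is an isomorphism for every $T\to Q'$, and separatedness of $\ul{g}$ likewise uses strictness; (ii) record explicitly that the identity $\sigma_{\Delta'}\circ\phi=\pi$ holds only after one further passage to a subdivision, exactly as you did for $fk=w$. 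With those points made precise, your argument is a valid alternative to the one in the paper.
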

\begin{proof}
Apply Proposition \ref{div.5} to $h_f^{-1}\in h_Y(X)$ to obtain a morphism $p\colon X'\to Y$ such that $fp$ is a dividing cover.
Since $h_p$ is an isomorphism, we can also apply Proposition \ref{div.5} to $h_p^{-1}\in h_{X'}(Y)$ to obtain a morphism $q\colon Y'\to X'$ such that $pq$ is a dividing cover.
We set $V:=X'\times_Y Y'$, which is a dividing cover over $X$.
Use $Y'\times_Y Y'\simeq Y'$ and $V\times_X V\simeq V$ to obtain an induced commutative diagram
\[
\begin{tikzcd}[column sep=small, row sep=small]
Y'\times_X V\ar[r]\ar[d]&
V\ar[r]\ar[d]&
Y'\times_X V\ar[r]\ar[d]&
V\ar[d]
\\
Y'\ar[r]\ar[d]&
V\ar[r]\ar[d]&
Y'\ar[r,"fpq"]\ar[d,"pq"]&
X
\\
Y'\ar[r,"q"]&
X'\ar[r,"p"]&
Y
\end{tikzcd}
\]
whose small squares are cartesian and vertical morphisms are dividing covers.
Let $a\colon V\to Y'\times_X V$ be the graph morphism, and let $b\colon Y'\times_X V\to V$ be the projection.
From the upper row of the diagram, we see that $a$ is an inverse of $b$.
Hence $b$ is an isomorphism, so we obtain the desired dividing covers $V\to X$ and $V\to Y$.
\end{proof}

\begin{lemma}
\label{equiv.28}
Let $f\colon h_Y\to h_X$ be an isomorphism in $\Shv_{dZar}(\lFan/B)$, where $X,Y\in \lFan/B$.
Then there exist dividing covers $u\colon V\to X$ and  $v\colon V\to Y$ in $\lFan/B$ such that $fh_v=h_u$.
\end{lemma}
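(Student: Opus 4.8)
The plan is to reduce the abstract sheaf isomorphism $f$ to an honest morphism in $\lFan/B$ and then feed it into Lemma \ref{equiv.25}. First I would apply Lemma \ref{equiv.24} to $f$, obtaining a dividing cover $v_0\colon V_0\to Y$ together with a morphism $g\colon V_0\to X$ in $\lFan/B$ such that $fh_{v_0}=h_g$. Since $v_0$ is a dividing cover, $h_{v_0}$ is an isomorphism (dividing covers induce isomorphisms on representable sheaves), and $f$ is an isomorphism by hypothesis; hence $h_g=fh_{v_0}$ is an isomorphism as well. The key point is that $g$, though merely a morphism of fs log schemes with a plain morphism on the $X$-leg, nonetheless induces an isomorphism on representable sheaves.

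Now $g\colon V_0\to X$ is a genuine morphism in $\lFan/B$ with $h_g$ an isomorphism, so Lemma \ref{equiv.25} applies and yields dividing covers $u\colon V\to X$ and $w\colon V\to V_0$ in $\lFan/B$ with $gw=u$. Setting $v:=v_0 w\colon V\to Y$, which is again a dividing cover since dividing covers are closed under composition, I obtain the desired pair of dividing covers $u\colon V\to X$ and $v\colon V\to Y$. The compatibility is then a formal computation: $fh_v=fh_{v_0}h_w=h_gh_w=h_{gw}=h_u$, where I have used $fh_{v_0}=h_g$ and $gw=u$.

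Since each step is a direct application of an already-established lemma, I do not anticipate a genuine obstacle here. Conceptually, this statement is simply the symmetrization of Lemma \ref{equiv.24}: the latter realizes $f$ by a dividing cover only on the source, and the present lemma upgrades the target leg to a dividing cover as well. The one thing to be careful about is precisely the observation in the first paragraph, namely that the auxiliary morphism $g$ produced by Lemma \ref{equiv.24} induces an isomorphism of representable sheaves, as this is exactly the hypothesis required to invoke Lemma \ref{equiv.25}.
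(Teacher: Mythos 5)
Your proposal is correct and follows exactly the paper's own argument: apply Lemma \ref{equiv.24} to realize $f$ by a morphism $g$ with $h_g$ an isomorphism, then feed $g$ into Lemma \ref{equiv.25} and compose the resulting dividing covers. The explicit verification $fh_v=fh_{v_0}h_w=h_gh_w=h_u$ is a nice touch that the paper leaves implicit.
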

\begin{proof}
Lemma \ref{equiv.24} yields a dividing cover $q\colon Y'\to Y$ and a morphism $p\colon Y'\to X$ such that $fh_q=h_p$.
Since $h_p$ is an isomorphism, Lemma \ref{equiv.25} yields a dividing cover $V\to Y'$ such that the composition $V\to Y'\xrightarrow{p} X$ is a dividing cover.
The composition $V\to Y'\xrightarrow{q} Y$ is a dividing cover too.
\end{proof}

\begin{lemma}
\label{equiv.29}
Let $f\colon Y\to X$ be a $\cP$-morphism in $\lFan/B$, where $\cP$ is a class of morphisms in $\lFan/B$ closed under pullbacks.
Then $h_f$ is a representable $\cP$-morphism.
\end{lemma}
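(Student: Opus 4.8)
The plan is to verify Definition~\ref{equiv.6} directly for the morphism $h_f\colon h_Y\to h_X$, using that the Yoneda functor preserves finite limits and that any test morphism into $h_X$ can be straightened by Lemma~\ref{equiv.24}. Concretely, I fix an arbitrary $T\in \lFan/B$ together with a morphism $\phi\colon h_T\to h_X$, and I aim to exhibit the square \eqref{equiv.6.1} for the pullback $h_T\times_{h_X}h_Y$.

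First I would apply Lemma~\ref{equiv.24} to $\phi$ to obtain a dividing cover $v\colon T'\to T$ and a morphism $w\colon T'\to X$ in $\lFan/B$ with $\phi h_v=h_w$. Since $v$ is a dividing cover, $h_v$ is an isomorphism, so the pullback of $h_f$ along $\phi$ agrees, up to this isomorphism, with its pullback along $h_w\colon h_{T'}\to h_X$. As $w$ and $f$ are genuine morphisms in $\lFan/B$, Proposition~\ref{div.13} gives
\[
h_T\times_{h_X}h_Y
\simeq
h_{T'}\times_{h_X}h_Y
\simeq
h_{T'\times_X Y}.
\]
The projection $g\colon T'\times_X Y\to T'$ is the pullback of $f$ along $w$; since $\cP$ is closed under pullbacks, $g$ is a $\cP$-morphism. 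Taking $V:=T'\times_X Y$, $U:=T'$, and $u:=v$ then furnishes the square \eqref{equiv.6.1} with vertical isomorphisms, bottom map the projection $p$, and $g$ a $\cP$-morphism, which is exactly what is required.

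There is no genuinely hard step here; the content is bookkeeping, and I expect the main point requiring care to be the commutativity of the square rather than the existence of its corners. Specifically, I would check that the left vertical isomorphism $h_{T'\times_X Y}\xrightarrow{\simeq} h_T\times_{h_X}h_Y$ carries $h_u\circ h_g$ (the top-then-right composite) to $p$ (the left-then-bottom composite). This follows from the naturality of the finite-limit isomorphism of Proposition~\ref{div.13}, under which the projection $T'\times_X Y\to T'$ corresponds to the pullback projection onto $h_{T'}$, combined with the relation $\phi h_v=h_w$ identifying $h_u=h_v$ with the structure map of the pullback over $\phi$.
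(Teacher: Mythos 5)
Your argument is correct and is essentially the paper's own proof: both straighten the test morphism $h_T\to h_X$ via Lemma~\ref{equiv.24} by passing to a dividing cover of $T$, then identify the fiber product $h_{T'}\times_{h_X}h_Y$ with $h_{T'\times_X Y}$ (Proposition~\ref{div.13}) and use closure of $\cP$ under pullbacks. The extra care you take with the commutativity of the square \eqref{equiv.6.1} is fine but routine, and the paper leaves it implicit.
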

\begin{proof}
Let $h_{V}\to h_X$ be a morphism in $\Shv_{dZar}(\lFan/B)$ with $V\in \lFan/B$.
By Lemma \ref{equiv.24}, we can replace $V$ with its suitable dividing cover to assume that $h_V\to h_X$ is equal to $h_g$ for some morphism $g\colon V\to X$ in $\lFan/B$.
To conclude, observe that we have an isomorphism $h_Y\times_{h_X} h_V \simeq h_{Y\times_X V}$.
\end{proof}

For a class of morphisms $\cP$ in $\lFan/B$, we will frequently assume the following condition:
\begin{enumerate}
\item[(Div)] If $Y\to X$ is a $\cP$-morphism and $Y'\to Y$ is a dividing cover in $\lFan/B$, then there exists a dividing cover $X'\to X$ in $\lFan/B$ such that the projection $Y'\times_X X'\to X'$ is a $\cP$-morphism.
\end{enumerate}

\begin{example}
\label{equiv.16}
Suppose that $\cP$ is a class of morphisms in $\lFan/B$ closed under pullbacks.
Let $Y\to X$ be a $\cP$-morphism, and let $Y'\to Y$ be a dividing cover in $\lFan/B$.
\\[5pt]
(1) Assume that every morphism in $\cP$ is strict.
If $X$ has a fan chart $\Sigma$,
then $Y$ has a fan chart $\Sigma$.
Proposition \ref{equiv.23} yields a subdivision $\Sigma'$ of $\Sigma$ such that the pullback $Y'\times_{\T_\Sigma}\T_{\Sigma'}\to Y\times_{\T_\Sigma}\T_{\Sigma'}$ is an isomorphism.
This means that the pullback $Y'\times_X X_{\Sigma'}\to Y\times_X X_{\Sigma'}$ is an isomorphism,
where $X_{\Sigma'}:=X\times_{\T_\Sigma}\T_{\Sigma'}$.
Hence $\cP$ satisfies (Div).
\\[5pt]
(2) Let exact $\cP$ be the subclass of morphisms in $\cP$ that are exact.
Suppose that exact $\cP$ satisfies (Div).
By \cite[Proposition 4.2.3]{logA1}, there exists a dividing cover $X''\to X$ such that the projection $Y\times_X X''\to X''$ is exact $\cP$.
Our assumption on exact $\cP$ yields a dividing cover $X'\to X''$ such that the projection $Y'\times_{X} X'\to X'$ is exact $\cP$.
Hence $\cP$ satisfies (Div).
\\[5pt]
(3) Suppose that $\cP$ contains all dividing covers and is closed under compositions.
Observe that $\cP$ satisfies (Div).
If $V\to U$ is an exact $\cP$-morphism and $V'\to V$ is a dividing cover, then there exists a dividing cover $U'\to U$ such that the projection $p\colon V'\times_U U'\to U'$ is exact by \cite[Proposition 4.2.3]{logA1}.
It follows that $p$ is an exact $\cP$-morphism, so exact $\cP$ satisfies (Div).
\end{example}

According to \cite[Proposition 4.2.3]{logA1}, $f$ is a representable $\cP$-morphism if and only if $f$ is a representable exact $\cP$-morphism.

In the following cases of $\cP$, all exact $\cP$-morphisms are strict by Proposition \ref{equiv.11} so that we can use (1) and (2):

\begin{center}
\begin{tabular}{|c|c|c|}
\hline
$\cP$ & exact $\cP$
\\
\hline
dividing cover & isomorphism
\\
dividing Zariski cover & Zariski cover
\\
proper monomorphism & strict closed immersion
\\
log \'etale monomorphism & open immersion
\\
strict immersion & strict immersion
\\
\hline
\end{tabular}
\end{center}

If $\cP$ is log smooth or log \'etale, then we can use (3).

\begin{example}
A morphism of fs log schemes $i\colon Z\to X$ is called a \emph{closed immersion} if the underlying morphism of schemes $\ul{i}$ is a closed immersion and the induced morphism of structure sheaves of monoids $i_{\log}^*\cM_X\to \cM_Z$ is surjective, see \cite[Definition III.2.3.1]{Ogu}.
In this case, the construction of the fiber products in the proof of \cite[Proposition III.2.1.2]{Ogu} shows that the diagonal morphism $Z\to Z\times_X Z$ is an isomorphism, i.e., $i$ is a monomorphism.
Hence we have the following relations:
\[
\text{(strict closed immersion) }
\subset
\text{ (closed immersion) }
\subset
\text{ (proper monomorphism).}
\]
We deduce that a morphism $f$ in $\Shv_{dZar}(\lFan/B)$ is a representable closed immersion if and only if $f$ is a representable strict closed immersion.
\end{example}

With the condition \textup{(Div)}, we have a more structured version of Lemma \ref{equiv.26} as follows.

\begin{lemma}
\label{equiv.27}
Suppose that $\cP$ is a class of morphisms in $\lFan/B$ closed under pullbacks and satisfying \textup{(Div)}.
Let $f\colon h_Y\to h_X$ be a representable $\cP$-morphism in $\Shv_{dZar}(\lFan/B)$, where $X,Y\in \lFan/B$.
Then there exists a commutative square
\begin{equation}
\begin{tikzcd}
h_{V}\ar[d,"h_{v}","\simeq"']\ar[r,"h_{g}"]&
h_{U}\ar[d,"h_{u}"',"\simeq"]
\\
h_{Y}\ar[r,"f"]&
h_X
\end{tikzcd}
\end{equation}
with vertical isomorphisms such that $g$ is a $\cP$-morphism and $u$ and $v$ are dividing covers in $\lFan/B$.
\end{lemma}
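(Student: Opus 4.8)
The plan is to start from the weaker structure already obtained in Lemma \ref{equiv.26}, which gives a commutative square with vertical isomorphisms
\[
\begin{tikzcd}
h_{V_0}\ar[d,"\simeq"']\ar[r,"h_{g_0}"]&
h_{U}\ar[d,"h_{u}"',"\simeq"]
\\
h_{Y}\ar[r,"f"]&
h_X
\end{tikzcd}
\]
where $g_0\colon V_0\to U$ is a $\cP$-morphism and $u\colon U\to X$ is a \emph{dividing cover}. The only gap between this and the desired statement is that the left vertical isomorphism $h_{V_0}\xrightarrow{\simeq} h_Y$ need not come from an actual dividing cover $V_0\to Y$ in $\lFan/B$; it is merely an isomorphism of sheaves. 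So the whole task reduces to upgrading that left-hand sheaf isomorphism to an honest dividing cover, while preserving the $\cP$-morphism on top.

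First I would apply Lemma \ref{equiv.28} to the isomorphism $h_{V_0}\xrightarrow{\simeq}h_Y$. This produces an object $V\in\lFan/B$ together with dividing covers $v\colon V\to Y$ and $w\colon V\to V_0$ in $\lFan/B$ such that the composite $h_V\xrightarrow{h_w}h_{V_0}\xrightarrow{\simeq}h_Y$ equals $h_v$. Now I have a genuine dividing cover $v\colon V\to Y$, at the cost of pulling back the top morphism along $w$. Concretely, set $g:=g_0\circ w\colon V\to U$. Because $\cP$ is closed under pullbacks and contains — or rather, interacts correctly with — dividing covers, I must check that this composite $g$ is again a $\cP$-morphism: here is where the hypothesis (Div) enters, which is precisely the reason this lemma is stated under (Div) whereas Lemma \ref{equiv.26} is not.

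The key computation is to verify that $g_0\circ w$ is a $\cP$-morphism. Since $g_0$ is a $\cP$-morphism and $w\colon V\to V_0$ is a dividing cover, condition (Div) furnishes a dividing cover $U'\to U$ such that the projection $V\times_U U'\to U'$ is a $\cP$-morphism. I would then replace $U$ by $U'$ (and correspondingly $V$ by its pullback $V\times_U U'$, which remains a dividing cover over $Y$ since dividing covers are closed under composition and pullback) so that the resulting top horizontal map is an actual $\cP$-morphism $g\colon V\to U'$; note that $U'\to U\to X$ is still a dividing cover, so $h_{U'}\xrightarrow{\simeq}h_X$. The main obstacle is exactly this bookkeeping around (Div): one must ensure that after the pullback along $U'\to U$ the left vertical map $h_V\to h_Y$ is still induced by a dividing cover and that the square genuinely commutes. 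I expect this to be routine once (Div) is invoked, since both legs of the square are then represented by morphisms in $\lFan/B$ and the commutativity can be checked after applying the fully controlled isomorphisms from Lemmas \ref{equiv.24} and \ref{equiv.28}.

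With $v\colon V\to Y$ and $u\colon U'\to X$ dividing covers and $g\colon V\to U'$ a $\cP$-morphism fitting into the commutative square, the lemma follows. I would finally remark that the commutativity $f\circ h_v = h_u\circ h_g$ holds because, after unwinding, both sides agree on the image of the dividing cover $h_V\to h_Y$, and $h_V\to h_Y$ being an isomorphism of sheaves lets us conclude equality of the two maps out of $h_Y$.
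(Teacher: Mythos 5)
Your proposal is correct and follows essentially the same route as the paper: apply Lemma \ref{equiv.26} to get the square with a dividing cover on the right, apply Lemma \ref{equiv.28} to upgrade the left-hand sheaf isomorphism to an actual dividing cover $v\colon V\to Y$ (at the cost of precomposing the top map with a dividing cover $w$), and then invoke (Div) to find a further dividing cover $U'\to U$ over which the pulled-back top map becomes a genuine $\cP$-morphism. The only cosmetic slip is the phrasing ``check that $g_0\circ w$ is a $\cP$-morphism'' --- (Div) does not make that composite a $\cP$-morphism; it lets you replace $U$ and $V$ so that the \emph{new} top map is one --- but your subsequent bookkeeping does exactly the right thing, matching the paper's choice of $U:=X''$ and $V:=Y''\times_{X'}X''$.
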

\begin{proof}
By Lemmas \ref{equiv.26} and \ref{equiv.28}, there exists a commutative diagram
\[
\begin{tikzcd}
h_{Y''}\ar[d,"\simeq"',"h_{q'}"]\ar[dd,bend right=70,"h_{q}"']
\\
h_{Y'}\ar[d,"\simeq"']\ar[r,"h_{f'}"]&
h_{X'}\ar[d,"h_{p}"',"\simeq"]
\\
h_{Y}\ar[r,"f"]&
h_X
\end{tikzcd}
\]
with vertical isomorphisms such that $f'$ is a $\cP$-morphism and $p$, $q$, and $q'$ are dividing covers in $\lFan/B$.
Use (Div) to obtain a cartesian square
\[
\begin{tikzcd}
Y''\times_{X'}X''\ar[r,"f''"]\ar[d]&
X''\ar[d,"p'"]
\\
Y''\ar[r,"f'q'"]&
X'
\end{tikzcd}
\]
such that $p'$ is a dividing cover and $f''$ is a $\cP$-morphism.
Take $U:=X''$ and $V:=Y''\times_{X'}X''$ to obtain the desired commutative diagram.
\end{proof}

\begin{lemma}
\label{equiv.18}
Suppose that $\cP$ and $\cQ$ are classes of morphisms in $\lFan/B$ closed under pullbacks.
Let $h_Y\to h_X$ be a representable $\cP$-morphism, and let $h_Z\to h_Y$ be a representable $\cQ$-morphism with $X,Y,Z\in \lFan/B$.
If $\cP$ satisfies \textup{(Div)}, then there exists a commutative diagram
\[
\begin{tikzcd}
h_{W}\ar[r,"h_g"]\ar[d,"\simeq"']&
h_V\ar[d,"\simeq"]\ar[r,"h_f"]&
h_U\ar[d,"h_u"',"\simeq"]
\\
h_Z\ar[r]&
h_Y\ar[r]&
h_X
\end{tikzcd}
\]
with vertical isomorphisms such that $f$ is a $\cP$-morphism, $g$ is a $\cQ$-morphism, and $u$ is a dividing cover in $\lFan/B$.
\end{lemma}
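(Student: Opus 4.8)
The plan is to represent the two given morphisms separately by honest maps in $\lFan/B$ and then glue them, using the hypothesis (Div) on $\cP$ to repair the composite that appears. First I would apply Lemma \ref{equiv.27} to the representable $\cP$-morphism $h_Y\to h_X$: since $\cP$ is closed under pullbacks and satisfies (Div), this produces a commutative square whose vertical maps are $h_{v_1}$ and $h_{u_1}$ for dividing covers $v_1\colon V_1\to Y$ and $u_1\colon U_1\to X$, together with a $\cP$-morphism $f_1\colon V_1\to U_1$.

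Next I would transport the $\cQ$-side over the isomorphism $h_{v_1}\colon h_{V_1}\to h_Y$. Composing the given representable $\cQ$-morphism $h_Z\to h_Y$ with $h_{v_1}^{-1}$ yields a representable $\cQ$-morphism $h_Z\to h_{V_1}$, since representability of $\cQ$-morphisms is preserved under post-composition with an isomorphism (the relevant pullbacks and their projections are unchanged). Applying Lemma \ref{equiv.26}, which requires only that $\cQ$ be closed under pullbacks, gives a commutative square with an isomorphism $h_W\xrightarrow{\simeq} h_Z$, a dividing cover $v_2\colon V_2\to V_1$, and a $\cQ$-morphism $g\colon W\to V_2$.

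At this point the main obstacle appears: I have $V_2\xrightarrow{v_2}V_1\xrightarrow{f_1}U_1$ with $v_2$ a dividing cover and $f_1$ a $\cP$-morphism, but the composite $f_1v_2$ need not be a $\cP$-morphism, so I cannot yet read off a $\cP$-morphism with source over $V_2$. This is exactly what (Div) is for: applied to the $\cP$-morphism $f_1$ and the dividing cover $v_2$, it furnishes a dividing cover $p\colon U\to U_1$ such that the projection $f\colon V:=V_2\times_{U_1}U\to U$ is a $\cP$-morphism, while the other projection $\pi\colon V\to V_2$ is a dividing cover (being a pullback of $p$). To hook up the $\cQ$-side I would then form $W':=W\times_{V_2}V$ along $g$ and $\pi$; the projection $g'\colon W'\to V$ is a $\cQ$-morphism because $\cQ$ is closed under pullbacks, and $W'\to W$ is a dividing cover.

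It remains to assemble the diagram with top row $h_{W'}\xrightarrow{h_{g'}}h_V\xrightarrow{h_f}h_U$ and to set $u:=u_1p\colon U\to X$, a composite of dividing covers. The middle and left vertical isomorphisms are the evident composites $h_V\xrightarrow{h_\pi}h_{V_2}\xrightarrow{h_{v_2}}h_{V_1}\xrightarrow{\simeq}h_Y$ and $h_{W'}\to h_W\xrightarrow{\simeq}h_Z$, all built from $h$ of dividing covers and the isomorphisms already produced; in particular all relevant objects lie in $\lFan/B$, since (Div) and the pullback-closure of $\cQ$ produce morphisms of $\lFan/B$, whose sources and targets are by definition in $\lFan/B$. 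Commutativity of the two squares follows by chasing the defining pullback relations $pf=f_1v_2\pi$ and $\pi g'=g\rho$ (where $\rho\colon W'\to W$ is the projection) together with the commutativity identities coming from Lemma \ref{equiv.27} and Lemma \ref{equiv.26}. I expect the only genuinely delicate point to be this final bookkeeping that makes both squares commute; the application of (Div) to fix the composite is the conceptual heart, and everything else is a direct invocation of the preceding lemmas.
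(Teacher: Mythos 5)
Your proposal is correct and follows essentially the same route as the paper: represent the $\cP$-side and then the $\cQ$-side by honest morphisms of $\lFan/B$ via Lemma \ref{equiv.27}/\ref{equiv.26}, apply (Div) to repair the composite of the resulting dividing cover with the $\cP$-morphism, and pull back the $\cQ$-morphism along the resulting dividing cover. The only cosmetic difference is that you invoke Lemma \ref{equiv.27} where the paper uses Lemma \ref{equiv.26} in the first step; this is harmless since (Div) holds for $\cP$.
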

\begin{proof}
Use Lemma \ref{equiv.26} twice to obtain a commutative diagram
\[
\begin{tikzcd}
h_{Z''}\ar[dd,"\simeq"']\ar[r,"h_b"]&
h_{Y''}\ar[d,"\simeq","h_q"']
\\
&
h_{Y'}\ar[r,"h_a"]\ar[d,"\simeq"]&
h_{X'}\ar[d,"\simeq","h_p"']
\\
h_Z\ar[r]&
h_Y\ar[r]&
h_X
\end{tikzcd}
\]
with vertical isomorphisms such that $a$ is a $\cP$-morphism, $b$ is a $\cQ$-morphism, and $p$ and $q$ are dividing covers in $\lFan/B$.
By \textup{(Div)}, there exists a dividing cover $X''\to X'$ such that the projection $Y''\times_{X'} X''\to X''$ is a $\cP$-morphism.
Take $U:=X''$, $V:=Y''\times_{X'} X''$, and $W:=Z''\times_{X'}X''$ to conclude.
\end{proof}

\begin{proposition}
\label{equiv.15}
Suppose that $\cP$ is a class of morphisms in $\lFan/B$ closed under compositions and pullbacks.
If $\cP$ satisfies \textup{(Div)}, then the class of representable $\cP$-morphisms in $\Shv_{dZar}(\lFan/B)$ is closed under compositions.
\end{proposition}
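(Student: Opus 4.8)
The plan is to verify the definition of a representable $\cP$-morphism directly for a composite. Write the two given representable $\cP$-morphisms as $b\colon \cH\to \cG$ and $a\colon \cG\to \cF$ in $\Shv_{dZar}(\lFan/B)$; the goal is to show that $ab\colon \cH\to \cF$ is again a representable $\cP$-morphism. First I would fix a morphism $h_X\to \cF$ with $X\in \lFan/B$. Since $\cP$ is closed under pullbacks, the class of representable $\cP$-morphisms is closed under pullbacks (as noted after Definition \ref{equiv.6}), so the projection $h_X\times_\cF \cG\to h_X$ is a representable $\cP$-morphism; and as $a$ is in particular representable, there is $Y_1\in \lFan/B$ with $h_X\times_\cF \cG\simeq h_{Y_1}$. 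This produces a representable $\cP$-morphism $h_{Y_1}\to h_X$ between representable sheaves.

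Next I would pull back $b$ along the second projection $h_{Y_1}\simeq h_X\times_\cF \cG\to \cG$. By the same closure-under-pullbacks property, the projection $h_{Y_1}\times_\cG \cH\to h_{Y_1}$ is a representable $\cP$-morphism, and representability of $b$ gives $Y_2\in \lFan/B$ with $h_{Y_1}\times_\cG \cH\simeq h_{Y_2}$. The pasting law for fiber products then identifies
\[
h_X\times_\cF \cH
\simeq
(h_X\times_\cF \cG)\times_\cG \cH
\simeq
h_{Y_1}\times_\cG \cH
\simeq
h_{Y_2},
\]
so that, up to these isomorphisms, the projection $h_X\times_\cF \cH\to h_X$ is the composite of the two projections $h_{Y_2}\to h_{Y_1}\to h_X$, both of which are representable $\cP$-morphisms between representable sheaves.

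At this point the main work is carried out by Lemma \ref{equiv.18}, which is precisely the mechanism for composing representable $\cP$-morphisms between representable sheaves. Applying it with $\cQ:=\cP$ to the pair $h_{Y_2}\to h_{Y_1}\to h_X$ (this is where the hypothesis that $\cP$ satisfies \textup{(Div)} is used) produces a commutative diagram with vertical isomorphisms
\[
\begin{tikzcd}
h_{W}\ar[r,"h_g"]\ar[d,"\simeq"']&
h_V\ar[d,"\simeq"]\ar[r,"h_f"]&
h_U\ar[d,"h_u"',"\simeq"]
\\
h_{Y_2}\ar[r]&
h_{Y_1}\ar[r]&
h_X
\end{tikzcd}
\]
in which $f\colon V\to U$ and $g\colon W\to V$ are $\cP$-morphisms and $u\colon U\to X$ is a dividing cover in $\lFan/B$. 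Since $\cP$ is closed under compositions, $fg\colon W\to U$ is a $\cP$-morphism. Because $u$ is a dividing cover, $h_u$ is an isomorphism, so the left and right vertical maps exhibit the projection $h_X\times_\cF \cH\to h_X$ as being represented by the $\cP$-morphism $fg$. As $h_X\to \cF$ was arbitrary, this verifies that $ab$ is a representable $\cP$-morphism.

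I expect the only delicate points to be purely formal: correctly invoking the closure of representable $\cP$-morphisms under pullback to recognize that both projections are representable $\cP$-morphisms, and applying the pasting law to identify $h_X\times_\cF \cH$ with $h_{Y_2}$. The genuine geometric input — producing a common dividing cover over which the two $\cP$-morphisms can be composed inside $\lFan/B$ — is entirely absorbed into Lemma \ref{equiv.18}, and with it the role of condition \textup{(Div)}; once that lemma is available, the remaining step is merely closure of $\cP$ under composition.
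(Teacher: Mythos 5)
Your proof is correct and follows the paper's own route exactly: the paper proves this proposition by declaring it an immediate consequence of Lemma \ref{equiv.18} with $\cQ:=\cP$, and your argument simply spells out the formal reductions (pullback closure, the pasting law for fiber products, and closure of $\cP$ under composition) that make that deduction immediate.
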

\begin{proof}
An immediate consequence of Lemma \ref{equiv.18} when $\cQ:=\cP$.
\end{proof}

\begin{proposition}
\label{equiv.37}
Suppose that $\cP$ is the class of all morphisms in $\lFan/B$.
Then a morphism $f\colon \cG\to \cF$ in $\Shv_{dZar}(\lFan/B)$ is representable if and only if $f$ is a representable $\cP$-morphism.
\end{proposition}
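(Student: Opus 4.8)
The plan is to treat the two implications separately, with almost all of the content lying in one easy reduction. For the implication ``representable $\cP$-morphism $\Rightarrow$ representable,'' I would simply invoke the observation recorded immediately after Definition \ref{equiv.6}: every representable $\cP$-morphism is representable, for any class $\cP$ whatsoever, since the left vertical isomorphism $h_V\xrightarrow{\simeq}h_X\times_{\cF}\cG$ appearing in the defining square already exhibits $h_X\times_{\cF}\cG$ as representable. This costs nothing and does not even use that $\cP$ is the class of all morphisms.

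For the converse, suppose $f\colon \cG\to \cF$ is representable and fix an arbitrary test morphism $h_X\to \cF$ with $X\in \lFan/B$. First I would use representability to obtain $Y\in \lFan/B$ and an isomorphism $h_X\times_{\cF}\cG\simeq h_Y$; transporting the projection $p\colon h_X\times_{\cF}\cG\to h_X$ across this isomorphism yields a morphism $h_Y\to h_X$ between representable sheaves. The key step is then to apply Lemma \ref{equiv.24} to $h_Y\to h_X$, producing a dividing cover $v\colon V\to Y$ and a morphism $g\colon V\to X$ in $\lFan/B$ with $h_g$ equal to the composite $h_V\xrightarrow{h_v}h_Y\to h_X$.

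To finish, I would assemble the required square by taking $U:=X$ with identity right vertical map, and the composite $h_V\xrightarrow{h_v}h_Y\simeq h_X\times_{\cF}\cG$ as left vertical map; the latter is an isomorphism because $v$ is a dividing cover, so $h_v$ is invertible. Commutativity is tautological: the relation $h_g=p\circ(\text{left vertical})$ is exactly what Lemma \ref{equiv.24} provides. Since $\cP$ is the class of all morphisms, $g$ lies in $\cP$ automatically, so this square witnesses $f$ as a representable $\cP$-morphism. I do not anticipate a genuine obstacle here; the only things to watch are the bookkeeping that identifies the projection $p$ with an honest morphism of representable sheaves before Lemma \ref{equiv.24} can be applied, and the verification that the degenerate choice $U:=X$ with identity right vertical map is admissible under Definition \ref{equiv.6}.
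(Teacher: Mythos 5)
Your proposal is correct and follows the paper's own argument: the ``if'' direction is the general observation after Definition \ref{equiv.6}, and the ``only if'' direction reduces the projection $h_X\times_{\cF}\cG\to h_X$ to a morphism $h_Y\to h_X$ of representables and applies Lemma \ref{equiv.24}. Your extra bookkeeping (taking $U:=X$ with the identity, and noting $h_v$ is invertible because $v$ is a dividing cover) just makes explicit what the paper leaves implicit.
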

\begin{proof}
The if direction is trivial.
For the only if direction, assume that $f$ is representable.
Let $h_X\to \cF$ be a morphism with $X\in \lFan/B$.
Then there exists a cartesian square
\[
\begin{tikzcd}
h_Y\ar[r]\ar[d]&
h_X\ar[d]
\\
\cG\ar[r,"f"]&
\cF
\end{tikzcd}
\]
with $Y\in \lFan/B$.
To obtain \eqref{equiv.6.1}, apply Lemma \ref{equiv.24} to $h_Y\to h_X$.
\end{proof}

\begin{proposition}
\label{equiv.36}
Let $f\colon \cG\to \cF$ be a morphism in $\Shv_{dZar}(\lFan/B)$.
If $f$ is a representable Zariski cover, then $f$ is an epimorphism of sheaves.
\end{proposition}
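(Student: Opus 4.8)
The plan is to use the standard characterization of epimorphisms in a sheaf topos as \emph{local surjections}, together with the single geometric input that a Zariski cover $g\colon V\to U$ in $\lFan/B$ induces an epimorphism $h_g\colon h_V\to h_U$ in $\Shv_{dZar}(\lFan/B)$. I first record this input. Every Zariski cover is in particular a dividing Zariski cover (the components of a Zariski cover are open immersions, hence log \'etale monomorphisms, and the cover is universally surjective), so $\{V\to U\}$ is a $dZar$-covering family. The canonical section of $h_U$ over $U$ restricts, along $V\to U$, to a section lying in the image of $h_V(V)\to h_U(V)$, and this property is stable under pullback; hence $h_g$ is an epimorphism of $dZar$-sheaves. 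This is the usual statement that the sheafified representable of a covering morphism surjects.

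Next I reduce the general case to this one. For $X\in \lFan/B$, the sections of $\cF$ over $X$ are the morphisms $h_X\to \cF$, since $h_X$ is the $dZar$-sheafification of the presheaf represented by $X$ and $\cF$ is a sheaf. Fix $X\in \lFan/B$ and a section $s\in \cF(X)$, viewed as $s\colon h_X\to \cF$, and form the cartesian square
\[
\begin{tikzcd}
h_X\times_{\cF}\cG\ar[r,"\mathrm{pr}"]\ar[d,"p"']&
\cG\ar[d,"f"]
\\
h_X\ar[r,"s"]&
\cF.
\end{tikzcd}
\]
Because $f$ is a representable Zariski cover, Definition \ref{equiv.6} applied to $s\colon h_X\to \cF$ provides a commutative square with vertical isomorphisms identifying the projection $p$ with $h_g$ for some Zariski cover $g\colon V\to U$ in $\lFan/B$. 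By the previous paragraph $h_g$ is an epimorphism, and composing an epimorphism with isomorphisms on either side preserves this property; hence $p$ is an epimorphism.

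Finally I deduce that $f$ itself is an epimorphism, i.e.\ that $f$ is locally surjective. Since $p$ is an epimorphism, the canonical section of $h_X$ over $X$ lifts, after passing to a $dZar$-covering family $\{W_j\to X\}_j$, to sections $\tau_j\in (h_X\times_\cF\cG)(W_j)$. Setting $t_j:=\mathrm{pr}(\tau_j)\in \cG(W_j)$ and using commutativity of the square above, one gets $f(t_j)=s|_{W_j}$, so $s$ lifts to $\cG$ on the $dZar$-cover $\{W_j\to X\}_j$. As $X$ and $s$ were arbitrary, $f$ is locally surjective, which is exactly the condition for $f$ to be an epimorphism in $\Shv_{dZar}(\lFan/B)$. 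The only point requiring a little care is the last bookkeeping step, namely that the lift of the \emph{universal} section along $p$ really transports to a lift of $s$ itself; but this is immediate from the commutativity of the cartesian square, and the remainder of the argument is formal once one knows that epimorphisms of sheaves are detected by local surjectivity.
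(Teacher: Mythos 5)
Your proof is correct and follows essentially the same route as the paper: pull $f$ back along a section $a\colon h_X\to \cF$, use representability to identify the projection with $h_g$ for a Zariski cover $g$, and conclude local surjectivity because Zariski covers are dividing Zariski covers. The only cosmetic difference is that the paper invokes Lemma \ref{equiv.26} to realize the pullback concretely as $V\xrightarrow{g}U\xrightarrow{u}X$ with $u$ a dividing cover and then checks surjectivity on the explicit dividing Zariski cover $V\to X$, whereas you argue one level more abstractly by first noting that $h_g$ is an epimorphism of sheaves and transporting along the vertical isomorphisms of Definition \ref{equiv.6}; both are sound.
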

\begin{proof}
Suppose $a\in \cF(X)$ with $X\in \lFan/B$, which can be expressed as a morphism of sheaves $a\colon h_X\to \cF$.
Lemma \ref{equiv.26} yields a commutative square
\[
\begin{tikzcd}
h_V\ar[d,"\simeq"']\ar[r,"h_g"]&
h_U\ar[d,"h_u"',"\simeq"]\\
\cG\times_{\cF} h_X\ar[r]&
h_X
\end{tikzcd}
\]
with vertical isomorphisms such that $g$ is a Zariski cover and $u$ is a dividing cover in $\lFan/B$.
Hence $(ug)^*a\in \cF(V)$ is in the image of $f(V)\colon \cG(V)\to \cF(V)$.
To conclude, observe that $V\to X$ is a dividing Zariski cover.
\end{proof}

\begin{lemma}
\label{equiv.17}
Suppose that $\cP$ is one of the following classes:
\begin{enumerate}
\item[\textup{(i)}] isomorphisms,
\item[\textup{(ii)}] strict immersions,
\item[\textup{(iii)}] open immersions,
\item[\textup{(iv)}] strict closed immersions,
\item[\textup{(v)}] Zariski covers.
\end{enumerate}
Let
\begin{equation}
\label{equiv.17.1}
\begin{tikzcd}
\cG'\ar[d,"u'"']\ar[r,"v'"]&
\cG\ar[d,"u"]
\\
\cF'\ar[r,"v"]&
\cF
\end{tikzcd}
\end{equation}
be a cartesian square in $\Shv_{dZar}(\lFan/B)$ such that $u$ is a representable Zariski cover and $v'$ is a representable $\cP$-morphism.
Then $v$ is a representable $\cP$-morphism.
\end{lemma}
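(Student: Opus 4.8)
The plan is to verify Definition \ref{equiv.6} for $v$ directly: fix a morphism $h_X\to \cF$ with $X\in \lFan/B$, set $\cH:=h_X\times_\cF \cF'$, and produce a square as in \eqref{equiv.6.1} exhibiting the projection $\cH\to h_X$ as representable with $\cP$-projection. The underlying idea is that, after base change to $h_X$, the cover $u$ becomes a representable Zariski cover along which the $\cP$-morphism $v'$ is already given, so the problem turns into Zariski descent of the property $\cP$, rigidified by the dividing covers coming from Lemma \ref{equiv.18}. First I would base change. Since $u$ is a representable Zariski cover and that class is closed under pullback, the projection $\cK:=h_X\times_\cF \cG\to h_X$ is a representable Zariski cover; in particular $\cK$ is representable. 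Because \eqref{equiv.17.1} is cartesian one identifies
\[
\cH\times_{h_X}\cK\simeq \cK\times_\cG \cG',
\]
the pullback of $v'$ along the projection $\cK\to \cG$; as $\cP$ is closed under pullback this is a representable $\cP$-morphism over $\cK$, whose source is therefore representable. I may thus write $\cK\simeq h_Y$ and $\cH\times_{h_X}\cK\simeq h_Z$.

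Next I would rigidify. Each of the five classes is strict, hence satisfies \textup{(Div)} by Example \ref{equiv.16}(1); in particular the class of Zariski covers does. Applying Lemma \ref{equiv.18} with $\cP$ the class of Zariski covers and $\cQ$ the given class to the composable pair $h_Z\to h_Y\to h_X$ yields a commutative diagram
\[
\begin{tikzcd}
h_{W}\ar[r,"h_g"]\ar[d,"\simeq"']&
h_V\ar[d,"\simeq"]\ar[r,"h_f"]&
h_U\ar[d,"h_u"',"\simeq"]
\\
h_Z\ar[r]&
h_Y\ar[r]&
h_X
\end{tikzcd}
\]
with $f\colon V\to U$ a Zariski cover, $g\colon W\to V$ a $\cP$-morphism, and $u\colon U\to X$ a dividing cover in $\lFan/B$, all genuine morphisms of fs log schemes. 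Viewing $\cH$ as a sheaf over $h_U\simeq h_X$, the vertical isomorphisms identify its pullback along $h_f$ with $h_W$ and the structure map $h_W\to h_V$ with the $\cP$-morphism $h_g$.

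The hard part will be the descent. Since $f$ is a Zariski cover, $h_f$ is a representable Zariski cover by Lemma \ref{equiv.29}, hence an epimorphism of sheaves by Proposition \ref{equiv.36}. The sheaf $\cH$ equips $W\to V$ with a canonical descent datum relative to $f$: the two pullbacks of $W$ to $V\times_U V$ are both identified with $\cH\times_{h_U}h_{V\times_U V}$, and the cocycle condition holds because everything is a pullback of the single sheaf $\cH$. The crux is that this datum is an isomorphism of \emph{sheaves}, whereas classical effective Zariski descent of fs log schemes needs an honest isomorphism of schemes. To fix this I would first arrange, after refining $U$ by a further dividing cover, that $U$ admits a fan chart $\Sigma$; since all five classes are strict, the Zariski cover $f$, the $\cP$-morphism $g$, and all the fiber products $V\times_U V$, $V\times_U V\times_U V$ then carry the pulled-back chart $\Sigma$ compatibly. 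Using Lemma \ref{equiv.28} together with Corollary \ref{equiv.34}, I would choose a single subdivision $\Sigma'$ of $\Sigma$ such that base change along $\T_{\Sigma'}\to \T_{\Sigma}$ turns the descent datum into an honest isomorphism of fs log schemes; because base change commutes with the strict fiber products, this subdivision is automatically compatible with the simplicial structure, while $f$ and $g$ remain a Zariski cover and a $\cP$-morphism after pullback.

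With an honest descent datum in hand, I would invoke the classical effectivity of Zariski gluing for each of the five properties (open immersions, strict closed immersions, strict immersions, isomorphisms, and Zariski covers are all Zariski-local on the target and glue) to obtain a $\cP$-morphism $\tilde g\colon \tilde W\to U$ in $\lFan/B$, noting that $\tilde W\in \lFan/B$ precisely because $\tilde g$ is such a morphism into $U\in \lFan/B$. Finally, $h_{\tilde W}$ and $\cH$ are sheaves over $h_U$ whose pullbacks along the epimorphism $h_f$ are isomorphic compatibly with the descent data, so $h_{\tilde W}\simeq \cH$. Together with $h_u\colon h_U\xrightarrow{\simeq}h_X$ this is exactly the square \eqref{equiv.6.1} for $v$ at $h_X\to \cF$; since $h_X\to \cF$ was arbitrary, $v$ is a representable $\cP$-morphism. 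The step requiring the most care is the upgrade of the sheaf-theoretic descent datum to an honest one via a simultaneous subdivision, since this is where the hypothesis that $\cP$ is one of the five strict classes is genuinely used.
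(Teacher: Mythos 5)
Your proposal follows essentially the same route as the paper's proof: reduce to $\cF=h_X$, rigidify via Lemma \ref{equiv.18} into an honest Zariski cover $f\colon V\to U$ and $\cP$-morphism $g\colon W\to V$, upgrade the sheaf-level descent datum to an honest isomorphism by a further dividing cover, glue, and identify the glued object with $\cF'$ using that representable Zariski covers are effective epimorphisms. The only point where you are less explicit than the paper is why your chosen subdivision makes the two pullbacks of $W$ over $V\times_U V$ honestly isomorphic rather than merely linked by a common dividing cover: the paper first uses Proposition \ref{div.5} to realize the comparison maps as actual morphisms of log schemes and then applies \cite[Proposition 4.2.3]{logA1} to make the composite to the base exact, so that the dividing covers between objects strict over the base become exact and hence isomorphisms by Proposition \ref{equiv.11}(3) --- this is precisely the justification your subdivision step needs, and it is available with the tools you already cite.
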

\begin{proof}
We only need to consider the case when $\cF=h_X$ with $X\in \lFan/B$.
By Lemma \ref{equiv.18}, we can replace $X$ with its suitable dividing cover to assume that \eqref{equiv.17.1} is isomorphic to a cartesian square
\begin{equation}
\label{equiv.17.3}
\begin{tikzcd}
h_{Y'}\ar[d,"u'"']\ar[r,"h_{g'}"]&
h_{Y}\ar[d,"h_{f}"]
\\
\cF'\ar[r,"v"]&
h_{X}
\end{tikzcd}
\end{equation}
for some Zariski cover $f$ and $\cP$-morphism $g'$ in $\lFan/B$.

We have isomorphisms
$
h_{Y'\times_X Y} \simeq h_{Y'}\times_{\cF'}h_{Y'} \simeq h_{Y\times_X Y'}
$.
Let $c\colon h_{Y'\times_X Y}\to h_{Y\times_X Y'}$ be the composition.
Lemma \ref{equiv.28} yields dividing covers $a\colon V\to Y'\times_X Y$ and $b\colon V \to Y\times_X Y'$ such that $ch_a=h_b$.
The induced square
\[
\begin{tikzcd}
h_V\ar[r,"h_a"]\ar[d,"h_b"']&
h_{Y'\times_X Y}\ar[d]
\\
h_{Y\times_X Y'}\ar[r]&
h_{Y\times_X Y}
\end{tikzcd}
\]
commutes.
By Proposition \ref{div.5}, after replacing $V$ by its suitable dividing cover, we may assume that the induced square
\[
\begin{tikzcd}
V\ar[r,"a"]\ar[d,"b"']&
Y'\times_X Y\ar[d]
\\
Y\times_X Y'\ar[r]&
Y\times_X Y
\end{tikzcd}
\]
commutes.
Let $g''\colon V\to Y\times_X Y$ be the composition obtained from this.
We also have the induced diagram
\[
\begin{tikzcd}
h_{V}\ar[d,shift left=0.5ex,"h_{q_2b}"]\ar[d,shift right=0.5ex,"h_{q_1a}"']\ar[r,"h_{g''}"]&
h_{Y\times_X Y}\ar[d,shift left=0.5ex,"h_{p_2}"]\ar[d,shift right=0.5ex,"h_{p_1}"']
\\
h_{Y'}\ar[r,"g'"]&
h_{Y},
\end{tikzcd}
\]
where $p_1$ and $q_1$ are the first projections and $p_2$ and $q_2$ are the second projections.
The two squares formed with the left vertical or right vertical morphisms commute.
By Proposition \ref{div.5} again, after replacing $V$ by its suitable dividing cover, we may assume that the two squares in the diagram
\begin{equation}
\label{equiv.17.4}
\begin{tikzcd}
V\ar[d,shift left=0.5ex,"q_2b"]\ar[d,shift right=0.5ex,"q_1a"']\ar[r,"g''"]&
Y\times_X Y\ar[d,shift left=0.5ex,"p_2"]\ar[d,shift right=0.5ex,"p_1"']
\\
Y'\ar[r,"g'"]&
Y
\end{tikzcd}
\end{equation}
commute.
Using \cite[Proposition 4.2.3]{logA1}, we can replace $X$ with its suitable dividing cover and $Y$, $Y'$, and $V$ by their corresponding pullbacks to assume that the composition $V\to X$ is exact.
In this case, $a$ and $b$ are exact since $Y'\times_X Y$ and $Y\times_X Y'$ are strict over $X$.
Proposition \ref{equiv.11}(3) shows that $a$ and $b$ are isomorphisms.
It follows that the two squares in \eqref{equiv.17.4} are cartesian.
All the morphisms in \eqref{equiv.17.4} are strict.
By gluing, we obtain $X'\in \lSch/B$ with a cartesian square
\[
\begin{tikzcd}
Y'\ar[d,"f'"']\ar[r,"g'"]&
Y\ar[d,"f"]
\\
X'\ar[r,"g"]&
X
\end{tikzcd}
\]
such that $g$ is a $\cP$-morphism.
Since $g$ is strict,
we have $X'\in \lFan/B$.

Proposition \ref{equiv.36} shows that $h_f$ is an epimorphism of sheaves.
Hence $h_f$ is a universal effective epimorphism of sheaves.
Its pullbacks $h_{f'}$ and $u'$ are universal effective epimorphisms of sheaves too, so we have isomorphisms
\[
h_{X'}\simeq \Coeq(h_{V}\rightrightarrows h_{Y'}) \simeq \cF'.
\]
By Lemma \ref{equiv.29} for $g$, we deduce that $v$ is a representable $\cP$-morphism.
\end{proof}

\section{Divided log spaces}\label{divspace}

We adapt the definition of algebraic spaces \cite[Definition II.1.1]{zbMATH03350017} to the dividing Zariski topology as follows.

\begin{definition}
\label{equiv.5}
We say that $\cX\in \Shv_{dZar}(\lFan/B)$ is a \emph{(noetherian) divided log space over $B$} if the following two conditions are satisfied.
\begin{enumerate}
\item[(i)]
The diagonal morphism $\Delta\colon\cX\to \cX\times \cX$ is a representable strict immersion.
\item[(ii)]
There exists a representable Zariski cover $h_X\to \cX$ with $X\in \lFan/B$.
\end{enumerate}
A \emph{morphism of divided log spaces over $B$} is a morphism of sheaves.
The category of divided log spaces over $B$ is denoted by $\lSpc/B$.
\end{definition}

\begin{remark}
\label{equiv.7}
We chose the terminology ``divided'' because a dividing sheaf has no more nontrivial dividing cover, i.e., dividing is finished.
\end{remark}

\begin{proposition}
\label{equiv.20}
Let $h_Y\to \cX$ and $h_Z\to \cX$ be morphisms in $\lSpc/B$ with $Y,Z\in \lFan/B$.
Then the fiber product $h_Y\times_{\cX}h_Z$ is representable.
\end{proposition}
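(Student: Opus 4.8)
The plan is to reduce the fiber product $h_Y \times_{\cX} h_Z$ to a representable object by using the diagonal condition (i) of Definition \ref{equiv.5} together with the representable Zariski cover from condition (ii). The key observation is that $h_Y \times_{\cX} h_Z$ can be rewritten as a base change of the diagonal $\Delta\colon \cX \to \cX \times \cX$. Concretely, consider the product morphism $h_Y \times h_Z \to \cX \times \cX$ and form the cartesian square whose pullback of $\Delta$ along this map is precisely $h_Y \times_{\cX} h_Z$. Since $Y, Z \in \lFan/B$ and $h$ preserves finite limits by Proposition \ref{div.13}, we have $h_Y \times h_Z \simeq h_{Y \times_B Z}$, which is representable.

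**First I would** make this reduction precise. There is a cartesian square
\[
\begin{tikzcd}
h_Y\times_{\cX}h_Z\ar[r]\ar[d]&
h_{Y\times_B Z}\ar[d]
\\
\cX\ar[r,"\Delta"]&
\cX\times\cX,
\end{tikzcd}
\]
where the right vertical map is induced by the two morphisms $h_Y \to \cX$ and $h_Z \to \cX$ (using $h_{Y \times_B Z} \simeq h_Y \times h_Z$). By condition (i), $\Delta$ is a representable strict immersion, and in particular representable. Since the class of representable morphisms is closed under pullbacks, the left vertical morphism $h_Y \times_{\cX} h_Z \to \cX$ is representable. Now I would apply the definition of representability (Definition \ref{equiv.6}) to the morphism $h_{Y \times_B Z} \to \cX$: because $Y \times_B Z \in \lFan/B$, representability of $\Delta$ along the map $h_{Y \times_B Z} \to \cX \times \cX$ directly yields that the fiber product $h_{Y \times_B Z} \times_{\cX \times \cX} \cX$ is of the form $h_W$ for some $W \in \lFan/B$.

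**The main obstacle** I anticipate is correctly identifying the fiber product $h_Y \times_{\cX} h_Z$ with the pullback of the diagonal, i.e.\ verifying the isomorphism
\[
h_Y\times_{\cX} h_Z
\simeq
(h_Y\times h_Z)\times_{\cX\times\cX}\cX.
\]
This is the standard categorical identity expressing a fiber product over $\cX$ as a base change of the diagonal, valid in any topos, but I would want to spell out that the two projections $h_Y \to \cX$ and $h_Z \to \cX$ assemble into the map $h_Y \times h_Z \to \cX \times \cX$, and that pulling back $\Delta$ along this map imposes exactly the condition that the two images in $\cX$ agree. Once this identity is in hand, the representability of $\Delta$ from condition (i) does all the work: the fiber product is the pullback of a representable morphism along $h_{Y\times_B Z}\to \cX\times\cX$, hence representable by a single object of $\lFan/B$.
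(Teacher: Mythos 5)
Your proposal is correct and follows essentially the same route as the paper: the paper's proof consists precisely of the cartesian square identifying $h_Y\times_{\cX}h_Z$ with the pullback of the diagonal $\Delta\colon\cX\to\cX\times\cX$ along $h_Y\times h_Z$, and then invoking condition (i) of Definition \ref{equiv.5}. Your additional step of identifying $h_Y\times h_Z$ with $h_{Y\times_B Z}$ via Proposition \ref{div.13} just makes explicit what the paper leaves implicit.
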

\begin{proof}
Consider the induced cartesian square
\[
\begin{tikzcd}
h_Y\times_{\cX}h_Z\ar[r]\ar[d]&
h_Y\times h_Z\ar[d]
\\
\cX\ar[r,"\Delta"]&
\cX\times \cX.
\end{tikzcd}
\]
The condition that $\Delta$ is a representable strict immersion implies the claim.
\end{proof}

\begin{proposition}
\label{equiv.38}
Let $\cZ\xrightarrow{g}\cY\xrightarrow{f}\cX$ be morphisms in $\lSpc/B$.
If $f$ is a representable strict (resp.\ strict separated) morphism and $fg$ is a representable strict (resp.\ strict closed) immersion, then $g$ is a representable strict (resp.\ strict closed) immersion.
\end{proposition}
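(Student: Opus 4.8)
The plan is to factor $g$ through its graph and reduce the assertion to two pullback statements combined with the stability of representable strict (resp.\ strict closed) immersions under composition. Throughout, all fiber products are formed in $\Shv_{dZar}(\lFan/B)$, where they exist, and I use repeatedly that for a class $\cP$ closed under pullbacks the class of representable $\cP$-morphisms is closed under pullbacks (the remark after Definition \ref{equiv.6}), and that for the classes of strict immersions and of strict closed immersions the hypotheses of Proposition \ref{equiv.15} hold: both classes are closed under compositions and pullbacks and satisfy \textup{(Div)} by Example \ref{equiv.16}(1).

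First I would write the graph factorization $g=p_2\circ\Gamma_g$, where $\Gamma_g\colon\cZ\to\cZ\times_\cX\cY$ is the graph of $g$ and $p_2\colon\cZ\times_\cX\cY\to\cY$ is the second projection. A functor-of-points computation shows two things: the square with vertices $\cZ\times_\cX\cY$, $\cZ$, $\cY$, $\cX$ is cartesian, so that $p_2$ is a pullback of $fg$ along $f$; and $\Gamma_g$ is a pullback of the relative diagonal $\Delta_f\colon\cY\to\cY\times_\cX\cY$ along the morphism $\cZ\times_\cX\cY\to\cY\times_\cX\cY$ induced by $g$. Since $fg$ is a representable strict (resp.\ strict closed) immersion and this class is pullback-stable, $p_2$ is a representable strict (resp.\ strict closed) immersion. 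It remains to treat $\Gamma_g$, for which it suffices to show that $\Delta_f$ is a representable strict (resp.\ strict closed) immersion and again invoke pullback stability.

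To handle $\Delta_f$ in the first case I would identify it as a pullback of the absolute diagonal $\Delta_\cY\colon\cY\to\cY\times\cY$: there is a cartesian square
\[
\begin{tikzcd}
\cY\ar[d,"\id"']\ar[r,"\Delta_f"]&
\cY\times_\cX\cY\ar[d]
\\
\cY\ar[r,"\Delta_\cY"]&
\cY\times\cY,
\end{tikzcd}
\]
where the right vertical map is the canonical morphism over $B$. Since $\cY$ is a divided log space, $\Delta_\cY$ is a representable strict immersion by condition (i) of Definition \ref{equiv.5}, and hence so is its pullback $\Delta_f$. In the separated case, $\Delta_f$ is a representable strict closed immersion directly because $f$ is strict separated, so its relative diagonal is a representable strict closed immersion by definition. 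In either case $\Gamma_g$, being a pullback of $\Delta_f$, is a representable strict (resp.\ strict closed) immersion.

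Finally I would conclude that $g=p_2\circ\Gamma_g$ is a representable strict (resp.\ strict closed) immersion by Proposition \ref{equiv.15}, applied to $\cP$ the class of strict immersions (resp.\ strict closed immersions). The point to get right is the identification of the relative diagonal $\Delta_f$ with a pullback of $\Delta_\cY$, since this is exactly where the divided log space structure of $\cY$ enters—equivalently, where the fact that the diagonal becomes \emph{strict} after inverting log blow-ups is used. Once the two squares are seen to be cartesian, the remainder is a formal consequence of the pullback and composition stability established in Section \ref{rep}.
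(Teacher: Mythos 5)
Your argument is correct, but it takes a genuinely different route from the paper's. The paper works entirely with local models: it chooses a representable Zariski cover $h_U\to \cX$, uses Lemmas \ref{equiv.18}, \ref{equiv.27} and Proposition \ref{div.5} to realize $f$ and $fg$ (after dividing covers) by actual morphisms $V\to U$ and $W''\to U''$ in $\lFan/B$ of the stated types, invokes the classical scheme-level cancellation (a strict morphism into a strict, resp.\ strict closed, immersion over a strict, resp.\ strict separated, base is a strict, resp.\ strict closed, immersion), and then descends with Lemma \ref{equiv.17}. You instead run the graph-factorization argument directly at the level of sheaves: $g=p_2\circ\Gamma_g$ with $p_2$ a pullback of $fg$ and $\Gamma_g$ a pullback of $\Delta_f$, which you identify as a pullback of the absolute diagonal $\Delta_{\cY}$; both cartesian squares check out on functors of points, and Proposition \ref{equiv.15} applies since strict and strict closed immersions satisfy (Div) by Example \ref{equiv.16}(1). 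This is cleaner and avoids most of the dividing-cover bookkeeping; it also shows that in the strict-immersion case the hypothesis on $f$ is not needed at all (only condition (i) of Definition \ref{equiv.5} for $\cY$), and it gives an independent proof of Proposition \ref{equiv.39}, which the paper instead deduces \emph{from} Proposition \ref{equiv.38} — so be aware your route inverts that logical order (harmlessly, since your proof of the strict case does not use Proposition \ref{equiv.39}). The one place you are too quick is the separated case: that $\Delta_f$ is a representable strict closed immersion when $f$ is a representable strict separated morphism is not literally ``by definition.'' You need to pull back along a representable Zariski cover $h_U\to\cX$ to present $f$ as $h_v$ for a strict separated $v\colon V\to U$, identify the pullback of $\Delta_f$ with $h_{\Delta_v}$ (a representable strict closed immersion by Lemma \ref{equiv.29}), and then descend using Lemma \ref{equiv.17}(iv). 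With that short supplement the proof is complete.
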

\begin{proof}
Choose a representable Zariski cover $h_U\to \cX$ with $U\in \lFan/B$.
Since $f$ and $fg$ are representable, there exists a commutative diagram
\[
\begin{tikzcd}
h_W\ar[d]\ar[r]&
h_V\ar[d]\ar[r]&
h_U\ar[d]
\\
\cZ\ar[r,"g"]&
\cY\ar[r,"f"]&
\cX
\end{tikzcd}
\]
with cartesian squares.
By Lemma \ref{equiv.18} and Proposition \ref{equiv.37}, we may assume that $h_V\to h_U$ is equal to $h_u$ for some strict (resp.\ strict separated) morphism $u\colon V\to U$ in $\lFan/B$ and $h_W\to h_V$ is equal to $h_v$ for some morphism $v\colon W\to V$ in $\lFan/B$.
Lemma \ref{equiv.27} yields a commutative square
\[
\begin{tikzcd}
h_{W'}\ar[d,"h_q","\simeq"']\ar[r,"h_a"]&
h_{U'}\ar[d,"h_p"',"\simeq"]
\\
h_W\ar[r,"h_{uv}"]&
h_U
\end{tikzcd}
\]
with vertical isomorphisms such that $a$ is a strict (resp.\ strict closed) immersion and $p$ and $q$ are dividing covers in $\lFan/B$.
By Proposition \ref{div.5}, there exists a dividing cover $r\colon W''\to W'$ such that the square
\[
\begin{tikzcd}
W''\ar[r,"ar"]\ar[d,"qr"']&
U'\ar[d,"p"]
\\
W\ar[r,"uv"]&
U
\end{tikzcd}
\]
commutes.
Use \cite[Proposition 4.2.3]{logA1} to find a dividing cover $U''\to U'$ such that the projection $W''\times_{U'}U''\to U''$ is exact.
The projection $W'\times_{U'}U''\to U''$ is a strict (resp.\ strict closed) immersion, so the pullback $W''\times_{U'}U''\to W'\times_{U'} U''$ is an exact dividing cover, i.e., an isomorphism by Proposition \ref{equiv.11}(3).
It follows that the projection $W''\times_{U'}U''\to U''$ is a strict 
(resp.\ strict closed) immersion.

We have the commutative diagram
\[
\begin{tikzcd}
W''\times_{U'} U''\ar[d]\ar[r]&
V\times_U U''\ar[r]\ar[d]&
U''\ar[d]
\\
W''\ar[r]&
V\times_U U'\ar[r]&
U'.
\end{tikzcd}
\]
with cartesian squares.
Since $u$ is a strict (resp.\ strict separated) morphism, the projection $V\times_U U''\to U''$ is a strict (resp.\ strict separated) morphism.
It follows that the induced morphism $W''\times_{U'}U''\to V\times_U U''$ is a strict (resp.\ strict closed) immersion.
Since the composition $h_{V\times_U U''}\to \cY$ is a representable Zariski cover, Lemma \ref{equiv.17} shows that $g$ is a representable strict (resp.\ strict closed) immersion.
\end{proof}

\begin{proposition}
\label{equiv.39}
Let $\cY\to \cX$ be a morphism in $\lSpc/B$.
Then the diagonal morphism $\cY\to \cY\times_{\cX}\cY$ is a representable strict immersion.
\end{proposition}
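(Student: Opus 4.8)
The plan is to realize the relative diagonal $\Delta_{\cY/\cX}\colon \cY\to\cY\times_{\cX}\cY$ as a base change of the absolute diagonal $\Delta_{\cY}\colon \cY\to\cY\times\cY$, and then to invoke the stability of representable strict immersions under pullback. Deliberately, I would \emph{not} route this through Proposition \ref{equiv.38}: its hypotheses require the relevant sheaves to lie in $\lSpc/B$, which for $\cY\times_{\cX}\cY$ and $\cY\times\cY$ is exactly the kind of closure statement that this proposition is meant to help establish. The base-change argument below uses only that these objects exist as sheaves, so it avoids any circularity.

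First I would record the canonical morphism $\iota\colon \cY\times_{\cX}\cY\to\cY\times\cY$ induced by the two projections, and note that $\iota\circ\Delta_{\cY/\cX}=\Delta_{\cY}$ since both send a point $w$ to $(w,w)$. The key step is to verify that the square
\[
\begin{tikzcd}
\cY\ar[r,"\Delta_{\cY/\cX}"]\ar[d,"\id"']&
\cY\times_{\cX}\cY\ar[d,"\iota"]
\\
\cY\ar[r,"\Delta_{\cY}"]&
\cY\times\cY
\end{tikzcd}
\]
is cartesian. This is a direct Yoneda computation: for $W\in\lFan/B$, a $W$-point of $\cY\times_{\cY\times\cY}(\cY\times_{\cX}\cY)$ is a pair $(s,(t_1,t_2))$ with $s\in\cY(W)$ and $(t_1,t_2)\in(\cY\times_{\cX}\cY)(W)$ satisfying $(s,s)=(t_1,t_2)$. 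Hence $t_1=t_2=s$, the compatibility $ft_1=ft_2$ over $\cX$ is automatic, and the fiber product is canonically identified with $\cY$ so that its projection to $\cY\times_{\cX}\cY$ becomes $\Delta_{\cY/\cX}$.

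Granting the cartesianness, $\Delta_{\cY/\cX}$ is the pullback of $\Delta_{\cY}$ along $\iota$. By condition (i) of Definition \ref{equiv.5} applied to $\cY$, the morphism $\Delta_{\cY}$ is a representable strict immersion. Since strict immersions in $\lFan/B$ are stable under base change, the remark following Definition \ref{equiv.6} shows that the class of representable strict immersions in $\Shv_{dZar}(\lFan/B)$ is closed under pullbacks. Therefore $\Delta_{\cY/\cX}$ is a representable strict immersion, which is the assertion.

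I expect the only genuine point to watch is the cartesianness of the displayed square; once that universal-property identification is made, the conclusion is immediate from base-change stability. It is worth emphasizing that the argument imposes no hypothesis on $\cY\times_{\cX}\cY$ or $\cY\times\cY$ beyond their existence as dividing Zariski sheaves, so this proposition is available as an input to the subsequent results on fiber products of divided log spaces rather than depending on them.
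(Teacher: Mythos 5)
Your proof is correct, but it takes a genuinely different route from the paper. The paper also starts from the factorization $\Delta_{\cY}=\iota\circ\Delta_{\cY/\cX}$ and the cartesian square exhibiting $\iota\colon\cY\times_{\cX}\cY\to\cY\times\cY$ as a base change of $\Delta_{\cX}$; it concludes that $\iota$ is a representable strict immersion (using condition (i) for $\cX$) and then invokes the cancellation statement, Proposition \ref{equiv.38}, applied to $\cY\to\cY\times_{\cX}\cY\to\cY\times\cY$. You instead exhibit $\Delta_{\cY/\cX}$ directly as the base change of $\Delta_{\cY}$ along $\iota$ via the standard cartesian square $\cY\simeq\cY\times_{\cY\times\cY}(\cY\times_{\cX}\cY)$, and your Yoneda verification of that square is correct. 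Your argument buys several things: it uses only condition (i) of Definition \ref{equiv.5} for $\cY$ (not for $\cX$), it needs nothing beyond pullback stability of representable strict immersions, and it bypasses Proposition \ref{equiv.38} entirely --- whose proof is one of the longer arguments in the section and whose hypotheses, as you rightly observe, are phrased for morphisms in $\lSpc/B$ even though $\cY\times_{\cX}\cY$ and $\cY\times\cY$ are only shown to be divided log spaces in Proposition \ref{equiv.21}, which in turn cites the present proposition. (The paper's use of Proposition \ref{equiv.38} is not actually circular, since its proof only needs a representable Zariski cover of the target, which $\cY\times\cY$ has; but your route makes the logical order transparent without any such inspection.) What the paper's route buys in exchange is that Proposition \ref{equiv.38} is needed elsewhere anyway, so the author reuses machinery already in place rather than introducing a separate cartesian-square argument.
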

\begin{proof}
Consider the induced commutative diagram
\[
\begin{tikzcd}
\cY\ar[r]&
\cY\times_{\cX}\cY\ar[r]\ar[d]&
\cY\times \cY\ar[d]
\\
&
\cX\ar[r]&
\cX\times \cX,
\end{tikzcd}
\]
where the horizontal morphisms are the diagonal morphisms, and the square is cartesian.
The diagonal morphisms $\cX\to \cX\times \cX$ and $\cY\to \cY\times \cY$ are representable strict immersions.
To finish the proof, apply Proposition \ref{equiv.38} to the upper row.
\end{proof}

\begin{proposition}
\label{equiv.21}
Let $\cY\to \cX$ and $\cZ\to \cX$ be morphisms in $\lSpc/B$.
Then the fiber product $\cY\times_{\cX}\cZ$ in $\Shv_{dZar}(\lFan/B)$ is a divided log space over $B$.
\end{proposition}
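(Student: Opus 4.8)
The plan is to verify the two conditions of Definition \ref{equiv.5} for $\cY\times_{\cX}\cZ$, assembling everything from pullbacks and compositions of morphisms already known to be representable strict immersions or representable Zariski covers. Throughout I would use that, by the remark following Definition \ref{equiv.6}, a class of representable $\cP$-morphisms is stable under pullback whenever $\cP$ is stable under pullback, and that by Proposition \ref{equiv.15} it is stable under composition whenever $\cP$ is moreover stable under composition and satisfies \textup{(Div)}. Both the class of strict immersions and the class of Zariski covers consist of strict morphisms and are stable under pullback and composition, so they satisfy \textup{(Div)} by Example \ref{equiv.16}(1); hence representable strict immersions and representable Zariski covers are each stable under pullback and composition.

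For condition (i), I would factor the diagonal through the relative diagonal over $\cX$:
\[
\cY\times_{\cX}\cZ \xrightarrow{\ \delta\ } (\cY\times_{\cX}\cZ)\times_{\cX}(\cY\times_{\cX}\cZ) \xrightarrow{\ \iota\ } (\cY\times_{\cX}\cZ)\times(\cY\times_{\cX}\cZ).
\]
The canonical morphism $\iota$ is the pullback of $\Delta\colon \cX\to \cX\times \cX$ along the projection $(\cY\times_{\cX}\cZ)\times(\cY\times_{\cX}\cZ)\to \cX\times \cX$, via the identification
\[
(\cY\times_{\cX}\cZ)\times_{\cX}(\cY\times_{\cX}\cZ)\simeq \big((\cY\times_{\cX}\cZ)\times(\cY\times_{\cX}\cZ)\big)\times_{\cX\times \cX}\cX;
\]
since $\Delta$ is a representable strict immersion by condition (i) for $\cX$, so is $\iota$. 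For the relative diagonal $\delta$, I would use the reshuffling isomorphism $(\cY\times_{\cX}\cZ)\times_{\cX}(\cY\times_{\cX}\cZ)\simeq (\cY\times_{\cX}\cY)\times_{\cX}(\cZ\times_{\cX}\cZ)$, under which $\delta$ becomes the fiber product over $\cX$ of the relative diagonals $\cY\to \cY\times_{\cX}\cY$ and $\cZ\to \cZ\times_{\cX}\cZ$. Writing this as the composite
\[
\cY\times_{\cX}\cZ \to (\cY\times_{\cX}\cY)\times_{\cX}\cZ \to (\cY\times_{\cX}\cY)\times_{\cX}(\cZ\times_{\cX}\cZ),
\]
the first morphism is the pullback of $\cY\to \cY\times_{\cX}\cY$ and the second the pullback of $\cZ\to \cZ\times_{\cX}\cZ$. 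Both relative diagonals are representable strict immersions by Proposition \ref{equiv.39}, so each factor, hence $\delta$, and therefore the full diagonal $\iota\circ\delta$, is a representable strict immersion.

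For condition (ii), I would choose representable Zariski covers $h_Y\to \cY$ and $h_Z\to \cZ$ with $Y,Z\in \lFan/B$ afforded by condition (ii) for $\cY$ and $\cZ$. Viewing these as morphisms over $\cX$, Proposition \ref{equiv.20} makes the fiber product $h_Y\times_{\cX}h_Z$ representable, say by $W\in \lFan/B$, and the resulting morphism $h_W\to \cY\times_{\cX}\cZ$ factors as
\[
h_Y\times_{\cX}h_Z \to \cY\times_{\cX}h_Z \to \cY\times_{\cX}\cZ,
\]
the pullback of $h_Y\to \cY$ followed by the pullback of $h_Z\to \cZ$. Each is a representable Zariski cover, so their composite is too, giving the required representable Zariski cover of $\cY\times_{\cX}\cZ$. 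With both conditions verified, $\cY\times_{\cX}\cZ$ is a divided log space.

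The main obstacle is bookkeeping rather than anything conceptual: one must set up the sheaf-theoretic fiber-product identifications correctly — the reshuffling isomorphism together with the cancellation isomorphisms recognizing each morphism above as an honest pullback — and confirm that strict immersions and Zariski covers satisfy \textup{(Div)} so that Proposition \ref{equiv.15} genuinely applies. Once these closure properties are secured, the argument is a routine assembly of pullbacks and compositions of morphisms of the two known types.
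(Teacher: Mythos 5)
Your proof is correct and follows essentially the same strategy as the paper: verify condition (i) by factoring the diagonal of $\cY\times_{\cX}\cZ$ into pullbacks of diagonals already known to be representable strict immersions (via Proposition \ref{equiv.39} and closure under pullback and composition), and condition (ii) by composing the pullbacks of the given representable Zariski covers $h_Y\to\cY$ and $h_Z\to\cZ$, using Proposition \ref{equiv.20} for representability of $h_Y\times_{\cX}h_Z$. The only difference is the bookkeeping of the diagonal factorization --- the paper writes it as the pullback of the relative diagonal of $\cZ$ followed by a pullback of the absolute diagonal $\cY\to\cY\times\cY$, whereas you use both relative diagonals over $\cX$ followed by a pullback of $\Delta\colon\cX\to\cX\times\cX$ --- which is an equivalent and equally valid decomposition.
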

\begin{proof}
We have the induced cartesian square
\[
\begin{tikzcd}
\cY\times_{\cX}\cZ\times_{\cX}\cZ\ar[r,"d"]\ar[d]&
\cY\times_{\cX}\cZ\times \cY\times_{\cX}\cZ\ar[d]
\\
\cY\ar[r]&
\cY\times \cY.
\end{tikzcd}
\]
Since the diagonal morphism $\cY\to \cY\times \cY$ is a representable strict immersion, $d$ is a representable strict immersion.
Compose $d$ with the pulback $\cY\times_{\cX} \cZ\to \cY\times_{\cX} \cZ\times_{\cX} \cZ$ of the diagonal morphism $\cZ\to \cZ\times_{\cX}\cZ$, which is a representable strict immersion by Proposition \ref{equiv.39}, to deduce the condition (i) in Definition \ref{equiv.5} for $\cY\times_{\cX}\cZ$.

There exist representable Zariski covers $h_Y\to \cY$ and $h_Z\to \cZ$ with $Y,Z\in \lFan/B$.
The pullbacks $h_Y\times_{\cX} h_Z\to \cY\times_{\cX} h_Z$ and $\cY\times_{\cX} h_Z\to \cY\times_{\cX} \cZ$ are representable Zariski covers.
Hence the composition $h_Y\times_{\cX} h_Z\to \cY\times_{\cX} \cZ$ is a representable Zariski cover by Proposition \ref{equiv.15}.
This shows the condition (ii) in Definition \ref{equiv.5} for $\cY\times_{\cX}\cZ$.
\end{proof}

\begin{proposition}
\label{equiv.31}
Let $f\colon \cY\to \cX$ be a representable Zariski cover in $\lSpc/B$.
If $f$ is a monomorphism in $\lSpc/B$, then $f$ is an isomorphism.
\end{proposition}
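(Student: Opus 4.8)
The plan is to exploit that $\Shv_{dZar}(\lFan/B)$ is a Grothendieck topos, and is therefore a balanced category: a morphism of sheaves that is simultaneously a monomorphism and an epimorphism must be an isomorphism. So the whole proof reduces to checking that $f$ has both properties and that ``monomorphism'' is unambiguous here.

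First I would record that $f$ is an epimorphism of sheaves. This is immediate from Proposition \ref{equiv.36}, since by hypothesis $f$ is a representable Zariski cover. Next I would verify that $f$ is a monomorphism in the ambient topos $\Shv_{dZar}(\lFan/B)$, and not merely in $\lSpc/B$. By Proposition \ref{equiv.21}, the subcategory $\lSpc/B$ is closed under fiber products inside $\Shv_{dZar}(\lFan/B)$, so the diagonal $\cY\to \cY\times_{\cX}\cY$ formed in $\lSpc/B$ coincides with the one formed in the topos. The hypothesis that $f$ is a monomorphism in $\lSpc/B$ says precisely that this diagonal is an isomorphism, hence $f$ is a monomorphism in $\Shv_{dZar}(\lFan/B)$ as well.

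Finally I would combine the two facts. In a Grothendieck topos every epimorphism is effective, so $f$ exhibits $\cX$ as the coequalizer of its kernel pair $\cY\times_{\cX}\cY\rightrightarrows \cY$. The monomorphism condition means the diagonal $\cY\xrightarrow{\simeq}\cY\times_{\cX}\cY$ is an isomorphism, so under this identification the two projections of the kernel pair both become the identity of $\cY$; the coequalizer of $\cY\rightrightarrows \cY$ is then $\cY$ itself. Therefore $f\colon \cY\to \cX$ is an isomorphism.

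I expect no real obstacle here, as the argument involves no computation. The only points requiring care are the standard topos-theoretic inputs (balancedness, equivalently the effectivity of epimorphisms together with the triviality of the kernel pair of a monomorphism) and the bookkeeping that makes ``monomorphism'' agree between $\lSpc/B$ and the surrounding topos, for which Proposition \ref{equiv.21} is the essential ingredient.
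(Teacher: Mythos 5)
Your proof is correct and follows essentially the same route as the paper: both reduce to showing $f$ is simultaneously a monomorphism and an epimorphism of sheaves via Propositions \ref{equiv.21} and \ref{equiv.36}. The only difference is the final step, where the paper checks the conclusion on stalks using that the dividing Zariski topology has enough points (Remark \ref{equiv.35}), while you invoke the balancedness of a Grothendieck topos directly; both justifications are standard and valid, and yours has the minor advantage of not needing the existence of points.
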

\begin{proof}
The diagonal morphism $\cY\to \cY\times_{\cX} \cY$ is an isomorphism, where $\cY\times_{\cX} \cY$ is the fiber product in $\lSpc/B$, which is the fiber product in $\Shv_{dZar}(\lFan/B)$ by Proposition \ref{equiv.21}.
Hence $f$ is a monomorphism of sheaves.
By Proposition \ref{equiv.36}, $f$ is an epimorphism of sheaves.
It follows that $f$ is a stalkwise isomorphism of sheaves since the dividing Zariski topology has enough points by Remark \ref{equiv.35}, i.e., $f$ is an isomorphism.
\end{proof}

\begin{proposition}
\label{equiv.40}
Suppose that $\cP$ is the class of monomorphisms in $\lFan/B$.
If $f\colon \cY\to \cX$ be a representable $\cP$-morphism in $\lSpc/B$, then $f$ is a monomorphism in $\lSpc/B$.
\end{proposition}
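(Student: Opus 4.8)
The plan is to show that $f$ is injective on all sections, which is equivalent to $f$ being a monomorphism of sheaves and hence a monomorphism in the full subcategory $\lSpc/B$. So I would fix $T\in \lFan/B$ and prove that $f(T)\colon \cY(T)\to \cX(T)$ is injective. Since $\cY$ and $\cX$ are in particular Zariski sheaves and $T$ is a finite disjoint union of objects admitting fan charts, I may assume throughout that $T$ admits a fan chart $\Sigma$.

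The key preliminary observation is that the Yoneda functor $h$ carries monomorphisms of fs log schemes to monomorphisms of sheaves. Indeed, let $g\colon V\to U$ be a monomorphism in $\lFan/B$ and let $S\in \lFan/B$ admit a fan chart $\Sigma$; writing $S_{\Sigma'}:=S\times_{\T_\Sigma}\T_{\Sigma'}$, Proposition \ref{div.5} identifies $h_g(S)$ with the map of filtered colimits $\colimit_{\Sigma'\to \Sigma}\Hom(S_{\Sigma'},V)\to \colimit_{\Sigma'\to \Sigma}\Hom(S_{\Sigma'},U)$. Each transition map $\Hom(S_{\Sigma'},V)\to \Hom(S_{\Sigma'},U)$ is injective because $g$ is a monomorphism, and a filtered colimit of injections is again injective; hence $h_g$ is injective on all sections, i.e.\ a monomorphism of sheaves.

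Next I would reduce to the representable situation. Given $s,s'\in \cY(T)$ with common image $t:=f(s)=f(s')\in \cX(T)$, regard $t$ as a morphism $h_T\to \cX$ and form the projection $p\colon h_T\times_{\cX}\cY\to h_T$. Because $f$ is a representable $\cP$-morphism with $\cP$ the class of monomorphisms, the defining square \eqref{equiv.6.1} (equivalently Lemma \ref{equiv.26}) exhibits $p$, via vertical isomorphisms, as isomorphic to $h_g$ for some monomorphism $g$ in $\lFan/B$. By the previous paragraph $h_g$, and therefore $p$, is a monomorphism of sheaves.

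Finally, the universal property of the fiber product turns $s$ and $s'$ into sections $\sigma,\sigma'\colon h_T\to h_T\times_{\cX}\cY$ of $p$ whose composites with the projection to $\cY$ recover $s$ and $s'$. Since $p\sigma=\id_{h_T}=p\sigma'$ and $p$ is a monomorphism, we conclude $\sigma=\sigma'$, whence $s=s'$. This gives injectivity of $f(T)$ for every $T$, so $f$ is a monomorphism in $\lSpc/B$. The only mildly delicate point is the passage from ``monomorphism of log schemes'' to ``monomorphism of sheaves'': it rests entirely on the fact that the relevant $\Hom$-sets are computed as filtered colimits over subdivisions (Proposition \ref{div.5}) together with the exactness of filtered colimits; everything else in the argument is formal.
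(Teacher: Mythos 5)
Your proof is correct, but it follows a genuinely different route from the paper's. The paper argues through the diagonal: after pulling back along a representable Zariski cover $h_U\to \cX$ and writing the pullback of $f$ as $h_g$ for a monomorphism $g\colon V\to U$, it observes that the diagonal $h_V\to h_{V\times_U V}$ is an isomorphism and then invokes the descent statement Lemma \ref{equiv.17} (for the class of isomorphisms) to conclude that $\Delta\colon \cY\to\cY\times_{\cX}\cY$ is an isomorphism. You instead check injectivity on $T$-sections directly: the definition of a representable $\cP$-morphism, applied to the morphism $h_T\to\cX$ classifying the common image of two sections, exhibits the projection $h_T\times_{\cX}\cY\to h_T$ as conjugate to $h_g$ for a monomorphism $g$, and your preliminary observation --- that $h$ carries monomorphisms of $\lFan/B$ to monomorphisms of sheaves, via the filtered-colimit description of Proposition \ref{div.5} and exactness of filtered colimits of sets --- lets you cancel the two induced sections of that projection. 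Your argument is sound at every step (in particular, the needed direction ``monomorphism of sheaves implies monomorphism in the full subcategory $\lSpc/B$'' is the trivial one), and it is arguably more elementary: it bypasses Lemma \ref{equiv.17} entirely and isolates the reusable fact that $h$ preserves monomorphisms. The paper's route stays within its general template of descending properties along representable Zariski covers and yields the equivalent formulation that the diagonal is an isomorphism of sheaves (the equivalence with your formulation uses Proposition \ref{equiv.21}).
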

\begin{proof}
Choose a representable Zariski cover $h_U\to \cX$ with $U\in \lFan/B$.
We may assume that there exists a cartesian square
\[
\begin{tikzcd}
h_V\ar[r,"h_g"]\ar[d]&
h_U\ar[d]
\\
\cY\ar[r,"f"]&
\cX
\end{tikzcd}
\]
such that $g$ is a monomorphism in $\lFan/B$.
Then we obtain a cartesian square
\[
\begin{tikzcd}
h_V\ar[r,"h_d"]\ar[d]&
h_{V\times_U V}\ar[d]
\\
\cY\ar[r,"\Delta"]&
\cY\times_{\cX} \cY,
\end{tikzcd}
\]
where $d$ and $\Delta$ are the diagonal morphisms.
Since $g$ is a monomorphism, $d$ is an isomorphism.
Use Lemma \ref{equiv.17} to show that $\Delta$ is an isomorphism, i.e., $f$ is a monomorphism.
\end{proof}

Hence any representable strict monomorphism in $\lSpc/B$ is a monomorphism.

\begin{proposition}
\label{equiv.9}
If $X\in \lSch/B$, then $h_X$ is a divided log space over $B$.
\end{proposition}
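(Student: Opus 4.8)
The plan is to verify the two conditions of Definition \ref{equiv.5} for $\cX=h_X$. By Proposition \ref{div.13} the Yoneda functor $h$ preserves finite limits, so $\cX\times\cX=h_X\times h_X\simeq h_{X\times_B X}$, and under this identification the diagonal $\Delta\colon\cX\to\cX\times\cX$ is $h_{\Delta_X}$, where $\Delta_X\colon X\to X\times_B X$ is the diagonal morphism in $\lSch/B$. Recall that $\Delta_X$ is always a monomorphism and that a relative diagonal is an immersion; these are the only geometric facts about $X$ that the argument needs.

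For condition (ii), I would use Definition \ref{lFan.1} to choose a Zariski cover $Y\to X$ with $Y\in\lFan/B$, and claim that $h_Y\to h_X$ is a representable Zariski cover. To test this, take any $h_Z\to h_X$ with $Z\in\lFan/B$; since representability may be checked on the connected components of $Z$, I may assume $Z$ admits a fan chart. By Proposition \ref{div.5} there is a dividing cover $Z'\to Z$ together with a morphism $Z'\to X$ in $\lSch/B$ inducing $h_{Z'}\simeq h_Z\to h_X$. Then Proposition \ref{div.13} gives $h_Z\times_{h_X}h_Y\simeq h_{Z'}\times_{h_X}h_Y\simeq h_{Z'\times_X Y}$, and $Z'\times_X Y\to Z'$ is a Zariski cover whose source lies in $\lFan/B$, being a disjoint union of opens in $Z'$ and hence strict over a fan-chart object. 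This produces the square required in \eqref{equiv.6.1} and settles (ii).

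For condition (i), I would run the same reduction on the diagonal. Given $h_Z\to h_{X\times_B X}$ with $Z\in\lFan/B$ admitting a fan chart $\Sigma$, Proposition \ref{div.5} yields a dividing cover $Z'\to Z$ and a morphism $Z'\to X\times_B X$, and Proposition \ref{div.13} identifies $h_Z\times_{h_{X\times_B X}}h_X\simeq h_W$ with $W:=Z'\times_{X\times_B X}X$. The projection $W\to Z'$ is a pullback of $\Delta_X$, hence a monomorphism and an immersion. Now I would invoke Proposition \ref{equiv.23}: applied to the monomorphism $W\to Z'$ over the fan chart of $Z'$, it produces a subdivision $\Sigma''$ such that $W\times_{\T_\Sigma}\T_{\Sigma''}\to Z'\times_{\T_\Sigma}\T_{\Sigma''}$ is a strict monomorphism; being a base change of the immersion $W\to Z'$, it is moreover an immersion, hence a strict immersion, and its source is strict over a fan-chart object and so lies in $\lFan/B$. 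Writing $Z'':=Z'\times_{\T_\Sigma}\T_{\Sigma''}$, the dividing covers $Z''\to Z'\to Z$ give $h_{Z''}\simeq h_Z$ and $h_W\times_{h_{Z'}}h_{Z''}\simeq h_W$, which yields the square exhibiting $\Delta$ as a representable strict immersion.

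The limit-preservation and the bookkeeping with dividing covers are routine; the crux is the passage from $\lSch/B$ into the fan-chart setting, where Proposition \ref{div.5} converts abstract maps of sheaves into honest morphisms of log schemes after a dividing cover, and Proposition \ref{equiv.23} converts the (non-strict) monomorphism $\Delta_X$ into a strict immersion after a subdivision. The one point requiring care is that Proposition \ref{equiv.23} only guarantees a strict monomorphism for a general monomorphism, so the argument genuinely uses that $\Delta_X$ is an immersion in order to upgrade the conclusion to a strict immersion; alternatively, one may work Zariski-locally on $X\times_B X$, where $\Delta_X$ is a closed immersion, apply the proper-monomorphism case of Proposition \ref{equiv.23}, and glue.
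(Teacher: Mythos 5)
Your proof is correct and follows essentially the same route as the paper: use Proposition \ref{div.5} to replace a test object by a dividing cover on which the map to $h_X$ (or $h_{X\times_B X}$) is induced by an honest morphism of log schemes, identify the fiber product via Proposition \ref{div.13}, and then apply Proposition \ref{equiv.23} to rectify the pulled-back diagonal. The only cosmetic difference is in condition (i): the paper observes that $\Delta_X$ is a \emph{proper} monomorphism and invokes the proper-monomorphism case of Proposition \ref{equiv.23} to land directly on a strict closed immersion, whereas you use the plain monomorphism case plus stability of immersions under base change to get a strict immersion, which is exactly what Definition \ref{equiv.5}(i) requires --- and you already note the paper's variant as an alternative.
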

\begin{proof}
Choose a Zariski cover $\{X_i\}_{i\in I}$ of $X$ such that each $\ul{X_i}$ is affine,
and consider the union $D$ of $X_i\times_B X_i$ in $X\times_B X$ for all $i\in I$.
Let $h_V\to h_D$ be a morphism in $\lSpc/B$ with $V\in \lFan/B$.
By Proposition \ref{div.5}, after replacing $V$ by its suitable dividing cover, we may assume that this is isomorphic to $h_f$ for some morphism $f\colon V\to D$ in $\lSch/B$.
The morphism $X\to D$ induced by the diagonal morphism $X\to X\times_B X$ is a proper monomorphism, so the projection $W:=V\times_{D}X\to V$ is a proper monomorphism too.
Proposition \ref{equiv.23} yields a dividing cover $V'\to V$ such that the projection $W':=V'\times_V W\to V'$ is a strict closed immersion.
This shows that the induced morphism $h_X\to h_D$ is a representable strict closed immersion.
Since $h_D\to h_X\times h_X$ is a representable open immersion,
we deduce the condition (i) in Definition \ref{equiv.5} for $h_X$ by Proposition \ref{equiv.15}.

Choose a Zariski cover $Y\to X$ with $Y\in \lFan/B$.
Let $h_V\to h_X$ be a morphism with $V\in \lFan/B$.
By Proposition \ref{div.5}, after replacing $V$ by its suitable dividing cover, we may assume that this is isomorphic to $h_f$ for some morphism $f\colon V\to X$ in $\lSch/B$.
The projection $h_V\times_{h_X}h_Y\to h_V$ is isomorphic to $h_p$, where $p$ is the projection $V\times_X Y\to V$.
Since $p$ is a Zariski cover, $h_X$ satisfies the condition (ii) in Definition \ref{equiv.5}.
\end{proof}

Hence the essential image of the Yoneda functor \eqref{div.0.1} lies in $\lSpc/B$.

\begin{proposition}
\label{equiv.19}
Let $h_X\to \cX$ be a representable Zariski cover in $\lSpc/B$ with $X\in \lFan/B$.
Then there exists a dividing Zariski covering family $\{U_i\to X\}_{i\in I}$ with finite $I$ such that each $h_{U_i}\to \cX$ is a representable open immersion.
\end{proposition}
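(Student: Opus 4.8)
The plan is to present $\cX$, near the given cover, as the quotient of $X$ by the Zariski equivalence relation $\tilde R:=h_X\times_{\cX}h_X$ (the kernel pair of the cover), and to read off the charts from the diagonal of this relation.

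\emph{Step 1 (reduction to a concrete equivalence relation).} Write $a\colon h_X\to\cX$ for the given cover. Since the diagonal of $\cX$ is a representable strict immersion, $\tilde R$ is representable (Proposition \ref{equiv.20}); its two projections $p_1,p_2\colon \tilde R\to h_X$ are pullbacks of $a$, hence representable Zariski covers, and $(p_1,p_2)\colon \tilde R\to h_X\times h_X$ is a pullback of the diagonal, hence a representable strict immersion and in particular a monomorphism. The classes ``Zariski cover'' and ``strict immersion'' are closed under pullbacks and satisfy \textup{(Div)} by Example \ref{equiv.16}, so Lemmas \ref{equiv.27} and \ref{equiv.18} allow me to replace $X$ by a dividing cover and thereby assume that $\tilde R\simeq h_R$ for some $R\in\lFan/B$, that $p_1,p_2\colon R\to X$ are honest Zariski covers, and that $(p_1,p_2)\colon R\to X\times_B X$ is an honest strict, hence locally closed, immersion of schemes. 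Covering $X$ by its affine opens and using that a composite of representable Zariski covers is one (Proposition \ref{equiv.15}), I may further assume $X$ is separated, so that $\Delta_X\subseteq X\times_B X$ is closed; this is harmless, because the charts produced below, composed with $X^{\mathrm{aff}}\to X$ and the dividing cover, still form a dividing Zariski covering family of the original $X$.

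\emph{Step 2 (the diagonal is clopen).} The identity of $X$ gives the diagonal section $s\colon X\to R$ with $p_1s=p_2s=\id$ and $(p_1,p_2)s=\Delta_X$. Because $p_1$ is a Zariski cover, $R=\coprod_k U_k$ with $p_1|_{U_k}\colon U_k\hookrightarrow X$ open immersions, and $s$ meets each sheet in an open subset; hence $s(X)$ is \emph{open} in $R$. On the other hand, as $(p_1,p_2)$ is a monomorphism carrying the section $s$, the map $(p_1,p_2)^{-1}(\Delta_X)\to\Delta_X$ is a monomorphism with a section, so it is an isomorphism and $s(X)=(p_1,p_2)^{-1}(\Delta_X)$; since $\Delta_X$ is closed, $s(X)$ is also \emph{closed} in $R$. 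Thus $s(X)$ is clopen, and identifying $s(X)\cong X$ through $p_1$ exhibits a finite clopen decomposition $X=\coprod_k X_k$ with $X_k:=p_1(s(X)\cap U_k)$. Writing $Z:=R\setminus s(X)$, its image $B:=(p_1,p_2)(Z)$ is closed in the image $M$ of $(p_1,p_2)$ and disjoint from $\Delta_X$; since $\Delta_X\subseteq M$, the subspace-closure identity $\overline B\cap M=B$ gives $\overline B\cap\Delta_X=\emptyset$, so the nontrivial part of the relation stays away from the diagonal.

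\emph{Step 3 (the charts).} I claim the clopen pieces $U_i:=X_k$ are the desired charts. They form a Zariski covering family of $X$, and composed with the dividing cover and the affine cover of Step 1 they give a dividing Zariski covering family of the original $X$. To see that $a|_{X_k}\colon h_{X_k}\to\cX$ is a representable open immersion, pull it back along the representable Zariski cover $a$: the pullback $h_{X_k}\times_{\cX}h_X\to h_X$ is computed as $\coprod_l (X_k\cap U_l)\xrightarrow{\phi_l} X$, a disjoint union of open immersions, where $\phi_l:=p_2|_{U_l}$. This is an open immersion of schemes exactly when it is injective, i.e.\ when no two distinct points of $X_k$ are $R$-equivalent, i.e.\ when $R|_{X_k\times_B X_k}=\Delta_{X_k}$. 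Granting this, Lemma \ref{equiv.29} shows the pullback is a representable open immersion, and Lemma \ref{equiv.17} (with $\cP$ the open immersions, using that $a$ is a representable Zariski cover) descends this to show that $a|_{X_k}$ is a representable open immersion, completing the proof.

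\emph{Main obstacle.} Everything reduces to the claim that the equivalence relation restricts trivially to each clopen piece, equivalently that $a|_{X_k}$ is a monomorphism; this is the crux. It is precisely here that the hypothesis that $a$ is a \emph{representable} Zariski cover must be used in full, and not merely that $p_1,p_2$ are Zariski covers. Indeed, on an irreducible piece any two nonempty opens meet, so a genuinely nontrivial fixed-point-free identification (the $z\mapsto -z$ on $\G_m$ phenomenon) would admit no open transversal at all; such monodromy is, however, incompatible with \emph{all} pullbacks of $a$ being Zariski covers. I would rule it out by showing that a nontrivial identification inside some $X_k$ produces a morphism $h_Y\to\cX$ along which the pullback of $a$ splits only after an \'etale, non-Zariski, cover, contradicting Definition \ref{equiv.6}; alternatively, by testing injectivity stalkwise, using that the dividing Zariski topology has enough points (Remark \ref{equiv.35}) together with Propositions \ref{equiv.40} and \ref{equiv.31}.
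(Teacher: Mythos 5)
Your argument has a genuine gap, and you have located it yourself: the claim that the relation restricts to the diagonal on each clopen piece $X_k$ (equivalently, that $a|_{X_k}$ is a monomorphism) is never proved, and neither of the two strategies you sketch at the end is carried out. This gap is not a residual technicality, because the clopen decomposition of Step 2 does essentially nothing to reduce it: the pieces $X_k=s^{-1}(U_k)$ are clopen, so if $X$ is connected then $s(X)$ lies in a single sheet and your decomposition is just $\{X\}$, and the unproved claim becomes ``every representable Zariski cover $h_X\to\cX$ with $X$ connected is a monomorphism'' --- essentially the full strength of the proposition. Moreover, the topological facts you do establish in Step 2 cannot suffice on their own. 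Take $X=\G_m$ with trivial log structure (characteristic $\neq 2$) and $R=\Delta_{\G_m}\amalg\Gamma_\sigma$ with $\sigma(z)=-z$: both projections are Zariski covers, $(p_1,p_2)$ is a strict closed immersion, $s(X)$ is clopen in $R$, and the non-diagonal part $\Gamma_\sigma$ is closed and disjoint from $\Delta_X$ --- yet no nonempty open $U\subseteq\G_m$ has $h_U\to h_{\G_m}/R$ a monomorphism, since $U\cap\sigma(U)\neq\emptyset$ by irreducibility. The only way to exclude this configuration is to test the representability of $a$ against objects other than $X$ itself (which is what shows $h_{\G_m}\to h_{\G_m}/R$ is not in fact a representable Zariski cover), and that is precisely the step you postpone. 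A secondary issue: in Step 1 you invoke Lemmas \ref{equiv.27} and \ref{equiv.18} to arrange \emph{simultaneously} that $p_1$ and $p_2$ are honest Zariski covers of the same model of $X$ and that $(p_1,p_2)$ is an honest strict immersion into $X\times_B X$; those lemmas normalize one morphism (or one composable pair) at a time, and the compatibility of the three replacements is not addressed.

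The paper avoids the whole difficulty by not looking for charts that contain the diagonal. It models $p_1$ as a Zariski cover $g\colon Y=\amalg_i Y_i\to X'$ and $p_2$ as a Zariski cover $g'\colon Y'\to X''$ on (possibly different) dividing covers of $X$, and takes $U_i:=g'(Y_i\times_Y Y')$ --- the image under the \emph{second} projection of the $i$-th sheet of the \emph{first} projection --- as an open subscheme of $X''$; the open-immersion property of $h_{U_i}\to\cX$ is then obtained by pulling back along the given cover $h_{X'}\to\cX$ and descending with Lemma \ref{equiv.17}. This is why the statement asks only for a \emph{dividing} Zariski covering family $\{U_i\to X\}$: the charts live on a dividing cover of $X$, whereas your construction could only ever produce a clopen partition of $X$ itself, which is too coarse.
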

\begin{proof}
Use Lemmas \ref{equiv.26} and \ref{equiv.27} to obtain a commutative diagram
\[
\begin{tikzcd}
h_{Y'}\ar[d,"h_{f''}"',"\simeq"]\ar[rr,"h_{g'}"]&
&
h_{X''}\ar[d,"h_{f'}","\simeq"']
\\
h_{Y}\ar[d,"h_g"']\ar[r,"\simeq"]&
h_X\times_{\cX}h_X\ar[d,"p_1"]\ar[r,"p_2"]&
h_X\ar[d]
\\
h_{X'}\ar[r,"\simeq","h_f"']&
h_{X}\ar[r]&
\cX,
\end{tikzcd}
\]
where $p_1$ (resp.\ $p_2$) is the first (resp.\ second) projection, $f$, $f'$, and $f''$ are dividing covers, $g$ and $g'$ are Zariski covers, and $h_{Y'}\simeq h_{Y}\times_{h_X}h_{X''}$.
We can decompose $Y$ as $\amalg_{i\in I}Y_i$ such that each $Y_i\to X'$ is an open immersion.
We set $U_i:=g'(Y_i\times_Y Y')$ and $V_i:=g'^{-1}(U_i)$, and we regard them as open subschemes of $X''$ and $Y'$ respectively.
There is a cartesian square
\[
\begin{tikzcd}
h_{V_i}\ar[d]\ar[r]&
h_{U_i}\ar[d]
\\
h_{X'}\ar[r]&
\cX.
\end{tikzcd}
\]
Since $h_{X'}\to \cX$ is a representable Zariski cover and $V_i\to X'$ is a log \'etale monomorphism, Lemma \ref{equiv.17} shows that $h_{U_i}\to \cX$ is a representable open immersion.
To conclude, observe that $\amalg_{i\in I} U_i\to X''$ is a Zariski cover.
\end{proof}

\section{Zariski equivalence relations}
\label{equiv}

\'Etale equivalence relations in the theory of algebraic spaces are helpful for constructing examples.
The purpose of this section is to develop an analogous notion in the category of divided log spaces.
As an application, we explain how to glue divided log spaces.

\begin{definition}
\label{equiv.2}
Suppose $\cX\in \lSpc/B$.
A \emph{Zariski equivalence relation on $\cX$} is a morphism
\[
i\colon \cR\to \cX\times \cX
\]
in $\lSpc/B$ satisfying the following conditions.
\begin{enumerate}
\item[(i)] $i$ is a representable strict immersion.
\item[(ii)] If $p_1,p_2\colon \cX\times \cX\rightrightarrows \cX$ denote two projections, then $p_1i$ and $p_2i$ are representable Zariski covers.
\item[(iii)] For all $T\in \lFan/B$, $\cR(T)\to \cX(T)\times \cX(T)$ is an equivalence relation.
\end{enumerate}
\end{definition}

By Proposition \ref{equiv.40}, the condition (i) implies that $i$ is a monomorphism, i.e., $\cR(T)\to \cX(T)\times \cX(T)$ is injective for all $T\in \lFan/B$.

Let $\cX/\cR$ denote the dividing Zariski sheaf associated with the presheaf
\[
(T\in \lFan/B)\mapsto \cX(T)/\cR(T).
\]
There is an induced cartesian square
\begin{equation}
\label{equiv.2.1}
\begin{tikzcd}
\cR\ar[d,"p_1i"']\ar[r,"p_2i"]&
\cX\ar[d]
\\
\cX\ar[r]&
\cX/\cR.
\end{tikzcd}
\end{equation}

Suppose that $\cT$ is a Zariski equivalence relation on $\cY\in \lSpc/B$.
If there is a morphism $f\colon \cY\to \cX$ and a commutative square
\[
\begin{tikzcd}
\cT\ar[r]\ar[d]&
\cR\ar[d]
\\
\cY\times \cY\ar[r,"f\times f"]&
\cX\times \cX,
\end{tikzcd}
\]
then there is an induced morphism $\cY/\cT\to \cX/\cR$.

\begin{proposition}
\label{equiv.3}
Let $\cR$ be a Zariski equivalence relation on $\cX\in \lSpc/B$.
Then $\cX/\cR\in \lSpc/B$.
\end{proposition}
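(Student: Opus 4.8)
The plan is to verify the two conditions of Definition \ref{equiv.5} for $\cX/\cR$, imitating the classical argument that an \'etale equivalence relation admits an algebraic-space quotient. Write $\pi\colon \cX\to \cX/\cR$ for the canonical morphism. Since $\cX/\cR$ is by definition the dividing Zariski sheafification of the presheaf $T\mapsto \cX(T)/\cR(T)$ and sheafification (being a left adjoint) preserves epimorphisms, $\pi$ is an epimorphism of sheaves. Moreover, the cartesian square \eqref{equiv.2.1} identifies $\cR\simeq \cX\times_{\cX/\cR}\cX$, with the two projections given by $p_1i$ and $p_2i$; this is the only place where condition (iii) of Definition \ref{equiv.2} enters, through the assertion that \eqref{equiv.2.1} is cartesian.

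The main step is to show that $\pi$ is a representable Zariski cover. Fix a morphism $h_T\to \cX/\cR$ with $T\in\lFan/B$. Because $\pi$ is an epimorphism, its base change $\cX\times_{\cX/\cR}h_T\to h_T$ is an epimorphism of sheaves, so there is a dividing Zariski cover $T'\to T$ over which $h_T\to \cX/\cR$ lifts to a morphism $b\colon h_{T'}\to \cX$. Using $\cR\simeq \cX\times_{\cX/\cR}\cX$ and associativity of fiber products, the base change along $b$ becomes $\cX\times_{\cX/\cR}h_{T'}\simeq \cR\times_{\cX}h_{T'}$, a pullback of one of the projections $p_ji\colon\cR\to\cX$, which is a representable Zariski cover by condition (ii) of Definition \ref{equiv.2}. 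Hence $\cX\times_{\cX/\cR}h_{T'}$ is representable and $\cX\times_{\cX/\cR}h_{T'}\to h_{T'}$ is a representable Zariski cover. Now $h_{T'}\to h_T$ is itself a representable Zariski cover (Lemma \ref{equiv.29} together with Proposition \ref{equiv.11}(4)), so applying Lemma \ref{equiv.17}, with $\cP$ the class of Zariski covers, to the cartesian square obtained by base change along $h_{T'}\to h_T$ shows that $\cX\times_{\cX/\cR}h_T\to h_T$ is a representable Zariski cover. As $h_T\to\cX/\cR$ was arbitrary, $\pi$ is a representable Zariski cover.

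Condition (ii) of Definition \ref{equiv.5} then follows quickly: since $\cX$ is a divided log space, there is a representable Zariski cover $h_X\to\cX$ with $X\in\lFan/B$, and because the class of Zariski covers is closed under composition and pullback and satisfies \textup{(Div)} (Example \ref{equiv.16}(1)), Proposition \ref{equiv.15} shows that the composite $h_X\to\cX\xrightarrow{\pi}\cX/\cR$ is again a representable Zariski cover. For condition (i), I would argue by descent: the morphism $\pi\times\pi\colon \cX\times\cX\to\cX/\cR\times\cX/\cR$ is a composite of pullbacks of $\pi$, hence a representable Zariski cover, and a direct computation of $T$-points shows that the base change of the diagonal $\Delta\colon\cX/\cR\to\cX/\cR\times\cX/\cR$ along $\pi\times\pi$ is precisely $i\colon\cR\to\cX\times\cX$, a representable strict immersion by condition (i) of Definition \ref{equiv.2}. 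Applying Lemma \ref{equiv.17} with $\cP$ the class of strict immersions then gives that $\Delta$ is a representable strict immersion, which is condition (i) of Definition \ref{equiv.5}. Together, these prove $\cX/\cR\in\lSpc/B$.

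I expect the representability of $\pi$ to be the main obstacle. The two delicate points are extracting the local lift $b$ from the epimorphism property along a cover in the dividing Zariski topology, and, above all, setting up the descent so that Lemma \ref{equiv.17} is applied along the \emph{already established} representable Zariski cover $h_{T'}\to h_T$ rather than along $\pi$ itself; applying it along $\pi$ would be circular, since that is exactly the statement under proof. Once $\pi$ is known to be a representable Zariski cover, both conditions of Definition \ref{equiv.5} are formal consequences of Propositions \ref{equiv.15} and Lemma \ref{equiv.17}.
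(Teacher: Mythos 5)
Your proposal is correct and follows essentially the same route as the paper: both arguments rest on $\pi$ being an epimorphism, the cartesian square \eqref{equiv.2.1} identifying $\cR\simeq\cX\times_{\cX/\cR}\cX$, and descent via Lemma \ref{equiv.17} (with $\cP$ the strict immersions for the diagonal and the Zariski covers for the atlas). The only difference is organizational — you isolate ``$\pi$ is a representable Zariski cover'' as the central step and descend the diagonal along the global cover $\pi\times\pi$, whereas the paper checks each condition directly against test objects $h_V$ after lifting along a dividing Zariski cover $V'\to V$ — but this is the same argument at a different level of granularity.
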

\begin{proof}
Let $h_V\to \cX/\cR\times \cX/\cR$ be a morphism with $V\in \lFan/B$.
The morphism $\cX\to \cX/\cR$ is an epimorphism, so there exists a dividing Zariski cover $V'\to V$ in $\lFan/B$ such that the composition $h_{V'}\to h_{V}\to \cX/\cR\times \cX/\cR$ factors through $\cX\times \cX$.
From the cartesian square \eqref{equiv.2.1}, we have an isomorphism
\[
h_{V'}\times_{\cX/\cR\times \cX/\cR}\cX/\cR
\simeq
h_{V'}\times_{\cX\times \cX}\cR.
\]
Since $\cR$ is a Zariski equivalence relation on $\cX$, the projection $h_{V'}\times_{\cX\times \cX}\cR\to h_{V'}$ is a representable strict immersion.

We set $\cF:=h_V\times_{\cX/\cR\times \cX/\cR}\cX/\cR$
to have a cartesian square
\[
\begin{tikzcd}
h_{V'}\times_{\cX\times \cX}\cR\ar[r]\ar[d]&
h_{V'}\ar[d]
\\
\cF\ar[r]&
h_V.
\end{tikzcd}
\]
Lemma \ref{equiv.17} shows that $\cF\to h_V$ is a representable strict immersion.
This shows that the diagonal morphism $\cX/\cR\to \cX/\cR\times \cX/\cR$ is a representable strict immersion, which verifies the axiom (i) of divided log spaces for $\cX/\cR$.

Let $h_V\to \cX/\cR$ be a morphism with $V\in \lFan/B$.
There exists a dividing Zariski cover $V'\to V$ in $\lFan/B$ such that the composition $h_{V'}\to h_{V}\to \cX/\cR$ factors through $\cX$.
Since \eqref{equiv.2.1} is cartesian, we have an isomorphism
\[
h_{V'}\times_{\cX/\cR}\cX
\simeq
h_{V'}\times_{\cX}\cR,
\]
where the morphism $\cR\to \cX$ in this formulation is $p_1i$.
Hence we have a cartesian square
\[
\begin{tikzcd}
h_{V'}\times_{\cX}\cR\ar[r]\ar[d]&
h_{V'}\ar[d]
\\
h_V\times_{\cX/\cR}\cX\ar[r]&
h_V.
\end{tikzcd}
\]
Since $p_1i$ is a representable Zariski cover, Lemma \ref{equiv.17} shows that the projection $h_V\times_{\cX/\cR}\cX\to h_V$ is a representable Zariski cover.
It follows that $\cX\to \cX/\cR$ is a representable Zariski cover.
Hence $\cX/\cR$ satisfies the axiom (ii) of divided log spaces.
\end{proof}

\begin{proposition}
Let $f\colon \cY\to \cX$ be a representable Zariski cover in $\lSpc/B$.
Then the induced morphism $i\colon \cR:=\cY\times_{\cX}\cY\to \cY\times \cY$ is a Zariski equivalence relation on $\cY$.
Furthermore, $\cY/\cR\simeq \cX$.
\end{proposition}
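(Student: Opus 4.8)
The plan is to verify the three defining conditions of a Zariski equivalence relation (Definition \ref{equiv.2}) for the morphism $i\colon \cR=\cY\times_{\cX}\cY\to \cY\times\cY$, and then to identify the quotient $\cY/\cR$ with $\cX$. The three conditions are: (i) $i$ is a representable strict immersion; (ii) both composites $p_1i,p_2i\colon \cR\to\cY$ are representable Zariski covers; and (iii) each $\cR(T)\to\cX(T)\times\cX(T)$ is a set-theoretic equivalence relation. I expect (iii) to be essentially formal, (ii) to follow from stability of representable Zariski covers under pullback, and (i) to be the one requiring a genuine argument.

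For condition (i), I would first observe that there is a canonical factorization of $i$ through the diagonal of $\cX$. Concretely, $\cR=\cY\times_{\cX}\cY$ is by definition the pullback of $\Delta_{\cX}\colon\cX\to\cX\times\cX$ along $f\times f\colon\cY\times\cY\to\cX\times\cX$, so $i$ fits into a cartesian square with $\Delta_{\cX}$ on the opposite side. Since $\cX$ is a divided log space, its diagonal $\Delta_{\cX}$ is a representable strict immersion by condition (i) of Definition \ref{equiv.5}; because representable strict immersions are closed under pullback, $i$ is a representable strict immersion as well. This is the cleanest route and avoids invoking Proposition \ref{equiv.38}. For condition (ii), the projection $p_1i\colon\cR\to\cY$ is precisely the pullback of the representable Zariski cover $f\colon\cY\to\cX$ along $f$ itself (using the first projection), and representable Zariski covers are stable under pullback; the same argument with the second projection handles $p_2i$. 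Condition (iii) holds because, for each $T\in\lFan/B$, the set $\cR(T)=\cY(T)\times_{\cX(T)}\cY(T)$ is exactly the kernel pair of the map $f(T)\colon\cY(T)\to\cX(T)$, and a kernel pair is always an equivalence relation.

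It remains to prove $\cY/\cR\simeq\cX$. The natural candidate morphism $\cY/\cR\to\cX$ comes from the universal property: since $f$ coequalizes the two projections $\cR\rightrightarrows\cY$, it descends to a map on the quotient. To show this is an isomorphism, I would invoke Proposition \ref{equiv.36} to conclude that $f$ is an epimorphism of sheaves (being a representable Zariski cover), hence a universal effective epimorphism, so that $\cX$ is the coequalizer $\Coeq(\cR\rightrightarrows\cY)$ in $\Shv_{dZar}(\lFan/B)$. On the other hand, $\cY/\cR$ is by construction the dividing Zariski sheafification of the presheaf quotient $T\mapsto\cY(T)/\cR(T)$, which is exactly this same coequalizer. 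Comparing the two universal properties yields the desired isomorphism $\cY/\cR\xrightarrow{\simeq}\cX$.

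The main obstacle will be making the effective-epimorphism argument rigorous: one must know that the representable Zariski cover $f$ is a \emph{universal effective} epimorphism of sheaves, so that its kernel pair $\cR$ reconstructs $\cX$ as a coequalizer. The proof of Lemma \ref{equiv.17} already used this principle (an epimorphism of sheaves is automatically a universal effective epimorphism in a topos, and its pullbacks remain so), so I would reuse that fact here. Once $\cX\simeq\Coeq(\cR\rightrightarrows\cY)$ is established at the level of sheaves, identifying this coequalizer with the sheafified presheaf quotient $\cY/\cR$ is formal, since sheafification is a left adjoint and therefore preserves the colimit computing the quotient.
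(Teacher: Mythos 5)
Your proof is correct and follows essentially the same route as the paper: condition (i) via pulling back the representable strict immersion $\Delta_{\cX}$, condition (ii) via pullback stability of representable Zariski covers, condition (iii) via the kernel-pair description of $\cR(T)$, and the identification $\cY/\cR\simeq\cX$ from that description together with the fact that $f$ is an epimorphism of sheaves (Proposition \ref{equiv.36}). Your write-up is merely more explicit than the paper's on the last step, spelling out the effective-epimorphism/coequalizer argument that the paper leaves implicit.
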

\begin{proof}
For $T\in \lFan/B$, a section $(a,b)\in \cY(T)\times \cY(T)$ is in $\cR(T)$ if and only if $f(a)=f(b)$ in $\cX(T)$.
This explicit description shows that $\cR(T)\to \cX(T)\times \cX(T)$ is an equivalence relation and $\cY/\cR\simeq \cX$.
Since the diagonal morphism $\cX\to \cX\times \cX$ is a representable strict immersion, so is $i$.
The assumption that $f$ is a representable Zariski cover implies that the two projections $\cR\rightrightarrows \cY$ are representable Zariski covers.
Hence $\cR$ is a Zariski equivalence relation on $\cY$.
\end{proof}

\begin{construction}
\label{equiv.13}
Let $I$ be a finite set.
Assume that we have given the gluing data
\begin{enumerate}
\item[(1)]
$\cX_i\in \lSpc/B$ for all $i\in I$,
\item[(2)]
representable open immersions $\cU_{ij}\to \cX_i$ for all $i,j\in I$,
\item[(3)]
isomorphisms $\varphi_{ij} \colon \cU_{ij}\to \cU_{ji}$ for all $i,j\in I$,
\end{enumerate}
satisfying the following conditions for $i,j,k\in I$:
\begin{enumerate}
\item[(i)]
$\cU_{ii}=\cX_i$ and $\varphi_{ii}=\id$,
\item[(ii)]
there exists an isomorphism $\psi_{ijk}\colon  \cU_{ij}\times_{\cX_i}\cU_{ik}\to \cU_{ji}\times_{\cX_j}\cU_{jk}$ such that the square
\[
\begin{tikzcd}
\cU_{ij}\times_{\cX_i}\cU_{ik}\ar[d,hookrightarrow]\ar[r,"\psi_{ijk}"]&
\cU_{ji}\times_{\cX_j}\cU_{jk}\ar[d,hookrightarrow]
\\
\cU_{ij}\ar[r,"\varphi_{ij}"]&
\cU_{ji}
\end{tikzcd}
\]
commutes,
\item[(iii)]
the diagram
\[
\begin{tikzcd}
\cU_{ij}\times_{\cX_i}\cU_{ik}\ar[d,"\simeq"']\ar[r,"\psi_{ijk}"]&
\cU_{ji}\times_{\cX_j}\cU_{jk}\ar[r,"\simeq"]&
\cU_{jk}\times_{\cX_j} \cU_{ji}\ar[d,"\psi_{jki}"]
\\
\cU_{ik}\times_{\cX_i}\cU_{ij}\ar[r,"\psi_{ikj}"]&
\cU_{ki}\times_{\cX_k}\cU_{kj}\ar[r,"\simeq"]&
\cU_{kj}\times_{\cX_k}\cU_{ki}
\end{tikzcd}
\]
commutes.
\end{enumerate}

In this setting, let us explain the gluing construction.
We set $\cX:=\amalg_{i\in I} \cX_i$ and $\cR:=\amalg_{i,j\in I}\cU_{ij}$.
The composition
\[
\cU_{ij}\xrightarrow{\Gamma_{\varphi_{ij}}} \cU_{ij}\times \cU_{ji} \hookrightarrow  \cX_i\times \cX_j
\]
induces a morphism $\cR\to \cX\times \cX$, where $\Gamma_{\varphi_{ij}}$ is the graph morphism.
Since the diagonal morphism $\cU_{ij}\to \cU_{ij}\times \cU_{ij}$ is a representable strict immersion, $\Gamma_{\varphi_{ij}}$ is a representable strict immersion.
Hence $\cR\to \cX\times \cX$ is a representable strict immersion.
The two compositions $\cR\to \cX\times \cX\rightrightarrows \cX$ are representable Zariski covers since the induced morphism $\amalg_{j\in I} \cU_{ij}\to \cX_i$ is a representable Zariski cover for all $i\in I$.
Together with the above conditions (i)--(iii), we see that $\cR$ is a Zariski equivalence relation on $\cX$.
The \emph{gluing of $\{\cX_{i}\}_{i\in I}$ along $\{\cU_{ij}\}_{i,j\in I}$} is defined to be $\cX/\cR$.

Apply Lemma \ref{equiv.17} to the induced cartesian squares
\[
\begin{tikzcd}
\amalg_{j\in I}\cU_{ij}\ar[r]\ar[d]&
\amalg_{j\in I}\cX_j\ar[d]
\\
\cX_i\ar[r]&
\cX/\cR
\end{tikzcd}
\quad
\begin{tikzcd}
\amalg_{i,j\in I}\cU_{ij}\ar[r]\ar[d]&
\amalg_{j\in I}\cX_j\ar[d]
\\
\amalg_{i\in I}\cX_i\ar[r]&
\cX/\cR
\end{tikzcd}
\]
to see that the induced morphism $\cX_i\to \cX/\cR$ is a representable open immersion for all $i\in I$ and the induced morphism $\amalg_{i\in I}\cX_i\to \cX/\cR$ is a representable Zariski cover.

For the functoriality of the gluing construction, assume that another gluing data
\[
\text{
$\cY_{i}$, $\cV_{ij}\to \cY_{i}$, $\varphi_{ij}$, and $\psi_{ijk}$
}
\]
for $i,j,k\in J$ are given, where $J$ is a finite set.
Furthermore, assume that a map $\eta\colon I\to J$, morphisms $\cX_i\to \cY_{\eta(i)}$ for all $i\in I$, and morphisms $\cU_{ij}\to \cV_{\eta(i)\eta(j)}$ are given too such that
the squares
\[
\begin{tikzcd}
\cU_{ij}\ar[r]\ar[d]&
\cX_i\ar[d]
\\
\cV_{\eta(i)\eta(j)}\ar[r]&
\cY_{\eta(i)}
\end{tikzcd}
\quad
\begin{tikzcd}
\cU_{ij}\ar[d]\ar[r,"\varphi_{ij}"]&
\cU_{ji}\ar[d]
\\
\cV_{\eta(i)\eta(j)}\ar[r,"\varphi_{\eta(i)\eta(j)}"]&
\cV_{\eta(j)\eta(i)}
\end{tikzcd}
\]
commute.
We set $\cY:=\amalg_{i\in J}\cY_{i}$ and $\cT:=\amalg_{i,j\in J}\cV_{ij}$.
There is an induced commutative square
\[
\begin{tikzcd}
\cR\ar[d]\ar[r]&
\cT\ar[d]
\\
\cX\times \cX\ar[r]&
\cY\times \cY.
\end{tikzcd}
\]
This induces a functorial morphism $\cX/\cR\to \cY/\cT$.
\end{construction}

\begin{definition}
Let $\{\cU_i\to \cX\}_{i\in I}$ be a family of representable open immersions in $\lSpc/B$ with finite $I$.
The \emph{union $\cup_{i\in I} \cU_i$ of $\{\cU_i\}_{i\in I}$} is defined to be the gluing of $\{\cU_i\}_{i\in I}$ along $\{\cU_i\times_{\cX} \cU_j\}_{i,j\in I}$.

Observe that the induced morphism $\cU_a\to \cup_{i\in I} \cU_i$ is a representable open immersion for all $a\in I$ and the induced morphism $\amalg_{i\in I} \cU_i\to \cup_{i\in I} \cU_i$ is a representable Zariski cover.
\end{definition}

\begin{proposition}
Let $\{\cU_i\to \cX\}_{i\in I}$ be a family of representable open immersions in $\lSpc/B$ with finite $I$.
Then the induced morphism $\cup_{i\in I} \cU_i\to \cX$ is a representable open immersion.
\end{proposition}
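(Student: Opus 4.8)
The plan is to reduce to the representable case by base change along a chart of $\cX$ and then recognize the union as a genuine open subscheme. First I would choose a representable Zariski cover $h_X\to\cX$ with $X\in\lFan/B$, which exists by condition (ii) in Definition~\ref{equiv.5}, and form the cartesian square whose bottom row is $\cup_{i\in I}\cU_i\to\cX$. Since the union is by construction the quotient $(\amalg_{i\in I}\cU_i)/\cR$ with $\cR=\amalg_{i,j\in I}\cU_i\times_\cX\cU_j$, and pullback along $h_X\to\cX$ is exact in the topos $\Shv_{dZar}(\lFan/B)$, this base change commutes with forming the union:
\[
h_X\times_\cX\Big(\cup_{i\in I}\cU_i\Big)
\simeq
\cup_{i\in I}\big(h_X\times_\cX\cU_i\big),
\]
where on the right the union is formed inside $h_X$ using the fiber products $(h_X\times_\cX\cU_i)\times_{h_X}(h_X\times_\cX\cU_j)$. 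By Lemma~\ref{equiv.17} with $\cP$ the class of open immersions, it then suffices to prove that the induced morphism $\cup_{i\in I}(h_X\times_\cX\cU_i)\to h_X$ is a representable open immersion.

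Next I would make each factor honest. For each $i$ the morphism $\cU_i\to\cX$ is representable, so $\cU_i':=h_X\times_\cX\cU_i\simeq h_{Y_i}$ for some $Y_i\in\lFan/B$, and $h_{Y_i}\to h_X$ is a representable open immersion since open immersions are closed under pullback. The class of open immersions is strict and closed under pullback, hence satisfies \textup{(Div)} by Example~\ref{equiv.16}(1), so Lemma~\ref{equiv.27} represents $h_{Y_i}\to h_X$, up to dividing covers, by an open immersion in $\lFan/B$ over a dividing cover $X_i'\to X$. As $I$ is finite, the finitely many dividing covers $X_i'\to X$ admit a common refinement $X'\to X$ (e.g.\ their fiber product over $X$), which is again a dividing cover. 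Since $h_{X'}\simeq h_X$ and $h_{X'}\to\cX$ is again a representable Zariski cover, I may replace $X$ by $X'$ from the start and assume each $\cU_i'\simeq h_{W_i}$ with $W_i\hookrightarrow X$ an open immersion in $\lFan/B$.

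Finally I would identify the union with a genuine open subscheme. Setting $W:=\cup_{i\in I}W_i\subseteq X$, the scheme-theoretic union of the open subschemes, the family $\{W_i\to W\}_{i\in I}$ is a Zariski covering family, so $\amalg_{i\in I}h_{W_i}\to h_W$ is a representable Zariski cover and $h_W$ is the quotient of $\amalg_{i\in I}h_{W_i}$ by its associated Zariski equivalence relation (the proposition preceding Construction~\ref{equiv.13}). Because $W_i\times_X W_j=W_i\cap W_j=W_i\times_W W_j$ and $h$ preserves finite limits by Proposition~\ref{div.13}, this equivalence relation is exactly $\amalg_{i,j\in I}h_{W_i}\times_{h_X}h_{W_j}$, so $\cup_{i\in I}h_{W_i}\simeq h_W$ and the structure morphism to $h_X$ is $h$ of the open immersion $W\hookrightarrow X$, which is a representable open immersion by Lemma~\ref{equiv.29}. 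Combined with the two reductions above, Lemma~\ref{equiv.17} finishes the proof.

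I expect the main obstacle to be the first paragraph: justifying rigorously that forming the union commutes with base change along $h_X\to\cX$. This rests on the union being a colimit, namely a quotient by a Zariski equivalence relation, and on pullback in $\Shv_{dZar}(\lFan/B)$ being exact as the inverse image of the \'etale geometric morphism onto the slice. One must also verify that the base-changed gluing data $\{(h_X\times_\cX\cU_i)\times_{h_X}(h_X\times_\cX\cU_j)\}$ is precisely the gluing data defining $\cup_{i\in I}(h_X\times_\cX\cU_i)$, which amounts to the identity $h_X\times_\cX(\cU_i\times_\cX\cU_j)\simeq(h_X\times_\cX\cU_i)\times_{h_X}(h_X\times_\cX\cU_j)$, so that the two unions genuinely agree. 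The remaining steps are routine given Lemmas~\ref{equiv.17}, \ref{equiv.27}, and~\ref{equiv.29}.
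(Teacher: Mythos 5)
Your proof is correct and follows essentially the same route as the paper: reduce to $\cX=h_X$ by base change along a representable Zariski cover (the paper applies Lemma \ref{equiv.17} to exactly the cartesian square you describe), represent each $\cU_i$ by an open immersion over a dividing cover of $X$, pass to the common refinement given by the fiber product of these dividing covers, and identify the union of the resulting representable open subsheaves with $h$ of the genuine scheme-theoretic union. The paper is terser — it uses Lemma \ref{equiv.26} where you use Lemma \ref{equiv.27} and leaves implicit the compatibility of the union with base change and the identification $\cup_i h_{W_i}\simeq h_W$, both of which you justify explicitly — but the argument is the same.
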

\begin{proof}
There exists a representable Zariski cover $h_X\to \cX$ with $X\in \lFan/B$.
Apply Lemma \ref{equiv.17} to the induced cartesian square
\[
\begin{tikzcd}
\cup_{i\in I} (\cU_i \times_{\cX} h_X)\ar[d]\ar[r]&
h_X\ar[d]
\\
\cup_{i\in I} \cU_i\ar[r]&
\cX
\end{tikzcd}
\]
to reduce to the case when $\cX=h_X$ with $X\in \lFan/B$.

Then by Lemma \ref{equiv.26},
there exists a commutative square
\[
\begin{tikzcd}
h_{U_i}\ar[r,"h_{u_i}"]\ar[d,"\simeq"']&
h_{X_i}\ar[d,"h_{f_i}"',"\simeq"]
\\
\cU_i\ar[r]&
h_X
\end{tikzcd}
\]
with vertical isomorphisms
such that $f_i$ is a dividing cover and $u_i$ is an open immersion in $\lFan/B$.
Let $Y$ be the fiber product of all $X_i$ over $X$,
and we set $V_i:=U_i\times_{X_i} Y$.
The induced morphism $V_i\to Y$ is an open immersion for all $i$.
Since $\cup_{i\in I} \cU_i\simeq h_{\cup_{i\in I} V_i}$ and $h_X\simeq h_Y$,
we deduce that $\cup_{i\in I} \cU_i\to \cX$ is a representable open immersion.
\end{proof}

\section{Properties of morphisms of divided log spaces}
\label{property}

When a morphism $f\colon \cY\to \cX$ and a representable Zariski cover $g\colon \cZ\to \cY$ such that $fg$ is a representable smooth morphism are given, it is natural to regard $f$ as a smooth morphism.
However, it is unclear whether $f$ is a representable smooth morphism or not.

This is the reason why we introduce a class of morphisms that can include non-representable morphisms as follows.

\begin{definition}
Let $\cP$ be a class of morphisms in $\lFan/B$ closed under pullbacks and compositions and satisfying (Div) and the following condition:
\begin{itemize}
\item[(Zarloc)]
If $f\colon Y\to X$ is a morphism and $u\colon U\to Y$ is a Zariski cover in $\lFan/B$,
then $fu\in \cP$ implies $f\in \cP$.
\end{itemize}

We say that a morphism $f\colon \cY\to \cX$ in $\lSpc/B$ is a \emph{$\cP$-morphism} if there exists a representable Zariski cover $u\colon \cU\to \cY$ such that $fu$ is a representable $\cP$-morphism.
\end{definition}

\begin{example}
By \cite[Theorem 0.2]{zbMATH06164842}, the classes of log smooth and log \'etale morphisms in $\lFan/B$ satisfy (Zarloc).
This implies that the classes of exact log smooth and Kummer \'etale morphisms in $\lFan/B$ satisfy (Zarloc).

The classes of Zariski covers, strict Nisnevich covers, and strict \'etale covers also satisfy (Zarloc).

If $f\colon Y\to X$ is a morphism and $u\colon U\to Y$ is a Zariski cover in $\lFan/B$ such that $fu$ is a monomorphism,
then $u$ is a monomorphism.
This implies that $u$ is an isomorphism.
Hence the classes of open immersions and closed immersions in $\lFan/B$ satisfy (Zarloc).
In particular,
we have the notions of open immersions and closed immersions in $\lSpc/B$,
which will be useful in Sections \ref{complement} and \ref{blow-up}.
\end{example}

\begin{proposition}
\label{property.4}
Let $\cP$ be a class of morphisms in $\lFan/B$ closed under pullbacks and compositions and satisfying \textup{(Div)} and \textup{(Zarloc)}.
Then a morphism in $\lSpc/B$ is a representable $\cP$-morphism if and only if it is representable and a $\cP$-morphism.
\end{proposition}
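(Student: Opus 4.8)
The plan is to prove the two implications separately; the forward one is immediate and the backward one carries all the content. If $f\colon\cY\to\cX$ is a representable $\cP$-morphism, then it is representable by the remark following Definition \ref{equiv.6}, and it is a $\cP$-morphism because $\id_\cY$ is a representable Zariski cover with $f\circ\id_\cY=f$ a representable $\cP$-morphism.

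For the converse, suppose $f$ is representable and a $\cP$-morphism, and fix $a\colon h_X\to\cX$ with $X\in\lFan/B$; I must produce the square \eqref{equiv.6.1}. Representability gives $\cY_X:=h_X\times_\cX\cY\simeq h_Z$, and I write $q\colon h_Z\to h_X$ for the projection. Choosing a representable Zariski cover $u\colon\cU\to\cY$ with $fu$ a representable $\cP$-morphism and base changing along $a$, the projection $\cU_X:=h_X\times_\cX\cU\to h_X$ is a representable $\cP$-morphism (these are stable under pullback), so $\cU_X\simeq h_W$, while $\cU_X\to\cY_X$ is a representable Zariski cover. Applying Lemma \ref{equiv.27} to $h_W\to h_X$ produces dividing covers $V\to W$ and $U\to X$ and a $\cP$-morphism $g\colon V\to U$; thus I obtain a representable Zariski cover $\gamma\colon h_V\to h_Z$ with $q\circ\gamma=h_g$ under $h_U\simeq h_X$. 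It now suffices to show that $q$ is a representable $\cP$-morphism.

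The core is to descend the $\cP$-property of $g$ along $\gamma$ with a single use of (Zarloc). First I realize $\gamma$ by an honest Zariski cover: Lemma \ref{equiv.27} for the class of Zariski covers (strict, hence satisfying (Div) by Example \ref{equiv.16}(1)) gives a Zariski cover $\gamma'\colon V'\to Z'$ over dividing covers of $V$ and $Z$. Realizing $q$ over $Z'$ by Lemma \ref{equiv.24} and pulling $\gamma'$ back, then replacing $Z'$ by a subdivision-type dividing cover (Corollary \ref{equiv.34}), I obtain a Zariski cover $\gamma''\colon V''\to Z''$ together with an honest $q''\colon Z''\to U$; here I use that the strict $\gamma''$ transports fan charts and that subdivision dividing covers pull back along strict morphisms, so the source covers stay Zariski covers. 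Chasing $q\circ\gamma=h_g$ shows the two honest maps $V''\to U$ given by $q''\gamma''$ and by $V''\to V\xrightarrow{g}U$ agree after applying $h$, hence agree over a further subdivision-type dividing cover $V'''=V''\times_{Z''}Z'''\to V''$ by Proposition \ref{div.5}. The resulting honest map $V'''\to U$ then factors as $V'''\xrightarrow{\gamma'''}Z'''\xrightarrow{q'''}U$ and equals $g\circ(V'''\to V)$.

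Now I apply (Div) once to $g$ and the dividing cover $V'''\to V$, obtaining a dividing cover $U^*\to U$ with $V'''\times_U U^*\to U^*$ a $\cP$-morphism. The decisive point is that, since $V'''\to U$ factors through $q'''\colon Z'''\to U$, setting $\tilde Z:=Z'''\times_U U^*$ yields $V'''\times_U U^*\simeq V'''\times_{Z'''}\tilde Z$; hence the source cover $V'''\times_U U^*\to\tilde Z$ is a pullback of $\gamma'''$ and remains a Zariski cover, while its composite with the projection $\tilde q\colon\tilde Z\to U^*$ is precisely the $\cP$-morphism $V'''\times_U U^*\to U^*$. Then (Zarloc) gives $\tilde q\in\cP$. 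As $\tilde Z\to Z$ and $U^*\to X$ are dividing covers and $h_{\tilde q}$ represents $q$, Lemma \ref{equiv.29} assembles \eqref{equiv.6.1} with the $\cP$-morphism $\tilde q$, so $f$ is a representable $\cP$-morphism. I expect the main obstacle to be exactly this simultaneous bookkeeping: replacing the source by dividing covers to realize $\gamma$ honestly threatens both the Zariski-cover property (a dividing cover composed with a Zariski cover need not be one) and the $\cP$-property of $g$ (one precomposes with a dividing cover), and the resolution is to take all source dividing covers of subdivision type pulled back from $Z$ and to route the final (Div)-cover through $Z$, so that one application of (Zarloc) closes the argument.
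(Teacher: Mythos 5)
Your proof is correct and follows essentially the same strategy as the paper's: straighten the representable Zariski cover of the source and the representable $\cP$-morphism into honest morphisms over dividing covers (arranging the dividing covers of the source to be pulled back from the intermediate object so the covers remain Zariski covers), apply \textup{(Div)} once to the accumulated dividing cover of the source of the $\cP$-morphism, and finish with \textup{(Zarloc)} because the resulting projection factors through the intermediate object. The only cosmetic difference is that the paper straightens the dividing Zariski cover via Proposition \ref{equiv.23} applied over the source, whereas you use Lemma \ref{equiv.27} for the class of Zariski covers together with Proposition \ref{div.5}.
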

\begin{proof}
Any representable $\cP$-morphism in $\lSpc/B$ is obviously representable and a $\cP$-morphism.

For the converse,
assume that $f\colon Y\to X$ is a morphism in $\lFan/B$ such that $h_f$ is a $\cP$-morphism.
We need to show that $h_f$ is a representable $\cP$-morphism.
There exists a representable Zariski cover $h_U\to h_Y$ with $U\in \lFan/B$ such that the composition $h_U\to h_X$ is a representable $\cP$-morphism.
By Lemma \ref{equiv.27},
after replacing $U$ by its suitable dividing cover,
we may assume that $h_U\to h_Y$ is equal to $h_g$ for some dividing Zariski cover $g\colon U\to Y$.
Then apply Lemma \ref{equiv.27} to $h_{fg}$ to obtain a commutative square
\[
\begin{tikzcd}
h_{V}\ar[d,"h_{w'}","\simeq"']\ar[r,"h_{g'}"]&
h_{X'}\ar[d,"h_w"',"\simeq"]
\\
h_{U}\ar[r,"h_{fg}"]&
h_X
\end{tikzcd}
\]
with vertical isomorphisms such that $g'$ is a $\cP$-morphism and $w$ and $w'$ are dividing covers.
By Proposition \ref{div.5},
there exists a dividing cover $v\colon V'\to V$ such that the square
\[
\begin{tikzcd}
V'\ar[r,"g'v"]\ar[d,"w'v"']&
X'\ar[d,"w"]
\\
U\ar[r,"fg"]&
X
\end{tikzcd}
\]
commutes.
Proposition \ref{equiv.23} yields a dividing cover $Y'\to Y$ such that the projection $V'':=V'\times_Y Y'\to Y'$ is a Zariski cover.
Since $\cP$ satisfies (Div),
there exists a dividing cover $X''\to X'$ such that the projection $V''\times_X X''\simeq V''\times_{X'}X''\to X''$ is a $\cP$-morphism.
The first arrow in
\[
V''\times_{X} X''
\to
Y'\times_X X''
\to
X''
\]
is a Zariski cover.
Use (Zarloc) to see that the projection $Y'\times_X X''\to X''$ is a $\cP$-morphism.
This implies that $h_f\colon h_{Y}\to h_X$ is a representable $\cP$-morphism.
\end{proof}

\begin{proposition}
\label{property.1}
Let $\cP$ be a class of morphisms in $\lFan/B$ closed under compositions and pullbacks and satisfying \textup{(Div)} and \textup{(Zarloc)}.
For all $\cP$-morphism $\cY\to \cX$ in $\lSpc/B$,
there exists a commutative square
\[
\begin{tikzcd}
h_V\ar[r,"h_g"]\ar[d]&
h_U\ar[d]
\\
\cY\ar[r]&
\cX
\end{tikzcd}
\]
such that $h_U\to \cX$ and $h_V\to \cY\times_{\cX} h_U$ are representable Zariski covers and $g$ is a $\cP$-morphism in $\lFan/B$.
\end{proposition}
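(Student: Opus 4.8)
The plan is to reduce the statement to the structural result of Lemma \ref{equiv.27} by choosing compatible charts downstairs and upstairs. First I would unwind the definition of a $\cP$-morphism: since $f\colon \cY\to \cX$ is a $\cP$-morphism, there is a representable Zariski cover $u\colon \cU\to \cY$ such that the composite $c:=fu\colon \cU\to \cX$ is a representable $\cP$-morphism. On the target side, axiom (ii) of Definition \ref{equiv.5} for the divided log space $\cX$ supplies a representable Zariski cover $p\colon h_{U_0}\to \cX$ with $U_0\in \lFan/B$. These two covers are the raw material: $u$ will eventually produce the cover $h_V\to \cY\times_{\cX} h_U$, and $p$ (after a dividing modification) will produce the cover $h_U\to \cX$.

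Next I would pull $c$ back along $p$. Because $c$ is representable, the fiber product $\cU\times_{\cX} h_{U_0}$ is representable, say isomorphic to $h_{W_0}$ with $W_0\in \lFan/B$; because the class of representable $\cP$-morphisms is closed under pullbacks (as $\cP$ is closed under pullbacks), the projection $q\colon h_{W_0}\to h_{U_0}$ is a representable $\cP$-morphism. Now I would invoke Lemma \ref{equiv.27} for $q$ (legitimate since $\cP$ is closed under pullbacks and satisfies \textup{(Div)}), obtaining a commutative square with vertical isomorphisms $h_v\colon h_V\xrightarrow{\simeq} h_{W_0}$ and $h_\pi\colon h_U\xrightarrow{\simeq} h_{U_0}$, where $v\colon V\to W_0$ and $\pi\colon U\to U_0$ are dividing covers in $\lFan/B$ and $g\colon V\to U$ is a $\cP$-morphism in $\lFan/B$. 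This $g$ is the $\cP$-morphism demanded by the proposition.

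Finally I would assemble the square. I define $h_U\to \cX$ as $p\circ h_\pi$; since $h_\pi$ is an isomorphism and $p$ is a representable Zariski cover, this is a representable Zariski cover by Proposition \ref{equiv.15}. I define $h_V\to \cY$ as $h_V\xrightarrow{h_v} h_{W_0}\xrightarrow{\mathrm{pr}_1}\cU\xrightarrow{u}\cY$. Commutativity of the resulting square reduces to the relation $h_\pi h_g=q h_v$ from Lemma \ref{equiv.27} together with the cartesian identity $p\circ q=c\circ \mathrm{pr}_1$, which together give $p h_\pi h_g = p q h_v = c\,\mathrm{pr}_1 h_v = f\circ(h_V\to\cY)$. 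The remaining, and I expect the most delicate, point is that $h_V\to \cY\times_{\cX} h_U$ is a representable Zariski cover. For this I would identify $\cU\times_{\cX} h_U\simeq \cU\times_{\cX} h_{U_0}=h_{W_0}$ (using that $h_\pi$ is an isomorphism), observe that the pullback of the representable Zariski cover $u$ yields a representable Zariski cover $\cU\times_{\cX} h_U\to \cY\times_{\cX} h_U$, and check via the two projections that $h_V\to \cY\times_{\cX} h_U$ factors as $h_V\xrightarrow{h_v} h_{W_0}\simeq \cU\times_{\cX} h_U\to \cY\times_{\cX} h_U$. Since $h_v$ and the identification are isomorphisms, this composite is a representable Zariski cover, completing the argument. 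The main obstacle is precisely this last piece of bookkeeping: tracking the identifications so that the abstractly defined map $h_V\to \cY\times_{\cX} h_U$ is recognized as a pullback of $u$ up to the dividing-cover isomorphisms, where the compatibility $h_g=h_\pi^{-1} q h_v$ does the essential work.
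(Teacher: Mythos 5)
Your proposal is correct and follows essentially the same route as the paper: pull the representable $\cP$-morphism $\cU\to\cX$ back along a representable Zariski cover $h_{U_0}\to\cX$, rectify the resulting morphism of representables to an actual $\cP$-morphism $g\colon V\to U$ of schemes via the structure lemma, and identify $h_V\to\cY\times_{\cX}h_U$ with the pullback of $u$ composed with isomorphisms. The only cosmetic difference is that the paper invokes Lemma \ref{equiv.26} where you invoke the stronger Lemma \ref{equiv.27}, which is equally legitimate since \textup{(Div)} is assumed.
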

\begin{proof}
Choose a representable Zariski cover $h_X\to \cX$ with $X\in \lFan/B$.
There exists a representable Zariski cover $\cU\to \cY$ such that the composition $\cU\to \cX$ is a representable $\cP$-morphism.
By Lemma \ref{equiv.26}, there exists a commutative square
\[
\begin{tikzcd}
h_V\ar[r,"h_{g}"]\ar[d,"\simeq"']&
h_{U}\ar[d,"h_u"',"\simeq"]
\\
\cU\times_{\cX} h_X\ar[r]&
h_X
\end{tikzcd}
\]
such that $g$ is a $\cP$-morphism and $u$ is a dividing cover.
Since $\cU\to \cY$ is a representable Zariski cover,
$\cU\times_{\cX} h_X\to \cY\times_{\cX} h_X$ is a representable Zariski cover.
This means that $h_V\to \cY\times_{\cX} h_U$ is a representable Zariski cover.
\end{proof}

\begin{proposition}
\label{equiv.42}
Let $\cP$ be a class of morphisms in $\lFan/B$ closed under compositions and pullbacks and satisfying \textup{(Div)} and \textup{(Zarloc)}.
Then the class of $\cP$-morphisms in $\lSpc/B$ is closed under pullbacks and compositions.
\end{proposition}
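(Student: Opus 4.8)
The plan is to establish the pullback statement first and then bootstrap it into the composition statement, in both cases reducing to the already-proved closure properties of representable $\cP$-morphisms and of representable Zariski covers. Recall that, since $\cP$ is closed under pullbacks, representable $\cP$-morphisms are closed under pullbacks (the remark after Definition \ref{equiv.6}); the same applies to representable Zariski covers; and since $\cP$ is closed under compositions and satisfies (Div), representable $\cP$-morphisms are closed under compositions by Proposition \ref{equiv.15}. Finally, all fiber products stay inside $\lSpc/B$ by Proposition \ref{equiv.21}.

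For pullbacks, let $f\colon \cY\to \cX$ be a $\cP$-morphism and let $\cX'\to \cX$ be arbitrary in $\lSpc/B$; set $\cY':=\cY\times_{\cX}\cX'$. By definition there is a representable Zariski cover $u\colon \cU\to \cY$ with $fu$ a representable $\cP$-morphism. First I would form $\cU':=\cU\times_{\cY}\cY'\simeq \cU\times_{\cX}\cX'$. The projection $\cU'\to \cY'$ is the base change of $u$, hence a representable Zariski cover, while $\cU'\to \cX'$ is the base change of $fu$, hence a representable $\cP$-morphism. Since $\cU'\to \cX'$ factors through $\cU'\to \cY'$, the cover $\cU'\to \cY'$ witnesses that $\cY'\to \cX'$ is a $\cP$-morphism.

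For compositions, let $g\colon \cZ\to \cY$ and $f\colon \cY\to \cX$ be $\cP$-morphisms and choose a representable Zariski cover $u\colon \cU\to \cY$ with $fu$ representable $\cP$. The key move is to pull $u$ back along $g$: set $\cU_{\cZ}:=\cZ\times_{\cY}\cU$, so that $\cU_{\cZ}\to \cZ$ is a representable Zariski cover and $\cU_{\cZ}\to \cU$ is the pullback of $g$, hence a $\cP$-morphism by the pullback case. Applying the definition of $\cP$-morphism to $\cU_{\cZ}\to \cU$ yields a representable Zariski cover $\cW\to \cU_{\cZ}$ for which $\cW\to \cU$ is a representable $\cP$-morphism. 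Then $\cW\to \cU\xrightarrow{fu}\cX$ is a composite of representable $\cP$-morphisms, hence representable $\cP$ by Proposition \ref{equiv.15}, while $\cW\to \cU_{\cZ}\to \cZ$ is a composite of representable Zariski covers, hence a representable Zariski cover by the same proposition applied to the class of Zariski covers (which satisfies (Div) by Example \ref{equiv.16}, being strict). This exhibits $fg$ as a $\cP$-morphism.

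The only genuine subtlety, and the step I would check most carefully, is the commutativity bookkeeping in the composition case: I must confirm that the representable $\cP$-morphism $\cW\to \cX$ obtained via $\cU$ coincides with the composite $\cW\to \cZ\xrightarrow{fg}\cX$. This follows because the universal property of $\cU_{\cZ}=\cZ\times_{\cY}\cU$ forces both routes from $\cW$ to $\cX$ to factor through $\cU_{\cZ}\to \cY\xrightarrow{f}\cX$, so they agree. Invoking Proposition \ref{equiv.15} twice, once for $\cP$ and once for Zariski covers, is what makes the argument close cleanly without any further log-geometric input.
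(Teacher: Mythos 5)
Your proof is correct and follows essentially the same route as the paper: for pullbacks, base-change the witnessing representable Zariski cover, and for compositions, pull the cover over $\cY$ back to $\cZ$, apply the pullback case to get a $\cP$-morphism $\cZ\times_{\cY}\cU\to\cU$, refine by a further representable Zariski cover, and conclude with Proposition \ref{equiv.15}. The only difference is cosmetic: the paper does not belabor the commutativity bookkeeping you flag at the end, since the relevant triangles commute by construction of the fiber products.
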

\begin{proof}
Let $\cY\to \cX$ be a $\cP$-morphism, and let $\cX'\to \cX$ be a morphism in $\lSpc/B$.
There exists a representable Zariski cover $\cU\to \cY$ such that the composition $\cU\to \cX$ is a representable $\cP$-morphism.
The pullback $\cU\times_{\cX} \cX'\to \cY\times_{\cX}\cX'$ is a representable Zariski cover, and the projection $\cU\times_{\cX} \cX'\to \cX'$ is a representable $\cP$-morphism.
Hence the projection $\cY\times_{\cX}\cX'\to \cX'$ is a $\cP$-morphism.

Let $\cZ\to \cY\to \cX$ be $\cP$-morphisms in $\lSpc/B$.
There exists a representable Zariski cover $\cU\to \cY$ such that the composition $\cU\to \cX$ is a representable $\cP$-morphism.
By the above paragraph, the projection $\cZ\times_{\cY}\cU\to \cU$ is a $\cP$-morphism.
There exists a representable Zariski cover $\cV\to \cZ\times_{\cY}\cU$ such that the composition $\cV\to \cU$ is a representable $\cP$-morphism.
By Proposition \ref{equiv.15}, the composition $\cV\to \cX$ is a representable $\cP$-morphism, and the composition $\cV\to \cZ$ is a representable Zariski cover.
Hence $\cZ\to \cX$ is a $\cP$-morphism.
\end{proof}

\begin{proposition}
Let $\cZ\xrightarrow{g}\cY\xrightarrow{f}\cX$ be morphisms in $\lSpc/B$.
If $f$ is log \'etale and $fg$ is log \'etale (resp.\ log smooth), then $g$ is log \'etale (resp.\ log smooth).
\end{proposition}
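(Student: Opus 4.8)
The plan is to factor $g$ through its graph and thereby reduce both cancellation statements to the single assertion that the diagonal of a log \'etale morphism is an open immersion. Consider the graph factorization
\[
\cZ \xrightarrow{\Gamma_g} \cZ\times_{\cX}\cY \xrightarrow{p_2} \cY,
\]
where $\Gamma_g=(\id_{\cZ},g)$ and $p_2$ is the second projection; these fiber products exist in $\lSpc/B$ by Proposition \ref{equiv.21}. The projection $p_2$ is the pullback of $fg$ along $f$, so by Proposition \ref{equiv.42} it is log \'etale (resp.\ log smooth). Moreover $\Gamma_g$ sits in a cartesian square
\[
\begin{tikzcd}
\cZ \ar[r,"\Gamma_g"]\ar[d,"g"']&
\cZ\times_{\cX}\cY \ar[d,"g\times_{\cX}\id"]\\
\cY \ar[r,"\Delta_f"]&
\cY\times_{\cX}\cY,
\end{tikzcd}
\]
so $\Gamma_g$ is a pullback of the diagonal $\Delta_f$. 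Once we know $\Delta_f$ is an open immersion (hence log \'etale), Proposition \ref{equiv.42} makes $\Gamma_g$ log \'etale, and then $g=p_2\Gamma_g$ is a log \'etale morphism followed by a log \'etale (resp.\ log smooth) one, hence log \'etale (resp.\ log smooth) again by Proposition \ref{equiv.42}. This treats both cases simultaneously.

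The crux is therefore the lemma: if $f$ is log \'etale, then $\Delta_f\colon \cY\to \cY\times_{\cX}\cY$ is an open immersion. By Proposition \ref{equiv.39} we already know $\Delta_f$ is a representable strict immersion, so the task is to upgrade ``strict immersion'' to ``open immersion''. Since being an open immersion is representable and Zariski local on the target, I would use Propositions \ref{property.1} and \ref{property.4}, together with Lemmas \ref{equiv.27} and \ref{equiv.17}, to reduce to the representable situation: a log \'etale morphism $f_0\colon Y\to X$ in $\lFan/B$, with $\Delta_f$ modeled by $h_Y\to h_{Y\times_X Y}$.

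In this representable case, the representable strict immersion structure of $\Delta_f$ provides, via Lemma \ref{equiv.27} and a subdivision (log blow-up) of the target, a genuine strict immersion $i\colon Y'\hookrightarrow W'$ of fs log schemes modeling $\Delta_f$, where $Y'\to Y$ and $W'\to Y\times_X Y$ are dividing covers. Log blow-ups are log \'etale and hence leave the sheaf of log differentials unchanged, so the conormal sheaf of the closed part of $i$ is identified with $\Omega^1_{Y/X}$, which vanishes because $f_0$ is log \'etale. A strict closed immersion with vanishing conormal sheaf has idempotent ideal, so locally that ideal is generated by an idempotent and the image is open and closed; thus $i$, and hence $\Delta_f$, is an open immersion. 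Transporting this conclusion back through the reduction completes the lemma and thereby the proposition.

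The main obstacle will be this last lemma, and within it the identification of the conormal of the blown-up diagonal with the log differentials $\Omega^1_{Y/X}$: one must verify that the subdivision supplied by the representability of $\Delta_f$ (needed to realize it by a strict immersion in the first place) is compatible with the log-differential computation, and that the reduction from $\lSpc/B$ to $\lFan/B$ through the covering lemmas genuinely preserves the property of being an open immersion. Everything else reduces to formal base-change and composition bookkeeping via Proposition \ref{equiv.42}.
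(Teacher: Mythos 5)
Your global architecture—the graph factorization $g = p_2\circ\Gamma_g$, pullback stability and composition via Proposition \ref{equiv.42}, and the reduction of both cases to the single claim that $\Delta_f$ is an open immersion when $f$ is log \'etale—is sound, and that key lemma is in fact true. But your proof of the lemma has a genuine gap at its crux: the identification of the conormal sheaf of the exactified diagonal $i\colon Y'\to W'$ with $\Omega^1_{Y/X}$. The justification you offer (log blow-ups are log \'etale, hence ``leave the sheaf of log differentials unchanged'') is a non sequitur: invariance of \emph{log} differentials under the dividing covers $Y'\to Y$ and $W'\to Y\times_X Y$ says nothing about the classical conormal sheaf $\cI/\cI^2$ of the strict immersion $i$, which is an invariant of the underlying schemes. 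Relating that conormal sheaf to the log differentials of $f$ is a genuine theorem about exactified diagonals; it appears nowhere in this paper (which never discusses conormal sheaves or differentials of immersions) nor in the results it quotes, so as written the lemma is unproven. A second, smaller gap: your reduction to the representable case is only gestured at, and it is not routine, because pulling $\Delta_f$ back along the representable Zariski cover $\cU\times_{\cX}\cU\to \cY\times_{\cX}\cY$ produces $\cU\times_{\cY}\cU\to \cU\times_{\cX}\cU$, which is \emph{not} a diagonal; showing it is an open immersion needs a further cancellation argument at the sheaf level before Lemma \ref{equiv.17} can be invoked.

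Both gaps can be closed with tools already in the paper, but doing so shows the detour is unnecessary. In the representable case, $\Delta\colon Y\to Y\times_X Y$ is a monomorphism, and it is log \'etale by the scheme-level cancellation property \cite[Remark IV.3.1.2]{Ogu} (both projections are log \'etale and $p_1\Delta=\id$); then the log \'etale monomorphism case of Proposition \ref{equiv.23}, combined with Lemma \ref{equiv.29}, shows that $h_\Delta$ is a representable open immersion—no conormal sheaves needed. Notice, however, that the essential input of this repaired argument is exactly \cite[Remark IV.3.1.2]{Ogu}, which is also the single nontrivial input of the paper's own proof: there one chooses Zariski and dividing covers so that $g$ is modeled by a morphism $V'\to U$ in $\lFan/B$ with $U\to X$ log \'etale and the composite $V'\to X$ log smooth (resp.\ log \'etale), and applies the Remark directly to conclude $V'\to U$ is log smooth (resp.\ log \'etale). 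So the cancellation you are trying to geometrize through the diagonal must be fed in at the level of fs log schemes in any case, and the paper's route reaches it without the graph factorization, the diagonal lemma, or descent of open immersions.
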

\begin{proof}
There exists a representable Zariski cover $\cU\to \cY$ such that the composition $\cU\to \cX$ is a representable log \'etale morphism in $\lSpc/B$.
The composition $\cZ\times_{\cY}\cU\to \cX$ is log \'etale (resp.\ log smooth), so there exists a representable Zariski cover $\cV\to \cZ\times_{\cY}\cU$ such that the composition $\cV\to \cX$ is representable log \'etale (resp. log smooth).

By Lemma \ref{equiv.18}, there exists a commutative square
\[
\begin{tikzcd}
h_V\ar[d]\ar[r,"h_v"]&
h_U\ar[d]\ar[r,"h_u"]&
h_X\ar[d]
\\
\cV\ar[r]&
\cU\ar[r]&
\cX
\end{tikzcd}
\]
with cartesian squares such that the vertical morphisms are representable Zariski covers, $u$ is a log \'etale morphism, and $v$ is a morphism in $\lFan/B$.
The composition $h_V\to h_X$ is a representable log \'etale (resp.\ log smooth) morphism, so Lemma \ref{equiv.27} yields a dividing cover $v'\colon V'\to V$ and a log \'etale (resp.\ log smooth) morphism $p\colon V'\to X$ such that $h_{uvv'}=h_p$.
By Proposition \ref{div.5}, we can replace $V'$ with its suitable dividing cover to assume that the diagram
\[
\begin{tikzcd}
V'\ar[d,"v'"']\ar[rrd,"p",bend left=20]
\\
V\ar[r,"v"]&
U\ar[r,"u"]&
X
\end{tikzcd}
\]
commutes.
Owing to \cite[Remark IV.3.1.2]{Ogu},
the composition $V'\to U$ is log \'etale (resp.\ log smooth).
Hence the composition $h_{V'}\to \cY$ is a representable log \'etale (resp.\ log smooth) morphism too.
To conclude, observe that the composition $h_{V'}\to \cZ$ is a representable Zariski cover.
\end{proof}

\begin{proposition}
\label{property.3}
Let $f\colon \cY\to \cX$ be a morphism in $\lSpc/B$.
If $f$ is an open immersion (resp.\ strict closed immersion),
then $f$ is a representable open immersion (resp.\ representable strict closed immersion).
\end{proposition}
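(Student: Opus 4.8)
The plan is to handle both cases simultaneously by writing $\cP$ for the class of open immersions (resp.\ strict closed immersions) in $\lFan/B$. This class is closed under pullbacks and compositions and satisfies \textup{(Div)} (by Example \ref{equiv.16}(1), as its members are strict) and \textup{(Zarloc)}; crucially, every member of $\cP$ is a strict monomorphism. Since $f$ is a $\cP$-morphism, there is by definition a representable Zariski cover $u\colon \cU\to\cY$ such that $fu\colon\cU\to\cX$ is a representable $\cP$-morphism. The strategy is to prove that $u$ is forced to be an isomorphism, which immediately identifies $f$ with the representable $\cP$-morphism $fu$.

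First I would note that $fu$ is a representable strict morphism, because every morphism in $\cP$ is strict. By the remark following Proposition \ref{equiv.40}, a representable strict morphism in $\lSpc/B$ is a monomorphism, so $fu$ is a monomorphism. A formal cancellation then upgrades this to $u$: if $ua=ub$ for sections over some $T\in\lFan/B$, then $fua=fub$, and injectivity of $fu$ gives $a=b$; hence $u$ is a monomorphism.

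The decisive input is now Proposition \ref{equiv.31}: a representable Zariski cover that is a monomorphism must be an isomorphism. Applying it to $u$ gives an isomorphism $\cU\simeq\cY$ under which $f$ corresponds to $fu$, so $f$ is itself a representable $\cP$-morphism (using that representable $\cP$-morphisms are stable under composition with isomorphisms, Proposition \ref{equiv.15}). The one genuine obstacle is seeing that the covering map $u$ in the definition of a $\cP$-morphism must collapse to an isomorphism; this is precisely where the strictness (hence the monomorphism property) of $\cP$ enters, and it is exactly the step that would break for a non-monomorphic class such as log smooth morphisms. Everything else in the argument is formal.
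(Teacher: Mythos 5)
Your argument is correct and follows the paper's own proof essentially verbatim: both deduce that $fu$ is a monomorphism (via Proposition \ref{equiv.40} or the remark following it), conclude that the Zariski cover $u$ is itself a monomorphism by cancellation, and then apply Proposition \ref{equiv.31} to see that $u$ is an isomorphism, identifying $f$ with the representable $\cP$-morphism $fu$. The extra remarks about (Div), (Zarloc), and Proposition \ref{equiv.15} are harmless but not needed.
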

\begin{proof}
There exists a representable Zariski cover $g\colon \cU\to \cY$ such that $fg$ is a representable open immersion (resp.\ representable strict closed immersion).
Proposition \ref{equiv.40} shows that $fg$ is a monomorphism in $\lSpc/B$, so $g$ is a monomorphism in $\lSpc/B$ too.
By Proposition \ref{equiv.31}, $g$ is an isomorphism.
Hence $f$ is a representable open immersion (resp.\ representable strict closed immersion).
\end{proof}

\section{Topologies on divided log spaces}
\label{topology}

In this section, we begin with introducing several topologies on $\lSpc/B$.
Then we compare the categories of sheaves on $\lFan/B$ and $\lSpc/B$.

\begin{definition}
\label{divtop.1}
Consider the following classes of morphisms in $\lFan/B$:

\begin{center}
\small
\begin{tabular}{|c|c|}
\hline
{\normalsize $\cP$} & {\normalsize exact $\cP$}
\\
\hline
dividing Zariski covers & Zariski covers
\\
dividing Nisnevich covers & strict Nisnevich covers
\\
dividing \'etale covers & strict \'etale covers 
\\
log \'etale covers & Kummer \'etale covers 
\\
\hline
\end{tabular}
\end{center}
The smallest topologies $t_{\cP}$ containing all exact $\cP$-morphism as a covering for the above cases are called the \emph{Zariski}, \emph{strict Nisnevich}, \emph{strict \'etale}, and \emph{Kummer \'etale topologies}.
We also call them as the \emph{dividing Zariski}, \emph{dividing Nisnevich}, \emph{dividing \'etale}, and \emph{log \'etale topologies}.

Let $\lSmSpc/B$ be the full subcategory of $\lSpc/B$ consisting of $\cX$ that is log smooth over $B$.
\end{definition}

\begin{proposition}
\label{divtop.2}
For the above four cases of $\cP$, the topology $t_\cP$ on $\lSpc/B$ is the smallest topology such that all representable $\cP$-morphisms are covers.
\end{proposition}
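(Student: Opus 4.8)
The plan is to identify $t_\cP$ with the topology $s$ defined as the smallest topology on $\lSpc/B$ in which every representable $\cP$-morphism is a cover. The easy inclusion is that any topology $\tau$ for which all representable $\cP$-morphisms are covers must contain $t_\cP$: since every exact $\cP$-morphism is in particular a $\cP$-morphism, Lemma \ref{equiv.29} shows that each generating morphism $h_g$ of $t_\cP$ (with $g$ an exact $\cP$-morphism in $\lFan/B$) is a representable $\cP$-morphism, hence a $\tau$-cover, so $t_\cP\subseteq\tau$; taking $\tau=s$ gives $t_\cP\subseteq s$. The substance of the proof is the reverse inclusion, namely that $t_\cP$ itself makes every representable $\cP$-morphism a cover, for then $s\subseteq t_\cP$ by minimality of $s$ and we are done.

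First I would settle the case of a representable $\cP$-morphism $f\colon h_Y\to h_X$ with representable target. By the equivalence ``representable $\cP$-morphism $=$ representable exact $\cP$-morphism'' recorded after Example \ref{equiv.16} (an application of \cite[Proposition 4.2.3]{logA1}) and the fact that exact $\cP$ satisfies \textup{(Div)} by Example \ref{equiv.16}, Lemma \ref{equiv.27} applies to $f$ with the class of exact $\cP$-morphisms and yields a commutative square whose vertical arrows are $h_u$ and $h_v$ for dividing covers $u,v$ and whose top arrow is $h_g$ for an exact $\cP$-morphism $g$. Dividing covers become isomorphisms in $\Shv_{dZar}(\lFan/B)$ (the remark after Definition \ref{div.4}), so this exhibits $f$ as $h_u\circ h_g\circ h_v^{-1}$. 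As $h_g$ is a generating $t_\cP$-cover, and isomorphisms are covers and covers are stable under composition, $f$ is a $t_\cP$-cover.

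Next I would reduce a representable $\cP$-morphism $f\colon\cY\to\cX$ with arbitrary target to the representable case. Choose a representable Zariski cover $w\colon h_X\to\cX$, which exists by Definition \ref{equiv.5}. Because $f$ is representable (Definition \ref{equiv.6}), the base change $\cY\times_\cX h_X$ is again representable, and the projection $f'\colon\cY\times_\cX h_X\to h_X$ is a representable $\cP$-morphism with representable target, hence a $t_\cP$-cover by the previous paragraph. Since $w\circ f'=f\circ\pi$ for the projection $\pi\colon\cY\times_\cX h_X\to\cY$, the sieve generated by $f$ contains the one generated by the composite $w\circ f'$; thus, once $w$ is known to be a $t_\cP$-cover, the composite $w\circ f'$ is a $t_\cP$-cover and $f$ inherits a covering sieve.

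The hard part is exactly this remaining claim: the atlas $w\colon h_X\to\cX$ of a general divided log space is a $t_\cP$-cover, which cannot be read off directly from the generators of $t_\cP$ since their targets are representable. I would prove it using the presentation $\cX\simeq h_X/\cR$ with $\cR:=h_X\times_\cX h_X$ (as in Section \ref{equiv}). By Proposition \ref{equiv.20} the object $\cR$ is representable, and its two projections $\cR\rightrightarrows h_X$, being base changes of $w$, are representable Zariski covers with representable target, hence $t_\cP$-covers by the representable case. Since $w$ is an epimorphism of sheaves (Proposition \ref{equiv.36}), necessarily effective in the sheaf topos, whose kernel pair $\cR$ has both projections $t_\cP$-covering, the local-character (transitivity) axiom of $t_\cP$ allows one to descend and conclude that $w$ is a $t_\cP$-cover, in exact analogy with the fact that the atlas of an algebraic space is a cover for the chosen topology. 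Checking that this coequalizer descent is compatible with the covering sieves of $t_\cP$ is the delicate point, and I expect it to absorb most of the work.
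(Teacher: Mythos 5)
Your argument has a genuine gap at exactly the point you flag as ``the delicate point.'' To show that the atlas $w\colon h_X\to\cX$ is a $t_\cP$-cover by the local-character axiom, you must exhibit an \emph{already known} $t_\cP$-covering sieve $R$ of $\cX$ along which to localize, and then check that $f^*\langle w\rangle$ is covering for every $f\in R$. But for a non-representable $\cX$ the only covering sieves you have produced at that stage are the maximal sieve and pullbacks of sieves on representables; the maximal sieve contains $\id_\cX$ (so the localization condition is just the claim itself), and any nontrivial candidate $R$ contains morphisms with non-representable domain $\cZ$, for which $f^*\langle w\rangle$ is again a sieve generated by an atlas of $\cZ$ --- the same problem one level down. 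The epimorphism property of $w$ and the representability of the kernel pair $\cR$ do not feed into any axiom of a Grothendieck topology, so ``coequalizer descent'' is not available; the step is circular as written, and I do not see how to complete it from your reading of the generators.

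The underlying issue is that reading. In Definition \ref{divtop.1} the generators of $t_\cP$ on $\lSpc/B$ are not only the morphisms $h_g$ with $g$ an exact $\cP$-morphism in $\lFan/B$, but all exact $\cP$-morphisms \emph{of divided log spaces} in the sense of Section \ref{property} (a morphism $f\colon\cY\to\cX$ admitting a representable Zariski cover $u\colon\cU\to\cY$ with $fu$ a representable exact $\cP$-morphism). With that reading the proposition is a two-line mutual-refinement statement, which is the paper's proof: every representable $\cP$-morphism is a representable exact $\cP$-morphism (the remark after Example \ref{equiv.16}), hence an exact $\cP$-morphism with $u=\id$, hence a generator of $t_\cP$; conversely every generator $f$ is refined by $fu$, which is a representable $\cP$-morphism, so $f$ is a cover in any topology in which representable $\cP$-morphisms are covers. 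In particular the atlas $h_X\to\cX$, being a representable Zariski cover and hence an exact $\cP$-morphism in all four cases, is itself a generator --- no descent is needed. Your first two reduction steps are fine and would survive under the correct reading, but the proof as proposed does not close.
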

\begin{proof}
Immediate from the fact that every $\cP$-morphism admits a refinement that is a representable $\cP$-morphism.
\end{proof}

Let $\varphi\colon \cC\to \cC'$ be a functor of sites.
There is a functor
\[
\varphi^*\colon \Shv(\cC')
\to
\Shv(\cC)
\]
such that $\varphi^*\cF(X):=\cF(\varphi(X))$ for $X\in \cC$ and $\cF\in \Shv(\cC')$.
If $\varphi$ is a continuous functor of sites, then according to \cite[Proposition III.1.2]{SGA4}, $\varphi^*$ admits a left adjoint
\[
\varphi_!\colon \Shv(\cC)\to \Shv(\cC').
\]

Consider the induced commutative diagram
\begin{equation}
\begin{tikzcd}
\lSmFan/B\ar[r,"\beta"]\ar[d,hookrightarrow,"\alpha"']&
\lSm/B\ar[d,hookrightarrow,"\alpha'"]\ar[r,"\gamma"]&
\lSmSpc/B\ar[d,hookrightarrow,"\alpha''"]
\\
\lFan/B\ar[r,"\beta'"]&
\lSch/B\ar[r,"\gamma'"]&
\lSpc/B,
\end{tikzcd}
\end{equation}
where $\lSm/B$ (resp.\ $\lSmFan/B$) denotes the full subcategory of $\lSch/B$ (resp.\ $\lFan/B$) consisting of log smooth fs log schemes.
Let $\cP$ be one of the four class of morphisms in Definition \ref{divtop.1}.
These functors are continuous functors of sites for the $t_\cP$-topology, and hence we have a commutative diagram
\begin{equation}
\begin{tikzcd}
\Shv_{t_{\cP}}(\lSmFan/B)\ar[r,"\beta_!"]\ar[d,"\alpha_!"']&
\Shv_{t_{\cP}}(\lSm/B)\ar[d,"\alpha_!'"]\ar[r,"\gamma_!"]&
\Shv_{t_{\cP}}(\lSmSpc/B)\ar[d,"\alpha_!''"]
\\
\Shv_{t_{\cP}}(\lFan/B)\ar[r,"\beta_!'"]&
\Shv_{t_{\cP}}(\lSch/B)\ar[r,"\gamma_!'"]&
\Shv_{t_{\cP}}(\lSpc/B).
\end{tikzcd}
\end{equation}
Due to the implication (i)$\Rightarrow$(ii) in \cite[Th\'eor\`eme III.4.1]{SGA4}, $\beta_!$ and $\beta_!'$ are equivalences.
Since $\alpha$, $\alpha'$, and $\alpha''$ are cocontinuous and fully faithful,  \cite[Proposition III.2.6]{SGA4} shows that $\alpha_!$, $\alpha_!'$, and $\alpha_!''$ are fully faithful.

If $X\in \lFan/B$ and $h_Y\to h_X$ is a representable $t_\cP$-cover with $Y\in \lFan/B$, then Lemma \ref{equiv.26} yields a commutative square
\[
\begin{tikzcd}
h_{Y'}\ar[d,"\simeq"']\ar[r,"h_{f'}"]&
h_{X'}\ar[d,"h_g"',"\simeq"]
\\
h_Y\ar[r]&
h_X
\end{tikzcd}
\]
with vertical isomorphisms such that $g$ is a dividing cover and $f'$ is a $t_\cP$-cover.
The composition $Y'\to X$ is a $t_{\cP}$-cover and $h_{Y'}\to h_X$ refines $h_Y\to h_X$.
This shows that $\gamma'\beta'$ is cocontinuous.
We can similarly show that $\gamma\beta$ is cocontinuous.

\begin{proposition}
\label{sheaves.1}
Let $\cP$ be as above.
The functors
\begin{gather*}
\gamma_!\colon \Shv_{t_{\cP}}(\lSm/B)\to \Shv_{t_{\cP}}(\lSmSpc/B),
\\
\gamma_!'\colon \Shv_{t_{\cP}}(\lSch/B)\to \Shv_{t_{\cP}}(\lSpc/B)
\end{gather*}
are equivalences.
\end{proposition}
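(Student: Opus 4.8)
The idea is to reduce the statement to the analogous assertion for the composite functors of sites $\gamma'\beta'\colon \lFan/B\to \lSpc/B$ and $\gamma\beta\colon \lSmFan/B\to \lSmSpc/B$, each of which sends an object to its representable sheaf. Because the left-adjoint construction $(-)_!$ is functorial in the composition of continuous functors of sites, we have $(\gamma'\beta')_!\simeq \gamma_!'\circ \beta_!'$ and $(\gamma\beta)_!\simeq \gamma_!\circ \beta_!$. We already know that $\beta_!$ and $\beta_!'$ are equivalences, so once we show that $(\gamma'\beta')_!$ and $(\gamma\beta)_!$ are equivalences, the identities $\gamma_!'\simeq (\gamma'\beta')_!\circ (\beta_!')^{-1}$ and $\gamma_!\simeq (\gamma\beta)_!\circ (\beta_!)^{-1}$ finish the proof.

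To see that $(\gamma'\beta')_!$ is an equivalence, I would verify the hypotheses of the comparison lemma \cite[Th\'eor\`eme III.4.1]{SGA4} for $\gamma'\beta'$, which is continuous and, as shown before the statement, cocontinuous. The first hypothesis is topological density: every $\cX\in \lSpc/B$ admits a $t_\cP$-covering by objects in the image of $\gamma'\beta'$. This is exactly Definition \ref{equiv.5}(ii), since a representable Zariski cover $h_X\to \cX$ with $X\in \lFan/B$ is a $t_\cP$-covering in each of the four cases. The second hypothesis is full faithfulness up to refinement. Here I would use that morphisms in $\lSpc/B$ between representables are morphisms of dividing Zariski sheaves, and hence are independent of $t_\cP$: for $X,X'\in \lFan/B$ with a fan chart $\Sigma$ on $X$, Proposition \ref{div.5} gives
\[
\Hom_{\lSpc/B}(h_X,h_{X'})
\simeq
h_{X'}(X)
\simeq
\colimit_{\Sigma'\to \Sigma}\Hom_{\lFan/B}(X\times_{\T_\Sigma}\T_{\Sigma'},X').
\]
Each transition morphism $X\times_{\T_\Sigma}\T_{\Sigma'}\to X$ is a dividing cover, hence a $t_\cP$-covering whose source again lies in $\lFan/B$. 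Thus every morphism $h_X\to h_{X'}$ becomes, after restriction along such a covering, represented by a morphism in $\lFan/B$, and two morphisms of $\lFan/B$ inducing the same map of sheaves already agree after such a covering. This is precisely the full-faithfulness-up-to-covering condition, so the comparison lemma applies and $(\gamma'\beta')_!$ is an equivalence.

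The log smooth case is formally identical; the only additional verification is that the covering object in the density condition lies in $\lSmFan/B$. Given $\cX\in \lSmSpc/B$, choose a representable Zariski cover $h_X\to \cX$ with $X\in \lFan/B$; since a Zariski cover is log smooth and $\cX$ is log smooth over $B$, the object $X$ is log smooth over $B$, i.e.\ $X\in \lSmFan/B$. The morphism comparison is the restriction of the one above, and $X\times_{\T_\Sigma}\T_{\Sigma'}$ remains log smooth over $B$ because the subdivision morphism $\T_{\Sigma'}\to \T_\Sigma$ is log \'etale. Hence $(\gamma\beta)_!$ is an equivalence as well. The crux of the argument, and the step deserving the most care, is the full-faithfulness-up-to-covering condition: one must reconcile the fact that the Yoneda functor is \emph{not} fully faithful --- dividing covers are inverted --- with the comparison lemma, and Proposition \ref{div.5} is exactly what makes this reconciliation work, with dividing covers serving as the $t_\cP$-coverings along which morphisms are comparable.
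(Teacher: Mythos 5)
Your proof is correct and follows essentially the same route as the paper: reduce to the composite functors $\gamma'\beta'$ and $\gamma\beta$ using that $\beta_!$ and $\beta_!'$ are equivalences, then verify the hypotheses of the comparison lemma (the paper cites \cite[Tag 03A0]{Stacks} rather than SGA4, but the conditions are the same), with continuity and cocontinuity established beforehand, the morphism conditions supplied by Proposition \ref{div.5}, and density supplied by representable Zariski covers. The paper's own write-up is terser than yours but identical in structure.
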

\begin{proof}
By the above observation, we only need to show that $\gamma_!\beta_!$ and $\gamma_!'\beta_!'$ are equivalences.
We focus on $\gamma_!'\beta_!'$ since the proofs are similar.

Let us check the conditions (1)--(5) in \cite[Tag 03A0]{Stacks} for $\gamma'\beta'$.
We have checked the conditions (1) and (2) above.
The conditions (3) and (4) are consequences of Proposition \ref{div.5}.
To show the condition (5), consider a representable Zariski cover $h_X\to \cX$ with $X\in \lFan/B$.

Hence we have checked the conditions (1)--(5), and we deduce that $\gamma_!'\beta_!'$ is an equivalence.
\end{proof}

\begin{definition}
\label{divtop.3}
A \emph{Zariski distinguished square in $\lSpc/B$} is a cartesian square in $\lSpc/B$ of the form
\begin{equation}
\label{divtop.3.1}
\begin{tikzcd}
\cW\ar[d,"g'"']\ar[r,"f'"]&
\cV\ar[d,"g"]
\\
\cU\ar[r,"f"]&
\cX
\end{tikzcd}
\end{equation}
such that $f$ and $g$ are representable open immersions and the induced morphism $\cU\amalg \cV\to \cX$ is a representable Zariski cover.
The \emph{Zariski cd-structure on $\lSpc/B$} is the collection of Zariski distinguished squares.

By Proposition \ref{equiv.40},
$f$ and $g$ are monomorphisms.
Observe that the square
\[
\begin{tikzcd}
\cW\ar[d,"\Delta"']\ar[r,"g"]&
\cV\ar[d,"\Delta"]
\\
\cW\times_{\cU} \cW\ar[r,"g'\times_g g'"]&
\cV\times_{\cX} \cV
\end{tikzcd}
\]
whose vertical morphisms are the diagonal morphisms is a Zariski distinguished square since the vertical morphisms are isomorphisms.
The Zariski cd-structure on $\lSpc/B$ is complete and regular in the sense of \cite[Definitions 2.3, 2.10]{Voe10a}.

The topology associated with the Zariski cd-structure is defined to be the smallest Grothendieck topology containing $\cU\amalg \cV\to \cX$ as a covering for all distinguished square of the form \eqref{divtop.3.1}.
\end{definition}

\begin{proposition}
The Zariski topology on $\lSpc/B$ is the topology associated with the Zariski cd-structure on $\lSpc/B$.
\end{proposition}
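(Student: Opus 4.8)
Write $\tau$ for the Zariski topology on $\lSpc/B$ and $\tau_{cd}$ for the topology associated with the Zariski cd-structure. By Proposition~\ref{divtop.2}, $\tau$ is the smallest topology for which every representable Zariski cover is a covering, while by Definition~\ref{divtop.3}, $\tau_{cd}$ is the smallest topology for which every morphism $\cU\amalg \cV\to \cX$ coming from a Zariski distinguished square is a covering. It therefore suffices to check two inclusions. The inclusion $\tau_{cd}\subseteq \tau$ is immediate: by the very definition of a Zariski distinguished square, the morphism $\cU\amalg \cV\to \cX$ is already a representable Zariski cover, hence a $\tau$-covering, and minimality of $\tau_{cd}$ gives the inclusion. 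The content lies in the reverse inclusion, for which I must show that every representable Zariski cover is a $\tau_{cd}$-covering.

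So let $f\colon \cY\to \cX$ be a representable Zariski cover. First I would reduce to a covering by representable open immersions. Choosing a representable Zariski cover $h_Y\to \cY$ with $Y\in \lFan/B$ from axiom (ii) of Definition~\ref{equiv.5} and composing by Proposition~\ref{equiv.15}, I obtain a representable Zariski cover $h_Y\to \cX$, to which Proposition~\ref{equiv.19} applies: it produces a dividing Zariski covering family $\{U_i\to Y\}_{i\in I}$ with finite $I$ such that each $h_{U_i}\to \cX$ is a representable open immersion. Since $\amalg_{i}U_i\to Y$ is a dividing Zariski cover, Proposition~\ref{equiv.11}(4) gives that $\amalg_i h_{U_i}\to h_Y$ is a representable Zariski cover, and composing with $h_Y\to \cX$ shows $\amalg_i h_{U_i}\to \cX$ is one too. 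Moreover each $h_{U_i}\to \cX$ is constructed as the composite $h_{U_i}\to h_Y\to \cX$, so it factors through $f$; thus the family $\{h_{U_i}\to \cX\}_{i\in I}$ refines $f$. Since a covering family refining $f$ forces $f$ itself to be a covering, it is enough to prove that any finite family of representable open immersions $\{\cU_i\to \cX\}_{i=1}^n$ with $\amalg_i \cU_i\to \cX$ a representable Zariski cover is a $\tau_{cd}$-covering.

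I would prove this last claim by induction on $n$, using the union construction of Section~\ref{equiv}. For $n=1$ the map $\cU_1\to \cX$ is a representable open immersion that is also a representable Zariski cover, hence a monomorphism by Proposition~\ref{equiv.40} and therefore an isomorphism by Proposition~\ref{equiv.31}, so it is a covering. For $n\ge 2$, set $\cU:=\cup_{i<n}\cU_i$ and $\cV:=\cU_n$. By the union construction, $\cU\to \cX$ is a representable open immersion and $\amalg_{i<n}\cU_i\to \cU$ is a representable Zariski cover; since the representable Zariski cover $\amalg_{i\le n}\cU_i\to \cX$ factors through $\cU\amalg \cV\to \cX$, the latter is universally surjective and hence, together with the representable open immersions $\cU\to \cX$ and $\cV\to \cX$, exhibits a Zariski distinguished square. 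Thus $\{\cU\to \cX,\ \cV\to \cX\}$ is a $\tau_{cd}$-covering. Applying the inductive hypothesis to $\{\cU_i\to \cU\}_{i<n}$ and composing coverings then shows $\{\cU_i\to \cX\}_{i\le n}$ is a $\tau_{cd}$-covering. This gives $\tau\subseteq \tau_{cd}$, and combined with the first inclusion we conclude $\tau=\tau_{cd}$.

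The main obstacle I anticipate is the bookkeeping in the reduction: one must verify both that the open immersions furnished by Proposition~\ref{equiv.19} genuinely factor through $f$ and jointly cover $\cX$, and that the inductive step produces a bona fide Zariski distinguished square, in particular the universal surjectivity of $\cU\amalg \cV\to \cX$. The delicate point is the interplay between a representable open immersion into $\cX$ and an open immersion of the covering scheme $Y$, mediated by the union construction; the cd-structure formalism (its completeness and regularity, recorded in Definition~\ref{divtop.3}) enters only implicitly, through the closure of $\tau_{cd}$ under refinement and composition of coverings.
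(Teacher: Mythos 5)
Your proof is correct and follows essentially the same route as the paper: reduce via Proposition \ref{equiv.19} to a finite family of representable open immersions refining the given cover, then induct on its size, peeling off one open immersion against the union of the others to produce a Zariski distinguished square at each step. The only differences are cosmetic --- you spell out the trivial inclusion of the cd-topology into the Zariski topology and the base case of the induction, which the paper leaves implicit.
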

\begin{proof}
Let $\cY\to \cX$ be a Zariski cover in $\lSpc/B$.
There exists a representable Zariski cover $\cU\to \cY$ such that the composition $\cU\to \cX$ is a representable Zariski cover.
By Proposition \ref{equiv.19},
we may assume $\cU\simeq \amalg_{i\in I} h_{U_i}$ with finite $I$ and each morphism $h_{U_i}\to \cX$ is a representable open immersion,
where $U_i\in \lFan/B$ for all $i\in I$.

Let $t$ be the topology associated with the Zariski cd-structure.
The sieve generated by $\{h_{U_a},\cup_{i\in I-\{a\}} h_{U_i}\to \cX\}$ is a $t$-covering sieve for all $a\in I$.
By induction on the number of elements of $I$,
we see that the sieve generated by $\{h_{U_i}\to \cX\}_{i\in I}$ is a $t$-covering sieve.
It follows that the sieve generated by $\cY\to \cX$ is a $t$-covering sieve.
\end{proof}

\section{Open complements}
\label{complement}

In this section, we define the open complement of a closed immersion of divided log spaces as a universal property.
We also show that the open complements always exist and are compatible with pullbacks.

\begin{definition}
\label{comp.1}
For a closed immersion $\cZ\to \cX$ in $\lSpc/B$, the \emph{open complement of $\cZ$ in $\cX$}, denoted $\cX-\cZ$, is defined to be a final object (if exists) of the full subcategory of $\lSpc/\cX$ consisting of morphisms $\cY\to \cX$ such that $\cZ\times_{\cX}\cY=\emptyset$.
\end{definition}

\begin{lemma}
\label{comp.2}
Let $\cZ\to \cX$ be a closed immersion in $\lSpc/B$, and let $\cX'\to \cX$ be a morphism in $\lSpc/B$.
We set $\cZ':=\cZ\times_{\cX} \cX'$.
If $\cX-\cZ$ exists, then $\cX'-\cZ'$ exists, and there is an isomorphism
\[
\cX'-\cZ'
\simeq
(\cX-\cZ)\times_{\cX}\cX'.
\]
\end{lemma}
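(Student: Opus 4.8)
The plan is to show directly that the base change $(\cX-\cZ)\times_{\cX}\cX'$ satisfies the universal property defining $\cX'-\cZ'$, so the whole argument is a formal manipulation of the finality in Definition \ref{comp.1}. First I would record that $\cZ'\to \cX'$ is again a closed immersion, being the pullback of the closed immersion $\cZ\to\cX$: closed immersions of divided log spaces are representable strict closed immersions by Proposition \ref{property.3}, and representable $\cP$-morphisms for a pullback-stable class $\cP$ are themselves stable under pullback, so $\cX'-\cZ'$ is a well-posed notion. Write $\cU:=\cX-\cZ$, which exists by hypothesis; by Definition \ref{comp.1} the structure morphism $\cU\to\cX$ satisfies $\cZ\times_{\cX}\cU=\emptyset$ and is final among morphisms $\cY\to\cX$ with $\cZ\times_{\cX}\cY=\emptyset$. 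All fiber products below exist in $\lSpc/B$ by Proposition \ref{equiv.21}.

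The first step is to check that $\cU\times_{\cX}\cX'\to\cX'$ is a candidate for $\cX'-\cZ'$. Since pullback along $\cX'\to\cX$ commutes with fiber products over $\cX$, and using $\cZ'\simeq\cZ\times_{\cX}\cX'$, I compute
\[
\cZ'\times_{\cX'}(\cU\times_{\cX}\cX')
\simeq
(\cZ\times_{\cX}\cX')\times_{\cX'}(\cU\times_{\cX}\cX')
\simeq
(\cZ\times_{\cX}\cU)\times_{\cX}\cX'
\simeq
\emptyset,
\]
where the last isomorphism uses $\cZ\times_{\cX}\cU=\emptyset$ together with $\emptyset\times_{\cX}\cX'\simeq\emptyset$. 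Hence $\cU\times_{\cX}\cX'\to\cX'$ lies in the full subcategory of $\lSpc/\cX'$ whose final object is $\cX'-\cZ'$.

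The second step is to verify finality. Let $\cW\to\cX'$ be any morphism in $\lSpc/B$ with $\cZ'\times_{\cX'}\cW=\emptyset$. Composing with $\cX'\to\cX$, and using that $\cW\to\cX$ factors through $\cX'$, I obtain
\[
\cZ\times_{\cX}\cW
\simeq
(\cZ\times_{\cX}\cX')\times_{\cX'}\cW
\simeq
\cZ'\times_{\cX'}\cW
\simeq
\emptyset.
\]
Thus $\cW\to\cX$ is an object of the subcategory of $\lSpc/\cX$ defining $\cU=\cX-\cZ$, so by finality of $\cU$ there is a unique morphism $\cW\to\cU$ over $\cX$. Combined with the structure morphism $\cW\to\cX'$, which agrees with $\cW\to\cU\to\cX$ over $\cX$, the universal property of the fiber product yields a unique morphism $\cW\to\cU\times_{\cX}\cX'$ over $\cX'$; uniqueness over $\cX'$ follows because its $\cU$-component over $\cX$ is already unique. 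This exhibits $\cU\times_{\cX}\cX'$ as the final object, giving $\cX'-\cZ'\simeq(\cX-\cZ)\times_{\cX}\cX'$.

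The argument is entirely formal, driven by the universal property. The only points requiring care are the two fiber-product rewritings and the observation that the defining emptiness condition $\cZ\times_{\cX}(-)=\emptyset$ is transported exactly across the base change, so that a candidate over $\cX'$ is the same datum as a candidate over $\cX$ factoring through $\cX'$. I therefore expect no genuine obstacle here; the crux is simply to keep the fiber products and the initial object $\emptyset$ bookkept correctly.
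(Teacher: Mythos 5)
Your proposal is correct and follows essentially the same route as the paper: transport the emptiness condition $\cZ\times_{\cX}(-)=\emptyset$ across the base change and invoke the universal properties of $\cX-\cZ$ and of the fiber product. You merely spell out the verification that $(\cX-\cZ)\times_{\cX}\cX'$ is itself a candidate object, which the paper leaves implicit.
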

\begin{proof}
Suppose that $\cY\to \cX'$ is a morphism in $\lSpc/B$ such that $\cZ'\times_{\cX'}\cY=\emptyset$.
Then $\cZ\times_{\cX}\cY=\emptyset$, so there exists a unique morphism $\cY\to \cX-\cZ$ over $\cX$.
This means that there exists a unique morphism 
$\cY\to (\cX-\cZ)\times_{\cX}\cX'$ over $\cX'$, which completes the proof.
\end{proof}

\begin{lemma}
\label{comp.3}
Let $\cZ\to \cX$ be a closed immersion in $\lSpc/B$, and let $\amalg_{i\in I} \cX_i\to \cX$ be a representable Zariski cover with finite $I$ such that each $\cX_i\to \cX$ is a representable open immersion.
We set $\cX_{ij}:=\cX_i\times_{\cX}\cX_j$, $\cZ_i:=\cZ\times_{\cX} \cX_i$, and $\cZ_{ij}:=\cZ\times_{\cX} \cX_{ij}$ for $i,j\in I$.
If $\cX_i-\cZ_i$ and $\cX_{ij}-\cZ_{ij}$ exist for all $i,j\in I$, then $\cX-\cZ$ exists.
\end{lemma}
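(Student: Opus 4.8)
The plan is to construct $\cX-\cZ$ by gluing the local open complements via Construction \ref{equiv.13} and then to verify the universal property of Definition \ref{comp.1}. Write $\cW_i:=\cX_i-\cZ_i$ and $\cW_{ij}:=\cX_{ij}-\cZ_{ij}$, which exist by hypothesis. Since $\cX_{ij}\to\cX_i$ is the pullback of the representable open immersion $\cX_j\to\cX$, it is a representable open immersion, and Lemma \ref{comp.2} gives $\cW_{ij}\simeq\cW_i\times_{\cX_i}\cX_{ij}$; hence $\cW_{ij}\to\cW_i$ is a representable open immersion. The canonical swap isomorphisms $\cX_{ij}\simeq\cX_{ji}$ carry $\cZ_{ij}$ to $\cZ_{ji}$ and therefore induce isomorphisms $\varphi_{ij}\colon\cW_{ij}\to\cW_{ji}$; the conditions (i)--(iii) of Construction \ref{equiv.13} follow from the uniqueness in the universal property of the open complements together with the canonicity of the fiber products. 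Applying Construction \ref{equiv.13} produces $\cW\in\lSpc/B$ together with representable open immersions $\cW_i\to\cW$ such that $\amalg_{i\in I}\cW_i\to\cW$ is a representable Zariski cover. The maps $\cW_i\to\cX_i\to\cX$ agree on the overlaps $\cW_{ij}$, both composites factoring through the symmetric morphism $\cX_{ij}\to\cX$, so they descend along the effective epimorphism $\amalg_{i}\cW_i\to\cW$ to a morphism $\cW\to\cX$.

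The technical heart is the base-change identification $\cW\times_\cX\cX_i\simeq\cW_i$. Since pullback along $\cX_i\to\cX$ preserves colimits, and $\cW$ is the quotient of $\amalg_j\cW_j$ by the Zariski equivalence relation $\amalg_{j,k}\cW_{jk}$, the object $\cW\times_\cX\cX_i$ is presented as the same type of quotient of $\amalg_j(\cW_j\times_\cX\cX_i)$. Lemma \ref{comp.2} gives $\cW_j\times_\cX\cX_i\simeq\cW_j\times_{\cX_j}\cX_{ji}\simeq\cW_{ji}$ and, on the overlaps, $\cW_{jk}\times_\cX\cX_i\simeq\cX_{jki}-\cZ_{jki}$ where $\cX_{jki}:=\cX_j\times_\cX\cX_k\times_\cX\cX_i$. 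On the other hand, $\{\cW_{ji}\to\cW_i\}_j$ is a family of representable open immersions with $\amalg_j\cW_{ji}\to\cW_i$ a representable Zariski cover, so $\cW_i\simeq\cup_j\cW_{ji}$ by Proposition \ref{equiv.31}, and this union is the quotient of $\amalg_j\cW_{ji}$ by the overlaps $\cW_{ji}\times_{\cW_i}\cW_{ki}\simeq\cX_{jki}-\cZ_{jki}$. The two presentations coincide, whence $\cW\times_\cX\cX_i\simeq\cW_i$ over $\cX_i$. I expect this bookkeeping --- matching the two quotient presentations and checking that the overlaps agree --- to be the main obstacle.

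It remains to verify the universal property. First, $\cZ\times_\cX\cW=\emptyset$: pulling the cover $\amalg_i\cW_i\to\cW$ back along $\cZ\times_\cX\cW\to\cW$ yields a representable Zariski cover whose source is $\amalg_i(\cZ\times_\cX\cW_i)\simeq\amalg_i(\cZ_i\times_{\cX_i}\cW_i)=\emptyset$, and since representable Zariski covers are epimorphisms by Proposition \ref{equiv.36}, the target is empty. Now let $\cY\to\cX$ satisfy $\cZ\times_\cX\cY=\emptyset$ and set $\cY_i:=\cY\times_\cX\cX_i$. Then $\cZ_i\times_{\cX_i}\cY_i\simeq\cZ\times_\cX\cY_i=\emptyset$, so the universal property of $\cW_i=\cX_i-\cZ_i$ yields a unique morphism $\cY_i\to\cW_i$ over $\cX_i$; by uniqueness over the overlaps $\cX_{ij}$ these agree on $\cY_{ij}:=\cY\times_\cX\cX_{ij}$, so they glue along the effective epimorphism $\amalg_i\cY_i\to\cY$ to a morphism $\cY\to\cW$ over $\cX$. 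For uniqueness, any morphism $\cY\to\cW$ over $\cX$ restricts, via $\cW\times_\cX\cX_i\simeq\cW_i$, to a morphism $\cY_i\to\cW_i$ over $\cX_i$, which is forced by the finality of $\cW_i$; since $\amalg_i\cY_i\to\cY$ is an epimorphism, the morphism $\cY\to\cW$ is uniquely determined. Hence $\cW$ is a final object, i.e.\ $\cW\simeq\cX-\cZ$.
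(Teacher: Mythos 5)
Your proof is correct and follows essentially the same route as the paper: glue the local complements $\cX_i-\cZ_i$ via Construction \ref{equiv.13} (using Lemma \ref{comp.2} to identify the overlaps) and then verify the universal property by descending morphisms along the representable Zariski cover $\amalg_i\cX_i\to\cX$. You spell out the base-change identification $\cW\times_{\cX}\cX_i\simeq\cW_i$ and the emptiness of $\cZ\times_{\cX}\cW$ in more detail than the paper, which compresses these into the Hom-equalizer display, but the argument is the same.
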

\begin{proof}
By Lemma \ref{comp.2}, we can glue $\{\cX_i-\cZ_i\}_{i\in I}$ using Construction \ref{equiv.13}, and let $\cV$ be the resulting divided log space over $\cX$.
For every $\cY\in \lSpc/\cX$, $\cZ\times_{\cX} \cY=\emptyset$ if and only if $\cZ_i\times_{\cX_i}\cY=\emptyset$ for all $i\in I$.
Together with the isomorphism
\[
\Hom_{\cX}(\cY,\cV)
\simeq
\mathrm{Eq}(
\Hom_{\cX_i}(\cY\times_{\cX} \cX_i,\cV\times_{\cX}\cX_i)\rightrightarrows
\Hom_{\cX_{ij}}(\cY\times_{\cX} \cX_{ij},\cV\times_{\cX}\cX_{ij})),
\]
we deduce $\Hom_{\cX}(\cY,\cV)\simeq *$ whenever $\cZ\times_{\cX}\cY=\emptyset$.
\end{proof}

\begin{lemma}
\label{comp.4}
Let $Z\to X$ be a strict closed immersion in $\lFan/B$.
Then $h_X-h_Z$ exists, and there is an isomorphism
\[
h_X-h_Z
\simeq
h_{X-Z}.
\]
\end{lemma}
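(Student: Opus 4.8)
The plan is to verify that $h_{X-Z}$ satisfies the universal property defining the open complement in Definition \ref{comp.1}. First I would note that $h_{X-Z}\to h_X$ is a representable open immersion: the open immersion $X-Z\to X$ in $\lFan/B$ induces, by Lemma \ref{equiv.29}, a representable open immersion of sheaves, and since $h$ preserves finite limits (Proposition \ref{div.13}), the relevant fiber products are computed schematically. The key geometric input is the scheme-theoretic fact that for a strict closed immersion $Z\to X$ and the open complement $X-Z$, the fiber product $Z\times_X(X-Z)$ is empty; strictness guarantees that the log structure plays no role here, so this reduces to the familiar statement about schemes that the open complement is disjoint from the closed subscheme.

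Next I would check that $h_{X-Z}$ lies in the correct comma category, i.e. that $h_Z\times_{h_X}h_{X-Z}=\emptyset$. Using Proposition \ref{div.13} this product is $h_{Z\times_X(X-Z)}=h_\emptyset=\emptyset$, where the last equality holds because $h$ sends the initial object to the initial sheaf. Then for the universal property, I would take any $\cY\to h_X$ in $\lSpc/B$ with $h_Z\times_{h_X}\cY=\emptyset$ and produce a unique factorization $\cY\to h_{X-Z}$ over $h_X$. The idea is to work locally: choose a representable Zariski cover $h_V\to\cY$ with $V\in\lFan/B$, so that by Proposition \ref{div.5} the composite $h_V\to h_X$ is, after passing to a suitable dividing cover of $V$, represented by an actual morphism $f\colon V\to X$ in $\lSch/B$. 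The hypothesis $h_Z\times_{h_X}\cY=\emptyset$ pulls back to say $f^{-1}(Z)=\emptyset$ (again invoking that $h$ detects emptiness and preserves fiber products), whence $f$ factors uniquely through the open immersion $X-Z\to X$ at the level of schemes, giving $h_V\to h_{X-Z}$.

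The main obstacle I anticipate is descending this local factorization along the cover $h_V\to\cY$ to obtain a well-defined morphism $\cY\to h_{X-Z}$. For this I would verify that the two pullbacks of the constructed factorization to $h_{V\times_{\cY}V}$ agree: both are local factorizations of the same map $h_{V\times_\cY V}\to h_X$ through the \emph{monomorphism} $h_{X-Z}\to h_X$, and a monomorphism admits at most one factorization, so they automatically coincide. Since $h_{X-Z}$ is a sheaf for the dividing Zariski topology and $h_V\to\cY$ is an epimorphism of sheaves (Proposition \ref{equiv.36}), the local factorizations glue to a unique morphism $\cY\to h_{X-Z}$ over $h_X$. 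Uniqueness globally again follows from $h_{X-Z}\to h_X$ being a monomorphism. This establishes that $h_{X-Z}$ is the desired final object, so $h_X-h_Z\simeq h_{X-Z}$.
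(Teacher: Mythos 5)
Your proposal is correct and follows essentially the same route as the paper: verify the universal property of Definition \ref{comp.1} by passing to a dividing Zariski cover $h_V\to\cY$, representing the composite $h_V\to h_X$ by an actual morphism $V\to X$ via Proposition \ref{div.5}, invoking the scheme-level universal property of $X-Z$, and then descending. The only cosmetic difference is in the descent step: the paper computes $\Hom_{h_X}(\cY,h_{X-Z})$ as an equalizer and shows each term is a singleton (using Proposition \ref{equiv.20} for the representability of $h_V\times_{\cY}h_V$), whereas you obtain agreement on overlaps and uniqueness from the fact that $h_{X-Z}\to h_X$ is a monomorphism; both are valid.
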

\begin{proof}
Suppose $\cY\in \lSpc/h_X$ and $h_Z\times_{h_X}\cY=\emptyset$.
Then there is a dividing Zariski cover $h_Y\to \cY$ such that the composition $h_Y\to h_X$ is equal to $h_f$ for some morphism $f$ in $\lFan/B$.
We have $Z\times_X Y=\emptyset$.
Hence there exists a unique morphism $u\colon Y\to X-Z$ over $X$.

Suppose that $v\colon h_Y\to h_{X-Z}$ is a morphism over $h_X$.
Then there exists a dividing cover $p\colon Y'\to Y$ such that the composite morphism $h_{Y'}\xrightarrow{vh_p} h_{X-Z}$ is equal to $h_w$ for some morphism $w\colon Y'\to X-Z$ over $X$.
By the universal property of open complements, we have $w=up$.
Hence we have $v=h_u$, i.e., $\Hom_{h_X}(h_Y,h_{X-Z})\simeq *$.

Proposition \ref{equiv.20} shows that $h_Y\times_{\cY} h_Y$ is representable.
Using this, we can similarly show $\Hom_{h_X}(h_Y\times_{\cY} h_Y,h_{X-Z})\simeq *$.
Together with the isomorphism
\[
\Hom_{h_X}(\cY,h_{X-Z})
\simeq
\mathrm{Eq}(\Hom_{h_X}(h_Y,h_{X-Z})\rightrightarrows \Hom_{h_X}(h_Y\times_{\cY} h_Y,h_{X-Z})),
\]
we obtain $\Hom_{h_X}(\cY,h_{X-Z})\simeq *$.
To conclude, observe $h_Z\times_{h_X}h_{X-Z}=\emptyset$.
\end{proof}

\begin{lemma}
\label{comp.6}
Let $h_Z\to h_X$ be a closed immersion in $\lSpc/B$, where $X,Z\in \lFan/B$.
Then $h_X-h_Z$ exists.
\end{lemma}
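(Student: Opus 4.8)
The plan is to reduce the statement to the strict case already settled in Lemma \ref{comp.4}, using the ability to strictify a closed immersion after passing to a dividing cover. First, since $h_Z\to h_X$ is a closed immersion in $\lSpc/B$, it is a representable strict closed immersion (closed immersions of divided log spaces are representable strict closed immersions; cf.\ Proposition \ref{property.3} and the identification of representable closed immersions with representable strict closed immersions). I would then take $\cP$ to be the class of strict closed immersions in $\lFan/B$. This class is closed under pullbacks and, consisting of strict morphisms, satisfies \textup{(Div)} by Example \ref{equiv.16}(1), so the structural lemmas for representable $\cP$-morphisms apply.

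Applying Lemma \ref{equiv.27} to the representable $\cP$-morphism $h_Z\to h_X$ produces a commutative square
\[
\begin{tikzcd}
h_{V}\ar[d,"h_{v}","\simeq"']\ar[r,"h_{g}"]&
h_{U}\ar[d,"h_{u}"',"\simeq"]
\\
h_{Z}\ar[r]&
h_X
\end{tikzcd}
\]
with vertical isomorphisms, where $u$ and $v$ are dividing covers in $\lFan/B$ and $g\colon V\to U$ is a genuine strict closed immersion in $\lFan/B$. In other words, after inverting suitable log blow-ups the given closed immersion becomes strict on the nose. Now Lemma \ref{comp.4} applies directly to $g$ and shows that $h_U-h_V$ exists, with $h_U-h_V\simeq h_{U-V}$.

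To finish, I would invoke the isomorphism-invariance of the open complement. Since $\cX-\cZ$ is defined as a final object of a full subcategory of $\lSpc/\cX$, it depends only on the isomorphism class of the arrow $\cZ\to\cX$ in the arrow category of $\lSpc/B$; the square above exhibits $(h_Z\to h_X)$ as isomorphic to $(h_V\to h_U)$, so $h_X-h_Z$ exists and is isomorphic to $h_U-h_V\simeq h_{U-V}$. The only point requiring genuine care—and hence the main (mild) obstacle—is verifying that the defining condition $h_Z\times_{h_X}\cY=\emptyset$ transports correctly along the base isomorphisms $h_X\simeq h_U$ and $h_Z\simeq h_V$. This is routine, because these fiber products are computed in $\Shv_{dZar}(\lFan/B)$ and are preserved by the equivalence of over-categories induced by an isomorphism of the base. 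Thus the substantive content is carried entirely by the strictification of Lemma \ref{equiv.27} together with the strict case of Lemma \ref{comp.4}, and nothing beyond this bookkeeping is needed.
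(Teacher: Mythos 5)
Your proposal is correct and follows essentially the same route as the paper: the paper's proof likewise strictifies the closed immersion into a square of the form $h_{Z'}\to h_{X'}$ with vertical isomorphisms and $f'$ a strict closed immersion in $\lFan/B$, and then applies Lemma \ref{comp.4}. You have simply made explicit the ingredients (Proposition \ref{property.3}, Example \ref{equiv.16}(1), Lemma \ref{equiv.27}, and the isomorphism-invariance of the universal property) that the paper leaves implicit.
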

\begin{proof}
There exists a commutative square
\[
\begin{tikzcd}
h_{Z'}\ar[r,"f'"]\ar[d,"\simeq"']&
h_{X'}\ar[d,"\simeq"]
\\
h_Z\ar[r]&
h_X
\end{tikzcd}
\]
with vertical isomorphisms such that $f'$ is a strict closed immersion in $\lFan/B$.
Apply Lemma \ref{comp.4} to $f'$ to conclude.
\end{proof}

\begin{theorem}
\label{comp.5}
Let $i\colon \cZ\to \cX$ be a closed immersion in $\lSpc/B$.
Then $\cX-\cZ$ exists.
Furthermore,
the induced morphism $\cX-\cZ\to \cX$ is an open immersion.
\end{theorem}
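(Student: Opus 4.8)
The plan is to reduce to the representable case treated in Lemma \ref{comp.6} and then glue via Lemma \ref{comp.3}. First I would apply Proposition \ref{property.3} to upgrade $i$ to a representable strict closed immersion, and invoke condition (ii) of Definition \ref{equiv.5} to choose a representable Zariski cover $h_X\to \cX$ with $X\in \lFan/B$. Proposition \ref{equiv.19} then yields a finite dividing Zariski covering family $\{U_i\to X\}_{i\in I}$ such that each $\cX_i:=h_{U_i}\to \cX$ is a representable open immersion. I would set $\cX_{ij}:=\cX_i\times_{\cX}\cX_j$, $\cZ_i:=\cZ\times_{\cX}\cX_i$, and $\cZ_{ij}:=\cZ\times_{\cX}\cX_{ij}$, with the goal of verifying the hypotheses of Lemma \ref{comp.3}.

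The first thing to check is that $\amalg_{i\in I}\cX_i\to \cX$ is a representable Zariski cover. Since $\amalg_{i\in I}U_i\to X$ is a dividing Zariski cover, Proposition \ref{equiv.23} produces a dividing cover $X'\to X$ after which it becomes a Zariski cover $g$; because $h_{X'}\to h_X$ is an isomorphism and $h$ preserves finite limits by Proposition \ref{div.13}, the morphism $\amalg_{i\in I}h_{U_i}\to h_X$ is isomorphic to $h_g$, hence a representable Zariski cover by Lemma \ref{equiv.29}. Composing with $h_X\to \cX$ and using Proposition \ref{equiv.15} for the class of Zariski covers gives the claim. Next, since $i$ is representable and each $\cX_i$ and $\cX_{ij}$ is representable (the latter by Proposition \ref{equiv.20}), the pullbacks $\cZ_i$ and $\cZ_{ij}$ are representable, so $\cZ_i\simeq h_{Z_i}\to h_{U_i}$ and $\cZ_{ij}\simeq h_{Z_{ij}}\to h_{X_{ij}}$ are closed immersions of representable objects. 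Lemma \ref{comp.6} then shows that $\cX_i-\cZ_i$ and $\cX_{ij}-\cZ_{ij}$ exist, and Lemma \ref{comp.3} produces $\cX-\cZ$.

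It remains to prove that $\cX-\cZ\to \cX$ is an open immersion. Here I would pull back along the representable Zariski cover $\amalg_{i\in I}\cX_i\to \cX$: by Lemma \ref{comp.2} this pullback is $\amalg_{i\in I}(\cX_i-\cZ_i)\to \amalg_{i\in I}\cX_i$. Reducing as in the proof of Lemma \ref{comp.6} to a dividing cover on which $\cZ_i$ becomes strict and applying Lemma \ref{comp.4}, the complement $\cX_i-\cZ_i$ is the scheme-theoretic open complement, so each $\cX_i-\cZ_i\to \cX_i$ is a representable open immersion. Thus the pullback of $\cX-\cZ\to \cX$ is a representable open immersion, and Lemma \ref{equiv.17}, applied with $\cP$ the class of open immersions, descends this to show that $\cX-\cZ\to \cX$ is itself a representable open immersion, which is exactly an open immersion in $\lSpc/B$ by Proposition \ref{property.3}.

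I expect the main obstacle to be the bookkeeping that makes Lemma \ref{comp.3} applicable, specifically the verification that the family $\{\cX_i\to \cX\}_{i\in I}$ assembles into a representable Zariski cover. This is the one place where a \emph{dividing} Zariski cover must be converted, via a subdivision as in Proposition \ref{equiv.23}, into an honest Zariski cover, so that Lemma \ref{equiv.29} and the descent Lemma \ref{equiv.17} can be brought to bear. Once this is in place, the remainder is a formal gluing argument resting entirely on the compatibility of open complements with base change recorded in Lemmas \ref{comp.2}, \ref{comp.3}, and \ref{comp.4}.
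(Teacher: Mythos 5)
Your proposal is correct and follows essentially the same route as the paper: use Proposition \ref{equiv.19} to produce a finite family of representable open immersions $\cX_i\to\cX$ covering $\cX$, apply Lemma \ref{comp.6} to each $\cX_i$ and $\cX_{ij}$ (which are representable by Proposition \ref{equiv.20}), glue via Lemma \ref{comp.3}, and descend the open-immersion property of $\cX-\cZ\to\cX$ through the cover using Lemmas \ref{comp.2} and \ref{equiv.17}. The extra bookkeeping you supply (converting the dividing Zariski covering family into a representable Zariski cover via Propositions \ref{equiv.23}, \ref{div.13}, and \ref{equiv.15}) is a correct elaboration of a step the paper leaves implicit.
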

\begin{proof}
By Proposition \ref{equiv.19}, there exists a representable Zariski cover $\amalg_{i\in I}h_{X_i}\to \cX$ with finite $I$ and $X_i\in \lFan/B$ such that each $h_{X_i}\to \cX$ is a representable open immersion.
We set $\cX_i:=h_{X_i}$.
Proposition \ref{equiv.20} shows that $\cX_{ij}:=\cX_i\times_{\cX} \cX_j$ is representable for all $i,j\in I$.
By Lemma \ref{comp.6}, $\cX_i-\cZ\times_{\cX}\cX_i$ and $\cX_{ij}-\cZ\times_{\cX}\cX_{ij}$ exist for all $i,j\in I$.
Together with Lemma \ref{comp.3},
we deduce that $\cX-\cZ$ exists.

Apply Lemma \ref{equiv.17} to the induced cartesian square
\[
\begin{tikzcd}
\amalg_{i\in I}(\cX-\cZ)\times_{\cX}\cX_i\ar[d]\ar[r]&
\amalg_{i\in I} \cX_i\ar[d]
\\
\cX-\cZ\ar[r]&
\cX
\end{tikzcd}
\]
to show that $\cX-\cZ\to \cX$ is a representable open immersion, i.e., an open immersion.
\end{proof}

\section{Blow-ups along closed immersions}
\label{blow-up}

As in the previous section, we define blow-ups by a universal property.
We also show that blow-ups exist in the log smooth case and are compatible with log smooth pullbacks.

\begin{definition}
For $X\in \lFan/B$, a strict closed subscheme $Z$ of $X$ is called an \emph{effective log Cartier divisor on $X$} if $\ul{Z\times_X X'}$ is an effective Cartier divisor on $\ul{X'}$ for all log smooth morphism $X'\to X$.
\end{definition}

We refer to Definition \ref{blow.10} for the notion of the blow-up $\Blow_Z X$ for all strict closed immersion $Z\to X$ in $\lSch/B$.

\begin{lemma}
\label{blow.20}
Let $i\colon Z\to X$ be a strict closed immersion in $\lSm/S$, where $S$ is an fs log scheme.
If $\ul{Z}$ is an effective Cartier divisor on $\ul{X}$, then $Z$ is an effective log Cartier divisor on $X$.
\end{lemma}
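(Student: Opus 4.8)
The plan is to reduce the statement to an intrinsic fact about strict closed immersions of log smooth $S$-log schemes, and then to invoke the logarithmic analogue of ``a closed immersion of smooth schemes is regular''. Given a log smooth morphism $X'\to X$, set $Z':=Z\times_X X'$. Since strict closed immersions are stable under base change, $Z'\to X'$ is again a strict closed immersion; and since $Z'\to Z$ and $X'\to X$ are base changes of the log smooth morphism $X'\to X$, while $Z\to S$ and $X\to S$ are log smooth, both $Z'\to S$ and $X'\to S$ are log smooth. Thus it suffices to show that $\ul{Z'}$ is an effective Cartier divisor on $\ul{X'}$, knowing only that $Z'\to X'$ is a strict closed immersion of $S$-log-smooth fs log schemes whose relative dimension over $S$ drops by exactly $1$. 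The relative dimension drop is inherited: the hypothesis that $\ul Z$ is an effective Cartier divisor forces $\ul Z\to \ul X$ to have codimension $1$, hence relative dimension difference $1$ over $S$ by the structural lemma below; and this difference is preserved by the log smooth base change $X'\to X$, because relative dimensions of log smooth morphisms are additive in compositions and stable under base change.

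The key structural input is the following: \emph{for a strict closed immersion $i\colon Z\to X$ of fs log schemes that are log smooth over $S$, the underlying morphism $\ul i$ is a regular immersion whose conormal sheaf $\mathcal{C}_{Z/X}=\mathcal{I}/\mathcal{I}^2$ is locally free of rank equal to the relative dimension difference $c$.} I would prove this \'etale locally on $\ul X$. Local freeness of the conormal follows from the first exact sequence of logarithmic differentials together with log smoothness of $Z/S$: the sequence of sheaves of logarithmic differentials
\[
0\to \mathcal{C}_{Z/X}\to i^*\Omega^1_{X/S}\to \Omega^1_{Z/S}\to 0
\]
is exact, and since $\Omega^1_{Z/S}$ is locally free it splits locally, so $\mathcal{C}_{Z/X}$ is locally free of rank $\operatorname{rk}\Omega^1_{X/S}-\operatorname{rk}\Omega^1_{Z/S}=c$.

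For the regular immersion property I would lift local sections $t_1,\dots,t_c\in\mathcal{I}$ to a basis of $\mathcal{C}_{Z/X}$; by Nakayama they generate $\mathcal{I}$ near the chosen point. The crucial use of \emph{strictness} is that the first-order thickening $Z\hookrightarrow \Spec(\mathcal{O}_X/\mathcal{I}^2)$ is a \emph{strict} square-zero extension, so the formal log smoothness of $Z/S$ provides, \'etale locally, a retraction of this thickening; together with $t_1,\dots,t_c$ this assembles into a strict \'etale $S$-morphism from $X$ to the affine space $\A^c_Z$ over $Z$ (with log structure pulled back from $Z$), under which $Z$ is identified with the preimage of the zero section, exactly as in the classical smooth case. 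Since the zero section is a regular immersion of codimension $c$ and strict \'etale morphisms are flat, $\ul i$ is a regular immersion of codimension $c$. Applying this to $Z'\to X'$, where $c=1$, and using that a regular immersion of codimension $1$ is precisely an effective Cartier divisor, we conclude that $\ul{Z'}$ is an effective Cartier divisor on $\ul{X'}$, as required.

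I expect the main obstacle to be the \'etale-local chart construction in the last step, i.e.\ establishing the logarithmic regular-immersion statement: strictness is precisely what makes the relevant square-zero thickening strict, so that formal log smoothness of $Z/S$ yields the retraction and hence a model with $c$ free smooth directions transverse to the common log structure. The bookkeeping of relative dimensions (additivity and base-change invariance) and the identification of codimension $1$ with a rank-$1$ conormal sheaf are routine by comparison; and if the appendices already record this regular-immersion statement, it can simply be cited in place of the \'etale-local argument.
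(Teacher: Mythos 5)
Your argument is correct, but it takes a genuinely different route from the paper's. The paper's proof is a three-line deduction from the universal property of blow-ups: since $\ul{Z}$ is an effective Cartier divisor, $\Blow_Z X\to X$ is an isomorphism; Lemma \ref{blow.1} identifies $\Blow_{Z'}X'$ with $\Blow_Z X\times_X X'$ for any log smooth $X'\to X$, so $\Blow_{Z'}X'\to X'$ is an isomorphism as well, which forces the ideal of $\ul{Z'}$ in $\ul{X'}$ to be invertible. You instead verify the definition directly: you show that a strict closed immersion of $S$-log-smooth fs log schemes is, on underlying schemes, a regular immersion whose codimension equals the locally constant rank of the conormal sheaf, i.e.\ the difference of the ranks of the sheaves of log differentials, and that this difference is preserved under log smooth base change; the hypothesis pins it to $1$ for $Z\to X$, hence for every $Z'\to X'$. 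The structural input you need --- strict \'etale locally, $Z\to X$ is the pullback of the zero section $Y\to Y\times \A^s$ along a strict \'etale map --- is exactly Proposition \ref{blow.3} of the appendix, which is also the input to the paper's Lemma \ref{blow.1}; so the two arguments rest on the same local computation, and you are right that you may simply cite it rather than redo the chart construction. Your route avoids blow-ups and makes the mechanism (invariance of the conormal rank under log smooth base change) explicit; the paper's route is shorter because it reuses Lemma \ref{blow.1}, which is needed elsewhere anyway. One point to phrase carefully in your write-up: the ``relative dimension'' you track must be the rank of $\Omega^1_{-/S}$ (log differentials), not the relative dimension of underlying schemes --- for log smooth morphisms these can differ (a log blow-up is log \'etale yet not quasi-finite), and it is only the former that computes the conormal rank and is additive in compositions and stable under base change. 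Your actual computation uses the ranks of $\Omega^1$, so this is a matter of wording rather than a gap.
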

\begin{proof}
The projection $\Blow_Z X\to X$ is an isomorphism.
Hence by Lemma \ref{blow.1}, the projection
\[
\Blow_{Z'} X'\to X'
\]
is an isomorphism for all log smooth morphism $X'\to X$ of fs log schemes, where $Z':=Z\times_X X'$.
It follows that $\ul{Z'}$ is an effective Cartier divisor on $\ul{X'}$.
\end{proof}

\begin{definition}
\label{blow.5}
Let $\cZ\to \cX$ be a closed immersion in $\lSpc/B$.
We say that $\cZ$ is an \emph{effective log Cartier divisor on $\cX$} if there exists a cartesian square
\begin{equation}
\label{blow.5.1}
\begin{tikzcd}
h_{Z}\ar[r,"h_i"]\ar[d]&
h_{X}\ar[d]
\\
\cZ \ar[r]&
\cX
\end{tikzcd}
\end{equation}
such that the vertical morphisms are representable Zariski covers and the morphism $i\colon Z\to X$ in $\lFan/B$ exhibits $Z$ as an effective log Cartier divisor on $X$.
\end{definition}

\begin{lemma}
\label{blow.19}
Let $\cZ$ be an effective log Cartier divisor on $\cX$.
Then for every log smooth morphism $\cX'\to \cX$ in $\lSpc/B$, $\cZ\times_{\cX} \cX'$ is an effective log Cartier divisor on $\cX'$.
\end{lemma}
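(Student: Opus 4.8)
The plan is to reduce everything to the corresponding statement for objects of $\lFan/B$: if $Z$ is an effective log Cartier divisor on $X\in\lFan/B$ and $W\to X$ is log smooth in $\lFan/B$, then $Z_W:=Z\times_X W$ is an effective log Cartier divisor on $W$. This $\lFan$-level fact is immediate from the definition. Indeed, $Z_W\to W$ is a strict closed immersion (a pullback of the strict closed immersion $Z\to X$), and for every log smooth $W'\to W$ the composite $W'\to W\to X$ is again log smooth, so $\ul{Z\times_X W'}$ is an effective Cartier divisor on $\ul{W'}$; since $Z\times_X W'=Z_W\times_W W'$, this says exactly that $\ul{Z_W\times_W W'}$ is an effective Cartier divisor on $\ul{W'}$, hence $Z_W$ is an effective log Cartier divisor on $W$.

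First I would unwind Definition \ref{blow.5}: choose a cartesian witness square for $\cZ$, i.e.\ representable Zariski covers $h_X\to\cX$ and $h_Z\to\cZ$ together with a morphism $i\colon Z\to X$ exhibiting $Z$ as an effective log Cartier divisor in $\lFan/B$. Pulling $\cX'\to\cX$ back along $h_X\to\cX$ yields a log smooth morphism $\cX'_X:=\cX'\times_\cX h_X\to h_X$, using that log smooth morphisms are stable under pullback (Proposition \ref{equiv.42}). Applying Proposition \ref{property.1} to this log smooth morphism produces a commutative square with $h_g\colon h_V\to h_U$, such that $h_U\to h_X$ and $h_V\to\cX'_X\times_{h_X}h_U$ are representable Zariski covers and $g\colon V\to U$ is log smooth in $\lFan/B$.

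Next I would assemble the composite $h_V\xrightarrow{h_g} h_U\to h_X$. Zariski covers and $g$ are log smooth, and representable log smooth morphisms compose by Proposition \ref{equiv.15} (the class of log smooth morphisms is closed under composition and pullback and satisfies (Div) by Example \ref{equiv.16}(3)); hence $h_V\to h_X$ is a representable log smooth morphism. Applying Lemma \ref{equiv.27} to it, after replacing $X$ and $V$ by suitable dividing covers $X_0\to X$ and $V_0\to V$ I may assume $h_V\to h_X$ equals $h_{g_0}$ for an honest log smooth morphism $g_0\colon V_0\to X_0$ in $\lFan/B$. I must check that passing to $X_0\to X$ preserves the witness data: setting $Z_{X_0}:=Z\times_X X_0$, the dividing cover $X_0\to X$ is log smooth, so $Z_{X_0}$ is again an effective log Cartier divisor on $X_0$ by the $\lFan$-level fact, while $h_{X_0}\simeq h_X\to\cX$ and $h_{Z_{X_0}}\simeq h_Z\to\cZ$ still form a witness for $\cZ$.

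Finally I would verify the two properties of the square
\[
\begin{tikzcd}
h_{Z_{V_0}}\ar[r]\ar[d]&h_{V_0}\ar[d]\\
\cZ\times_\cX\cX'\ar[r]&\cX',
\end{tikzcd}
\]
where $Z_{V_0}:=Z\times_X V_0$. The right vertical morphism $h_{V_0}\to\cX'$ is the composite of representable Zariski covers $h_{V_0}\simeq h_V\to\cX'_X\times_{h_X}h_U\to\cX'_X\to\cX'$, hence a representable Zariski cover (Proposition \ref{equiv.15}). Since $h_{V_0}\to\cX$ factors through $h_{X_0}$ and $h$ preserves fiber products (Proposition \ref{div.13}), and since the witness gives $h_{X_0}\times_\cX\cZ\simeq h_{Z_{X_0}}$, I obtain
\[
h_{V_0}\times_{\cX'}(\cZ\times_\cX\cX')\simeq h_{V_0}\times_\cX\cZ\simeq h_{V_0}\times_{h_{X_0}}h_{Z_{X_0}}\simeq h_{Z_{V_0}},
\]
so the square is cartesian and its left vertical morphism is a representable Zariski cover as a pullback of $h_{V_0}\to\cX'$. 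As $g_0\colon V_0\to X_0$ is log smooth and $Z_{X_0}$ is an effective log Cartier divisor on $X_0$, the $\lFan$-level fact shows $Z_{V_0}=Z_{X_0}\times_{X_0}V_0$ is an effective log Cartier divisor on $V_0$, and by Definition \ref{blow.5} this exhibits $\cZ\times_\cX\cX'$ as an effective log Cartier divisor on $\cX'$. The main obstacle is the bookkeeping in the third step: realizing the abstractly representable log smooth morphism $h_V\to h_X$ by an honest morphism of $\lFan/B$ while simultaneously keeping the witness data for $\cZ$ intact under the auxiliary dividing covers.
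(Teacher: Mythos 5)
Your proof is correct and follows essentially the same route as the paper's: unwind the witness square of Definition \ref{blow.5}, apply Proposition \ref{property.1} to $\cX'\times_{\cX}h_X\to h_X$, use Lemma \ref{equiv.27} to realize the resulting sheaf morphisms by honest morphisms in $\lFan/B$, and conclude with the evident fact that effective log Cartier divisors in $\lFan/B$ pull back along log smooth morphisms. The only (harmless) difference is bookkeeping: the paper realizes $h_U\to h_X$ as an honest dividing Zariski cover $u\colon U\to X$ and uses that the composite $V\to U\to X$ is log smooth, whereas you replace $X$ by a dividing cover $X_0$ to realize $h_V\to h_X$ directly as a log smooth $g_0\colon V_0\to X_0$ and then check the witness data survives.
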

\begin{proof}
We have a cartesian square of the form \eqref{blow.5.1}.
Proposition \ref{property.1} yields a commutative square
\[
\begin{tikzcd}
h_{V}\ar[r,"h_g"]\ar[d]&
h_{U}\ar[d]
\\
\cX'\times_{\cX}h_X\ar[r]&
h_X
\end{tikzcd}
\]
such that $g$ is a log smooth morphism in $\lFan/B$ and the vertical morphisms are representable Zariski covers.
By Lemma \ref{equiv.27}, we can replace $U$ with its suitable dividing cover and $V$ by the corresponding pullback to assume that $h_U\to h_X$ is equal to $h_u$ for some dividing Zariski cover.
We have a commutative square
\[
\begin{tikzcd}
h_{Z\times_X V}\ar[d]\ar[r,"h_{i'}"]&
h_{V}\ar[d]
\\
\cZ\times_{\cX} \cX'\ar[r]&
\cX'
\end{tikzcd}
\]
such that the vertical morphisms are representable Zariski covers, where $i'$ is the projection.
To conclude, observe that $Z\times_X V$ is an effective log Cartier divisor on $V$ since the composition $V\to X$ is log smooth.
\end{proof}

\begin{lemma}
\label{blow.26}
Let $\cZ\to \cX$ be a closed immersion in $\lSpc/B$.
If there exists a representable Zariski cover $\cX'\to \cX$ such that $\cZ\times_{\cX}\cX'$ is an effective log Cartier divisor on $\cX'$, then $\cZ$ is an effective log Cartier divisor on $\cX$.
\end{lemma}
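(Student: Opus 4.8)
The plan is to show that the very square witnessing $\cZ':=\cZ\times_{\cX}\cX'$ as an effective log Cartier divisor on $\cX'$ already witnesses $\cZ$ as one on $\cX$, once we fold the cover $\cX'\to\cX$ into it. Concretely, Definition \ref{blow.5}, being phrased through a representable Zariski cover, is local with respect to such covers, and the statement is essentially this locality.

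First I would apply Definition \ref{blow.5} to the effective log Cartier divisor $\cZ'$ on $\cX'$ to obtain a cartesian square
\[
\begin{tikzcd}
h_{Z_0}\ar[r,"h_i"]\ar[d]&
h_{X_0}\ar[d]
\\
\cZ' \ar[r]&
\cX'
\end{tikzcd}
\]
whose vertical morphisms are representable Zariski covers and such that $i\colon Z_0\to X_0$ in $\lFan/B$ exhibits $Z_0$ as an effective log Cartier divisor on $X_0$. Next I would observe that the composition $h_{X_0}\to\cX'\to\cX$ is again a representable Zariski cover: the class of Zariski covers in $\lFan/B$ is closed under pullbacks and compositions and satisfies \textup{(Div)} (Example \ref{equiv.16}(1), as Zariski covers are strict), so representable Zariski covers are closed under composition by Proposition \ref{equiv.15}.

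Then I would compute the pullback of $\cZ$ along this cover. Using $\cZ'=\cZ\times_{\cX}\cX'$ together with the square above,
\[
\cZ\times_{\cX} h_{X_0}
\simeq
(\cZ\times_{\cX}\cX')\times_{\cX'} h_{X_0}
\simeq
\cZ'\times_{\cX'} h_{X_0}
\simeq
h_{Z_0}.
\]
Hence the outer square
\[
\begin{tikzcd}
h_{Z_0}\ar[r]\ar[d]&
h_{X_0}\ar[d]
\\
\cZ \ar[r]&
\cX
\end{tikzcd}
\]
is cartesian; its right vertical morphism is the representable Zariski cover $h_{X_0}\to\cX$ just produced, and its left vertical morphism $h_{Z_0}\to\cZ$ is the base change of that cover along $\cZ\to\cX$, hence again a representable Zariski cover since representable Zariski covers are stable under pullback (the remark after Definition \ref{equiv.6}). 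As $i\colon Z_0\to X_0$ exhibits $Z_0$ as an effective log Cartier divisor on $X_0$, this square verifies Definition \ref{blow.5} for $\cZ$ on $\cX$, which finishes the proof.

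I do not expect a genuine obstacle here: the content is precisely that Definition \ref{blow.5} descends along representable Zariski covers, and the only points requiring care are the closure of representable Zariski covers under composition and pullback and the identification of the iterated fiber product $\cZ\times_{\cX}h_{X_0}\simeq h_{Z_0}$, both of which are formal given the earlier results.
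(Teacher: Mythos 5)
Your proposal is correct and follows essentially the same route as the paper: take the cartesian square witnessing $\cZ\times_{\cX}\cX'$ as an effective log Cartier divisor on $\cX'$ and paste it onto the cover $\cX'\to\cX$ to obtain the required square over $\cX$. The paper states this more tersely, but your added justifications (closure of representable Zariski covers under composition via Proposition \ref{equiv.15} and (Div), and the identification $\cZ\times_{\cX}h_{X_0}\simeq h_{Z_0}$) are exactly the implicit steps.
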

\begin{proof}
There exists a cartesian square
\[
\begin{tikzcd}
h_{Z'}\ar[r,"h_{i'}"]\ar[d]&
h_{X'}\ar[d]
\\
\cZ\times_{\cX} \cX'\ar[r]&
\cX'
\end{tikzcd}
\]
such that the vertical morphisms are representable Zariski covers and $i'$ exhibits $Z'$ as an effective log Cartier divisor on $X'$.
Hence we obtain a cartesian square
\[
\begin{tikzcd}
h_{Z'}\ar[r,"h_{i'}"]\ar[d]&
h_{X'}\ar[d]
\\
\cZ\ar[r]&
\cX
\end{tikzcd}
\]
such that the vertical morphisms are representable Zariski covers.
\end{proof}

\begin{definition}
For a closed immersion $\cZ\to \cX$ in $\lSpc/B$, the \emph{blow-up of $\cX$ along $\cZ$}, denoted $\Blow_{\cZ}\cX$, is defined to be a final object (if exists) of the full subcategory of $\lSpc/\cX$ consisting of $\cY\to \cX$ such that $\cZ\times_{\cX} \cY$ is an effective log Cartier divisor on $\cY$.

Suppose that $\cX'\to \cX$ is a morphism in $\lSpc/B$ with $\cZ':=\cX'\times_{\cX} \cZ$.
Then $\Blow_{\cZ'}\cX'\times_{\cX'}\cZ'\simeq \Blow_{\cZ'}\cX'\times_{\cX} \cZ$ is an effective log Cartier divisor on $\Blow_{\cZ'}\cX'$, so there is a canonical morphism
\begin{equation}
\Blow_{\cZ'}\cX'\to \Blow_{\cZ}\cX
\end{equation}
whenever the two blow-ups exist.
\end{definition}

\begin{lemma}
\label{blow.17}
Let $\cZ\to \cX$ be a closed immersion in $\lSpc/B$, and let $\cX'\to \cX$ be a log smooth morphism in $\lSpc/B$.
We set $\cZ':=\cZ\times_{\cX} \cX'$.
If $\Blow_{\cZ}\cX$ exists, then $\Blow_{\cZ'}\cX'$ exists, and there is an isomorphism
\[
\Blow_{\cZ'}\cX'
\simeq
\Blow_{\cZ}\cX\times_{\cX}\cX'.
\]
\end{lemma}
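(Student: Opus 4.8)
The plan is to imitate the proof of Lemma \ref{comp.2} for open complements, replacing the emptiness condition with the effective log Cartier divisor condition and using Lemma \ref{blow.19} to control the pullback. Concretely, I would set $\cW:=\Blow_{\cZ}\cX\times_{\cX}\cX'$ and verify that $\cW$ is a final object of the full subcategory of $\lSpc/\cX'$ consisting of those $\cY\to\cX'$ for which $\cZ'\times_{\cX'}\cY$ is an effective log Cartier divisor on $\cY$; this is exactly the category defining $\Blow_{\cZ'}\cX'$.

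First I would check that $\cW$ itself lies in this subcategory. Since $\cX'\to\cX$ is log smooth, the projection $\cW\to\Blow_{\cZ}\cX$ is log smooth by Proposition \ref{equiv.42}. By the definition of $\Blow_{\cZ}\cX$, the pullback $\cZ\times_{\cX}\Blow_{\cZ}\cX$ is an effective log Cartier divisor on $\Blow_{\cZ}\cX$, so Lemma \ref{blow.19} shows that its pullback along $\cW\to\Blow_{\cZ}\cX$ is an effective log Cartier divisor on $\cW$. Associativity of fiber products identifies this pullback with $\cZ\times_{\cX}\cW$, and a further rearrangement identifies $\cZ\times_{\cX}\cW\simeq\cZ'\times_{\cX'}\cW$, which yields the claim.

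For the universal property, I would take any $\cY\to\cX'$ such that $\cZ'\times_{\cX'}\cY$ is an effective log Cartier divisor on $\cY$ and form the composite $\cY\to\cX'\to\cX$. The canonical isomorphism $\cZ\times_{\cX}\cY\simeq\cZ'\times_{\cX'}\cY$ shows that $\cZ\times_{\cX}\cY$ is an effective log Cartier divisor on $\cY$, so by the universal property of $\Blow_{\cZ}\cX$ there is a unique morphism $\cY\to\Blow_{\cZ}\cX$ over $\cX$. Pairing this with the given morphism $\cY\to\cX'$ produces a unique morphism $\cY\to\cW$ over $\cX'$, which is what we want.

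The formal part, namely the universal property, is essentially identical to the open complement case and should pose no difficulty. The one genuinely new input, and the step I expect to be the crux, is the verification that $\cW$ belongs to the relevant subcategory: this is precisely where the log smoothness hypothesis on $\cX'\to\cX$ is used, via Lemma \ref{blow.19}. Without it, the pullback of an effective log Cartier divisor need not again be an effective log Cartier divisor, and the argument would break down; this is the reason the statement is restricted to log smooth pullbacks rather than arbitrary ones.
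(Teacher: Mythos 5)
Your proposal is correct and follows essentially the same route as the paper: apply Lemma \ref{blow.19} to the log smooth projection $\Blow_{\cZ}\cX\times_{\cX}\cX'\to\Blow_{\cZ}\cX$ to see that $\cW$ lies in the relevant subcategory, then use the canonical identification $\cZ\times_{\cX}\cY\simeq\cZ'\times_{\cX'}\cY$ to transport the universal property of $\Blow_{\cZ}\cX$. The paper's proof is just a terser version of exactly this argument.
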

\begin{proof}
Apply Lemma \ref{blow.19} to $\Blow_{\cZ}\cX\times_{\cX} \cX'\to \Blow_{\cZ} \cX$ to show that $\Blow_{\cZ}\cX\times_{\cX} \cZ'$ is an effective log Cartier divisor on $\Blow_{\cZ}\cX\times_{\cX}\cX'$.
For every $\cY\in \lSpc/\cX'$, there is an isomorphism $\cZ\times_{\cX} \cY\simeq \cZ'\times_{\cX'}\cY$.
Use these to show that $\Blow_{\cZ}\cX\times_{\cX}\cX'$ is a final object of the full subcategory of $\lSpc/\cX'$ consisting of $\cY\to \cX'$ such that $\cZ'\times_{\cX'}\cY$ is an effective log Cartier divisor on $\cY$.
\end{proof}

\begin{lemma}
\label{blow.16}
Let $\cZ\to \cX$ be a closed immersion in $\lSpc/B$, and let $\amalg_{i\in I} \cX_i \to \cX$ be a representable Zariski cover with finite $I$ such that each $\cX_i\to \cX$ is a representable open immersion.
We set $\cX_{ij}:=\cX_i\times_{\cX} \cX_{j}$,
$\cZ_i:=\cZ\times_{\cX} \cX_i$, and $\cZ_{ij}:=\cZ\times_{\cX} \cX_{ij}$ for all $i,j\in I$.
If $\Blow_{\cZ_i}\cX_i$ and $\Blow_{\cZ_{ij}}\cX_{ij}$ exist for all $i,j\in I$, then $\Blow_{\cZ}\cX$ exists.
\end{lemma}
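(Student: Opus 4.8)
The plan is to mirror the proof of the corresponding statement for open complements (Lemma \ref{comp.3}): construct a candidate by gluing the local blow-ups and then verify its universal property, with Lemma \ref{blow.17} playing the role of Lemma \ref{comp.2} and the locality of effective log Cartier divisors replacing the locality of emptiness.

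First I would assemble the gluing data. Each projection $\cX_{ij}\to \cX_i$ is a pullback of the representable open immersion $\cX_j\to \cX$, hence is itself a representable open immersion and in particular log smooth, and $\cZ_{ij}\simeq \cZ_i\times_{\cX_i}\cX_{ij}$. Thus Lemma \ref{blow.17} provides canonical isomorphisms $\Blow_{\cZ_{ij}}\cX_{ij}\simeq \Blow_{\cZ_i}\cX_i\times_{\cX_i}\cX_{ij}$. Writing $\cV_i:=\Blow_{\cZ_i}\cX_i$ and $\cV_{ij}:=\Blow_{\cZ_{ij}}\cX_{ij}$, these exhibit $\cV_{ij}\to \cV_i$ as a representable open immersion and, via the symmetry $\cX_{ij}\simeq \cX_{ji}$, supply isomorphisms $\varphi_{ij}\colon \cV_{ij}\to \cV_{ji}$; the cocycle conditions of Construction \ref{equiv.13} then follow from the uniqueness clause in the universal property of blow-ups. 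Gluing $\{\cV_i\}_{i\in I}$ along $\{\cV_{ij}\}_{i,j\in I}$ yields $\cV\in \lSpc/B$ with a morphism $\cV\to \cX$ induced by the $\cV_i\to \cX_i\to \cX$, for which each $\cV_i\to \cV$ is a representable open immersion, $\amalg_{i\in I}\cV_i\to \cV$ is a representable Zariski cover, and $\cV\times_\cX\cX_i\simeq \cV_i$.

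Next I would record the key locality statement: for $\cY\to \cX$ in $\lSpc/\cX$, writing $\cY_i:=\cY\times_\cX\cX_i$, the divided log space $\cZ\times_\cX\cY$ is an effective log Cartier divisor on $\cY$ if and only if $\cZ_i\times_{\cX_i}\cY_i\simeq (\cZ\times_\cX\cY)\times_\cY\cY_i$ is one on $\cY_i$ for every $i$. The forward implication is Lemma \ref{blow.19} applied to the representable open (hence log smooth) morphisms $\cY_i\to \cY$, and the backward implication is Lemma \ref{blow.26} applied to the representable Zariski cover $\amalg_{i\in I}\cY_i\to \cY$. Applied to $\cY=\cV$, where $\cZ_i\times_{\cX_i}\cV_i$ is an effective log Cartier divisor by construction of $\cV_i=\Blow_{\cZ_i}\cX_i$, this shows $\cZ\times_\cX\cV$ is an effective log Cartier divisor on $\cV$, so $\cV$ lies in the category of which $\Blow_\cZ\cX$ is the final object.

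Finally, to check that $\cV$ is final, take any $\cY\to \cX$ with $\cZ\times_\cX\cY$ an effective log Cartier divisor on $\cY$. The locality statement makes each $\cZ_i\times_{\cX_i}\cY_i$ an effective log Cartier divisor, so the universal property of $\cV_i$ gives a unique $\cY_i\to \cV_i$ over $\cX_i$ and, likewise, unique maps over the overlaps; uniqueness makes these compatible. Since $\amalg_i\cX_i\to \cX$ is a representable Zariski cover and $\cV\times_\cX\cX_i\simeq \cV_i$, the descent isomorphism
\[
\Hom_\cX(\cY,\cV)\simeq \mathrm{Eq}\Bigl(\prod_{i}\Hom_{\cX_i}(\cY_i,\cV_i)\rightrightarrows \prod_{i,j}\Hom_{\cX_{ij}}(\cY_{ij},\cV_{ij})\Bigr)
\]
then yields $\Hom_\cX(\cY,\cV)\simeq *$, establishing $\Blow_\cZ\cX\simeq \cV$. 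I expect the main obstacle to be purely bookkeeping: checking that the comparison isomorphisms from Lemma \ref{blow.17} are natural enough to satisfy the cocycle conditions of Construction \ref{equiv.13} and that the glued object restricts to $\cV_i$ over $\cX_i$; the geometric content is entirely contained in Lemmas \ref{blow.17}, \ref{blow.19}, and \ref{blow.26}.
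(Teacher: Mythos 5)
Your proposal is correct and follows essentially the same route as the paper: glue the local blow-ups via Construction \ref{equiv.13} using Lemma \ref{blow.17}, verify that $\cZ\times_{\cX}\cV$ is an effective log Cartier divisor via the locality supplied by Lemmas \ref{blow.19} and \ref{blow.26}, and conclude with the descent description of $\Hom_{\cX}(\cY,\cV)$. Your write-up is in fact slightly more careful than the paper's, which cites only Lemma \ref{blow.26} for the ``if and only if'' where the forward direction really uses Lemma \ref{blow.19} as you note.
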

\begin{proof}
By Lemma \ref{blow.17}, we can glue $\{\Blow_{\cZ_i}\cX_i\}$ using Construction \ref{equiv.13}, and let $\cV$ be the resulting divided log space over $\cX$.
For every $\cY\in \lSpc/\cX$, $\cZ\times_{\cX} \cY$ is an effective log Cartier divisor on $\cY$ if and only if $\cZ_i\times_{\cX}\cY$ is an effective log Cartier divisor on $\cX_i\times_{\cX}\cY$ for all $i\in I$ by Lemma \ref{blow.26}.
Together with an isomorphism
\[
\Hom_{\cX}(\cY,\cV)
\simeq
\mathrm{Eq}(
\Hom_{\cX_i}(\cY\times_{\cX} \cX_i,\cV\times_{\cX}\cX_i)\rightrightarrows
\Hom_{\cX_{ij}}(\cY\times_{\cX} \cX_{ij},\cV\times_{\cX}\cX_{ij})),
\]
we deduce $\Hom_{\cX}(\cY,\cV)=*$ whenever $\cZ\times_{\cX}\cY$ is an effective log Cartier divisor on $\cY$.
To conclude, observe that $\cZ\times_{\cX} \cV$ is an effective log Cartier divisor on $\cV$ by Lemma \ref{blow.26}.
\end{proof}

\begin{lemma}
\label{blow.22}
Let $i\colon Z\to X$ be a strict closed immersion in $\lFan/B$.
If $h_Z$ is an effective log Cartier divisor on $h_X$, then there exists a dividing Zariski cover $Y\to X$ such that $Z\times_X Y$ is an effective log Cartier divisor on $Y$.
\end{lemma}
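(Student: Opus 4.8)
The plan is to turn the sheaf-theoretic hypothesis of Definition \ref{blow.5} into an honest dividing Zariski cover in $\lFan/B$ by means of the structure lemmas of Section \ref{rep}, and then to transport the effective log Cartier property along a dividing cover using the log smooth base change already exploited in Lemma \ref{blow.19}. Concretely, the hypothesis that $h_Z$ is an effective log Cartier divisor on $h_X$ provides a cartesian square in $\lSpc/B$ whose vertical morphisms are representable Zariski covers $h_{Z_0}\to h_Z$ and $a\colon h_{X_0}\to h_X$, together with a strict closed immersion $i_0\colon Z_0\to X_0$ in $\lFan/B$ exhibiting $Z_0$ as an effective log Cartier divisor on $X_0$. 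First I would apply Lemma \ref{equiv.27} to the representable Zariski cover $a$; the class of Zariski covers is closed under pullbacks and satisfies (Div) by Example \ref{equiv.16}(1), since Zariski covers are strict, so the lemma applies. This yields dividing covers $u\colon U\to X$ and $w\colon Y\to X_0$ and a Zariski cover $c\colon Y\to U$ in $\lFan/B$ with $a\circ h_w=h_u\circ h_c=h_{uc}$. I would take $Y\to X$ to be the composite $uc$: since $c$ is a Zariski cover and $u$ is a dividing cover, $uc$ is universally surjective and each of its components factors as an open immersion followed by a log \'etale monomorphism, hence is a log \'etale monomorphism; thus $Y\to X$ is a dividing Zariski cover, as required.

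Next I would identify the pullback of $Z$. Because $h$ preserves finite limits (Proposition \ref{div.13}) and $h_{uc}=a\circ h_w$, there is a chain of isomorphisms over $h_Y$,
\[
h_{Z\times_X Y}\simeq h_Z\times_{h_X}h_Y\simeq (h_Z\times_{h_X}h_{X_0})\times_{h_{X_0}}h_Y\simeq h_{Z_0}\times_{h_{X_0}}h_Y\simeq h_{Z_0\times_{X_0}Y}.
\]
Now $w\colon Y\to X_0$ is a dividing cover, hence log \'etale and in particular log smooth, so the argument of Lemma \ref{blow.19}---for every log smooth $Y'\to Y$ the composite $Y'\to X_0$ is log smooth, whence $\ul{Z_0\times_{X_0}Y'}$ is an effective Cartier divisor on $\ul{Y'}$---shows that $Z_0\times_{X_0}Y$ is an effective log Cartier divisor on $Y$. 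Granting that $Z\times_X Y$ and $Z_0\times_{X_0}Y$ agree as strict closed subschemes of $Y$, this proves that $Z\times_X Y$ is an effective log Cartier divisor on the dividing Zariski cover $Y\to X$, which is the assertion.

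The main obstacle is precisely this last identification: a priori the two strict closed subschemes $Z\times_X Y$ and $Z_0\times_{X_0}Y$ of $Y$ are only known to have isomorphic associated sheaves, and a strict closed immersion is not determined by its dividing Zariski sheaf in general, since sheafification can collapse embedded structure supported on the log boundary. I would resolve this by first using Lemma \ref{equiv.24} to replace $X_0$ by a dividing cover so that $a$ becomes $h_g$ for an honest morphism $g\colon X_0\to X$ (replacing $Z_0$ by its pullback, still an effective log Cartier divisor by log smooth base change); then $Z\times_X Y$ is the scheme-theoretic pullback of $Z$ and $Z_0\times_{X_0}Y$ that of $Z_0$. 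Since $Z_0\times_{X_0}Y$ is an effective Cartier divisor it has no embedded components, so the isomorphism $h_{Z\times_X Y}\simeq h_{Z_0\times_{X_0}Y}$ furnished by Lemma \ref{equiv.28} can be promoted, after a further dividing cover of $Y$, to an equality of ideals; absorbing this harmless dividing cover into $Y$ keeps $Y\to X$ a dividing Zariski cover and completes the argument. Making this promotion precise---that an effective Cartier divisor is reflected by its sheaf up to a dividing cover---is the technical heart and the step I expect to require the most care.
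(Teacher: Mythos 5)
Your overall strategy is the same as the paper's: rigidify the representable Zariski cover from Definition \ref{blow.5} into an honest dividing Zariski cover $Y\to X$ in $\lFan/B$, transfer the effective log Cartier property along the log smooth morphism $Y\to X_0$, and then identify $Z\times_X Y$ with $Z_0\times_{X_0}Y$ as closed subschemes of $Y$. The first two steps are carried out correctly (Lemma \ref{equiv.27} applies to Zariski covers via Example \ref{equiv.16}(1), and the composite of a dividing cover with a Zariski cover is indeed a dividing Zariski cover). But the third step --- which you yourself flag as ``the technical heart'' and ``the step I expect to require the most care'' --- is left as a sketch, and the sketch as written does not close the gap. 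Lemma \ref{equiv.28} applied to $h_{Z\times_X Y}\simeq h_{Z_0\times_{X_0}Y}$ produces dividing covers of the two \emph{closed subschemes}, not of $Y$; the appeal to ``no embedded components'' of an effective Cartier divisor does nothing to convert those into a dividing cover of the ambient $Y$ over which the two subschemes literally coincide. That conversion is the actual content of the missing step.

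The paper closes it as follows. After arranging (via Lemma \ref{equiv.24}) that $h_{X'}\to h_X$ is $h_u$ for an honest $u\colon X'\to X$, Lemma \ref{equiv.28} gives a common dividing cover $V$ of $Z'$ and of $Z\times_X X'$; Proposition \ref{div.5} then makes the relevant square of schemes commute on the nose after a further dividing cover $V'\to V$. The key mechanism is Proposition \ref{equiv.23}: the dividing covers $V'\to Z'$ and $V'\to Z\times_X X'$ are trivialized by pulling back along subdivisions of a fan chart, and because both closed immersions are \emph{strict}, a fan chart of $X'$ restricts to one of each closed subscheme, so these subdivisions are realized by dividing covers $X_1'\to X'$ and $X_2'\to X'$ of the ambient scheme. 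Over $Y:=X_1'\times_{X'}X_2'$ both pullbacks of $V'$ become isomorphisms, whence $Z\times_X Y$ and $Z'\times_{X'}Y$ agree as closed subschemes of $Y$, and the effective log Cartier property follows because $Y\to X'$ is log smooth. Without this use of Proposition \ref{equiv.23} and of strictness to move dividing covers from the subscheme to the ambient scheme, your ``promotion to an equality of ideals'' remains unproved, so as it stands the argument has a genuine gap at its decisive step.
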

\begin{proof}
There exists a cartesian square
\[
\begin{tikzcd}
h_{Z'}\ar[r,"i'"]\ar[d]&
h_{X'}\ar[d]
\\
h_Z\ar[r,"h_i"]&
h_X
\end{tikzcd}
\]
such that the vertical morphisms are representable Zariski covers and $i'$ exhibits $Z'$ as an effective log Cartier divisor on $X'$.
By Lemma \ref{equiv.24}, we can replace $X'$ with its suitable dividing cover and $Z'$ by the corresponding pullback to assume that $h_{X'}\to h_X$ is equal to $h_u$ for some morphism $u\colon X'\to X$ in $\lFan/B$.

There is an isomorphism $q\colon h_{Z'} \xrightarrow{\simeq} h_{Z\times_X X'}$.
Apply Lemma \ref{equiv.28} to $q$ to obtain a dividing cover $v\colon V\to Z'$ and a morphism $r\colon V\to Z\times_X X'$ in $\lFan/B$ such that $qh_v=h_{r}$.
By Lemma \ref{equiv.24}, we obtain a dividing cover $v'\colon V'\to V$ and a morphism $s\colon V'\to Z$ such that the composition
\[
h_{V'}\xrightarrow{h_{v'}}h_V\xrightarrow{h_v}h_{Z'}\to h_Z
\]
is equal to $h_s$.
The square
\[
\begin{tikzcd}
h_{V'}\ar[r,"h_{i'vv'}"]\ar[d,"h_s"']&
h_{X'}\ar[d,"h_u"]
\\
h_Z\ar[r,"h_i"]&
h_X
\end{tikzcd}
\]
commutes.
Use Proposition \ref{div.5} to see that the square
\[
\begin{tikzcd}
V'\ar[r,"i'vv'"]\ar[d,"s"']&
X'\ar[d,"u"]
\\
Z\ar[r,"i"]&
X
\end{tikzcd}
\]
commutes after replacing $V'$ by a suitable dividing cover.

By Proposition \ref{equiv.23}, there exists a dividing cover $X_1'\to X'$ (resp.\ $X_2'\to X'$) such that the pullback $V'\times_{X'} X_1'\to Z'\times_{X'} X_1'$ (resp.\ $V'\times_{X'}X_2'\to (Z\times_{X} X')\times_{X'}X_2'$) is an isomorphism.
Take $Y:=X_1'\times_X X_2'$.
Then $Z\times_X Y$ is an effective log Cartier divisor on $Y$ since the closed immersion $Z\times_X Y\to Y$ is a pullback of $i'\colon Z'\to X'$ along the log smooth morphism $Y\to X'$.
\end{proof}

\begin{lemma}
\label{blow.6}
Let $i\colon Z\to X$ be a strict closed immersion of log smooth fs log schemes in $\lFan/S$, where $S\in \lFan/B$.
Then $\Blow_{h_Z}h_X$ exists, and there is an isomorphism
\[
\Blow_{h_Z}h_X
\simeq
h_{\Blow_Z X}.
\]
\end{lemma}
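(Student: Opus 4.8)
The plan is to verify directly that $h_{\Blow_Z X}$ satisfies the universal property defining $\Blow_{h_Z}h_X$: it suffices to show (A) that $h_Z\times_{h_X}h_{\Blow_Z X}$ is an effective log Cartier divisor on $h_{\Blow_Z X}$, so that $h_{\Blow_Z X}$ lies in the full subcategory of $\lSpc/h_X$ indexing the blow-up, and (B) that it is a final object of this subcategory. First I would record that $h_{\Blow_Z X}$ is representable: since $Z\to X$ is strict, the projection $\Blow_Z X\to X$ is strict, so a fan chart $X\to \T_\Sigma$ composes to a fan chart of $\Blow_Z X$, whence $\Blow_Z X\in \lFan/B$ and $h_{\Blow_Z X}$ is defined.

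For (A), Proposition~\ref{div.13} gives $h_Z\times_{h_X}h_{\Blow_Z X}\simeq h_E$, where $E:=Z\times_X\Blow_Z X$ is the exceptional divisor, a strict closed subscheme of $\Blow_Z X$. I would then argue that $E$ is an effective log Cartier divisor on $\Blow_Z X$, which with the identity Zariski covers realizes Definition~\ref{blow.5}. Here the hypothesis $Z,X\in \lSmFan/S$ enters: because $Z\to X$ is a strict closed immersion of log smooth $S$-schemes, the strict blow-up $\Blow_Z X$ is again log smooth over $S$, and the projective-bundle structure of $E$ over $Z$ shows $E\in \lSm/S$ as well. Since $\ul E$ is always an effective Cartier divisor on $\ul{\Blow_Z X}$ for a strict blow-up, Lemma~\ref{blow.20} applies and shows that $E$ is an effective log Cartier divisor on $\Blow_Z X$. (Alternatively, one can avoid the log smoothness of $\Blow_Z X$: for a log smooth $W\to \Blow_Z X$, Lemma~\ref{blow.1} identifies $\Blow_{Z\times_X W}W$ with $\Blow_E(\Blow_Z X)\times_{\Blow_Z X}W\simeq W$, since blowing up the Cartier divisor $\ul E$ is an isomorphism; hence $\ul{Z\times_X W}$ is an effective Cartier divisor on $\ul W$.)

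For (B), let $\cY\to h_X$ satisfy the condition that $h_Z\times_{h_X}\cY$ is an effective log Cartier divisor on $\cY$. Choosing a representable Zariski cover $h_Y\to \cY$ with $Y\in \lFan/B$ and using that such covers are effective epimorphisms (Proposition~\ref{equiv.36}), I would reduce the construction of a morphism $\cY\to h_{\Blow_Z X}$ over $h_X$ to the representable case and glue the result over $h_Y\times_\cY h_Y$ by the sheaf property. In the representable case $h_Z\times_{h_X}h_Y\simeq h_{Z\times_X Y}$ is an effective log Cartier divisor on $h_Y$, so Lemma~\ref{blow.22} furnishes a dividing Zariski cover $Y'\to Y$ for which $Z\times_X Y'$ is an honest effective log Cartier divisor on $Y'$; in particular $\ul{Z\times_X Y'}$ is an effective Cartier divisor on $\ul{Y'}$. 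The universal property of the classical blow-up from Definition~\ref{blow.10} then factors $Y'\to X$ uniquely through $\Blow_Z X$, and since $h_{Y'}\xrightarrow{\simeq}h_Y$ this yields a morphism $h_Y\to h_{\Blow_Z X}$; Proposition~\ref{div.5} and the uniqueness of the classical factorization guarantee that it is independent of the chosen cover and descends to the desired unique $\cY\to h_{\Blow_Z X}$.

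The hard part will be step (A): proving that the strict blow-up of a log smooth $S$-scheme along a strict log smooth center is again log smooth over $S$ (so that Lemma~\ref{blow.20} is applicable), or, in the alternative route, checking that the defining base changes along arbitrary log smooth $W\to \Blow_Z X$ interact correctly with the fs fiber products through Lemma~\ref{blow.1}. This is precisely the place where the $\lSmFan/S$ hypothesis is indispensable; everything else is formal manipulation of representable morphisms and dividing Zariski descent.
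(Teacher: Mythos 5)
Your proposal follows essentially the same route as the paper: part (A) is the paper's final observation, which it disposes of by citing Lemma \ref{blow.2} (the appendix result that $\Blow_Z X$ and $\Blow_Z X\times_X Z$ are again in $\lSm/S$ with $\ul{\Blow_Z X\times_X Z}$ an effective Cartier divisor) together with Lemma \ref{blow.20}; part (B) is carried out with the same chain of reductions, namely Lemma \ref{blow.19} to transfer the divisor condition along the log smooth cover $h_Y\to\cY$, Lemma \ref{blow.22} to upgrade it to an honest effective log Cartier divisor after a dividing Zariski cover, the classical universal property of $\ul{\Blow_Z X}$, and descent along the effective epimorphism using the representability of $h_Y\times_{\cY}h_Y$ from Proposition \ref{equiv.20}. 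The ``hard part'' you isolate is thus already available as Lemma \ref{blow.2}.

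One small inaccuracy: Lemma \ref{blow.22} produces a dividing \emph{Zariski} cover $Y'\to Y$, and for such a cover $h_{Y'}\to h_Y$ is generally \emph{not} an isomorphism (only dividing covers invert under $h$), so your step ``since $h_{Y'}\xrightarrow{\simeq}h_Y$ this yields a morphism $h_Y\to h_{\Blow_Z X}$'' does not go through as written. The fix is what the paper does: replace the chosen cover $h_Y\to\cY$ by the composite $h_{Y'}\to\cY$, which is still a dividing Zariski cover of $\cY$, and run your descent argument (equalizer over $h_{Y'}\times_{\cY}h_{Y'}$) for that cover instead.
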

\begin{proof}
Suppose that $\cY\in \lSpc/h_X$ and $h_Z\times_{h_X}\cY$ is an effective log Cartier divisor on $\cY$.
Choose a dividing Zariski cover $h_Y\to \cY$ with $Y\in \lFan/B$.
By Lemma \ref{equiv.24}, after replacing $Y$ by its suitable dividing cover, the composition $h_Y\to \cY\to h_X$ is equal to $h_f$ for some morphism $f\colon Y\to X$ in $\lFan/B$.
Since $h_Y\to \cY$ is log smooth, Lemma \ref{blow.19} shows that $h_{Z\times_X Y}$ is an effective log Cartier divisor on $h_Y$.
Hence by Lemma \ref{blow.22}, after replacing $Y$ by its suitable dividing Zariski cover, $Z\times_X Y$ is an effective log Cartier divisor on $Y$.

Since $\ul{Z\times_X Y}$ is an effective Cartier divisor on $\ul{Y}$, there exists a unique morphism $\ul{Y}\to \ul{\Blow_Z X}$ over $\ul{X}$ by the universal property of blow-ups.
Together with $\Blow_Z X\simeq \ul{\Blow_Z X}\times_{\ul{X}} X$, we deduce that there exists a unique morphism $u\colon Y\to \Blow_Z X$ over $X$.

Suppose that $v\colon h_Y\to h_{\Blow_Z X}$ is a morphism over $h_X$.
By Lemma \ref{equiv.24}, there exists a dividing cover $p\colon Y'\to Y$ such that the composite morphism $h_{Y'}\xrightarrow{vh_p} h_{\Blow_Z X}$ is equal to $h_w$ for some morphism $w\colon Y'\to \Blow_Z X$ in $\lFan/B$.
The universal property of blow-ups shows $w=up$.
Hence we have $v=h_u$, i.e., $\Hom_{h_X}(h_Y,h_{\Blow_Z X})\simeq *$.

By Proposition \ref{equiv.20}, $h_Y\times_{\cY} h_Y$ is representable.
Using this, we can similarly show $\Hom_{h_X}(h_Y\times_{\cY} h_Y,h_{\Blow_Z X})\simeq *$.
Together with the isomorphism
\[
\Hom_{h_X}(\cY,h_{\Blow_Z X})
\simeq
\mathrm{Eq}(\Hom_{h_X}(h_Y,h_{\Blow_Z X})\rightrightarrows \Hom_{h_X}(h_Y\times_{\cY} h_Y,h_{\Blow_Z X})),
\]
we obtain $\Hom_{h_X}(\cY,h_{\Blow_Z X})\simeq *$.
To conclude, observe that $\Blow_Z X\times_X Z$ is an effective log Cartier divisor on $\Blow_Z X$ by Lemmas \ref{blow.2} and \ref{blow.20}.
\end{proof}

\begin{lemma}
\label{blow.21}
Let $f\colon X\to S$ be a log smooth morphism in $\lFan/B$.
If $\cZ\to h_X$ is a closed immersion such that the composition $\cZ\to h_S$ is a log smooth morphism in $\lSpc/B$, then there exists a cartesian square
\[
\begin{tikzcd}
h_W\ar[r,"h_a"]\ar[d]&
h_V\ar[d,"h_v"]
\\
\cZ\ar[r]&
h_X
\end{tikzcd}
\]
such that $a$ is a strict closed immersion, $fva$ is log smooth, and $v$ is a dividing cover.
\end{lemma}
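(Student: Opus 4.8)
The plan is to descend the whole situation to an honest diagram of fs log schemes and then reconcile two descriptions of the resulting morphism to $S$. First I would note that since $\cZ\to h_X$ is a closed immersion, it is a representable strict closed immersion (closed immersions of divided log spaces being representable strict closed immersions, cf.\ Proposition \ref{property.3}); pulling back along the identity $h_X\to h_X$ then shows $\cZ$ is representable, say $\cZ\simeq h_Z$ with $Z\in\lFan/B$. The composite $h_Z\to h_S$ is a morphism between representable sheaves, hence representable by Lemma \ref{equiv.24}, and it is log smooth by hypothesis, so Proposition \ref{property.4} upgrades it to a representable log smooth morphism. Applying Lemma \ref{equiv.27} to the representable strict closed immersion $h_Z\to h_X$ — legitimate because strict morphisms satisfy (Div) by Example \ref{equiv.16}(1) — I would obtain dividing covers $q\colon Z_0\to Z$ and $v_0\colon V_0\to X$ together with a strict closed immersion $a_0\colon Z_0\to V_0$ fitting into a commutative square with vertical maps $h_q$ and $h_{v_0}$.

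At this point $v_0$ is a genuine dividing cover of $X$ and $a_0$ is a strict closed immersion, so the cartesian square is essentially in place; the only missing property is that the composite $\psi:=fv_0a_0\colon Z_0\to S$ be log smooth. Its sheafification $h_\psi$ equals the given log smooth morphism $\cZ\to h_S$ precomposed with the isomorphism $h_q$, hence is representable log smooth. The key intermediate step is then the following: whenever $g\colon Y\to T$ in $\lFan/B$ has $h_g$ representable log smooth, some dividing cover $Y'\to Y$ makes the composite $Y'\to T$ log smooth. I would prove this by applying Lemma \ref{equiv.27} (log smooth satisfies (Div) by Example \ref{equiv.16}(3)) to produce a log smooth $Y_1\to T_1$ over dividing covers $Y_1\to Y$ and $T_1\to T$; composing with the dividing cover $T_1\to T$, which is log \'etale, yields a log smooth map $Y_1\to T$ agreeing with $g$ at the sheaf level, and Proposition \ref{div.5} then lets me pass to a dividing cover $Y'\to Y_1$ on which the two scheme-level maps to $T$ coincide. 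Applied to $\psi$, this furnishes a dividing cover $\rho\colon Z_1\to Z_0$ with $\psi\rho$ log smooth.

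It remains to reinstate the strict closed immersion after this extra cover, and here I would invoke condition (Div) for strict closed immersions applied to $a_0\colon Z_0\to V_0$ and $\rho\colon Z_1\to Z_0$: this produces a dividing cover $\iota\colon V\to V_0$ such that the projection $a\colon W\to V$ of $W:=Z_1\times_{V_0}V$ is a strict closed immersion. Setting $v:=v_0\iota\colon V\to X$, a dividing cover, and using the fiber-product identity $\iota a=(a_0\rho)\pi$ with $\pi\colon W\to Z_1$ the projection, one computes $fva=fv_0(a_0\rho)\pi=\psi\rho\pi$, which is log smooth since $\psi\rho$ is log smooth and $\pi$ is a dividing cover, hence log \'etale. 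Finally the square with vertices $h_W$, $h_V$, $\cZ$, $h_X$ commutes at the sheaf level by the two relations above, $h_v$ is an isomorphism, and $h_W\simeq h_{Z_1}\simeq h_Z=\cZ$ through the chain of dividing covers, so it is cartesian. The main obstacle is precisely the tension in the middle step: sheaf-level log smoothness of $\cZ\to h_S$ must be realized as an honest log smooth morphism of fs log schemes with $S$ held fixed, and doing so forces auxiliary dividing covers that threaten to destroy the strict closed immersion — this is resolved only by feeding those covers back through condition (Div).
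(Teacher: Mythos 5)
Your argument is correct and follows essentially the same route as the paper's proof: represent $\cZ$ by a strict closed immersion into a dividing cover of $X$, realize the structure morphism to $S$ as an honest log smooth morphism of fs log schemes after a further dividing cover of the source (via Lemma \ref{equiv.27}, Proposition \ref{property.4}, and Proposition \ref{div.5}), and then push that cover back to a dividing cover of the ambient space to restore strictness of the closed immersion. The only cosmetic difference is in the last step, where you invoke condition (Div) for strict closed immersions while the paper applies Proposition \ref{equiv.23} to the monomorphism $Z''\to X'$ and then uses that a strict dividing cover is an isomorphism; these are the same mechanism, since (Div) for strict morphisms is verified in Example \ref{equiv.16}(1) precisely by Proposition \ref{equiv.23}.
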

\begin{proof}
By Lemma \ref{equiv.26}, there exists a commutative square
\[
\begin{tikzcd}
h_{Z}\ar[r,"h_i"]\ar[d,"\simeq"']&
h_{X'}\ar[d,"h_u"',"\simeq"]
\\
\cZ\ar[r]&
h_X
\end{tikzcd}
\]
with vertical isomorphisms such that $i$ is a strict closed immersion and $u$ is a dividing cover.
Apply Lemma \ref{equiv.27} and Proposition \ref{property.4} to the log smooth morphism $h_Z\to h_S$ to obtain a dividing cover $z\colon Z'\to Z$ such that the composition $h_{Z'}\to h_S$ is equal to $h_g$ for some log smooth morphism $g\colon Z'\to S$ in $\lFan/B$.
Use Proposition \ref{div.5} to obtain a dividing cover $z'\colon Z''\to Z'$ such that the two compositions
\[
h_{Z''}\xrightarrow{h_{z'}} h_{Z'}\xrightarrow{h_z}h_Z\xrightarrow{h_{iuf}}h_S
\text{ and }
h_{Z''}\xrightarrow{h_{z'}}h_{Z'}\xrightarrow{h_g}h_S
\]
are equal.
The composition $Z''\to X'$ is a monomorphism, so Proposition \ref{equiv.23} yields a dividing cover $X''\to X'$ such that the projection $Z''\times_{X'} X''\to X''$ is strict.
Since the projection $Z\times_{X'}X''\to X''$ is strict, the induced morphism $Z''\times_{X'}X''\to Z\times_{X'}X''$ is a strict dividing cover, i.e., an isomorphism by Proposition \ref{equiv.11}(3).
The composition $Z''\times_{X'}X''\to S$ is log smooth, so the composition $Z\times_{X'}X''\to S$ is log smooth too.
Take $V:=X''$ and $W:=Z\times_{X'}X''$ to conclude.
\end{proof}

For $\cS\in \lSpc/B$,
let $\lSmSpc/\cS$ denote the full subcategory of $\lSpc/\cS$ consisting of log smooth morphisms $\cX\to \cS$.
\begin{theorem}
\label{blow.7}
Let $\cZ\to \cX$ be a closed immersion in $\lSmSpc/\cS$, where $\cS\in \lSpc/B$.
Then $\Blow_{\cZ}\cX$ exists, and $\Blow_{\cZ}\cX\in \lSmSpc/\cS$.
\end{theorem}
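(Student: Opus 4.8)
The plan is to reduce the general statement, via the gluing and descent machinery developed earlier, to the affine case established in Lemma \ref{blow.6}, where the blow-up is computed on honest log schemes in $\lSmFan/S$. The overall strategy mirrors the proof of Theorem \ref{comp.5} for open complements: first produce a representable Zariski cover of $\cX$ by objects of the form $h_{X_i}$ for which the blow-up exists, then glue using Lemma \ref{blow.16}.

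First I would choose a representable Zariski cover $\amalg_{i\in I} h_{X_i}\to \cX$ with finite $I$ and $X_i\in \lFan/B$, each $h_{X_i}\to \cX$ a representable open immersion, using Proposition \ref{equiv.19}. Setting $\cX_i:=h_{X_i}$, Proposition \ref{equiv.20} shows that each $\cX_{ij}:=\cX_i\times_{\cX}\cX_j$ is representable. Next I would arrange for the structure morphisms to $\cS$ to be pulled down to $\lFan/B$: since $\cX\to \cS$ and $\cZ\to \cS$ are log smooth, after refining the cover by a dividing Zariski cover (via Lemma \ref{equiv.27} and Proposition \ref{property.4}) I may assume each $\cX_i\to \cS$ factors through some $h_{S}$ with $S\in \lFan/B$ and that, Zariski-locally, the closed immersion $\cZ\times_{\cX}\cX_i\to \cX_i$ is represented by a strict closed immersion $Z_i\to X_i$ in $\lSmFan/S$ with $fva$ log smooth; this is exactly what Lemma \ref{blow.21} provides.

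Then, by Lemma \ref{blow.6}, for each $i$ the blow-up $\Blow_{\cZ_i}\cX_i\simeq h_{\Blow_{Z_i}X_i}$ exists and lies in $\lSmSpc/\cS$; the same applies to the overlaps $\cX_{ij}$, since an open immersion restricted to a log smooth closed immersion remains one. Invoking Lemma \ref{blow.17}, the local blow-ups are compatible with the open restrictions, so the gluing hypotheses of Construction \ref{equiv.13} are satisfied. Lemma \ref{blow.16} then assembles $\Blow_{\cZ}\cX$ from $\{\Blow_{\cZ_i}\cX_i\}$, and its universal property follows formally from the equalizer description of morphisms out of a Zariski cover. Finally, log smoothness over $\cS$ is local for the representable Zariski cover, so $\Blow_{\cZ}\cX\in \lSmSpc/\cS$.

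I expect the main obstacle to be the descent step bridging the local affine input and the abstract gluing: concretely, verifying that the chart data produced by Lemma \ref{blow.21} on each $\cX_i$ can be made compatible on the overlaps $\cX_{ij}$, so that the local blow-ups genuinely satisfy the cocycle conditions (i)--(iii) of Construction \ref{equiv.13}. This requires repeatedly passing to common dividing covers and invoking Proposition \ref{div.5} to lift equalities of sheaf morphisms to actual morphisms in $\lFan/B$. The log smoothness hypothesis on both $\cX$ and $\cZ$ over $\cS$ is essential here, as it is what allows Lemma \ref{blow.22} to convert the abstract effective-log-Cartier-divisor condition into the scheme-theoretic one needed to apply the universal property of the classical blow-up.
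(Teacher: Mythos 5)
Your proposal is correct and follows essentially the same route as the paper: Proposition \ref{equiv.19} to obtain an open cover by representables, Lemma \ref{blow.21} to rigidify the closed immersion into a strict closed immersion of log smooth fs log schemes, Lemma \ref{blow.6} for the representable case, and Lemma \ref{blow.16} to glue. The only difference is ordering: the paper applies Proposition \ref{property.1} and Lemma \ref{blow.21} once to a single global representable Zariski cover $h_X\to \cX$ \emph{before} decomposing $X$ into open pieces, so the overlap-compatibility issue you flag as the main obstacle never arises --- all local data are restrictions of one global strict closed immersion $Z\to X$, and Lemmas \ref{blow.17} and \ref{blow.16} handle the rest.
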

\begin{proof}
By Proposition \ref{property.1}, there exists a commutative square
\[
\begin{tikzcd}
h_X\ar[d]\ar[r,"h_f"]&
h_S\ar[d]
\\
\cX\ar[r]&
\cS
\end{tikzcd}
\]
such that $f$ is a log smooth morphism in $\lFan/B$ and the vertical morphisms are representable Zariski covers.
After replacing $X$ by its suitable dividing cover, by Lemma \ref{blow.21}, we may assume that there exists a cartesian square
\[
\begin{tikzcd}
h_Z\ar[r,"h_i"]\ar[d]&
h_X\ar[d]
\\
\cZ\ar[r]&
\cX
\end{tikzcd}
\]
such that $i$ is a strict closed immersion in $\lFan/B$ and $fi$ is log smooth.

By Proposition \ref{equiv.19}, we may further assume that $X\simeq \amalg_{i\in I} X_i$ with finite $I$ such that each $h_{X_i}\to \cX$ is a representable open immersion.
For all $i,j\in I$, there exists a commutative square
\[
\begin{tikzcd}
h_{U_{ij}}\ar[r,"u_{ij}"]\ar[d,"\simeq"']&
h_{X_i}\ar[d,"\simeq"]
\\
\cX_i\times_{\cX}\cX_j\ar[r]&
\cX_i
\end{tikzcd}
\]
with vertical isomorphisms such that $u_{ij}$ is a representable open immersion.

Since $X_i,Z\times_X X_i,U_{ij},Z\times_X U_{ij}\in \lSm/S$, Lemma \ref{blow.6} shows that $\Blow_{\cZ_i}\cX_i$ and $\Blow_{h_{Z_i}}h_{X_{ij}}$ exist.
Lemma \ref{blow.16} finishes the proof.
\end{proof}

Let $\boxx$ be the fs log scheme whose underlying scheme is $\P^1$ and whose log structure is the compactifying log structure associated with the open immersion $\A^1\to \P^1$ away from $\infty$.

\begin{definition}
\label{blow.15}
Let $i\colon \cZ\to \cX$ be a closed immersion in $\lSmSpc/\cS$, where $\cS\in \lSpc/B$.
The \emph{deformation space associated with $i$} is defined to be
\[
\Deform_{\cZ}\cX
:=
\Blow_{\cZ\times \{0\}}(\cX\times \boxx)-\Blow_{\cZ\times \{0\}}(\cX\times \{0\}).
\]
The \emph{normal bundle of $\cZ$ in $\cX$} is defined to be
\[
\Normal_{\cZ}\cX
:=
\Deform_{\cZ}\cX \times_{\boxx}\{0\}.
\]
These exist by Theorems \ref{comp.5} and \ref{blow.7}.
\end{definition}

Suppose that $Z\to X$ is a strict closed immersion in $\sSm/S$ with $S\in \lSch/B$, where $\sSm$ denotes the class of strict smooth morphisms.
Let $\Normal_Z X:=\Normal_{\ul{Z}}\ul{X}\times_{\ul{X}}X$ denote the normal bundle of $Z$ in $X$.
As explained in \cite[Definition 7.5.1]{logDM}, there is an isomorphism
\begin{equation}
\Deform_Z X\times_{\boxx}\{0\}
\simeq
\Normal_Z X,
\end{equation}
where $\Deform_Z X:=\Blow_{Z\times \{0\}}(X\times \boxx)-\Blow_{Z\times \{0\}}(X\times \{0\})$.
Hence we have an isomorphism
\begin{equation}
\label{blow.15.1}
\Normal_{h_Z}h_X
\simeq
h_{\Normal_Z X}
\end{equation}
by Lemmas \ref{comp.4} and \ref{blow.6} since showing \eqref{blow.15.1} is Zariski local on $X$ and $Z$.

\begin{remark}
For a closed immersion of schemes $Z\to X$, Verdier \cite{zbMATH03522129} used $\A^1$ to define a deformation space, while Fulton \cite{zbMATH01027930} used $\P^1$.
We use $\boxx$ since this choice is suitable for log motivic homotopy theory, see e.g.\ \cite[Theorem 7.5.4]{logDM}.
\end{remark}

\begin{proposition}
\label{blow.28}
Let $i\colon Z\to X$ be a closed immersion in $\lSm/S$ with $S\in \lSch/B$.
Then the induced morphism
\begin{equation}
\label{blow.28.1}
i^*\Omega_{X/S}^1\to \Omega_{Z/S}^1
\end{equation}
is surjective, and its kernel is a locally free $\cO_Z$-module.
Furthermore, if $V$ is the vector bundle over $Z$ associated with the dual of the kernel of \eqref{blow.28.1}, then there is an isomorphism
\begin{equation}
\label{blow.28.2}
\Normal_{h_Z}h_X
\simeq
h_V.
\end{equation}
\end{proposition}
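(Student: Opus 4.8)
The plan is to prove the two assertions separately: the first is the log conormal sequence together with log smoothness, and the second reduces, via the invariance of $h$ under dividing covers, to a local computation of the log normal bundle of a strict closed immersion.

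For the first assertion, I would use the exact sequence of log differentials attached to $Z\xrightarrow{i}X\xrightarrow{f}S$, namely $i^*\Omega^1_{X/S}\to \Omega^1_{Z/S}\to \Omega^1_{Z/X}\to 0$. Since $i$ is a closed immersion, the underlying morphism $\ul i$ is a closed immersion, so $\Omega^1_{\ul Z/\ul X}=0$, while the surjectivity of $i^*\cM_X\to \cM_Z$ forces $\mathrm{coker}(\cM_X^{\gp}\to \cM_Z^{\gp})=0$; hence $\Omega^1_{Z/X}=0$ and \eqref{blow.28.1} is surjective. As $X/S$ and $Z/S$ are log smooth, $i^*\Omega^1_{X/S}$ and $\Omega^1_{Z/S}$ are locally free, so the kernel $K$ fits in a short exact sequence $0\to K\to i^*\Omega^1_{X/S}\to \Omega^1_{Z/S}\to 0$ with locally free middle and right terms; this splits locally, exhibiting $K$ as a local direct summand of a locally free sheaf, hence locally free.

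For the second assertion I would first reduce to the case that $i$ is strict. Working Zariski locally I may assume $X$ admits a fan chart $\Sigma$, and since a closed immersion is a proper monomorphism, Proposition \ref{equiv.23} gives a subdivision $\Sigma'$ of $\Sigma$ for which $i'\colon Z\times_{\T_\Sigma}\T_{\Sigma'}\to X\times_{\T_\Sigma}\T_{\Sigma'}$ is a strict closed immersion. The two projections to $Z$ and $X$ are pullbacks of the dividing cover $\T_{\Sigma'}\to \T_\Sigma$, hence dividing covers; they induce isomorphisms after applying $h$, and being log \'etale they preserve log smoothness over $S$ and pull back $\Omega^1_{-/S}$, so they pull back $K$ and $V$ along the same dividing covers and leave $h_V$ unchanged. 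Thus I am reduced to proving $\Normal_{h_Z}h_X\simeq h_V$ when $i$ is a strict closed immersion of log smooth $S$-schemes with fan charts. In that situation $Z\times\{0\}\to X\times \boxx$ and $Z\times\{0\}\to X\times\{0\}$ are strict closed immersions in $\lSmFan/S$, so Lemmas \ref{blow.6} and \ref{comp.4} compute the blow-ups and the open complement schematically, giving $\Normal_{h_Z}h_X\simeq h_{\Normal_Z X}$ for the schematic log normal bundle $\Normal_Z X$.

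It remains to identify $\Normal_Z X$ with $V$, and this is the technical heart of the argument. Choosing a chart $Q$ of $X$ along $Z$ and using that $i$ is strict, the same $Q$ serves as a chart of $Z$; by the local structure theory of log smooth morphisms I may assume, after \'etale localization, that $Z\to A$ and $X\to A$ are strict and smooth over $A:=\A_Q\times_{\A_P}S$ (a strict log smooth morphism has smooth underlying morphism), with $i$ a morphism over $A$. Then $\ul i\colon \ul Z\to \ul X$ is a closed immersion of smooth $\ul A$-schemes, hence a regular immersion, so its normal cone is the normal bundle $\Spec_{\ul Z}\mathrm{Sym}(\mathcal{I}/\mathcal{I}^2)$; endowed with the log structure pulled back from $Z$ this is $\Normal_Z X$. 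The strict log conormal sequence identifies $\mathcal{I}/\mathcal{I}^2$ with the kernel $K$ of \eqref{blow.28.1}, so $\Normal_Z X$ is precisely the vector bundle $V$ attached to $K^\vee$, whence $\Normal_{h_Z}h_X\simeq h_V$. The main obstacle is exactly this last step: one must produce compatible charts realizing $i$ over a common strictly smooth base in order to force $\ul i$ to be a regular immersion, since log smoothness of $X/S$ and $Z/S$ alone does not make the underlying schemes smooth.
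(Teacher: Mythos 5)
Your overall architecture is close to the paper's: both arguments reduce to a strict closed immersion, then to a situation where $\ul{Z}\to\ul{X}$ is a regular immersion of schemes smooth over a common base, and finish with the scheme-level identification $\Normal_{h_Z}h_X\simeq h_{\Normal_Z X}$ (the paper's \eqref{blow.15.1}). Your treatment of the first assertion via $\Omega^1_{Z/X}=0$ and local splitting of a surjection of locally free sheaves is correct and a bit more direct than the paper's (which first factors $i$ through a log \'etale monomorphism using \cite[Proposition III.2.3.5]{Ogu} and quotes \cite[Theorem IV.3.2.2]{Ogu}). Your reduction to the strict case by pulling back along a dividing cover $\T_{\Sigma'}\to\T_\Sigma$ via Proposition \ref{equiv.23} is also a legitimate alternative to the paper's factorization, and the bookkeeping that $h_X$, $h_Z$, $K$, $V$, and $h_V$ are all unchanged under this pullback is sound.

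The genuine gap is in the step you yourself flag as the main obstacle: the assertion that, after choosing ``a chart $Q$ of $X$ along $Z$,'' both $X\to A$ and $Z\to A$ are strict and smooth over $A:=\A_Q\times_{\A_P}S$. For a general chart this is simply false: take $X=\A_{\N^2}$ over $S=\Spec(k)$ and $Z=\{y=1\}\simeq\A_{\N}$; with the chart $Q=\N^2$ at a point of $Z$ one gets $A=\A_{\N^2}$ and $Z\to A$ is a closed immersion of a curve into a surface, not smooth. To make the step work you need $Q$ to be \emph{neat} at the point in question, and then you need two further arguments: that neatness of $Q$ for $X/S$ at $x\in Z$ transfers to neatness for $Z/S$ at $x$ (which uses strictness of $i$ to identify $\cM^{\gp}_{Z/S,x}$ with $\cM^{\gp}_{X/S,x}$), and that a neat chart of a log smooth morphism induces a strict \emph{smooth} morphism to $S\times_{\A_P}\A_Q$ (the content of \cite[Theorem IV.3.3.1(3)]{Ogu} and Proposition \ref{logsmooth.1}, which itself needs torsion-freeness or \'etale localization). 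None of this is in your write-up; you assert the conclusion and move on. The paper isolates exactly this difficulty in Proposition \ref{blow.3} and resolves it by a different construction: it chooses sections $m_1,\dots,m_r$ of $\cM_X$ whose differentials restrict to a basis of $\Omega^1_{Z/S,x}$ and sections $m_{r+1},\dots,m_{r+s}$ of the ideal of $Z$ completing them to a basis of $\Omega^1_{X/S,x}$, producing a cartesian square over $Y\times\A^s$ with $X\to Y\times\A^s$ strict \'etale and $Z$ the preimage of the zero section. Since the whole proposition hinges on this common smooth base, the step cannot be left as an assertion; either supply the neat-chart argument in full or invoke a statement like Proposition \ref{blow.3}.
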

\begin{proof}
The question is strict \'etale local on $X$ and $Z$.
Hence by \cite[Proposition III.2.3.5]{Ogu}, we may assume that $i$ admits a factorization $Z\xrightarrow{i'}X'\xrightarrow{u} X$ such that $i'$ is a strict closed immersion and $u$ is a log \'etale monomorphism.
Then \eqref{blow.28.1} is isomorphic to the induced morphism
\[
i'^*\Omega_{X'/S}^1 \to \Omega_{Z/S}^1.
\]
Together with \cite[Theorem IV.3.2.2]{Ogu}, we see that \eqref{blow.28.1} is surjective and its kernel is a locally free $\cO_Z$-module.

To show \eqref{blow.28.2}, it suffices to show
\[
\Normal_{h_Z}h_{X'}
\simeq
h_V
\]
since $h_u\colon h_{X'}\to h_X$ is an open immersion.
Hence we can replace $Z\xrightarrow{i} X$ by $Z\xrightarrow{i'} X'$, so we may assume that $i$ is a strict closed immersion.

Recall that the question is strict \'etale local on $X$.
By Proposition \ref{blow.3}, we may assume that there exists a strict smooth morphism $X\to Y$ in $\lSm/S$ such that the composition $Z\to Y$ is strict smooth too.
We finish the proof by applying \eqref{blow.15.1} to the strict closed immersion $Z\to X$ in $\sSm/S$.
\end{proof}

\begin{lemma}
\label{blow.24}
Let $W\to Z\to X$ be closed immersions of schemes.
Then there is a cartesian square
\begin{equation}
\label{blow.24.1}
\begin{tikzcd}
\Deform_W Z\ar[d]\ar[r]&
\Deform_W X\ar[d]
\\
Z\times \boxx\ar[r]&
\Deform_Z X.
\end{tikzcd}
\end{equation}
\end{lemma}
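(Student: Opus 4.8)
The plan is to reduce to an explicit computation with extended Rees algebras and to concentrate all of the content at the central fibre $t=0$. Both the formation of $\Deform$ and the formation of fibre products are local on $X$ and compatible with the two standard affine charts of $\boxx$, so I would first pass to the chart $\boxx\setminus\{\infty\}\cong\A^1$ with coordinate $t$ over an affine $X=\Spec A$, writing $Z=\Spec A/J$ and $W=\Spec A/I$ with $J\subseteq I$ (since $W\subseteq Z$). On this chart the three deformation spaces are $\Spec$ over $X$ (resp.\ $Z$) of the extended Rees algebras $R_I=\bigoplus_{n\in\Z}I^{(n)}t^{-n}$, $R_J=\bigoplus_{n\in\Z}J^{(n)}t^{-n}$ and $\bar R=\bigoplus_{n\in\Z}\bar I^{(n)}t^{-n}$, where $I^{(n)}=A$ for $n\le 0$, $I^{(n)}=I^n$ for $n>0$, and $\bar I=I/J\subseteq A/J$. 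The right-hand map $\Deform_W X\to\Deform_Z X$ is induced by the inclusion $R_J\hookrightarrow R_I$ coming from $J\subseteq I$, and the bottom map $Z\times\boxx\to\Deform_Z X$ by the graded surjection $R_J\to(A/J)[t]$ that is reduction modulo $J$ in non-negative degrees and zero in strictly negative degrees; one checks directly that the resulting square of graded $A$-algebras commutes.

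Next I would dispose of everything away from the origin. Over $\G_m=\A^1\setminus\{0\}$ all four spaces restrict to the constant families and the square becomes $Z\times\G_m=(X\times\G_m)\times_{X\times\G_m}(Z\times\G_m)$, hence is cartesian; the same holds over the chart of $\boxx$ containing $\infty$, where no blow-up occurs. Thus it suffices to treat the $\A^1$-chart, and there the coordinate ring of the fibre product $\Deform_W X\times_{\Deform_Z X}(Z\times\boxx)$ is the pushout $R_I\otimes_{R_J}(A/J)[t]\cong R_I/(Jt^{-1})R_I$. Comparing this with $\bar R$ degree by degree, the non-negative degrees give $(A/J)t^n$ on both sides, so the whole question collapses to the strictly negative degrees: for each $n\ge 1$ the natural map is $I^n/I^{n-1}J\to(I^n+J)/J\cong I^n/(I^n\cap J)$, which is an isomorphism precisely when $I^{n-1}J=I^n\cap J$.

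The identity $I^{n-1}J=I^n\cap J$ for all $n\ge 1$ is the crux, and I expect it to be the main obstacle. Geometrically it is the exact sequence of normal cones attached to the flag $W\subseteq Z\subseteq X$: passing to $t=0$, the statement is that the normal cone $\Deform_W Z\times_{\boxx}\{0\}$ is the fibre, over the zero section, of the map of normal cones $\Deform_W X\times_{\boxx}\{0\}\to\Deform_Z X\times_{\boxx}\{0\}$ restricted over $W$. The right-exact conormal sequence $i^{*}(J/J^2)\to I/I^2\to\bar I/\bar I^2\to 0$ gives the degree-one case for free; the higher-degree (equivalently, the above Artin--Rees-type) identity is where the geometry of the immersions genuinely enters. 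For the regular immersions relevant to the log smooth applications — where the normal cone is a vector bundle by Proposition~\ref{blow.28} — the conormal sequence dualises to a short exact sequence $0\to N_{W/Z}\to N_{W/X}\to i^{*}N_{Z/X}\to 0$, and the fibre of the surjection over the zero section is exactly $N_{W/Z}$, so the identity holds and this is the only non-formal point.

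Finally, having matched $R_I/(Jt^{-1})R_I$ with $\bar R$ as graded $A$-algebras compatibly with the two structure maps, I would read off that $\Deform_W Z\to\Deform_W X\times_{\Deform_Z X}(Z\times\boxx)$ is an isomorphism on the $\A^1$-chart, glue it with the (trivially cartesian) chart at $\infty$ over their common overlap $\G_m$, and conclude that the square \eqref{blow.24.1} is cartesian.
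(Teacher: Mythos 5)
Your computation is the same one the paper performs: reduce to the affine $\A^1$-chart of $\boxx$, identify the three deformation spaces with extended Rees algebras following Fulton's description, observe that $Z\times\A^1$ is cut out of $\Deform_Z X$ by the ideal generated by $Jt^{-1}$, and hence identify the fibre product with $\Spec$ of $R_I/(Jt^{-1})R_I$. The difference is that you stop at the degree-by-degree comparison and flag the identity $I^{n-1}J=I^n\cap J$ as the unproven crux, whereas the paper's proof asserts without comment that this quotient is the extended Rees algebra of $\bar I\subseteq A/J$. Your caution is warranted, and the step you left open is a genuine gap that cannot be closed at the stated level of generality: the lemma is false for arbitrary closed immersions. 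Take $A=k[x,y]$, $Z=V(x^2)$, $W=V(x,y)$, so that in your notation $J=(x^2)\subseteq I=(x,y)$. Then $I^2\cap J=(x^2)$ while $I^{1}J=(x^3,x^2y)$, so the surjection $I^2/IJ\to I^2/(I^2\cap J)$ kills the nonzero class of $x^2$. Concretely $R_I\simeq k[t,u,v]$ with $x=tu$, $y=tv$, the fibre product on the chart is $\Spec k[t,u,v]/(tu^2)$, whose fibre over $t=0$ is all of $\A^2$, whereas the central fibre of $\Deform_W Z$ is the normal cone $\Spec k[u,v]/(u^2)$. So the square is not cartesian there.

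What rescues the paper is that Lemma \ref{blow.24} is only ever invoked (in Proposition \ref{blow.23}) for flags of strict closed immersions between log smooth schemes over $S$, which by Proposition \ref{blow.3} are \'etale-locally zero sections of affine spaces; for such a flag there is a local model $Y\times\{0\}\subseteq Y\times\A^{s'}\times\{0\}\subseteq Y\times\A^{s}$ in which $I$ and $J$ are generated by subsets of a single regular sequence, and there $I^n\cap J=I^{n-1}J$ does hold for every $n$. Your appeal to the conormal sequence only gives the case $n=1$, and the short exact sequence of normal bundles you invoke is a consequence of the identity rather than a proof of it. To complete the argument you must either verify the identity for a regular flag directly (easiest in the split local model, where the associated graded ring of $I$ is a polynomial algebra over $A/I$ and the computation is transparent, then transport the conclusion by flat base change), or restate the lemma with a regularity hypothesis and prove it in that form. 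As written, both your proposal and the paper's own proof are missing exactly this step; the difference is that you located it and said so.
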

\begin{proof}
The question is Zariski local on $X$, so we reduce to the case when $X=\Spec(A)$, $Z=\Spec(A/I)$, and $W=\Spec(A/J)$, where $I\subset J$ are ideals of $A$.
According to \cite[Section 5.1]{zbMATH01027930}, we have explicit descriptions
\begin{gather*}
\Blow_{Z\times \{0\}}(X\times \A^1)-\Blow_{Z\times \{0\}} (X\times \{0\})
\simeq
\Spec\big(\bigoplus_{n\in \Z} I^{-n} t^n\big),
\\
\Blow_{W\times \{0\}}(X\times \A^1)-\Blow_{W\times \{0\}}(X\times \{0\})
\simeq
\Spec\big(\bigoplus_{n\in \Z} J^{-n} t^n\big),
\end{gather*}
where $t$ is an indeterminate and $I^{n},J^{n}:=A$ for all integer $n\leq 0$.
The closed subscheme $Z\times \A^1$ of $\Blow_{Z\times \{0\}}(X\times \A^1)-\Blow_{Z\times \{0\}} (X\times \{0\})$ is given by the ideal generated by $It^{-1}$.
Hence we see that
\[
(Z\times \A^1) \times_{\Blow_{Z\times \{0\}}(X\times \A^1)-\Blow_{Z\times \{0\}} (X\times \{0\})}(\Blow_{W\times \{0\}}(X\times \A^1)-\Blow_{W\times \{0\}} (X\times \{0\}))
\]
is isomorphic to 
\[
\Blow_{W\times \{0\}}(Z\times \A^1)-\Blow_{W\times \{0\}} (Z\times \{0\})\simeq \Spec\big(\bigoplus_{n\in \Z} (J/I)^{-n} t^n\big),
\]
where $(J/I)^n:=A/I$ for all integer $n\leq 0$.
It follows that \eqref{blow.24.1} is cartesian.
\end{proof}

\begin{proposition}
\label{blow.23}
Let $\cW\to \cZ\to \cX$ be closed immersions in $\lSmSpc/\cS$, where $\cS\in \lSpc/B$.
Then there is a cartesian square
\begin{equation}
\label{blow.23.2}
\begin{tikzcd}
\Deform_{\cW}\cZ\ar[d]\ar[r]&
\Deform_{\cW}\cX\ar[d]
\\
\cZ\times \boxx\ar[r]&
\Deform_{\cZ}\cX.
\end{tikzcd}
\end{equation}
\end{proposition}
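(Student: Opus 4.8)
The plan is to reduce the statement to the scheme-level computation of Lemma \ref{blow.24} by passing to a representable Zariski cover and using that the deformation space commutes with log smooth pullbacks. First I would note that whether the square \eqref{blow.23.2} is cartesian is a local question for the dividing Zariski topology: given a representable Zariski cover $h_X\to \cX$ with $X\in \lFan/B$, this cover is an epimorphism of sheaves by Proposition \ref{equiv.36}, and the dividing Zariski topology has enough points by Remark \ref{equiv.35}. The comparison map from $\Deform_{\cW}\cZ$ to the fibre product $(\cZ\times \boxx)\times_{\Deform_{\cZ}\cX}\Deform_{\cW}\cX$ commutes with pullback along $h_X\to \cX$ since finite limits do, so it is an isomorphism if and only if it becomes one after this pullback. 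Hence it suffices to produce one representable Zariski cover $h_X\to \cX$ after which the square is cartesian.

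Next I would check that each corner commutes with the pullback along $h_X\to \cX$. All four corners map to $\cX$ through their structure maps, and $h_X\to \cX$ (together with its base changes $h_Z:=\cZ\times_\cX h_X\to \cZ$ and $h_W:=\cW\times_\cX h_X\to \cW$) is log smooth, being a representable Zariski cover. Writing $\Deform_{\cZ}\cX=\Blow_{\cZ\times \{0\}}(\cX\times \boxx)-\Blow_{\cZ\times \{0\}}(\cX\times \{0\})$, Lemma \ref{blow.17} lets the two blow-ups commute with the pullback and Lemma \ref{comp.2} lets the open complement commute with it; the same applies to $\Deform_{\cW}\cX$ and to $\Deform_{\cW}\cZ$ (pulled back along $h_Z\to \cZ$), while $\cZ\times \boxx$ pulls back to $h_Z\times \boxx$. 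This identifies the pullback of \eqref{blow.23.2} with the corresponding square for the chain $h_W\to h_Z\to h_X$, so it remains to treat the representable case.

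In the representable case I would first make the chain strict. By Proposition \ref{property.1} there is a representable Zariski cover $h_S\to \cS$ and a log smooth $f\colon X\to S$ in $\lFan/B$ with a representable Zariski cover $h_X\to \cX$; applying Lemma \ref{blow.21} first to $\cZ\to h_X$ and then to $\cW\to h_Z$, and replacing $X$ by suitable dividing covers (which do not change the associated sheaves), I may assume that $W\to Z\to X$ arise from strict closed immersions in $\lSmFan/S$ with $W,Z,X$ all log smooth over $S$. Since $\{0\}\to \boxx$ is a strict closed immersion and $\boxx$ is log smooth, the centres $Z\times \{0\}$ and $W\times \{0\}$ are strict closed immersions into log smooth objects such as $X\times \boxx$; thus Lemma \ref{blow.6} gives $\Blow_{h_{Z\times \{0\}}}h_{X\times \boxx}\simeq h_{\Blow_{Z\times \{0\}}(X\times \boxx)}$ and likewise for the remaining blow-ups, while Lemma \ref{comp.4} identifies the open complements with the $h$ of the scheme-level complements. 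Consequently $\Deform_{h_Z}h_X\simeq h_{\Deform_Z X}$, $\Deform_{h_W}h_X\simeq h_{\Deform_W X}$, $\Deform_{h_W}h_Z\simeq h_{\Deform_W Z}$, and $h_Z\times \boxx\simeq h_{Z\times \boxx}$.

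Finally I would invoke Lemma \ref{blow.24}. Because all the immersions are strict, the log structures on every object in the relevant diagram are pulled back from $X\times \boxx$, so the cartesian square of underlying schemes produced by Lemma \ref{blow.24} upgrades to a cartesian square in $\lFan/S$. Applying the Yoneda functor $h$, which preserves finite limits by Proposition \ref{div.13} and hence cartesian squares, yields the square \eqref{blow.23.2} for $h_W\to h_Z\to h_X$ and shows it is cartesian; by the locality from the first step, the original square \eqref{blow.23.2} is cartesian. The main obstacle I anticipate is the strictification step: arranging that the whole chain $W\to Z\to X$ simultaneously becomes strict closed immersions of objects log smooth over a common $S$, via repeated applications of Lemma \ref{blow.21} and dividing covers, and verifying that the explicit scheme-level computation of Lemma \ref{blow.24} is compatible with the pulled-back log structures so that it may be read off in $\lFan/S$.
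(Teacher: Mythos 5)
Your overall reduction strategy (Zariski-localize, strictify the chain via Lemma \ref{blow.21} and dividing covers, identify the deformation spaces with $h$ of the scheme-level ones via Lemmas \ref{comp.4} and \ref{blow.6}, and finish with Lemma \ref{blow.24}) is the same as the paper's. But there is a genuine gap at the very start: your locality argument speaks of ``the comparison map from $\Deform_{\cW}\cZ$ to the fibre product $(\cZ\times \boxx)\times_{\Deform_{\cZ}\cX}\Deform_{\cW}\cX$,'' which presupposes that the morphisms into $\Deform_{\cZ}\cX$ already exist. They do not come for free. In this framework $\Blow_{\cZ\times\{0\}}(\cX\times\boxx)$ is defined by a universal property, and $\cW\times\{0\}$ is \emph{not} the pullback of the centre $\cZ\times\{0\}$, so there is no canonical functoriality morphism $\Deform_{\cW}\cX\to\Deform_{\cZ}\cX$ until you prove two things: first, that $(\cZ\times\{0\})\times_{\cX\times\boxx}\Deform_{\cW}\cX$ is an effective log Cartier divisor on $\Deform_{\cW}\cX$ (this produces, via the universal property, a map $p$ to $\Blow_{\cZ\times\{0\}}(\cX\times\boxx)$); and second, that $\Blow_{\cZ\times\{0\}}(\cX\times\{0\})\times_{\Blow_{\cZ\times\{0\}}(\cX\times\boxx),\,p}\Deform_{\cW}\cX$ is empty (so that $p$ factors through the open complement $\Deform_{\cZ}\cX$). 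The same issue, in milder form, affects the bottom horizontal arrow $\cZ\times\boxx\to\Deform_{\cZ}\cX$. The paper's proof devotes its explicit steps (1) and (2) to exactly these constructions, and only its step (3) is the cartesianness claim that your proposal addresses. Without steps (1) and (2) the fibre product in your first paragraph is not defined, so the reduction cannot be stated.

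The good news is that the repair uses precisely the locality you already invoke: effectivity of a log Cartier divisor is Zariski-local by Lemma \ref{blow.26}, open complements and blow-ups commute with the relevant (log smooth) pullbacks by Lemmas \ref{comp.2} and \ref{blow.17}, and emptiness of a fibre product is local; so all three steps, not just the cartesianness, can be checked after passing to the strictified representable chain, where Lemma \ref{blow.24} delivers them simultaneously from the explicit Rees-algebra description. Two smaller points you should also tighten: (a) the strictification is not a single application of Lemma \ref{blow.21} twice --- after replacing $Z$ by the dividing cover $Z'$ needed to strictify $\cW$, the map $Z'\to X$ is only a proper monomorphism, and you must re-strictify it using Proposition \ref{equiv.23} (you flag this as an anticipated obstacle but do not resolve it); (b) after applying $h$ to the scheme-level cartesian square you must check that the resulting arrows agree with the universal-property arrows of \eqref{blow.23.2} under the identifications of Lemmas \ref{comp.4} and \ref{blow.6}, which is again exactly what organizing the proof around steps (1)--(3) accomplishes.
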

\begin{proof}
As in the proof of Theorem \ref{blow.7}, there exists a commutative diagram
\[
\begin{tikzcd}
h_Z\ar[r,"h_i"]\ar[d]&
h_X\ar[r,"h_f"]\ar[d]&
h_S\ar[d]
\\
\cZ\ar[r]&
\cX\ar[r]&
\cS
\end{tikzcd}
\]
such that $i$ is a strict closed immersion, $f$ and $fi$ are log smooth, the vertical morphisms are representable Zariski covers, and the left small square is cartesian.
By Lemma \ref{blow.21}, there exists a cartesian square
\[
\begin{tikzcd}
h_{W'}\ar[d,"\simeq"']\ar[r,"h_{a}"]&
h_{Z'}\ar[d,"h_u"',"\simeq"]
\\
\cW\times_{\cZ}h_Z\ar[r]&
h_Z
\end{tikzcd}
\]
with vertical isomorphisms such that $u$ is a dividing cover, $a$ is a strict closed immersion, and the composition $W'\to S$ is log smooth.
The composition $Z'\to X$ is a proper monomorphism, so Proposition \ref{equiv.23} yields a dividing cover $X''\to X$ such that the projection $Z'':=Z'\times_X X''\to X''$ is a strict closed immersion.
Hence we obtain a commutative diagram
\[
\begin{tikzcd}
h_{W''}\ar[d]\ar[r,"h_{a''}"]&
h_{Z''}\ar[d]\ar[r,"h_{i''}"]&
h_{X''}\ar[d]
\\
\cW\ar[r]&
\cZ\ar[r]&
\cX
\end{tikzcd}
\]
with cartesian squares such that $W'':=W\times_X X''$, the vertical morphisms are representable Zariski covers, $i''$ and $a''$ are strict closed immersions, and the compositions $X'',Z'',W''\to S$ are log smooth.

The proof is done if we prove the following steps:
\begin{enumerate}
\item[(1)] Show that $(\cZ\times \{0\})\times_{\cX\times \boxx}\Deform_{\cW}\cX$ is an effective log Cartier divisor on $\Deform_{\cW}\cX$.
\item[(2)] Show $\Blow_{\cZ\times \{0\}}(\cX\times \{0\})\times_{\Blow_{\cZ\times \{0\}}(\cX\times \boxx),p} \Deform_{\cW}\cX=0$, where the morphism $p$ is obtained by (1).
\item[(3)] Show that \eqref{blow.23.2} is cartesian, where its right vertical morphism is obtained by (2).
\end{enumerate}
The steps (1)--(3) are Zariski local on $\cX$ by Lemmas \ref{comp.2}, \ref{blow.19}, \ref{blow.26}, and \ref{blow.17}.
Hence we reduce to showing the similar steps for $h_{W''}\to h_{Z''}\to h_{X''}\to h_{S''}$.
Lemma \ref{blow.24} proves the steps (1)--(3) at once.
\end{proof}

\begin{corollary}
\label{blow.25}
Let $\cW\to \cZ\to \cX$ be closed immersions in $\lSmSpc/\cS$, where $\cS\in \lSpc/B$.
Then there is a cartesian square
\begin{equation}
\label{blow.25.1}
\begin{tikzcd}
\Normal_{\cW}\cZ\ar[d]\ar[r]&
\Normal_{\cW}\cX\ar[d]
\\
\cZ\ar[r]&
\Normal_{\cZ}\cX.
\end{tikzcd}
\end{equation}
\end{corollary}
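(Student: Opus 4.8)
The plan is to deduce the cartesian square \eqref{blow.25.1} directly from the deformation-space square \eqref{blow.23.2} of Proposition \ref{blow.23} by base change along the closed point $\{0\}\to \boxx$. Indeed, recalling that $\Normal_{\cZ}\cX=\Deform_{\cZ}\cX\times_{\boxx}\{0\}$ and likewise $\Normal_{\cW}\cX=\Deform_{\cW}\cX\times_{\boxx}\{0\}$ and $\Normal_{\cW}\cZ=\Deform_{\cW}\cZ\times_{\boxx}\{0\}$, the square \eqref{blow.25.1} is literally obtained from \eqref{blow.23.2} by applying the functor $-\times_{\boxx}\{0\}$, once we also use the canonical identification $(\cZ\times \boxx)\times_{\boxx}\{0\}\simeq \cZ$.

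First I would observe that all four corners of \eqref{blow.23.2} carry canonical structure morphisms to $\boxx$: each deformation space $\Deform_{\cW}\cX$, $\Deform_{\cZ}\cX$, and $\Deform_{\cW}\cZ$ projects to $\boxx$ through the ambient blow-up of a product with $\boxx$ followed by the second projection, and $\cZ\times \boxx$ projects to $\boxx$ via its second factor. I would then check that each of the four morphisms in \eqref{blow.23.2} is a morphism over $\boxx$, so that \eqref{blow.23.2} is a cartesian square in $\lSpc/\boxx$. Next, I would invoke the fact that the base-change functor $-\times_{\boxx}\{0\}\colon \lSpc/\boxx\to \lSpc/\{0\}$ preserves finite limits, and in particular preserves fiber products; since a cartesian square is precisely a fiber-product diagram, the image square is again cartesian. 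Finally, I would rewrite the four corners of this image square using the definitions of the normal bundles and the identification $(\cZ\times \boxx)\times_{\boxx}\{0\}\simeq \cZ$, which turns the pulled-back square into exactly \eqref{blow.25.1}.

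The step I expect to be the only non-formal one is verifying that \eqref{blow.23.2} genuinely lives over $\boxx$, i.e.\ that the bottom inclusion $\cZ\times \boxx\to \Deform_{\cZ}\cX$ and the comparison morphisms produced in the proof of Proposition \ref{blow.23} are all compatible with the projections to $\boxx$. This amounts to tracing through that construction (and, on the underlying schemes, through the explicit description in Lemma \ref{blow.24}, where the subscheme $\cZ\times \boxx$ of the deformation space is compatible with the $\boxx$-coordinate); once this compatibility is in place, the remainder is the formal statement that base change preserves cartesian squares.
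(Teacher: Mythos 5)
Your proposal is correct and coincides with the paper's own argument: the paper also obtains \eqref{blow.25.1} as a pullback of the deformation-space square \eqref{blow.23.2} along $\{0\}\to\boxx$, using the definition $\Normal_{\cZ}\cX=\Deform_{\cZ}\cX\times_{\boxx}\{0\}$. Your additional care in checking that \eqref{blow.23.2} lives over $\boxx$ is a reasonable elaboration of a step the paper leaves implicit.
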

\begin{proof}
The square \eqref{blow.25.1} is obtained by a pullback of \eqref{blow.23.2}.
\end{proof}

Normal bundles of divided log spaces can be regarded as affine bundles in the following sense.

\begin{proposition}
\label{blow.27}
Let $\cZ\to \cX$ be a closed immersion in $\lSmSpc/\cS$, where $\cS\in \lSpc/B$.
Then there exists a cartesian square
\[
\begin{tikzcd}
h_{Z\times \A^n}\ar[d]\ar[r,"h_p"]&
h_Z\ar[d]
\\
\Normal_{\cZ} \cX\ar[r]&
\cZ
\end{tikzcd}
\]
with $Z\in \lFan/B$ and $n\in \N$ such that $h_Z\to \cZ$ is a representable Zariski cover and $p$ is the projection.
\end{proposition}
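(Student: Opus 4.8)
The plan is to reduce to a representable strict closed immersion of log smooth fs log schemes over a fan chart, where Proposition \ref{blow.28} identifies the normal bundle with the sheaf represented by a vector bundle, and then to trivialize that vector bundle Zariski locally.

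First I would set up the reduction exactly as in the proofs of Theorem \ref{blow.7} and Proposition \ref{blow.23}. Applying Proposition \ref{property.1} to the log smooth morphism $\cX\to\cS$ produces a representable Zariski cover $h_X\to\cX$, a representable Zariski cover $h_S\to\cS$, and a log smooth morphism $f\colon X\to S$ in $\lFan/B$ fitting into a commutative square. The pullback $\cZ\times_{\cX}h_X\to h_X$ is a closed immersion whose composition to $h_S$ is log smooth: indeed $\cZ\times_{\cX}h_X\to\cS$ is log smooth because $\cZ\to\cS$ is log smooth and $\cZ\times_{\cX}h_X\to\cZ$ is a representable Zariski cover, while $h_S\to\cS$ is log \'etale, so a morphism whose composition with a log \'etale morphism is log smooth is itself log smooth. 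After replacing $X$ by a suitable dividing cover, Lemma \ref{blow.21} then yields a cartesian square exhibiting $\cZ\times_{\cX}h_X$ as $h_Z$ for a strict closed immersion $i\colon Z\to X$ in $\lFan/B$ with $fi$ log smooth; in particular $Z$ and $X$ are both log smooth over $S\in\lFan/B$, so $i$ is a strict closed immersion in $\lSm/S$.

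Next I would transport the computation of the normal bundle across the cover $h_X\to\cX$. Since $h_X\to\cX$ is a representable Zariski cover it is log \'etale, hence log smooth, and $h_Z=\cZ\times_{\cX}h_X$. Combining Lemma \ref{blow.17} (base change of blow-ups along log smooth morphisms), applied to the factors $\cX\times\boxx$ and $\cX\times\{0\}$, with Lemma \ref{comp.2} (base change of open complements) shows that $\Deform$, and hence $\Normal$, is compatible with log smooth pullback; concretely
\[
\Normal_{h_Z}h_X\simeq\Normal_{\cZ}\cX\times_{\cZ}h_Z.
\]
Thus it suffices to trivialize $\Normal_{h_Z}h_X$ after a further representable Zariski cover of $h_Z$. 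Here I would invoke Proposition \ref{blow.28}: applied to $i\colon Z\to X$ in $\lSm/S$, it gives $\Normal_{h_Z}h_X\simeq h_V$, where $V\to Z$ is the vector bundle associated with the dual of the locally free kernel of $i^*\Omega_{X/S}^1\to\Omega_{Z/S}^1$. A vector bundle is strict smooth and Zariski locally trivial, so after passing to a Zariski cover $Z'\to Z$ in $\lFan/B$ on which this kernel has constant rank $n$ we obtain $V\times_Z Z'\simeq Z'\times\A^n$ with the product (i.e.\ strict) log structure. Since the Yoneda functor preserves fiber products (Proposition \ref{div.13}) and composites of representable Zariski covers are representable Zariski covers (Proposition \ref{equiv.15}), the composite $h_{Z'}\to h_Z\to\cZ$ is a representable Zariski cover and
\[
\Normal_{\cZ}\cX\times_{\cZ}h_{Z'}\simeq h_V\times_{h_Z}h_{Z'}\simeq h_{Z'\times\A^n},
\]
which is the desired cartesian square after renaming $Z'$ to $Z$ and taking $p$ to be the projection.

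The genuinely new input is already packaged in Proposition \ref{blow.28}, so the work here is largely organizational. The point that requires the most care is the compatibility $\Normal_{h_Z}h_X\simeq\Normal_{\cZ}\cX\times_{\cZ}h_Z$: one must check that both the blow-up and the open complement entering the definition of $\Deform$ base-change correctly along the log smooth cover $h_X\to\cX$, which means applying Lemmas \ref{blow.17} and \ref{comp.2} to $\cX\times\boxx$ and $\cX\times\{0\}$ separately and then reassembling the difference. A minor secondary point is arranging the rank $n$ to be constant; this is handled by refining the Zariski cover to the constant-rank loci of the kernel and using that $\lFan/B$ permits disjoint unions, so that the formula $Z\times\A^n$ may be read on pieces of fixed codimension.
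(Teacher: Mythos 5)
Your proof is correct and follows essentially the same route as the paper: reduce via the setup of Theorem \ref{blow.7} and Lemma \ref{blow.21} to a strict closed immersion $i\colon Z\to X$ in $\lSm/S$ covering $\cZ\to\cX$, identify $\Normal_{\cZ}\cX\times_{\cZ}h_Z$ with the sheaf represented by a vector bundle over $Z$, and trivialize it after a further Zariski localization. The only cosmetic difference is that you obtain the vector-bundle identification from Proposition \ref{blow.28}, whereas the paper cites Lemmas \ref{blow.17} and \ref{blow.6} and observes directly that $\ul{\Normal_Z X}$ is a vector bundle over $\ul{Z}$.
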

\begin{proof}
As in the proof of Theorem \ref{blow.7}, there exists a commutative diagram
\[
\begin{tikzcd}
h_Z\ar[d]\ar[r,"h_i"]&
h_X\ar[d]\ar[r,"h_f"]&
h_S\ar[d]
\\
\cZ\ar[r]&
\cX\ar[r]&
\cS
\end{tikzcd}
\]
with $S,X,Z\in \lFan/B$ such that the left square is cartesian, the vertical morphisms are representable Zariski covers, $f$ and $fi$ are log smooth, and $i$ is a strict closed immersion.
Lemmas \ref{blow.17} and \ref{blow.6} yield isomorphisms
\[
\Normal_{\cZ} \cX\times_{\cX}h_X
\simeq
\Normal_{h_Z}h_X
\simeq
h_{\Normal_Z X}.
\]
Together with $h_Z\simeq \cZ\times_{\cX} h_X$, we obtain an isomorphism
\[
\Normal_{\cZ} \cX\times_{\cZ} h_Z
\simeq
h_{\Normal_Z X}.
\]
Since $\ul{\Normal_Z X}$ is a vector bundle over $\ul{Z}$, we obtain a desired cartesian square after further Zariski localization on $Z$.
\end{proof}

\appendix

\section{Charts for log smooth morphisms}

The chart theorem for log smooth morphisms \cite[Theorem IV.3.3.1]{logDM} is crucial for the development of the theory of log motives since this allows us to understand the structure of log smooth morphisms more concretely.
However, the theorem is strict \'etale local on the source even though we are working with Zariski log structures.
In this section, we explain how the theorem can be Zariski local on the source with a stronger assumption.

\begin{definition}
Let $X$ be an fs log scheme, and let $x$ be a point.
We say that a chart $P$ of $X$ is called \emph{neat at $x$} if $P$ is sharp and $P\to \ol{\cM}_{X,x}$ is an isomorphism.
See \cite[Definition II.2.3.1]{Ogu} for other equivalent conditions.
\end{definition}

\begin{definition}
Let $f\colon X\to S$ be a morphism of fs log schemes.
We set
\[
\cM_{X/S}:=\coker(\cM_{\ul{X}\times_{\ul{S}}S} \to \cM_X),
\]
where the cokernel is taken in the category of sheaves of monoids on $X$.
By \cite[Proposition I.1.3.3]{Ogu}, there is an isomorphism
\[
\cM_{X/S}^\gp
\simeq
\coker(\cM_{\ul{X}\times_{\ul{S}}S}^\gp \to \cM_X^\gp).
\]
Let $x$ be a point of $X$.
A chart $\theta\colon P\to Q$ for $f$ is called \emph{neat at $x$} if the induced sequence
\[
0\to P^\gp \to Q^\gp \to \cM_{X/S,x}^\gp \to 0
\]
is exact.
This is a rephrase of the conditions in \cite[Theorem II.2.4.4]{Ogu}.
\end{definition}

\begin{proposition}
Let $f\colon X\to S$ be an exact morphism of fs log schemes.
If $\theta\colon P\to Q$ is a chart for $f$ neat at $x\in X$ such that $P$ is a neat chart at $f(x)$, then $Q$ is a neat chart at $x$.
\end{proposition}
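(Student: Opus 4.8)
The plan is to reduce everything to a statement about characteristic monoids and then run a five-lemma argument, with the exactness of $f$ entering at exactly one point. Write $s := f(x)$ and let $\iota\colon \ol{\cM}_{S,s}\to \ol{\cM}_{X,x}$ be the stalk of the characteristic map. First I would replace the relative structure sheaf by its characteristic version: since the log structure of $\ul{X}\times_{\ul{S}}S$ is pulled back from $S$, one has $\ol{\cM}_{\ul{X}\times_{\ul{S}}S,x}\simeq \ol{\cM}_{S,s}$, and comparing the two unit sequences
\[
0\to \cO_{X,x}^*\to \cM_{\ul{X}\times_{\ul{S}}S,x}^\gp\to \ol{\cM}_{S,s}^\gp\to 0,\qquad 0\to \cO_{X,x}^*\to \cM_{X,x}^\gp\to \ol{\cM}_{X,x}^\gp\to 0
\]
via the snake lemma (the left-hand map is the identity) yields a canonical isomorphism $\cM_{X/S,x}^\gp\simeq \coker(\iota^\gp)$. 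Thus the neat chart sequence $0\to P^\gp\to Q^\gp\to \cM_{X/S,x}^\gp\to 0$ for $f$ becomes the top row of a commutative ladder whose bottom row is $\ol{\cM}_{S,s}^\gp\xrightarrow{\iota^\gp}\ol{\cM}_{X,x}^\gp\to \coker(\iota^\gp)\to 0$, with left vertical map $P^\gp\to \ol{\cM}_{S,s}^\gp$ the isomorphism coming from the neatness of $P$ at $s$, and right vertical map the identity.

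The crucial step is to promote this bottom row to a short exact sequence, and this is where I would use that $f$ is exact. Exactness of $f$ means that $\iota$ is an exact homomorphism of monoids; since $\ol{\cM}_{S,s}$ is sharp, I claim $\iota^\gp$ is injective. Indeed, if $a\in \ker(\iota^\gp)$ then $\iota^\gp(\pm a)=0$ lies in $\ol{\cM}_{X,x}$, so exactness forces $\pm a\in \ol{\cM}_{S,s}$, whence $a\in \ol{\cM}_{S,s}^*=0$. With $\iota^\gp$ injective the bottom row is short exact, and the five lemma applied to the ladder (isomorphism on the left, identity on the right, short exact rows) shows that the middle map $\beta^\gp\colon Q^\gp\to \ol{\cM}_{X,x}^\gp$ is an isomorphism.

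It remains to descend from groups to monoids. Because $Q\to \cM_X$ is a chart, the induced map $\bar\beta\colon Q\to \ol{\cM}_{X,x}$ is surjective, with kernel-face the face $\rho^{-1}(\cO_{X,x}^{*})$ of elements sent to units by the structure map $\rho\colon Q\to \cO_{X,x}$. Since $\beta^\gp$ is injective and $Q$ is integral, the composite $Q\hookrightarrow Q^\gp\xrightarrow{\beta^\gp}\ol{\cM}_{X,x}^\gp$ is injective, so $\bar\beta$ is injective as well, hence an isomorphism; and as $\ol{\cM}_{X,x}$ is sharp this forces $Q$ to be sharp. Therefore $Q$ is a neat chart at $x$. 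I expect the only genuine obstacle to be the injectivity claim for $\iota^\gp$: it is precisely the hypothesis that $f$ is exact that rules out a kernel, and without it the middle vertical map need not be an isomorphism even though the outer two are. The remaining inputs—the pullback description of $\ol{\cM}_{\ul{X}\times_{\ul{S}}S}$, the chart surjectivity $Q\twoheadrightarrow \ol{\cM}_{X,x}$, and the snake/five-lemma bookkeeping—are standard and can be cited from Ogus's book.
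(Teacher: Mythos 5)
Your proof is correct and follows essentially the same route as the paper: the paper simply cites \cite[Remark II.2.4.5]{Ogu} for the five-lemma comparison of the chart sequence with the characteristic sequence, and \cite[Proposition I.4.2.1(5)]{Ogu} for the fact that exactness of $f$ forces $\ol{\cM}_{S,f(x)}^{\gp}\to \ol{\cM}_{X,x}^{\gp}$ to be injective, which is exactly the argument you spell out by hand. The one place where exactness is used -- ruling out a kernel of $\iota^{\gp}$ via sharpness of $\ol{\cM}_{S,f(x)}$ -- is identified correctly in both.
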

\begin{proof}
This is proved in \cite[Remark II.2.4.5]{Ogu} with the assumption that the induced homomorphism $\ol{\cM}_{S,f(x)}\to \ol{\cM}_{X,x}$ is injective.
The exactness of $f$ implies this assumption by \cite[Proposition I.4.2.1(5)]{Ogu}.
\end{proof}

\begin{proposition}
\label{logsmooth.1}
Let $f\colon X\to S$ be a log smooth (resp.\ log \'etale) morphism of fs log schemes, and let $x$ be a point of $X$.
Assume that $\cM_{X/S,x}^\gp$ is torsion free and $S$ has a chart $P$.
Then in a Zariski neighborhood of $x$, $f$ admits a chart $\theta\colon P\to Q$ neat at $x$, and the induced morphism $X\to S\times_{\A_P} \A_Q$ is strict smooth (resp.\ strict \'etale).
\end{proposition}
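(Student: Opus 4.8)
The plan is to reprove the \'etale-local chart theorem \cite[Theorem IV.3.3.1]{Ogu} (the log smooth form of Kato's structure theorem), but to use the hypothesis that $\cM_{X/S,x}^\gp$ is torsion free in order to carry out every choice Zariski locally. The price is that the induced morphism is only strict smooth rather than strict log \'etale; this reflects exactly the point that torsion in $\cM_{X/S,x}^\gp$ would force an \'etale localization in order to split off roots of units, whereas its absence keeps everything Zariski local. Accordingly I would isolate three tasks: constructing the chart, checking that the induced morphism $g\colon X\to Y:=S\times_{\A_P}\A_Q$ is strict with $Y\to S$ log smooth, and upgrading strictness to smoothness.

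First I would construct the chart. Since $\cM_{X/S,x}^\gp$ is finitely generated and torsion free, it is free. Choose sections of $\cM_X$ near $x$ lifting a finite generating set of the monoid $\cM_{X/S,x}$, adjoin them to the image of $P$, and let $Q$ be the fs submonoid of $\cM_{X,x}$ they generate together with $P$. By \cite[Theorem II.2.4.4]{Ogu} one arranges $\theta\colon P\to Q$ to be a chart for $f$ that is neat at $x$, i.e.\ $0\to P^\gp\to Q^\gp\to \cM_{X/S,x}^\gp\to 0$ is exact, and by \cite[Theorem III.1.2.7]{Ogu} the map $Q\to \cO_X$ is a chart for $X$ in a Zariski neighbourhood of $x$. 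Torsion freeness enters precisely here: the sequence splits as $Q^\gp\simeq P^\gp\oplus \cM_{X/S,x}^\gp$ with a free complement, so the construction requires no roots of units and hence no \'etale cover.

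Next I would analyse $g\colon X\to Y$. Strictness is automatic and needs only that $P$ is a chart for $S$: then $S\to \A_P$ is strict, so its base change $Y\to \A_Q$ is strict, and since $Q$ is a chart for $X$ the factorization $X\xrightarrow{g} Y\to \A_Q$ exhibits $\cM_X$ as the pullback of $\cM_{\A_Q}$, giving $g^*\cM_Y\xrightarrow{\simeq}\cM_X$. Because $\theta^\gp$ is injective with torsion free cokernel $\cM_{X/S,x}^\gp$, Kato's criterion \cite[Theorem IV.3.1.8]{Ogu} shows $\A_\theta$, and hence its base change $Y\to S$, is log smooth. In the log \'etale case $\Omega^1_{X/S}=0$ forces $\cM_{X/S,x}^\gp$ to be torsion, hence trivial; then $\theta^\gp$ is an isomorphism and \cite[Corollary IV.3.1.10]{Ogu} gives that $Y\to S$ is log \'etale.

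Finally I would promote strict to smooth. As $g$ is strict, $\Omega^1_{X/Y}$ coincides with the classical $\Omega^1_{\ul X/\ul Y}$, and the cotangent sequence $g^*\Omega^1_{Y/S}\to \Omega^1_{X/S}\to \Omega^1_{X/Y}\to 0$ has its outer terms locally free, since $f$ and $Y\to S$ are log smooth. It then suffices to show $g^*\Omega^1_{Y/S}\to \Omega^1_{X/S}$ is split injective (resp.\ an isomorphism) near $x$, which I would verify on the fibre at $x$: the source is $\cM_{X/S,x}^\gp\otimes k(x)$ and the map carries the lifted basis to the classes $d\log q_i$, which form part of a basis of $\Omega^1_{X/S}\otimes k(x)$ by the explicit description of log differentials of the log smooth morphism $f$ combined with neatness \cite[Theorem IV.3.2.2]{Ogu}. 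Hence $\Omega^1_{X/Y}$ is locally free (resp.\ zero) near $x$, and the (log) smoothness criterion then yields that $g$ is log smooth (resp.\ log \'etale); being strict, $g$ is strict smooth (resp.\ strict log \'etale). I expect the two genuine obstacles to be the Zariski-local compatibility of the chart construction with $P$ (where torsion freeness is indispensable) and this last differential computation, namely the injectivity of $g^*\Omega^1_{Y/S}\to \Omega^1_{X/S}$ at $x$ and the consequent flatness of $\ul g$.
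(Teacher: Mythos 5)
Your proof is correct and follows essentially the same route as the paper: the paper obtains the Zariski-local neat chart directly from \cite[Theorem III.1.2.7(4), Proposition II.2.3.7]{Ogu} (where torsion-freeness of $\cM_{X/S,x}^\gp$ is exactly what removes the need for \'etale localization) and then says to ``argue as in the proof of [Theorem IV.3.3.1(3)]''---which is precisely the strictness, split-injectivity, and differential-criterion argument you spell out. One small caveat: in your final step it is the local split injectivity of $g^*\Omega^1_{Y/S}\to\Omega^1_{X/S}$ (not merely the local freeness of $\Omega^1_{X/Y}$) that feeds the smoothness criterion for a morphism between log smooth $S$-schemes, but you do establish that.
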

\begin{proof}
By \cite[Theorem III.1.2.7(4)]{Ogu}, $f$ admits a chart including $P$ neat at $x$ in a Zariski neighborhood of $x$.
Then argue as in the proof of \cite[Theorem IV.3.3.1]{Ogu} to conclude.
\end{proof}

\section{Exact monomorphisms}

\begin{definition}
For an fs monoid $P$ and a ring $R$, we set $\A_{P,R}:=\Spec(P\to R[P])$, see \cite[Definition III.1.2.3]{Ogu}.
If $I$ is an ideal of $P$, we set
\[
\A_{(P,I),R}
:=
\A_{P,R}\times_{\Spec(R[P])}\Spec(R[P]/I).
\]
If $P$ is sharp, we set
\[
\pt_{P,R}
:=
\A_{(P,P^+),R},
\]
where $P^+$ denotes the ideal of non-units of $P$.
We often omit $R$ in this notation when it is clear from the context.
\end{definition}

\begin{lemma}
\label{equiv.10}
Let $\theta\colon P\to Q$ be an exact local homomorphism of sharp fs monoids, and let $k$ be a field.
If the induced morphism $f\colon \pt_{Q,k}\to \pt_{P,k}$ is a monomorphism of fs log schemes, then $\theta$ is an isomorphism.
\end{lemma}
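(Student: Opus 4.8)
The plan is to translate the statement into a question about $\theta$ and then exploit the two hypotheses separately. First I would record what exactness buys: since $f$ is exact, the induced homomorphism on characteristic monoids, which is exactly $\theta\colon P\to Q$, is exact, i.e.\ $P=(\theta^\gp)^{-1}(Q)$ inside $Q^\gp$. Because $P$ is sharp, this forces $\theta^\gp$ to be injective (any element of $\ker\theta^\gp$ lies in $P\cap(-P)=\{0\}$), so I may regard $P^\gp$ as a subgroup of $Q^\gp$ with $P=Q\cap P^\gp$. Setting $G:=Q^\gp/P^\gp$, the goal reduces to showing $G=0$: indeed $G=0$ gives $P^\gp=Q^\gp$, whence exactness yields $P=Q\cap Q^\gp=Q$, so $\theta$ is an isomorphism.

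Next I would feed in the monomorphism hypothesis through the diagonal. Since $f$ is a monomorphism, the diagonal $\Delta\colon\pt_{Q,k}\to W:=\pt_{Q,k}\times_{\pt_{P,k}}\pt_{Q,k}$ is an isomorphism, so it suffices to compute $W$ and read off $G=0$. The fiber product is governed by the amalgamated sum $N:=(Q\oplus_P Q)$ of fs monoids; a direct computation gives $N^\gp\cong Q^\gp\oplus G$, under which the two coprojections $Q\to N$ send $q$ to $(q,0)$ and to $(q,\bar q)$ respectively, and the codiagonal $N\to Q$ becomes the first projection.

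I would then extract two independent consequences. The characteristic monoid of $W$ is the sharpening $\bar N$, with $\bar N^\gp\cong Q^\gp\oplus(G/G_{\mathrm{tors}})$, and the characteristic-monoid part of $\Delta$ is the codiagonal $\bar N\to Q$; strictness of $\Delta$ therefore forces $G/G_{\mathrm{tors}}=0$, i.e.\ $G$ is finite. Once $G$ is finite one has $N\cong Q\oplus G$, and the underlying scheme of $W$ is cut out of $\Spec k[N]$ by the ideal generated by the images of $Q^+$ under both coprojections; since these images share the same $Q$-coordinate, that ideal is generated by $\{e^{(q,0)}:q\in Q^+\}$, and the quotient is $k[G]$. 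Hence $\underline{W}\cong\Spec k[G]$, and the underlying map of $\Delta$ is the augmentation $k[G]\to k$, which is an isomorphism only when $G=0$. This forces $G=0$ and finishes the proof.

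I expect the main obstacle to be the explicit computation of $W$ as a log scheme, in particular keeping track of the saturation of the monoid pushout: the torsion of $G$ is exactly what produces the extra roots of unity (the factor $k[G]$) in the underlying scheme, while the free part of $G$ is what shows up in the characteristic monoid, and one must check that fiber products of fs log schemes genuinely introduce both. A good sanity check is $\theta\colon\N\to\N$, $1\mapsto 2$, which is exact but not surjective: here $G=\Z/2$ and $\underline W\cong\Spec(k[\gamma]/(\gamma^2-1))$ has length two, so $\Delta$ is not an isomorphism, as it must be. One could instead shortcut via Proposition~\ref{equiv.11}(1) (as $\underline f=\id$ is proper, an exact proper monomorphism is a strict closed immersion, forcing $f$ to be an isomorphism), but since this lemma is meant to feed into the proof of that very synonym list, I would give the direct argument above.
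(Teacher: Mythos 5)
Your argument is correct and is essentially the paper's proof in different bookkeeping: both compute $\pt_{Q,k}\times_{\pt_{P,k}}\pt_{Q,k}$ via the saturated pushout $Q\oplus_P Q$, use the fact that the diagonal is an isomorphism on characteristic monoids to kill the free part of $G=Q^\gp/P^\gp$ (the paper phrases this as a rank count showing $\theta$ is Kummer), and then use the underlying scheme of the fiber product to kill the torsion part (your $\Spec k[G]$ is the paper's $\Spec k[(Q\oplus_P Q)^*]$). No gaps.
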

\begin{proof}
Let us omit $k$ for simplicity of notation.
Note that $\theta$ is injective by \cite[Proposition I.4.2.1(5)]{Ogu}.
We set $Z:=\pt_Q\times_{\pt_P}\pt_Q$.
If $q\in Q$ satisfies $(q,0)\in (Q\oplus_P Q)^*$,
then there exists an integer $n>0$ such that $(nq,0)+(q_1,q_2)=0$ for some $q_1,q_2\in Q$.
There exists $p\in P^\gp$ such that $nq+q_1=p$ and $q_2=-p$ in $Q^\gp$.
Since $\theta$ is exact, we have $p=nq+q_1\in P^\gp\cap Q=P$.
Similarly,
we have $p\in -P$ and hence $p\in P^*=0$.
This implies $q\in Q^*=0$.
Hence the first inclusion $Q\to Q\oplus_P Q$ sends $Q^+$ into $(Q\oplus_P Q)^+$, and the same holds for the second inclusion too,
so $Z$ contains $\A_{(Q\oplus_P Q,(Q\oplus_P Q)^+)}$ as a strict closed subscheme.
It follows that $Z$ contains a point $z$ such that $\ol{\cM}_{Z,z}\simeq \ol{Q\oplus_P Q}$.
Since $f$ is a monomorphism, the diagonal morphism $\pt_Q\to Z$ is an isomorphism.
This gives an isomorphism
\[
\overline{Q\oplus_P Q}
\simeq
Q.
\]

Due to \cite[Proposition I.4.2.5(5)]{Ogu}, we have an equality
\[
2\rank(Q^{\gp})-\rank(P^{\gp})
=
\rank((\overline{Q\oplus_P Q})^{\gp}).
\]
We deduce that $P^{\gp}$ and $Q^{\gp}$ have the same rank.
By \cite[Proposition I.4.2.1(5)]{Ogu}, $\theta$ is injective.
Together with \cite[Proposition I.4.3.5]{Ogu}, we see that $\theta$ is Kummer.

The underlying scheme of $\A_{(Q\oplus_P Q,(Q\oplus_P Q)^+)}$ is $\A_{(Q\oplus_P Q)^*}$, which is isomorphic to $\A_{Q^\gp/P^\gp}$ by \cite[Lemma 3.3]{MR1922832} using the fact that $\theta$ is Kummer.
Since the morphism $\pt_Q\to Z$ is an isomorphism,
$Q^\gp/P^\gp$ is $0$.
Hence $\theta$ is an isomorphism.
\end{proof}

\begin{proposition}
\label{equiv.32}
Let $f\colon Y\to X$ be an exact monomorphism of fs log schemes.
Then $f$ is strict.
\end{proposition}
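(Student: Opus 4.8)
The plan is to reduce the assertion to the local computation already carried out in Lemma \ref{equiv.10}. Recall that $f$ is strict if and only if the induced map of characteristic sheaves $f^{-1}\overline{\cM}_X\to \overline{\cM}_Y$ is an isomorphism, and that this can be checked on the stalks at geometric points. So I would fix a geometric point $\bar{y}\colon \Spec(k)\to Y$ and let $\bar{x}:=f\bar{y}\colon \Spec(k)\to X$ be its image, which is a geometric point of $X$ with the \emph{same} separably closed residue field $k$. Writing $P:=\overline{\cM}_{X,\bar{x}}$ and $Q:=\overline{\cM}_{Y,\bar{y}}$, both sharp fs monoids, and $\theta\colon P\to Q$ for the induced homomorphism, it suffices to prove that $\theta$ is an isomorphism for every such $\bar{y}$.

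Next I would realize the canonical morphism of log points $\pt_{Q,k}\to \pt_{P,k}$ determined by $\theta$ as a composite extracted from $f$. The geometric points $\bar{x}$ and $\bar{y}$, equipped with the pulled-back log structures, are isomorphic to the standard log points $\pt_{P,k}$ and $\pt_{Q,k}$, and they fit into a commutative square with $f$ whose horizontal arrows $\pt_{P,k}\to X$ and $\pt_{Q,k}\to Y$ are strict. Base changing $f$ along the strict morphism $\pt_{P,k}\to X$ produces $Y':=\pt_{P,k}\times_X Y\to \pt_{P,k}$, and since strictness is stable under base change the projection $Y'\to Y$ is strict; in particular $\overline{\cM}_{Y',\bar{y}'}=Q$ at the point $\bar{y}'$ lying over $\bar{y}$. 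The induced morphism $\pt_{Q,k}\to Y'$ is then the inclusion of the $k$-rational point $\bar{y}'$ with its pulled-back log structure, hence (working on an affine, and therefore separated, neighborhood) a strict closed immersion, and the composite $\pt_{Q,k}\to Y'\to \pt_{P,k}$ is exactly the $\theta$-induced map of log points.

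Now I would check that this composite is an exact monomorphism. Monomorphisms are stable under base change, so $Y'\to \pt_{P,k}$ is a monomorphism; a strict closed immersion is a monomorphism by Proposition \ref{equiv.11}(1); hence the composite is a monomorphism. For exactness, the characteristic homomorphism of the composite is precisely $\theta=\theta_{\bar{y}}$, which is an exact homomorphism of monoids because $f$ is exact at the geometric point $\bar{y}$; therefore the composite is exact. Thus $\pt_{Q,k}\to \pt_{P,k}$ is an exact monomorphism of fs log schemes, and Lemma \ref{equiv.10} yields that $\theta$ is an isomorphism. As $\bar{y}$ was arbitrary, $f^{-1}\overline{\cM}_X\to \overline{\cM}_Y$ is an isomorphism on all geometric stalks, so $f$ is strict.

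The main obstacle I expect is the bookkeeping in the second paragraph: making sure the pulled-back log structures on the geometric points genuinely give the standard log points $\pt_{P,k}$ and $\pt_{Q,k}$ over the common field $k$, and that base change along the strict morphism $\pt_{P,k}\to X$ preserves the characteristic monoid $Q$ at $\bar{y}'$, so that $\pt_{Q,k}\to Y'$ is a strict closed immersion whose composite with $Y'\to \pt_{P,k}$ recovers the $\theta$-induced map. Once these identifications are in place, the monomorphism and exactness claims are formal, and the real content is deferred entirely to Lemma \ref{equiv.10}.
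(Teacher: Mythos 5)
Your proposal is correct and follows essentially the same route as the paper: reduce to the induced morphism of log points at a (geometric) point and invoke Lemma \ref{equiv.10}. The only cosmetic difference is that you arrange for a common residue field by passing to geometric points, whereas the paper works at a Zariski point $y$ with residue field $k'$ over $k=k(f(y))$ and factors $g\colon \pt_{Q,k'}\to\pt_{P,k}$ through $\pt_{P,k'}$ to land in the setting of Lemma \ref{equiv.10}.
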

\begin{proof}
Let $y$ be a point of $Y$.
We set $x:=f(y)$, $P:=\ol{\cM}_{X,x}$, and $Q:=\ol{\cM}_{Y,y}$.
Let $\theta\colon P\to Q$ be the induced homomorphism.
The restriction of $f$ at $x$ and $y$ is a morphism $g\colon \pt_{Q,k'}\to \pt_{P,k}$ for some fields $k$ and $k'$.
The morphism $g$ is an exact monomorphism too.

Consider the canonical factorization
\[
\pt_{Q,k'}
\xrightarrow{g'}
\pt_{P,k'}
\to
\pt_{P,k}.
\]
Since $g$ is an exact monomorphism, $g'$ is an exact monomorphism.
By Lemma \ref{equiv.10}, $\theta$ is an isomorphism.
Hence $f$ is strict at $x$.
\end{proof}

\section{Strict closed immersions of log smooth schemes}

\begin{proposition}
\label{blow.3}
Let $i\colon Z\to X$ be a strict closed immersion in $\lSm/S$, where $S$ is an fs log scheme.
Then strict \'etale locally on $X$, there exists a cartesian square
\begin{equation}
\label{blow.3.1}
\begin{tikzcd}
Z\ar[d,"i"']\ar[r]&
Y\ar[d,"i_0"]
\\
X\ar[r,"u"]&
Y\times \A^s
\end{tikzcd}
\end{equation}
with $Y\in \lSm/S$ such that $i_0$ is the zero section and $u$ is strict \'etale,
where $\A^s$ has trivial log structure.
If $\cM_{X/S}^\gp$ is torsion free, then \eqref{blow.3.1} exists Zariski locally on $X$.
\end{proposition}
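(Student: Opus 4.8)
The plan is to reduce, via the chart theorem for log smooth morphisms, to the situation where both $Z$ and $X$ are strict smooth over one common toric base, and then to build the square by a classical choice of coordinates on the underlying smooth schemes.

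Fix a point $z\in Z$ and set $x:=i(z)$. Since $i$ is strict we have $\cM_Z\simeq i^*\cM_X$, and pulling back the defining cokernel gives $\cM_{Z/S}\simeq i^*\cM_{X/S}$; in particular $\cM_{Z/S,z}^\gp\simeq \cM_{X/S,x}^\gp$, so the torsion-freeness hypothesis transfers between $X$ and $Z$. Applying Proposition \ref{logsmooth.1} to $X\to S$ at $x$ (in the torsion-free case Zariski locally, and in general only \'etale locally, using the chart theorem \cite[Theorem IV.3.3.1]{logDM}) I obtain a neat chart $\theta\colon P\to Q$ such that $X\to \overline{X}:=S\times_{\A_P}\A_Q$ is strict smooth. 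Because $i$ is strict, $Q$ is also a chart of $Z$, and the identification above shows it is neat for $Z\to S$ at $z$; hence the same proposition applied to $Z$ shows that the composite $Z\to X\to \overline{X}$ is strict smooth too. Thus over $\ul{\overline{X}}$ I am reduced to an honest closed immersion $\ul{Z}\to \ul{X}$ of schemes smooth over $\ul{\overline{X}}$, and the only step that was merely \'etale local rather than Zariski local was the chart theorem, which is Zariski local precisely when $\cM^\gp_{X/S}$ is torsion free. This matches the two assertions of the proposition.

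Now work with the smooth morphisms $\ul{X},\ul{Z}\to \ul{\overline{X}}$. The conormal sequence
\[
0\to K\to i^*\Omega^1_{\ul{X}/\ul{\overline{X}}}\to \Omega^1_{\ul{Z}/\ul{\overline{X}}}\to 0
\]
is short exact with locally free terms, $K$ being the conormal sheaf; write $s$ for its rank. Choose $t_1,\dots,t_s$ in the ideal $I$ of $Z$ whose classes give a basis of $K$ at $z$; shrinking $X$ to a neighborhood of $z$, the $t_i$ generate $I$ and the $dt_i$ are linearly independent in $\Omega^1_{\ul{X}/\ul{\overline{X}}}$. Complete them by $g_1,\dots,g_m\in \cO_X$ so that $dt_1,\dots,dt_s,dg_1,\dots,dg_m$ is a basis of $\Omega^1_{\ul{X}/\ul{\overline{X}}}$ near $z$ (possible by Nakayama after shrinking). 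With $Y:=\overline{X}\times\A^m\in \lSm/S$, so that $Y\times\A^s$ has underlying scheme $\A^{m+s}_{\ul{\overline{X}}}$, the morphism
\[
u:=(g_1,\dots,g_m,t_1,\dots,t_s)\colon X\to Y\times\A^s
\]
induces an isomorphism on differentials relative to $\ul{\overline{X}}$ between schemes of the same relative dimension, hence is \'etale near $z$. Since $u$ lies over $\overline{X}$ and both $X$ and $Y\times\A^s$ carry the log structure pulled back from $\overline{X}$, the morphism $u$ is strict, hence strict \'etale, and $i_0\colon Y\to Y\times\A^s$ is the strict zero section.

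It remains to check that the square is cartesian. On the chosen neighborhood $\ul{Z}=V(t_1,\dots,t_s)$, so the underlying fiber product $\ul{X}\times_{\ul{Y}\times\A^s,\,i_0}\ul{Y}$ is exactly $\ul{Z}$, with $Z\to Y$ induced by $(g_1,\dots,g_m)|_Z$; all four morphisms are strict, so this underlying cartesian square is cartesian in fs log schemes, the log structure on $Z$ being the one pulled back along $i$ by strictness. This yields the required square \eqref{blow.3.1}, \'etale locally in general and Zariski locally when $\cM^\gp_{X/S}$ is torsion free, while for points of $X$ outside $Z$ one takes the open complement with $s=0$. The main difficulty is the simultaneous reduction: producing a single neat chart that renders both $X$ and $Z$ strict smooth over $\overline{X}$, so that the non-strict-looking input becomes a closed immersion of honest smooth schemes; after that the coordinate argument is classical, and tracking the chart theorem's locality is what pins down the Zariski-versus-\'etale dichotomy.
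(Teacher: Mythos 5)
Your proof is correct, but it is organized genuinely differently from the paper's. The paper first chooses log coordinates adapted to the pair: by \cite[Lemma IV.1.2.10, Theorem IV.3.2.2]{Ogu} it picks sections $m_1,\dots,m_r$ of $\cM_X$ whose differentials restrict to a basis of $\Omega^1_{Z/S}$ and sections $m_{r+1},\dots,m_{r+s}$ of the ideal of $Z$ completing these to a basis of $\Omega^1_{X/S}$; this yields a log \'etale morphism $v\colon X\to S\times\A_{\N^r}\times\A^s$ under which $Z$ is the preimage of $S\times\A_{\N^r}\times\{0\}$, and only at the very end is the chart theorem applied to $v$ to upgrade ``log \'etale'' to ``strict \'etale''. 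You instead apply the chart theorem first, to $X\to S$, and observe that a neat chart at $x$ transports to a neat chart for $Z\to S$ at $z$ because $i$ is strict, so that $X$ and $Z$ become simultaneously strict smooth over a single model $\overline{X}=S\times_{\A_P}\A_Q$; from there the argument is purely classical scheme theory. Your route makes the Zariski-versus-\'etale dichotomy transparent (it is exactly the locality of neat charts) and confines the log-geometric input to one step, while the paper's route stays inside Ogus's formalism of log differentials and gets the cartesianness of the square for free from the choice of the $m_i$. Two small points to tighten: first, what you need for $Z$ is not the existence statement of Proposition \ref{logsmooth.1} applied to $Z\to S$ (which would produce some other chart) but the underlying fact, \cite[Theorem IV.3.3.1(3)]{Ogu}, that any chart which is neat at a point for a log smooth morphism induces a strict smooth map to the chart model near that point; cite that directly. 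Second, your parenthetical treatment of points of $X$ outside $Z$ via ``$s=0$'' does not work, since with $s=0$ the cartesian square would force $Z\times_X U\simeq U$; this is harmless because, like the paper's proof (which also begins ``let $x$ be a point of $Z$''), you only need the square near points of $Z$, but it should be stated that way rather than patched with $s=0$.
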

\begin{proof}
Let $x$ be a point of $Z$, and let $\cI$ be the sheaf of ideals on $X$ defining $Z$.
By \cite[Lemma IV.1.2.10, Theorem IV.3.2.2]{Ogu}, we can choose local sections $m_1,\ldots,m_r$ of $\cM_X$ and $m_{r+1},\ldots,m_{r+s}$ of $\cI$ such that $\{dm_1,\ldots,dm_{r+s}\}$ (resp.\ $\{dm_{1},\ldots,dm_{r}\}$) gives rise a basis of $\Omega_{X/S,x}^1$ (resp.\ $\Omega_{Z/S,x}^1$).
Zariski locally on $X$, the local sections $m_1,\ldots,m_{r+s}$ are global sections.
Hence Zariski locally on $X$, we obtain a cartesian square
\[
\begin{tikzcd}
Z\ar[d,"i"']\ar[r]&
S\times \A_{\N^r}\ar[d]
\\
X\ar[r,"v"]&
S\times \A_{\N^r}\times \A^s
\end{tikzcd}
\]
such that the right vertical morphism is the zero section.
According to the proof of \cite[Theorem IV.3.2.6]{Ogu}, $v$ is log \'etale.

We may assume that $S\times \A_{\N^r}$ admits a chart $P$.
By \cite[Theorem IV.3.3.1]{Ogu}, strict \'etale locally on $X$, $v$ admits a chart $\theta\colon P\to Q$ such that the induced morphism $X\to (S\times \A_{\N^r})\times_{\A_P}\A_Q \times \A^s$ is strict \'etale.
By setting $Y:=(S\times \A_{\N^r})\times_{\A_P}\A_Q$, we obtain \eqref{blow.3.1}.

If $\cM_{Y/X}^\gp$ is torsion free, use Proposition \ref{logsmooth.1} instead.
\end{proof}

\section{Blow-ups along strict closed subschemes}

\begin{definition}
\label{blow.10}
Suppose that $i\colon Z\to X$ is a strict closed immersion in $\lSch/S$, where $S$ is an fs log scheme.
The \emph{blow-up of $X$ along $Z$} is defined to be
\[
\Blow_Z X
:=
\Blow_{\ul{Z}}\ul{X}\times_{\ul{X}}X,
\]
where $\Blow_{\ul{Z}}\ul{X}$ denotes the usual blow-up.
\end{definition}

\begin{lemma}
\label{blow.2}
Let $i\colon Z\to X$ be a strict closed immersion in $\lSm/S$, where $S$ is an fs log scheme.
Then $\ul{\Blow_Z X\times_X Z}$ is an effective Cartier divisor on $\ul{\Blow_Z X}$ and we have $\Blow_Z X,\Blow_Z X\times_X Z\in \lSm/S$.
\end{lemma}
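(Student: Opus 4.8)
The plan is to reduce to an explicit local model via Proposition \ref{blow.3} and then compute the blow-up directly. First I would record that the formation of $\Blow_Z X$ commutes with strict \'etale base change on $X$: by Definition \ref{blow.10} we have $\Blow_Z X = \Blow_{\ul Z}\ul X\times_{\ul X} X$, and the usual blow-up commutes with flat base change, so for a strict \'etale cover $u\colon X'\to X$ with $Z':=Z\times_X X'$ one gets $\Blow_{Z'}X'\simeq \Blow_Z X\times_X X'$; moreover, $u$ being strict, $\ul{\Blow_{Z'}X'\times_{X'}Z'}=\ul{\Blow_Z X\times_X Z}\times_{\ul X}\ul{X'}$. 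I would then observe that all three assertions are strict \'etale local on $X$: being an effective Cartier divisor descends along the faithfully flat map $\ul{X'}\to\ul X$, while log smoothness over $S$ is a strict \'etale local condition on the source. It therefore suffices to treat the situation after passing to a strict \'etale cover of $X$.

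By Proposition \ref{blow.3}, after such a localization I may assume that $i\colon Z\to X$ is the pullback of the zero section $i_0\colon Y\to Y\times\A^s$ along a strict \'etale morphism $X\to Y\times\A^s$ with $Y\in\lSm/S$. The base-change property above then reduces the claim to the model case $X=Y\times\A^s$ and $Z=Y\times\{0\}$. Here $\ul{\Blow_Z X}=\ul Y\times\Blow_{\{0\}}\A^s$, and since $\A^s$ carries the trivial log structure and the log structure of $\Blow_Z X$ is pulled back from $X$, we obtain $\Blow_Z X\simeq Y\times\Blow_{\{0\}}\A^s$, where $\Blow_{\{0\}}\A^s$ is the blow-up of affine space at the origin with its trivial log structure. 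As $\Blow_{\{0\}}\A^s$ is smooth over $\Z$ and $Y\in\lSm/S$, the product lies in $\lSm/S$. The exceptional divisor $E\subset\Blow_{\{0\}}\A^s$ is isomorphic to $\P^{s-1}$ and is an effective Cartier divisor; since $\ul{\Blow_Z X\times_X Z}=\ul Y\times E$, it is an effective Cartier divisor on $\ul{\Blow_Z X}=\ul Y\times\Blow_{\{0\}}\A^s$, and $\Blow_Z X\times_X Z\simeq Y\times E\in\lSm/S$ because $E$ is smooth with trivial log structure.

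The descent steps are routine; the point deserving care is the compatibility of the identification of $\Blow_Z X\times_X Z$ with the exceptional divisor under the strict \'etale localization, which rests on the strictness of both $i$ and the blow-up projection, forcing all relevant fiber products to be computed on underlying schemes with pulled-back log structures. The essential computational input — that $\Blow_{\{0\}}\A^s$ is smooth with an effective Cartier exceptional divisor — is classical, and the degenerate case $s=0$, in which $Z=X$ and both blow-ups are empty, is vacuous.
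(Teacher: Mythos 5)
Your proof is correct and follows essentially the same route as the paper: reduce strict \'etale locally via Proposition \ref{blow.3} to the zero-section model $Y\to Y\times\A^s$, use flat base change for blow-ups, and verify the model case explicitly. The only difference is that you spell out the model computation ($\Blow_Z X\simeq Y\times\Blow_{\{0\}}\A^s$ with exceptional divisor $Y\times E$), which the paper dismisses as clear.
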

\begin{proof}
The question is strict \'etale local on $X$ by \cite[Theorem 0.2]{zbMATH06164842} and Remark \ref{blow.8} below, so we may assume the existence of the diagram \eqref{blow.3.1}.
Since the morphism $u$ in this diagram is strict flat, there is a canonical isomorphism
\begin{equation}
\label{blow.2.1}
\Blow_Z X
\simeq
\Blow_Y(Y\times \A^s) \times_{Y\times \A^s}X.
\end{equation}
To conclude, observe that the claim for the strict closed immersion $Y\to Y\times \A^s$ is clear.
\end{proof}

\begin{remark}
\label{blow.8}
We are working with fs log schemes with Zariski log structures.
On the other hand,
the log smoothness and log \'etaleness in \cite[Theorem 0.2]{zbMATH06164842} are about fs log schemes with \'etale log structures.
The category of fs log schemes with Zariski log structures is equivalent to the category of fs log schemes with \'etale log structures having charts Zariski locally by \cite[Proposition III.1.4.1]{Ogu}.
To use \cite[Theorem 0.2]{zbMATH06164842} for the setting of Zariski log structures,
we need Proposition \ref{blow.11} below.
We thank one of the referees for pointing out this issue.
\end{remark}

\begin{proposition}
\label{blow.11}
Let $f\colon Y\to X$ be a morphism of fs log schemes with Zariski log structures.
Then it is log smooth (resp.\ log \'etale) if and only if its associated morphism of fs log schemes with \'etale log structures is log smooth (resp.\ log \'etale).
\end{proposition}
\begin{proof}
The if direction is an immediate consequence of \cite[Proposition IV.3.1.4(2)]{Ogu}.
For the only if direction,
let $S\to Y$ be an open immersion of fs log schemes with \'etale log structures.
Observe that $S$ has charts Zariski locally.
By \cite[Corollary IV.3.1.5]{Ogu},
it suffices to show that for every first-order log thickening of fs log schemes with \'etale log structures $S\to T$ in the sense of \cite[Definition IV.2.1.1]{Ogu},
$T$ has charts Zariski locally.
This is a consequence of \cite[Proposition II.2.3.7]{Ogu} and Lemma \ref{blow.9} below.
\end{proof}

\begin{lemma}
\label{blow.9}
Let $i\colon S\to T$ be a first-order log thickening of fs log schemes with \'etale log structures.
Assume that $\ul{S}$ is affine.
If $g\colon S\to \A_P$ is a strict morphism of fs log schemes with \'etale log structures for some fs monoid $P$ such that $P^\gp$ is torsion free (e.g., a sharp fs monoid),
then it lifts to a strict morphism of fs log schemes with \'etale log structures $h\colon T\to \A_P$.
\end{lemma}
\begin{proof}
Since $\A_P\to \Spec(\Z)$ is log smooth by \cite[Corollary IV.3.1.9]{Ogu},
we have a lift $h$ by \cite[Proposition IV.3.1.4(2)]{Ogu}.
We need to show that $h$ is strict.
By \cite[Th\'eor\`eme IV.18.1.2]{EGA},
the small \'etale sites $\ul{S}_{\et}$ and $\ul{T}_{\et}$ are equivalent.
Hence it suffices to show that the homomorphism $\ol{\cM}_{\A_P,g(\ol{s})} \to \ol{\cM}_{T,i(\ol{s})}$ induced by $h$ is an isomorphism for every geometric point $\ol{s}$ of $\ul{S}$.
This is a consequence of the assumption that $g$ and $i$ are strict.
\end{proof}

\begin{lemma}
\label{blow.1}
Let $i\colon Z\to X$ be a strict closed immersion in $\lSm/S$, where $S$ is an fs log scheme.
Then for every log smooth morphism $X'\to X$, there is a canonical isomorphism
\[
\Blow_{Z'} X'\simeq\Blow_Z X\times_X X',
\]
where $Z':=Z\times_X X'$.
\end{lemma}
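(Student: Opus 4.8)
The plan is to transport the statement to underlying schemes, reduce to the explicit local model of Proposition \ref{blow.3}, and there defeat the failure of flatness of log smooth morphisms by separating the directions transverse to $Z$ (where the blow-up happens) from the ``toric'' directions (where the base change is not flat).

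First I would record the formal reductions. Since $i$ is a strict closed immersion, its base change $Z'=Z\times_X X'\to X'$ is again a strict closed immersion with $\ul{Z'}=\ul Z\times_{\ul X}\ul{X'}$; in particular $\cI_{Z'}=\cI_Z\cO_{X'}$. By Definition \ref{blow.10} the morphisms $\Blow_Z X\to X$ and $\Blow_{Z'}X'\to X'$ are strict, so $\ul{\Blow_Z X\times_X X'}=\ul{\Blow_Z X}\times_{\ul X}\ul{X'}$, and the universal property of the scheme-theoretic blow-up together with $\cI_{Z'}=\cI_Z\cO_{X'}$ furnishes a canonical morphism $\Blow_{Z'}X'\to\Blow_Z X\times_X X'$ over $X'$. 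This morphism is one between log schemes whose log structures are both pulled back from $X'$, hence strict, so it is an isomorphism if and only if it is an isomorphism of underlying schemes. The latter question is strict \'etale local on $X$ and on $X'$.

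Next I would localize. As the claim is strict \'etale local on $X$, Proposition \ref{blow.3} lets me assume $X=Y\times\A^s$ with $Y\in\lSm/S$, that $\A^s$ carries the trivial log structure, that $Z=Y\times\{0\}$ is the zero section (so $\cI_Z=(t_1,\dots,t_s)$ for the coordinates of $\A^s$), and, after shrinking, that $X$ admits a chart $P$ pulled back from $Y$. As in \eqref{blow.2.1} this gives $\Blow_Z X\simeq Y\times\Blow_{\{0\}}\A^s$, that is, $\Blow_Z X\simeq\mathrm{pr}^*\big(\Blow_{\{0\}}\A^s\big)$ for the flat projection $\mathrm{pr}\colon X=Y\times\A^s\to\A^s$. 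Now I localize strict \'etale on $X'$ and apply the chart theorem \cite[Theorem IV.3.3.1]{Ogu}: the log smooth morphism $X'\to X$ factors as $X'\xrightarrow{w}X\times_{\A_P}\A_Q\to X$ with $w$ strict smooth, for some chart $\theta\colon P\to Q$. Since $w$ has flat underlying morphism and scheme-theoretic blow-ups commute with flat base change, the validity of the lemma for $X\times_{\A_P}\A_Q\to X$ implies it for $X'\to X$; thus I may replace $X'$ by $X\times_{\A_P}\A_Q$. Because $P$ is pulled back from $Y$, this base change is $X\times_{\A_P}\A_Q=Y'\times\A^s$ with $Y':=Y\times_{\A_P}\A_Q$, and $X'\to X$ becomes $(Y'\to Y)\times\mathrm{id}_{\A^s}$.

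Finally I would compute in this product model, where the possibly non-flat factor $Y'\to Y$ is orthogonal to the center. The composite $\mathrm{pr}'\colon X'=Y'\times\A^s\to\A^s$ is the flat projection and is compatible with $\mathrm{pr}$, so transitivity of base change gives
\[
\Blow_Z X\times_X X'\simeq\mathrm{pr}^*\big(\Blow_{\{0\}}\A^s\big)\times_X X'\simeq(\mathrm{pr}')^*\big(\Blow_{\{0\}}\A^s\big)\simeq Y'\times\Blow_{\{0\}}\A^s,
\]
while $Z'=Y'\times\{0\}$ has ideal $(t_1,\dots,t_s)$, whence $\Blow_{Z'}X'\simeq(\mathrm{pr}')^*\big(\Blow_{\{0\}}\A^s\big)\simeq Y'\times\Blow_{\{0\}}\A^s$ as well; the canonical morphism identifies the two. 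The main obstacle is exactly that the underlying morphism of a log smooth morphism need not be flat, so blow-up does not commute with $X'\to X$ directly; the argument circumvents this by using the chart theorem to confine all non-flat behaviour to the factor $Y'\to Y$, which does not meet $Z$, while the transverse $\A^s$-directions carrying the blow-up are touched only by the flat projection. I would end by noting that the isomorphism is canonical, being a strict morphism that is an isomorphism on underlying schemes, and that the local identifications glue.
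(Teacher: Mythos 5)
Your proposal is correct and follows essentially the same route as the paper: strict \'etale localization, Proposition \ref{blow.3} to put $X$ over the product model $Y\times\A^s$, the chart theorem to replace $X'$ by $X\times_{\A_P}\A_Q=Y'\times\A^s$ via a flat morphism, and then the observation that the non-flat toric factor $Y'\to Y$ is transverse to the center so both sides become $\Blow_{\{0\}}(\A^s)\times_{\A^s}X'$. The only (harmless) imprecision is saying Proposition \ref{blow.3} lets you \emph{assume} $X=Y\times\A^s$ rather than that it provides a strict \'etale (hence flat) morphism $X\to Y\times\A^s$ along which blow-ups are computed by base change, exactly as in \eqref{blow.2.1}, which you invoke anyway.
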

\begin{proof}
The question is strict \'etale local on $X$ and $X'$, so we may assume that \eqref{blow.3.1} exists and $X$ admits a chart $P$.
By \cite[Theorem IV.3.3.1]{Ogu}, we may also assume that there exists a chart $P\to Q$ of $X'\to X$ such that the induced morphism
\[
X'\to X\times_{\A_P}\A_Q
\]
is strict \'etale.
We set $Y':=Y\times_{\A_P}\A_Q$.
There are canonical isomorphisms
\[
\Blow_{Z'} {X'}
\simeq
\Blow_{Y'}(Y'\times \A^r) \times_{Y'\times \A^r}X'
\simeq
\Blow_{\{0\}}(\A^r)\times_{\A^r}X'.
\]
Together with \eqref{blow.2.1}, we obtain the desired isomorphism.
\end{proof}

\begin{example}
\label{blow.4}
The conclusion of Lemma \ref{blow.1} is wrong if we do not assume $Z\in \lSm/S$.
For example, suppose
\[
X:=(\A^2,H_1+H_2),
\;
X':=(\Blow_{\{0\}}\A^2,\widetilde{H}_1+\widetilde{H}_2+E),
\text{ and }
Z:=X\times_{\A^2}\{0\},
\]
where $H_1$ and $H_2$ are the axes, $\widetilde{H}_1$ and $\widetilde{H_2}$ are their strict transforms, and $E$ is the exceptional divisor.
While $\Blow_Z X\times_X X'$ is not irreducible, $\Blow_{Z'}X'\simeq X'$ is irreducible.
\end{example}

\bibliography{bib}
\bibliographystyle{siam}

\end{document}